\pgfplotsset{compat=newest,width=0.328\textwidth,height=0.55\textwidth,grid style={dotted},ticklabel style = {font=\tiny},x label style={font=\footnotesize, at={(axis description cs:0.5,-0.035)},anchor=north},y label style={font=\footnotesize, at={(axis description cs:-0.158,.5)},anchor=south}, title style={font=\small, at={(axis description cs:0.5,1.06)},anchor=north}, legend pos=south west, legend cell align=left,legend style = {fill=none,inner xsep=0.8pt,inner ysep=0.4pt,font=\tiny,at={(axis description cs:0,0)}}}
\newcommand{\settable}[2]{
\pgfplotstableread{#2}{\datatable}
\pgfplotstablecreatecol[copy column from table={#1}{0}] {data} {\datatable}
}
\newtheorem{theorem}{Theorem}
\newtheorem{lemma}[theorem]{Lemma}
\newtheorem{proposition}[theorem]{Proposition}
\newtheorem{corollary}[theorem]{Corollary}
\newtheorem{assumption}[theorem]{Assumption}
\newtheorem{remark}[theorem]{Remark}
\renewcommand{\Re}{\operatorname{Re}}             
\providecommand{\diam}{\operatorname{diam}}
\providecommand{\Bb}{{\boldsymbol{b}}}
\providecommand{\Bd}{{\boldsymbol{d}}}
\providecommand{\Bn}{{\boldsymbol{n}}}
\providecommand{\Bu}{{\boldsymbol{u}}}
\providecommand{\Bv}{{\boldsymbol{v}}}
\providecommand{\Bw}{{\boldsymbol{w}}}
\providecommand{\Bx}{{\boldsymbol{x}}}
\providecommand{\By}{{\boldsymbol{y}}}
\providecommand{\Bz}{{\boldsymbol{z}}}
\newcommand{\VF}{{\mathbf{F}}}
\newcommand{\VH}{{\mathbf{H}}}
\newcommand{\VL}{{\mathbf{L}}}
\newcommand{\Bnu}        {\boldsymbol{\nu}}
\providecommand{\bbC}{\mathbb{C}}
\providecommand{\bbE}{\mathbb{E}}
\providecommand{\bbI}{\mathbb{I}}
\providecommand{\bbN}{\mathbb{N}}
\providecommand{\bbR}{\mathbb{R}}
\newcommand{\pspace}{\mathcal{P}}
\newcommand{\psubspace}{\mathcal{P}^{\Gamma}}
\newcommand{\Var}{\text{Var}_{\mu_J}}
\newcommand{\dd}{\,\text{d}}
\newcommand{\din}{D_{in}}
\newcommand{\hdin}{\hat{D}_{in}}
\newcommand{\dout}{D_{out}}
\newcommand{\doutR}{D_{out,R_{out}}}
\newcommand{\hdout}{\hat{D}_{out}}
\newcommand{\hdoutR}{\hat{D}_{out,R_{out}}}
\newcommand{\dom}{D_{R_{out}}}
\newcommand{\hexp}{\beta}
\newcommand{\jac}{\text{D}}
\newcommand{\jacsl}{\emph{D}}
\newcommand{\refgrad}{\hat{\nabla}}
\newcommand{\uref}{\hat{u}}
\newcommand{\aref}{\hat{\alpha}}
\newcommand{\bref}{\hat{b}}
\newcommand{\cref}{\hat{\kappa}^2}
\newcommand{\fref}{\hat{f}}
\newcommand{\partialref}{\hat{\partial}}
\newcommand{\xref}{\hat{\Bx}}
\newcommand{\nref}{\hat{\Bn}}
\newcommand{\vref}{\hat{v}}
\newcommand{\gammaref}{\hat{\Gamma}}
\newcommand{\evmin}{\Lambda_{min}}
\newcommand{\evmax}{\Lambda_{max}}
\newcommand{\xreffix}{\hat{\Bx}_{\hat{\Gamma}}}
\newcommand{\greffix}{\hat{g}}
\newcommand{\freffix}{\hat{F}}
\newcommand{\omatrix}{M}
\newcommand{\diagmatrix}{\Lambda}
\newcommand{\xif}{\tilde{\Bx}_y}
\newcommand{\xifgrad}{\tilde{\nabla}_y}
\newcommand{\xifgradtg}{\tilde{\nabla}_{y,\tilde{\Gamma}}}
\newcommand{\partialxif}{\tilde{\partial}_y}
\newcommand{\partialxifi}[1]{\tilde{\partial}_{y,#1}}
\newcommand{\xiffix}{\tilde{\Bx}_{\tilde{\Gamma}}}
\newcommand{\uxif}{\tilde{u}_y}
\newcommand{\wxif}{\tilde{w}_y}
\newcommand{\vxif}{\tilde{v}_y}
\newcommand{\psixif}{\tilde{\psi}_y}
\newcommand{\nxif}{\tilde{\Bn}_y}
\newcommand{\fxif}{\tilde{F}}
\newcommand{\fxifs}{\tilde{f}}
\newcommand{\axif}{\tilde{\alpha}_y}
\newcommand{\gxif}{\tilde{g}}
\newcommand{\gammaxif}{\tilde{\Gamma}}
\newcommand{\Ckpwxi}[2]{C^{#1,\hexp}_{\tilde{pw}}({#2})}
\newcommand{\sholder}[3][\hexp]{\left|{#2}\right|_{{#1};{#3}}}
\newcommand{\sholderk}[4][\hexp]{\left|{#2}\right|_{{#4},{#1};{#3}}}
\newcommand{\holder}[3][\hexp]{\left\|{#2}\right\|_{{#1};{#3}}}
\newcommand{\holderk}[4][\hexp]{\left\|{#2}\right\|_{{#4},{#1};{#3}}}
\newcommand{\scont}[3][0]{\left|{#2}\right|_{{#1};{#3}}}
\newcommand{\cont}[3][0]{\left\|{#2}\right\|_{{#1};{#3}}}
\newcommand{\sholderpw}[3][\hexp]{\left|{#2^{\pm}}\right|_{{#1};{#3}^{\pm}}}
\newcommand{\sholderkpw}[4][\hexp]{\left|{#2^{\pm}}\right|_{{#4},{#1};{#3}^{\pm}}}
\newcommand{\holderpw}[3][\hexp]{\left\|{#2^{\pm}}\right\|_{{#1};{#3}^{\pm}}}
\newcommand{\holderkpw}[4][\hexp]{\left\|{#2^{\pm}}\right\|_{{#4},{#1};{#3}^{\pm}}}
\newcommand{\scontpw}[3][0]{\left|{#2^{\pm}}\right|_{{#1};{#3}^{\pm}}}
\newcommand{\contpw}[3][0]{\left\|{#2^{\pm}}\right\|_{{#1};{#3}^{\pm}}}
\newcommand{\holderkpwb}[3]{\left\|{#1^{\pm}}\right\|_{{#3},{\hexp};B^{\pm}_{#2}(\xiffix)}}
\newcommand{\contpwb}[3][0]{\left\|{#2^{\pm}}\right\|_{{#1};B^{\pm}_{#3}(\xiffix)}}
\newcommand{\sholderpwb}[2]{\left|{#1^{\pm}}\right|_{{\hexp};B^{\pm}_{#2}(\xiffix)}}
\newcommand{\sholderkpwb}[3]{\left|{#1^{\pm}}\right|_{{#3},\hexp;B^{\pm}_{#2}(\xiffix)}}
\newcommand{\holderpwbr}[2]{\left\|{#1^{\pm}}\right\|_{{\hexp};B^{\pm}_{#2}}}
\newcommand{\holderkpwbr}[3]{\left\|{#1^{\pm}}\right\|_{{#3},{\hexp};B^{\pm}_{#2}}}
\newcommand{\contpwbr}[3][0]{\left\|{#2^{\pm}}\right\|_{{#1};B^{\pm}_{#3}}}
\newcommand{\sholderpwbr}[2]{\left|{#1^{\pm}}\right|_{{\hexp};B^{\pm}_{#2}}}
\newcommand{\sholderkpwbr}[3]{\left|{#1^{\pm}}\right|_{{#3},\hexp;B^{\pm}_{#2}}}
\newcommand{\scontpwbr}[3][0]{\left|{#2^{\pm}}\right|_{{#1};B^{\pm}_{#3}}}
\newcommand{\dz}{\dd\Bz}
\newtheoremstyle{dotless}{}{}{\itshape}{}{\bfseries}{}{ }{}
\theoremstyle{dotless}
\newtheorem*{notation}{Notation}
\title {Multilevel Monte Carlo on a high-dimensional parameter space for transmission problems with geometric uncertainties}
\author{
    Laura~Scarabosio\footnote{\textsc{Technical University of Munich, Germany, Department of Mathematics, Institute for Numerical Mathematics,} Boltzmannstrasse 3, 85748 Garching b. M\"unchen. \textsl{Email}: scarabos\symbol{64}ma.tum.de}}
\begin{document}

\maketitle

\begin{abstract}
 In the framework of uncertainty quantification, we consider a quantity of interest which depends non-smoothly on the high-dimensional parameter representing the uncertainty. We show that, in this situation, the multilevel Monte Carlo algorithm is a valid option to compute moments of the quantity of interest (here we focus on the expectation), as it allows to bypass the precise location of discontinuities in the parameter space. We illustrate how such lack of smoothness occurs for the point evaluation of the solution to a (Helmholtz) transmission problem with uncertain interface, if the point can be crossed by the interface for some realizations. For this case, we provide a space regularity analysis for the solution, in order to state converge results in the $L^{\infty}$-norm for the finite element discretization. The latter are then used to determine the optimal distribution of samples among the Monte Carlo levels. Particular emphasis is given on the robustness of our estimates with respect to the dimension of the parameter space.
\end{abstract}

\small
\textbf{Keywords:} multilevel Monte Carlo, shape uncertainty, interface problem, $L^{\infty}$-estimates, uncertainty quantification.

\normalsize
\section{Introduction}
In many engineering applications, the behavior of a physical system depends on a parameter vector $\By$ belonging to a high-dimensional parameter space $\pspace_J\subseteq \bbR^J$ with $J\in\bbN$ large. The vector $\By\in\pspace_J$ may represent, for instance, random variations in material or geometrical properties of the physical system. Equipping $\pspace_J$ with a $\sigma$-algebra $\mathcal{A}_J$ and a probability measure $\mu_J$, we obtain the probability space $(\pspace_J,\mathcal{A}_J,\mu_J)$.  In such cases, it is of interest to compute statistics, with respect to the parameter, of a quantity   $q(\By;u(\By))$ (quantity of interest, Q.o.I. for short) depending on the solution $u$ to a partial differential equation (PDE):
\begin{equation}
\begin{split}
 \text{Find } u \text{ s.t.}: \quad
&  u(\By)\in \mathcal{X} \quad\text{for every } \By\in\pspace_J,\\
& \mathcal{D}(\By;u(\By))=0\quad \text{for every } \By\in\pspace_J.
\end{split}\label{eq:diffeq}
\end{equation}
In \eqref{eq:diffeq}, $\mathcal{X}$ denotes a separable Banach space. For every $\By\in\pspace_J$ and every $J\in\bbN$, $q(\By;\cdot):\mathcal{X}\rightarrow\mathcal{Y}$, that is, every realization of $q$ belongs to a separable Hilbert space $\mathcal{Y}$. For instance, if $q$ is the solution $u$ itself, then $\mathcal{Y}=\mathcal{X}$, if $q$ is some linear output functional, then $\mathcal{Y}=\bbR$ or $\mathcal{Y}=\bbC$. 

Introducing the quantity $Q: \pspace_J\rightarrow \mathcal{Y}$ such that $Q(\By):=q(\By;u(\By))$ for every $\By\in\pspace_J$, the present work focuses on the case when $Q$ is \emph{non-smooth}, with respect to the high-dimensional parameter $\By$, across a submanifold $\psubspace_J\subset \pspace_J$ which is not easy to track. Here, by non-smooth we mean `not analytic in $\By$', and in our treatment we allow $Q$ to have jumps across $\psubspace_J$. More precisely, in this paper \eqref{eq:diffeq} is a (acoustic) transmission problem, where the shape of the scatterer is subject to random variations modeled by the high-dimensional parameter $\By$, and the Q.o.I. is the point evaluation of the solution in locations that, depending on the realization, may be either inside or outside the scatterer. We focus on the computation of the mean
\begin{equation}\label{eq:meanintro}
\bbE_{\mu_J}[Q]:=\int_{\pspace_J}Q(\By)\dd\mu_J(\By),
\end{equation}
and aim at numerical methods which are robust with respect to the dimension $J$ of the parameter space, that is, whose convergence rates \emph{do not deteriorate for large $J$}, possibly tending to infinity.

\smallskip
\textbf{Related work.} We first review the literature on the computation of moments of a Q.o.I., and then, in view of our application to a transmission problem with random interface, the literature in shape uncertainty quantification.

If the randomness in the system consists of deviations from a deterministic quantity that are small enough, it is possible to apply a perturbation approach \cite{DW}, and approximate moments of the Q.o.I. exploiting its Taylor expansion centered at the deterministic quantity. Otherwise, we have to compute \eqref{eq:meanintro} directly (or analogous expression for higher order moments), which means employing quadrature formulas on the parameter space. In this work we focus on this second option. Quadrature rules on a (high-dimensional) parameter space can be classified in two main cathegories: Monte Carlo-like rules and deterministic rules. As with quadrature rules for functions of one real variable, there is a compromise between speed of convergence with respect to the number of function evaluations and smoothness required on the integrand. The Monte Carlo approach to compute \eqref{eq:meanintro}, consisting of random sampling \cite{Caf}, converges almost surely to the exact mean provided the Q.o.I. is Lebesgue integrable with respect to the parameter. This is ensured by the strong law of large numbers \cite[Sect. 2]{Caf}. If the Q.o.I. has also finite variance, then the Monte Carlo quadrature converges with rate $M^{-1/2}$, where $M$ is the number of samples \cite[Thm. 2.1]{Caf}. The high computational effort due to the slow convergence rate can be reduced using the multilevel Monte Carlo (MLMC) method \cite{He98,HeS,Giles08,Giles15} or other variance reduction techniques \cite{Glas}. To converge, MLMC requires square integrability of the Q.o.I., and details are provided in Section \ref{sect:mlmcintro} of this paper. Deterministic quadrature rules comprise quasi-Monte Carlo (QMC) methods and spectral methods.
 We refer to \cite{DKS} for a comprehensive treatment of QMC. It is possible to construct QMC sequences of quadrature points such that the speed of convergence is $M^{1-\varepsilon}$, for any $\epsilon>0$ (with $M$ the number of quadrature points) \cite[Prop. 2.18, Thm. 3.20 and Sect. 3.4]{DP}, under the assumption that the integrand has continuous first order mixed derivatives. If the integrand has higher regularity, then higher order QMC quadrature rules can be constructed, with convergence rates that are robust with respect to the dimension of the parameter space \cite{DKLNS}. Spectral methods can be divided in stochastic Galerkin \cite{BTZ,SG,XK} and stochastic collocation \cite{BNT,NTW} approaches. They provide high order convergence rates if the Q.o.I. admits an analytic extension to the complex plane: for finite-dimensional parameter spaces, the rate is exponential with respect to number of evaluation points, but it depends on the dimension and deteriorates as the latter increases \cite{BNT,BeNTT}; the dimension-independent convergence rate, which still holds in infinite-dimensional parameter spaces, is algebraic, and it depends only on the `sparsity class of the unknown' \cite{CCS,SG,SS}. If the Q.o.I. is not globally smooth with respect to the parameter, but it is piecewise smooth, then one possibility is to employ discontinuity detection methods (as, for instance, the one suggested in \cite{ZWGB}) to detect the surfaces of non-smoothness, and then apply a high order quadrature rule separately on each subdomain on which the Q.o.I. is smooth. However, this approach is not applicable for complicated surfaces of discontinuity. This issue is discussed in more details in subsection \ref{ssect:discussion} of this work, which then motivates why MLMC is a valid option when non-smoothness occurs across manifolds that are not easy to track.

In the model problem that we consider, the randomness stems from uncertain variations of the scatterer boundary. Several approaches are possible to tackle shape uncertainty quantification: perturbation techniques \cite{HaSS,HL,CherS} (analogous to \cite{DW} using shape calculus to construct the Taylor expansions), level set methods \cite{NSM,NCSM}, the fictitious domain approach \cite{CC} and the mapping technique \cite{TX,XT}. Recently, a new approach has been suggested in \cite{Har} in the framework of a Helmholtz scattering problem, where a boundary integral formulation is used to reconstruct the expansion of the solution in spherical or cylindrical harmonics; however, explicit formulas for the coefficients seem to be available, for the moment, only when the parameter space is low-dimensional. In our paper, we adopt the mapping technique, because it allows to deal with not small perturbations, it provides a natural way of resolving the interface for the spatial discretization \cite[Sect. 5]{HPS}, and, transforming a PDE on a random domain to a PDE on a deterministic domain with stochastic coefficients, it simplifies both theoretical analysis and practical implementation. 
The regularity of the solution to a PDE with respect to the high-dimensional parameter describing the shape variations has been studied in \cite{CNT,HPS,HSSS,CSZ} and \cite{JSSZ}. The authors of these papers prove holomorphic dependence, with respect to the high-dimensional parameter, of the solution on the nominal, deterministic domain introduced by the domain mapping. The work \cite{CNT} deals with an elliptic boundary value problem, \cite{HPS} tackles also an elliptic interface problem, and \cite{HSSS} treats the same Helmholtz transmission problem as the one addressed in the present paper (and considers also some linear output functionals). The paper \cite{CSZ} provides, in the framework of the stationary Navier-Stokes equations, a unified mathematical treatment of the mapping method, independent of the domain parametrization, and introduces the concept of `shape holomorphy'. The techniques presented in \cite{CSZ} have been applied, in \cite{JSSZ}, to the Maxwell equations in frequency domain. However, the smooth dependence on the parameter breaks down for point evaluations of the solution to an interface problem on the physical domain, where the interface changes for every realization \cite[Ch. 8]{LSthesis}. This is the case treated in this paper. For an application of the mapping technique to the inverse problem setting, we refer to \cite{GP} and \cite{HKMS}, where the inverse problem in electrical impedance tomography is considered. In particular, in \cite{GP} the authors prove the regularity of the posterior measure with respect to the high-dimensional parameter associated to the shape variations.

\smallskip
\textbf{Scope and outline of the paper.} One goal of this paper is to highlight the presence and impact of the non-smooth dependence on the stochastic parameter in the case of an important class of transmission problems with stochastic interface, namely Helmholtz transmission problems. We prove, and confirm by numerical experiments, that MLMC offers a robust treatment for such class of problems, allowing to bypass the precise location of discontinuities in the parameter space. The methodology used clearly conveys that MLMC is a viable approach also for other problems lacking smoothness with respect to the stochastic parameter. The second goal is to provide a full numerical analysis for point evaluation in (Helmholtz) transmission problems with geometric uncertainties, including the regularity of the solution with respect to the parameter and to the spatial coordinate, and their implications in the convergence of MLMC.

 The paper is organized as follows. In Section \ref{sect:modelpb} we introduce our model transmission problem. Sections \ref{sect:pointvalsmooth} and \ref{sect:pointreg} are the core of this paper. In Section \ref{sect:pointvalsmooth},  we show that the point value of the solution in locations that might be crossed by the random interface is a Q.o.I. which does not depend smoothly on the parameter describing the shape variations. The main contribution there is Proposition \ref{prop:conty}, where we state the regularity of the Q.o.I. with respect to the high-dimensional parameter. In the same section, we discuss possible ways to handle the non-smooth parameter dependence, and provide our motivation for choosing the MLMC method. The latter is reviewed in Section \ref{sect:mlmcintro}, with focus on Q.o.I.s depending on the solution of a partial differential equation. In Section \ref{sect:pointreg}, we first analize the space regularity of the solution to the model transmission problem, and then state convergence results for MLMC when using a finite element discretization. Finally, in Section \ref{sect:numexp}, we show numerical experiments matching the theoretical predictions. For ease of presentation, technical details on the space regularity of the solution, used in the proofs of Proposition \ref{prop:conty} and  Theorem \ref{thm:ureg}, have been moved to Appendices A and B, since they consist in the adaptation of already existing results to our Helmholtz transmission problem.

\section{A model transmission problem}\label{sect:modelpb}
As a model problem, we address the Helmholtz transmission problem in $\bbR^2$, describing the scattering of an incoming wave $u_i$ from a penetrable object whose shape is subject to random variations. We formally define $\Gamma(\By)$, $\By\in\pspace_J$, to be the boundary of the scatterer, and denote by $\din(\By)$ the domain enclosed inside $\Gamma(\By)$. We consider a circle of fixed radius $R_{out}$ containing all realizations of the scatterer in its interior, and indicate and by $\doutR(\By)$ the part of the outer, unbounded domain contained inside this circle. Finally, $D_{R_{out}}:=\din(\By)\cup \Gamma(\By) \cup \doutR(\By)$. Geometry and notation are clarified in Fig.~\ref{fig:domain}.

\begin{figure}[t]
\begin{center}
\fbox{\begin{tikzpicture}[scale=0.6]
\footnotesize
   \useasboundingbox (-4.4,-4.2) rectangle (4.4,4.2);
   \draw[dotted,thick] (0,0) circle[radius=1.6];
 \filldraw[fill=SkyBlue!70!white,draw=SkyBlue!40!Blue,domain=0:6.28,samples=100]  plot ({1.6*cos(\x r)*(1+0.05*cos(3*\x r)+0.04*cos(8*\x r)+0.015*cos(11*\x r)}, {1.6*sin(\x r)*(1+0.05*cos(3*\x r)+0.04*cos(8*\x r)+0.015*cos(11*\x r)});
 \node at (0,-0.25) {$0$};
 \draw[very thin] (0,0)--(1.3,1);
 \node at (0.5,0.5) {$r(\By;\varphi)$};
 \draw[very thin] (0,0)--(4,0);
 \node at (3.3,0.3) {$R_{out}$};
  \node at (-0.4,-0.9) {$\din(\By)$};
   \node at (0,2.7) {$\doutR(\By)$};
\node at (-0.3,-1.8) {$\Gamma(\By)$};
   \node at (3.5,3.2) {$\partial D_{R_{out}}$};
 \draw[thick,->] (-4,0)--(-2.2,0);
 \draw[semithick] (-2.9,-0.2)--(-2.9,0.2);
 \draw[semithick] (-2.8,-0.2)--(-2.8,0.2);
 \node at (-3.4,0.2) {$u_i$};
 \draw (0,0) circle[radius=4];
 \end{tikzpicture}}\caption{Geometry of our model problem.}\label{fig:domain}
 \end{center}
  \end{figure}
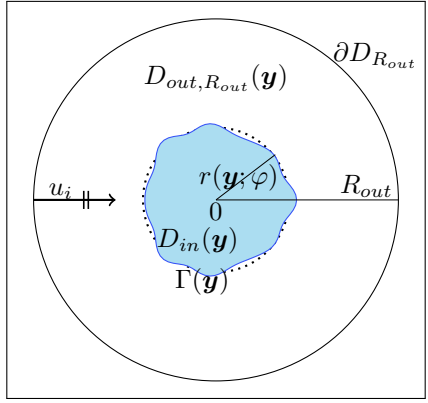

The transmission problem for the Helmholtz equation reads:
    \begin{subequations}
    \begin{align}[left=\empheqlbrace]
 &-\nabla\cdot\left(\alpha(\Gamma(\By);\Bx)\nabla u\right) - \kappa^2(\Gamma(\By);\Bx)u = 0 \quad \text{in }\din(\By)\cup\doutR(\By),\label{mdpb:eq}\\
 & {\llbracket u \rrbracket_{\Gamma(\By)}=0,\quad\llbracket \alpha(\Gamma(\By);\Bx)\nabla u \cdot \Bn \rrbracket_{\Gamma(\By)}=0},\label{mdpb:jump}\\
 &  \dfrac{\partial}{\partial \Bn_{out}} (u-u_i) = \operatorname{DtN}(u)-\operatorname{DtN}(u_i)\quad\text{on }\partial D_{R_{out}},\label{mdpb:dtn}\\
 & \text{for every }\By\in\pspace_J,\nonumber
    \end{align}\label{eq:modelpb}
    \end{subequations}
where we consider real-valued, piecewise-constant coefficients
\begin{equation}\label{eq:coeffs}
 \alpha(\Gamma(\By);\Bx) = \begin{cases}
                   1 &\mbox{if }\Bx\in \doutR(\By),\\
                   \alpha_2 &\mbox{if }\Bx \in \din(\By),
                  \end{cases}\qquad
 \kappa^2(\Gamma(\By);\Bx) = \begin{cases}
                   \kappa_1^2 &\mbox{if }\Bx\in \doutR(\By),\\
                   \alpha_2 \,\kappa_2^2 &\mbox{if }\Bx \in \din(\By).
                  \end{cases}
\end{equation}
We assume $u_i$ to be a plane wave, that is $u_i(\Bx)=e^{j\kappa_1\Bd\cdot\Bx}$, where $\Bd$ is a direction vector with $\rVert\Bd\rVert=1$ and $j=\sqrt{-1}$. The unknown $u=u(\By;\Bx)$ represents the total field, whereas $\kappa_1,\kappa_2>0$ denote the wavenumbers in free space and in the scatterer, respectively; $\alpha_2$ is a positive coefficient. In equation \eqref{mdpb:jump}, $\llbracket\cdot\rrbracket_{\Gamma(\By)}$ denotes the jump across the random interface $\Gamma(\By)$. Equation \eqref{mdpb:dtn} is the exact boundary condition on the disc of radius $R_{out}$, and corresponds to the radiation condition in free space (Sommerfeld radiation condition). Such boundary condition is stated in terms of the Dirichlet-to-Neumann map ($\operatorname{DtN}$) on the scattered wave, see \cite[Sect. 6.2.3]{Ned} for its definition.

We work in the \emph{large wavelength regime}, assuming the wavelength to be large enough compared to the size of the scatterer (see Assumption \ref{Ak1k2}). Mathematically, this means that we address the case when the bilinear form associated to \eqref{eq:modelpb} is coercive.


We consider here an explicit description for the interface. We assume the scatterer to be star-shaped with respect to the origin, and set, in polar coordinates, $\Gamma(\By):=\{(r,\varphi): r=r(\By;\varphi),\varphi\in[0,2\pi)\}$, where $r$ is a stochastic, angle-dependent radius (see Fig.\ref{fig:domain}). We express the latter as:
\begin{equation}\label{eq:radiusy}
r(\By;\varphi) = r_0(\varphi) + \sum_{j=1}^{J} \beta_j y_j\psi_j(\varphi),\qquad \psi_j(\varphi)=\begin{cases}
                   \sin(\tfrac{j+1}{2}\varphi)& \text{for }j \text{ odd},\\
                   \cos(\tfrac{j}{2}\varphi)& \text{for }j\text{ even},
                  \end{cases}
\end{equation}
for every $\varphi\in [0,2\pi)$, $J\in\mathbb{N}$ and $\By=(y_1,\ldots,y_J)\in\pspace_J$. The quantity $r_0\in C^{k,\hexp}_{per}([0,2\pi))$, for some $k\geq 1$ and $0<\hexp<1$, is an approximation to the mean radius. The real parameters $\left\{y_j\right\}_{j=1}^{J}$ are the images of independent, identically distributed (i.i.d.) uniform random variables $Y_j \sim \mathcal{U}([-1,1])$, $1\leq j\leq J$. Thus, $\pspace_J=[-1,1]^J$ and $\mu_J$ is the product measure $\mu_J=\left(\tfrac{1}{2}\right)^{J}$. For every $J\in\bbN$, the radius \eqref{eq:radiusy} is a well-defined random variable on the closure of the subspace $\operatorname{span}\left\{1,\left(\psi_j\right)_{j=1}^J\right\}$ in the $C^{k,\hexp}_{per}([0,2\pi))$-norm. For the expansion \eqref{eq:radiusy}, we require:
 \begin{assumption}\label{Arcoeffs}
 The sequence $(\beta_j)_{j\geq 1}$ in \eqref{eq:radiusy} has a monotonic majorant in $\ell^p(\bbN)$ with $0<p<\frac{1}{2}$. Furthermore, $\sum_{j\geq 1}|\beta_j|\leq \frac{r_0^{-}}{2}$, with $r_0^{-}=  \emph{inf}_{\varphi\in[0,2\pi)}r_0(\varphi)>0$.
\end{assumption}
The bounds on $\sum_{j\geq 1}|\beta_j|$ and $r_0^{-}$ ensure positivity and boundedness of the radius for every realization. The condition on the decay of $(\beta_j)_{j\geq 1}$, instead, is a regularity assumption (with respect to $\varphi$) on the radius: the smaller the $p$, the smoother the radius \cite[Lemma 2.1.6]{LSthesis}. In particular, $p<\tfrac{1}{2}$ ensures that every realization of $r(\By;\cdot)\in C^{1,\hexp}(0,2\pi)$ for some $\hexp\in(0,1)$, with a $J$ and $\By$-independent bound (cf. proof of Lemma 2.1.6 in \cite{LSthesis}).
 
 \smallskip
 Our Q.o.I. is $Q(\By)=\Bu(\By):=\left\{u(\By;\Bx_i)\right\}_{i=0}^{N-1}\subset\bbC^N$, $\By\in\pspace_J$, the value of the solution $u$ to \eqref{eq:modelpb} at fixed points $\left\{\Bx_0,\ldots,\Bx_{N-1}\right\}\subset \bbR^2$. In particular, we are interested in the case that these evaluation points are close to the interface, so that they may lie on different sides of $\Gamma(\By)$ for different realizations of $\By$.
 
 \section{Point evaluation: non-smooth parameter dependence}\label{sect:pointvalsmooth}
 The aim of this section is to highlight the non-smooth dependence of $\Bu(\By)=\left\{u(\By;\Bx_i)\right\}_{i=0}^{N-1}$ on the high-dimensional parameter $\By\in\pspace_J$. To better explain the non-smooth behavior, we first consider, in subsection \ref{ssect:1dpv}, a one-dimensional transmission problem. Then, in subsection \ref{ssect:transmpv}, we move to the model problem introduced in the previous section. Due to the failure of  high order quadrature methods to compute $\bbE_{\mu_J}[\Bu]$, illustrated in subsection \ref{ssect:smolyak}, in subsection \ref{ssect:discussion} we discuss how this issue can be overcome, and motivate why we opt for MLMC.
 
 \subsection{A one-dimensional example}\label{ssect:1dpv}
 We consider the one-dimensional problem
 
 \begin{minipage}{0.46\textwidth}
  \begin{empheq}[left=\empheqlbrace]{align*}
 &-\left(\alpha(y,x)u'(y,x)\right)' = 0, \quad x\in (0,1),\nonumber\\
 & u(0)=1,\quad u(1)=0,\label{eq:1dpb}\\
 & \text{for every } y\in \left[\tfrac{1}{4},\tfrac{3}{4}\right],\nonumber
\end{empheq}
\end{minipage}\hfill
\begin{minipage}{0.53\textwidth}
\begin{equation}\mbox{with}\qquad\alpha(y,x) = \begin{cases}
                   \alpha_l &\mbox{if }x\in (0,y),\\
                   \alpha_r &\mbox{if }x \in (y,1),
                  \end{cases}\label{eq:1dpb}
                  \end{equation}
\end{minipage}

\medskip

\noindent where $'$ denotes the derivative with respect to $x$, and $\alpha_l,\alpha_r\in\mathbb{R}_{+}\setminus \left\{0\right\}$, $\alpha_l\neq\alpha_r$. The exact solution to \eqref{eq:1dpb} is 
\begin{equation*}
 u(y,x)=\begin{cases}
         -\frac{\alpha_r}{\alpha_l(1-y)+\alpha_r y}x+1 &\mbox{if }x\in (0,y),\\
         \frac{\alpha_l}{\alpha_l(1-y)+\alpha_r y}(1-x) &\mbox{if }x\in (y,1),
        \end{cases}
\end{equation*}
and presents a kink at the interface $y$. Consequently, the evaluation of the solution at a point $x_0\in (0,1)$ is only continuous (and in particular not analytic) as a function of $y$ if $x_0$ is a point that can be crossed by the interface, that is if $x_0\in \left[\tfrac{1}{4},\tfrac{3}{4}\right]$. The $y$-dependence of point values of the solution for the points $x_0=0.5$, $x_0=0.3$ and $x_0=0.2$ is plotted in Figure \ref{fig:1dsol}.

\begin{figure}[t]
\centering
 \hspace{-1cm}\includegraphics[scale=0.4]{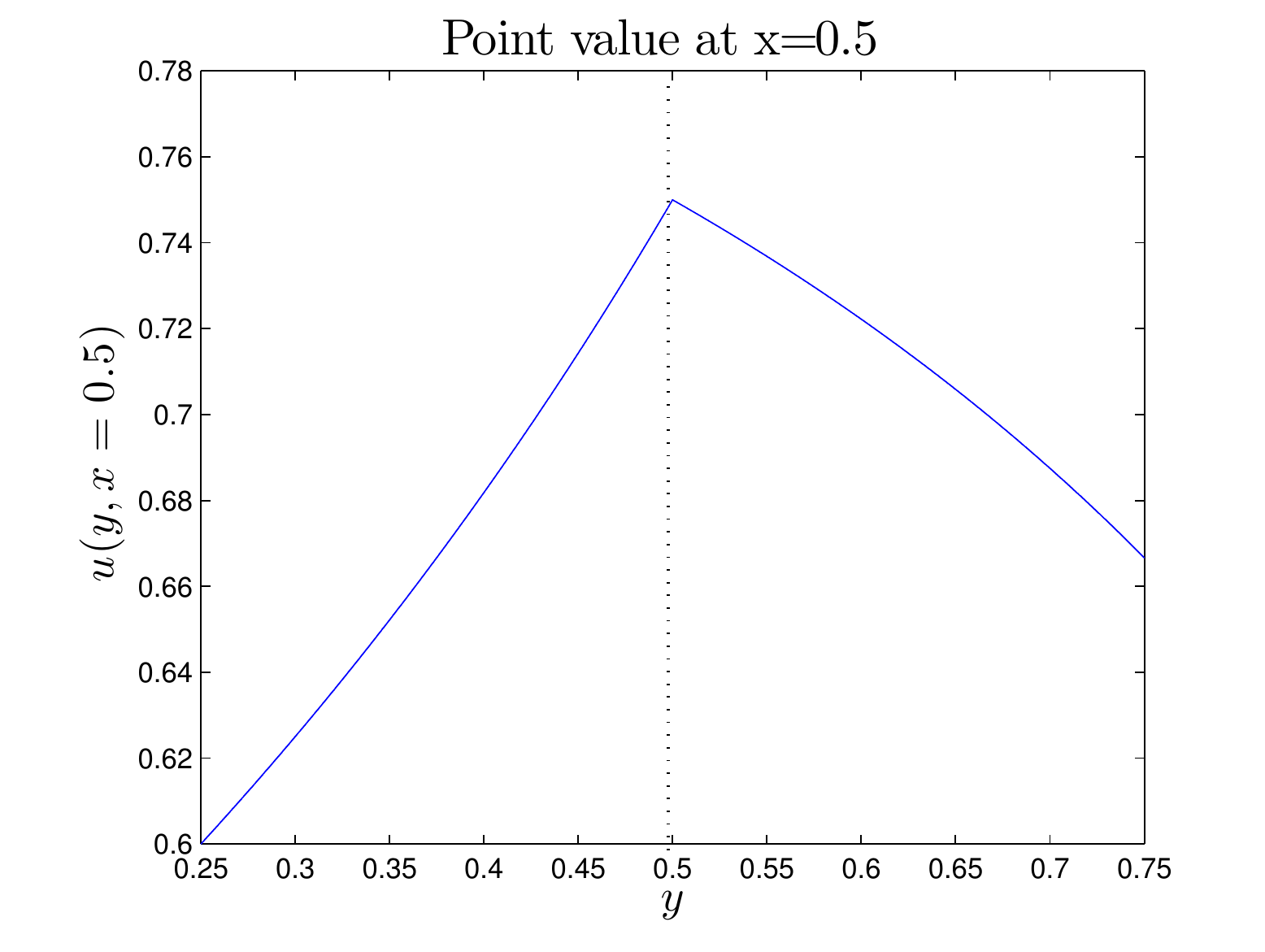}\hspace{-0.65cm}
  \includegraphics[scale=0.4]{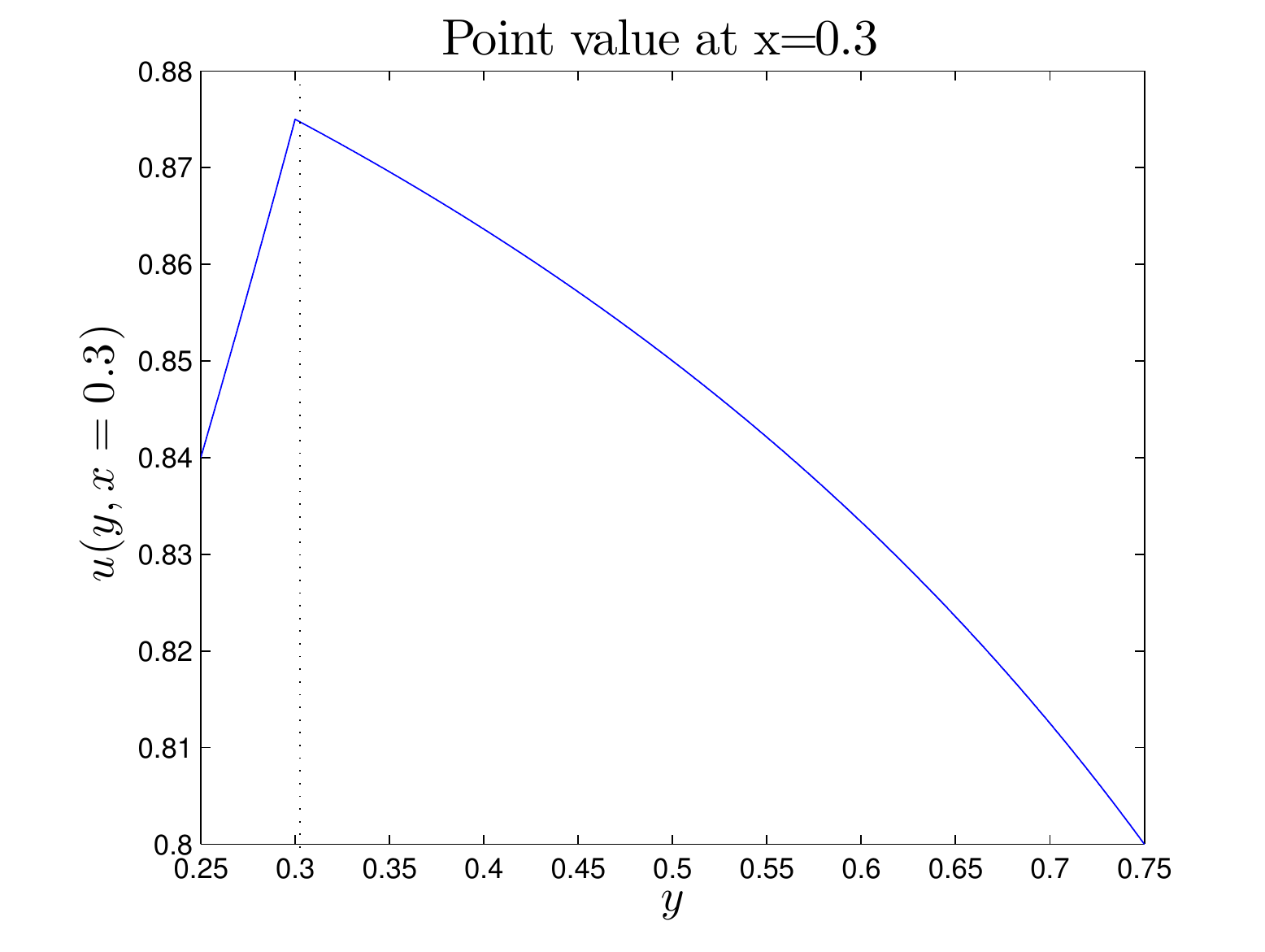}\hspace{-0.65cm}
   \includegraphics[scale=0.4]{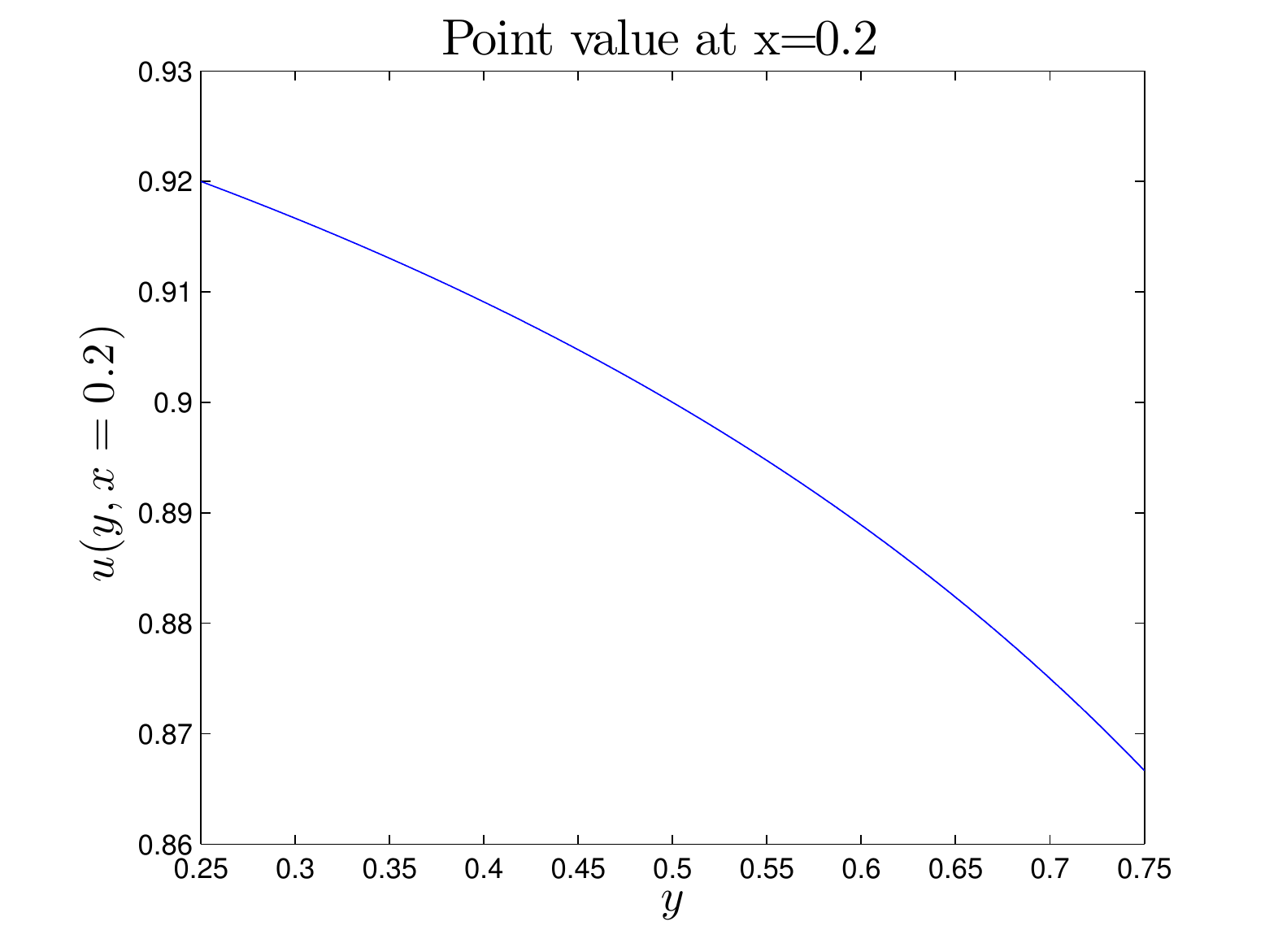}\caption{Point evaluations of the solution to \eqref{eq:1dpb} for $\alpha_l=3$ and $\alpha_r=1$, in dependence of $y\in \left[\tfrac{1}{4},\tfrac{3}{4}\right]$. For the evaluation at $x_0=0.5$ and $x_0=0.3$ (left and center plot), the dependence is not smooth as $x_0\in \left[\tfrac{1}{4},\tfrac{3}{4}\right]$, while the dependence is smooth for $x_0=0.2$ (right plot).}\label{fig:1dsol}
\end{figure}

\subsection{Parameter dependence for the transmission problem}\label{ssect:transmpv}

By analogy, we can expect a behavior similar to the one-dimensional case when considering the model problem \eqref{eq:modelpb} with coefficients \eqref{eq:coeffs} which are discontinuous across the interface.

To better understand, in the general case, the regularity of the point evaluation with respect to the parameter, we introduce a reference interface $\hat{\Gamma}:=\Gamma(\By=\bm{0})=\left\{r_0(\varphi), \varphi\in[0,2\pi)\right\}$, and a so-called \emph{nominal configuration}, corresponding to the domain configuration when the interface is $\hat{\Gamma}$ (see e.g. \cite{HSSS,CNT,HPS}). We denote by $\hdin$ and $\hdoutR$, respectively, the inner and outer domain in the nominal configuration. In the following, we use the terminology \emph{actual configuration} to denote the domain configuration when the interface is $\Gamma(\By)$, $\By\in\pspace_J$, and distinguish it from the nominal configuration. The nominal configuration can be mapped to the actual configuration by a parameter-dependent diffeomorphism $\Phi(\By):D_{R_{out}}\rightarrow D_{R_{out}}$, $\By\in\pspace_J$, $J\in\bbN$. This is the so-called mapping approach, first introduced in \cite{TX} and \cite{XT}. It is natural, from \eqref{eq:radiusy}, to consider the domain mapping as a perturbation of the identity (see also \cite[Sect. 2.8]{SZ} and \cite[Sect. 5.2]{CSZ}).
Here we consider
\begin{equation}\label{eq:phicircle}
 \Bx(\By)=\Phi(\By;\hat{\Bx})=\hat{\Bx}+\chi\left(\hat{\Bx}\right)\left(r(\By;\hat{\varphi}_{\hat{\Bx}})-r_0(\hat{\varphi}_{\hat{\Bx}})\right)\frac{\hat{\Bx}}{\lVert \hat{\Bx}\rVert},
\end{equation}
where $\hat{\Bx}$ denotes the coordinates in the nominal configuration, $\hat{\varphi}_{\hat{\Bx}}:=\arg(\hat{\Bx})$, and $\chi:D_{R_{out}}\rightarrow [0,1]$ is a mollifier. From now on, we assume the following on $\chi$: it acts on the radial component of $\hat{\Bx}$, it is supported in $[\tfrac{r_0^{-}}{4},\tilde{R}]$ for some $\tilde{R}\leq R_{out}$, it is strictly decreasing in $[\tfrac{r_0^{-}}{4},r_0]$ and strictly increasing in $[r_0,\tilde{R}]$, it has at least the same smoothness as the nominal radius $r_0$, and $\max\left\{\lVert\chi\rVert_{C^1(\overline{\hat{D}_{in}})}, \lVert\chi\rVert_{C^1(\overline{\hat{D}_{out,R_{out}}})}\right\}<\frac{\sqrt{2}}{r_0^{-}}$. These assumptions guarantee that $\Phi$ as in \eqref{eq:phicircle} is an orientation preserving diffeomorphism with the same spatial smoothness as the the radius $r$ in \eqref{eq:radiusy}, and the singular values of its Jacobian matrix have $J$-, $\By$- and $\hat{\Bx}$-independent lower and upper bounds $\sigma_{min}$ and $\sigma_{max}$, see Sect. 3.2 and Appendix E in \cite{HSSS} for details. Since in the following we will use H\"older norms of $\Phi(\By;\cdot)$ and its inverse, here we note that this domain mapping is well-defined as random variable taking values on H\"older spaces: although these spaces are not separable, it is clear from \eqref{eq:radiusy} and \eqref{eq:phicircle} that $\Phi$ and its inverse take values in separable subspaces of these spaces. 

With the mapping $\Phi$ at hand, we can rewrite the variational formulation of \eqref{eq:modelpb} on the nominal configuration as
\begin{equation}
\begin{split}
 &\text{Find }\uref(\By)\in H^1(D_{R_{out}}):\\
 &\int_{D_{R_{out}}}\hat{\alpha}(\By;\hat{\Bx})\hat{\nabla} \uref(\By)\cdot\hat{\nabla}\vref- \hat{\kappa}^2(\By;\hat{\Bx})\uref(\By) \vref\dd\xref-\int_{\partial D_{R_{out}}}\operatorname{DtN}(\uref(\By))\vref \dd S \\
 &= \int_{\partial D_{R_{out}}}\left(\dfrac{\partial u_i}{\partial \Bn_{out}}-\operatorname{DtN}(u_i)\right)\vref \dd S,\quad\text{for every }\vref\in H^1(D_{R_{out}})\text{ and } \By\in\pspace_J,
\end{split}
 \label{eq:varform}
\end{equation}
where $\hat{u}:=\Phi^{\ast}(u)$ ($\Phi^{\ast}$ denoting the pullback with respect to $\Phi$), $\nref$ is the normal to $\hat{\Gamma}$ pointing to $\hdoutR$, $\hat{\nabla}$ denotes differentiation with respect to $\hat{\Bx}$ and
\begin{equation}\label{eq:coeffshat}
\begin{split}
 \hat{\alpha}(\By;\hat{\Bx}) &= D\Phi(\By)^{-1}D\Phi(\By)^{-\top}\det D\Phi(\By) \alpha(\By;\Phi^{-1}(\By;\Bx)),\\
 \hat{\kappa}^2 (\By;\hat{\Bx})&= \det D\Phi(\By)\kappa^2(\By;\Phi^{-1}(\By;\Bx)),
\end{split}
\end{equation}
with $D\Phi$ the Jacobian matrix of $\Phi$. Since $\Phi(\By;\cdot)$ and its inverse take values in separable subspaces of H\"older spaces, the coefficients \eqref{eq:coeffshat} also do, and they are well-defined as H\"older-space valued random variables.

We have mentioned in Section \ref{sect:modelpb} that we work in the large-wavelength regime. In quantitative terms, this means that we assume the following:
\begin{assumption}[Large wavelength assumption]\label{Ak1k2}
The wavenumbers in \eqref{eq:coeffs} satisfy the condition:
 \begin{equation}\label{eq:kub}
  \kappa_1^2, \kappa_2^2 \leq \tau \,\inf_{w\in H^1(D_{R_{out}})}\frac{ |w|_{H^1(D_{R_{out}})}^2 + \left\|w\right\|_{L^2(\partial D_{R_{out}})}^2}{\left\|w\right\|_{L^2(D_{R_{out}})}^2},
 \end{equation}
with $\tau<\frac{\sigma_{min}^4}{\sigma_{max}^2}\min\left\{\frac{1}{\alpha_2},\alpha_2\right\}$ (where $\sigma_{min}$ and $\sigma_{max}$ are the $J$-, $\By$- and $\hat{\Bx}$-independent upper and lower bounds on the singular values of $D\Phi$).
\end{assumption}

In \cite[Sect. 5.3]{HSSS}, we have shown that, provided Assumption \ref{Ak1k2} holds, $\uref$ depends smoothly on $\By$ (it admits a holomorphic extension to polyellipses in the complex plane). This is possible because, on the nominal configuration, the interface $\hat{\Gamma}$ is fixed, for every parameter realization. However, this is not the case when considering the solution on the actual configuration.

Since we are interested in the evaluation of the solution $u$ to \eqref{eq:modelpb} at points that may be located on either side of the interface, we introduce the set
\begin{equation}\label{eq:hyperplane}
 \pspace_J^{\Gamma}(\Bx_0):=\left\{\By\in\pspace_J: \Bx_0\in \Gamma(\By)\right\}=\left\{\By\in\pspace_J: \Phi^{-1}(\By;\Bx_0)\in \hat{\Gamma}(\By)\right\}.
\end{equation}
Due to the affine parametrization \eqref{eq:radiusy} for the interface, for every $\Bx_0\in\bbR^2$ the set $\pspace_J^{\Gamma}(\Bx_0)$, if not empty, is a hyperplane, affine to a $(J-1)$-dimensional subspace of $\pspace_J$.

In the following proposition we show that, for the point evaluation in the actual configuration, the smoothness with respect to $\By$ is $C^0$ or $C^1$, and in general not $C^k$ for $k\geq 2$. We denote by $\mathcal{C}$ the continuity constant of the bilinear form $a_p(v,w):=\langle \hat{\nabla}\hat{v},\hat{\nabla}\hat{w}\rangle - \langle \hat{\nabla}\hat{v}\cdot \Bn_{out},\hat{w}\rangle_{\langle H^{-\frac{1}{2}}(\partial D_{R_{out}}),H^{\frac{1}{2}}(\partial D_{R_{out}})\rangle}$ on $H^1(D_{R_{out}})$, and by $\gamma_p$ the coercivity constant of $a_p$ restricted to functions that satisfy the radiation condition.
\begin{proposition}\label{prop:conty}
Let $u$ be the solution to \eqref{eq:modelpb} with coefficients \eqref{eq:coeffs}. Let Assumptions \ref{Arcoeffs} and \ref{Ak1k2} hold, and let us assume that we can build the mapping $\Phi$ in \eqref{eq:phicircle} such that $\frac{\sigma_{min}^4}{\sigma_{max}^4}\min\left\{\frac{1}{\alpha_2},\alpha_2\right\}\geq 1-\frac{\gamma_p}{\mathcal{C}}$. Consider $\Bx_0\in D_{R_{out}}$ such that $\pspace_J^{\Gamma}(\Bx_0)$ is not empty. 

For every $J\in\bbN$, the map $\By\mapsto u(\By;\Bx_0)$ from $\pspace_J$ to $\bbC$ is continuous, and $\lVert u(\cdot;\Bx_0)\rVert_{C^0(\mathcal{P}_J)}$ has a $J$-independent upper bound. 

If $\alpha_2=1$ in \eqref{eq:coeffs}, then the map $\By\mapsto u(\By;\Bx_0)$ is of class $C^1$, and $\lVert u(\cdot;\Bx_0)\rVert_{C^1(\mathcal{P}_J)}$ has a $J$-independent upper bound. 
\end{proposition}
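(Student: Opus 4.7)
The plan is to use the mapping approach to reduce everything to the pulled-back solution $\uref$ on the fixed nominal configuration, whose smooth $\By$-dependence is already provided by \cite[Sect.~5.3]{HSSS}, and then handle the dependence on $\By$ induced by the inverse map $\hat{\Bx}_0(\By):=\Phi^{-1}(\By;\Bx_0)$. The key identity to exploit is $u(\By;\Bx_0)=\uref(\By;\hat{\Bx}_0(\By))$. From \eqref{eq:radiusy}, \eqref{eq:phicircle} and Assumption~\ref{Arcoeffs}, the map $\By\mapsto\hat{\Bx}_0(\By)$ is real-analytic with $J$-independent bounds on its derivatives, while under Assumption~\ref{Ak1k2} the map $\By\mapsto\uref(\By;\cdot)\in H^1(D_{R_{out}})$ is holomorphic with $J$-independent bounds. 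What is still missing is pointwise spatial regularity of $\uref(\By;\cdot)$, uniformly in $\By$ and $J$.

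For the $C^0$ statement, I would first invoke a piecewise Schauder-type regularity result at the nominal interface $\hat{\Gamma}$ (the technical adaptation being relegated to Appendix~A) to obtain $\uref(\By;\cdot)\in C^0(\overline{D_{R_{out}}})$ with a bound uniform in $\By$ and $J$. The extra spectral condition $\sigma_{min}^4/\sigma_{max}^4\min\{1/\alpha_2,\alpha_2\}\geq 1-\gamma_p/\mathcal{C}$ enters precisely here: it keeps the transformed coefficients $\hat{\alpha},\hat{\kappa}^2$ in a regime where the piecewise H\"older interface theory applies with constants that do not depend on $\By$ or $J$. Continuity of $\By\mapsto u(\By;\Bx_0)$ then follows from combining uniform-in-space convergence of $\uref(\By_n;\cdot)\to\uref(\By;\cdot)$ in $C^0$ (via the analyticity in $\By$ plus the uniform regularity) with the continuity of $\hat{\Bx}_0(\By)$; the $J$-independent $C^0(\pspace_J)$-bound is inherited from the same regularity estimate.

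For the $C^1$ case, the formal chain rule gives
\begin{equation*}
\partial_{y_j} u(\By;\Bx_0)=(\partial_{y_j}\uref)(\By;\hat{\Bx}_0(\By))+\hat{\nabla}\uref(\By;\hat{\Bx}_0(\By))\cdot\partial_{y_j}\hat{\Bx}_0(\By).
\end{equation*}
The first summand is controllable because $\partial_{y_j}\uref$ remains holomorphic in $\By$ with bounds obtained by Cauchy estimates from the holomorphic extension in \cite[Sect.~5.3]{HSSS}, and it inherits the same piecewise spatial regularity as $\uref$. The second summand is the delicate one: in general $\hat{\nabla}\uref$ jumps across $\hat{\Gamma}$ (its normal component jumps whenever $\alpha_2\neq 1$), so the chain-rule expression jumps as $\hat{\Bx}_0(\By)$ crosses $\hat{\Gamma}$. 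Exactly when $\alpha_2=1$ the normal transmission condition reduces to continuity of $\hat{\nabla}\uref\cdot\nref$, which together with the continuity of the tangential gradient promotes $\uref(\By;\cdot)$ to a globally $C^{1,\hexp}$ function on $\overline{D_{R_{out}}}$; continuity of $\partial_{y_j}u(\cdot;\Bx_0)$ and the $J$-independent $C^1$-bound then follow.

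The main obstacle I expect is the quantitative, $J$- and $\By$-uniform piecewise Schauder regularity of $\uref$ at $\hat{\Gamma}$, and in particular its globalisation to $C^{1,\hexp}(\overline{D_{R_{out}}})$ in the case $\alpha_2=1$. This calls for interface estimates for the transformed equation whose coefficients are globally discontinuous but piecewise H\"older with seminorms uniform in $\By$ and $J$; the technical adaptation of the classical Schauder/Nirenberg theory to this setting is precisely what Appendix~A is designed to supply, and I would treat it as a black box in the body of the proof.
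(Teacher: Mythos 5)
Your overall strategy matches the paper's: pull back to the nominal configuration via $u(\By;\Bx_0)=\uref(\By;\Phi^{-1}(\By;\Bx_0))$, separate the $\By$-regularity of $\uref$ from that of the inverse map, apply the chain rule, and identify that the obstruction to $C^1$-dependence is the jump of $\hat{\nabla}\uref$ across $\hat{\Gamma}$, which disappears exactly when $\alpha_2=1$. This is exactly what the paper does.

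There is, however, a concrete gap in how you propose to obtain the uniform $C^0(\overline{D_{R_{out}}})$ bound for $\uref(\By;\cdot)$, which is the step where the spectral condition $\sigma_{\min}^4/\sigma_{\max}^4\,\min\{1/\alpha_2,\alpha_2\}\geq 1-\gamma_p/\mathcal{C}$ enters. You want to ``first invoke a piecewise Schauder-type regularity result (Appendix~A) to obtain $\uref\in C^0$''. But the Schauder estimate actually proved in the paper (Theorem~\ref{thm:ureg}, whose proof is Appendix~B, not~A) has the form
\begin{equation*}
\lVert\uref(\By)\rVert_{C_{pw}^{k,\hexp}(\overline{D_{R_{out}}})}\leq C\bigl(\lVert\uref(\By)\rVert_{C^0(\overline{D_{R_{out}}})}+\lVert u_i\rVert_{C^{k,\hexp}(\overline{D_{R_{out}}})}\bigr),
\end{equation*}
i.e., it \emph{takes the $C^0$-norm as input} and bootstraps higher-order H\"older regularity from it. It cannot produce the $C^0$ bound in the first place, so using it for that purpose is circular. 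The paper instead establishes the $C^0$ bound through a genuinely different mechanism: an $H^{1+\hexp'}$-regularity estimate for the scattered field $\uref_s$ with $J$- and $\By$-uniform constants, obtained by adapting the contraction/multiplier argument of Bonito--Guermond--Luddens to the DtN boundary (this is the actual content of Appendix~A, and the place where the spectral hypothesis is needed to make the relevant operator a contraction), followed by the Sobolev embedding $H^{1+\hexp'}(D_{R_{out}})\hookrightarrow C^0(\overline{D_{R_{out}}})$ in $\bbR^2$. Your explanation that the spectral condition ``keeps the coefficients in a regime where the piecewise H\"older interface theory applies'' therefore misattributes its role: Schauder theory only needs piecewise H\"older coefficients and uniform ellipticity; the spectral condition is specific to the $H^{1+\hexp'}$ contraction argument with the $\operatorname{DtN}$ term, as the paper explicitly remarks.

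Once the uniform $C^0$ estimate is in hand by the route above, your chain-rule discussion of the $\alpha_2=1$ case is correct: the piecewise Schauder estimate then gives $\uref(\By;\cdot)\in C^{1,\hexp}(\overline{\hdin})\cup C^{1,\hexp}(\overline{\hdoutR})$ with uniform bounds, and the transmission conditions promote this to $C^{1,\hexp}(\overline{D_{R_{out}}})$ when $\alpha_2=1$, so the chain rule applies globally in space and yields the $C^1$-in-$\By$ claim, using the analyticity of $\By\mapsto r(\By)$ in $C^1$ and the $\By$-regularity of $\Phi$.
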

\begin{proof}
We first consider the general case $\alpha_2\neq 1$. Using the mapping from the nominal configuration, we can write:
\begin{equation}\label{eq:c0boundy}
 \max_{\By\in\pspace_J}|u(\By;\Bx_0)|=\max_{\By\in\pspace_J}|\uref(\By;\Phi^{-1}(\By;\Bx_0))|\leq \max_{\By\in\pspace_J}\lVert\uref(\By;\cdot)\rVert_{C^0(\overline{D_{R_{out}}})}.
\end{equation}
This means that it is sufficient to show that the mapping $\By\mapsto \uref$ is continuous from $\pspace_J$ to $C^0(\overline{D_{R_{out}}})$, with a $J$-independent bound on the last term in \eqref{eq:c0boundy}. 

Assumption \ref{Arcoeffs} ensures that, for every $\By\in\pspace_J$, $\lVert r(\By;\cdot)\rVert_{C^{1,\hexp}_{per}([0,2\pi))}$ has a $J$- and $\By$-independent bound for some $\hexp\in(0,1)$. Thanks to the properties of $\Phi$, the coefficients $\hat{\alpha}(\By;\cdot)$ and $\hat{\kappa}^2(\By;\cdot)$ belong to $C^{\hexp}(\overline{\hdin})\cup C^{\hexp}(\overline{\hdoutR})$ for every $\By\in\pspace_J$, with $J$-independent bounds on the norms. Then, using Assumption \ref{Ak1k2} to ensure coercivity, we can apply Lemma 2 in \cite{Joc} and a slight generalization of Theorem 3.1 in \cite{BGL} to conclude that every realization of the scattered wave $\uref_s(\By;\cdot):=\uref(\By;\cdot)-u_i(\Phi(\By;\cdot))$ has a $J$- and $\By$-independent bound on its $H^{1+\hexp'}(D_{R_{out}})$-norm, for $0<\hexp'<\hexp$ sufficiently small. This implies that the right-hand side in \eqref{eq:c0boundy} has a $J$-independent bound, thanks to the Sobolev embedding theorem \cite[Thm. 7.26]{GT}. The adaptation of Theorem 3.1 in \cite{BGL} to our case is reported in Appendix A, and it requires that $\frac{\sigma_{min}^4}{\sigma_{max}^4}\min\left\{\frac{1}{\alpha_2},\alpha_2\right\}\geq 1-\frac{\gamma_p}{\mathcal{C}}$.

The smoothness with respect to $\By$ is limited by the spatial smoothness of $\uref$, due to the application of the chain rule on $\uref(\By;\Phi^{-1}(\By;\Bx_0))$. Thus, if $\alpha_2=1$ in \eqref{eq:coeffs}, we obtain higher regularity with respect to the parameter.

In particular, for a generic $\alpha_2$, the $J$- and $\By$-independent upper bound on $\lVert\uref(\By;\cdot)\rVert_{C^0(\overline{D_{R_{out}}})}$ and the H\"older regularity of the PDE coefficients (with $J$- and $\By$-independent norm bounds) also imply a $J$- and $\By$-upper bound on $\lVert\uref(\By;\cdot)\rVert_{C^{1+\hexp}(\overline{\hdin})}$ and $\lVert\uref(\By;\cdot)\rVert_{C^{1+\hexp}(\overline{\hdoutR})}$ \cite[Thm. 8.33]{GT}. We specify that the result in \cite{GT} is for a boundary value problem, but it can be adapted to a interface problem proceeding as elaborated in Appendix B. If $\alpha_2=1$, then the transmission conditions at $\hat{\Gamma}$ ensure that $\uref\in C^{1,\hexp}(\overline{D_{R_{out}}})$. It is shown in \cite[Lemma 4.3.8]{LSthesis} that $r=r(\By)$, as $C^1$-valued map, is analytic with respect to $\By$ even in the case that $\mathcal{P}_J$ has infinite dimension ($J\rightarrow\infty$). Then, thanks to the regularity of $\Phi$ in \eqref{eq:phicircle} with respect to $\By$ \cite[Lemma 4.3.9]{LSthesis}, the claim for $\alpha_2=1$ follows from chain rule.
\end{proof}

\begin{remark}[Alternative proof] The assumption that $\frac{\sigma_{min}^4}{\sigma_{max}^4}\min\left\{\frac{1}{\alpha_2},\alpha_2\right\}\geq 1-\frac{\gamma_p}{\mathcal{C}}$ in the above proposition is due to a technicality when adapting the proof of Theorem 3.1 in \cite{BGL} to our case, because of the presence of the $\operatorname{DtN}$ map (see Appendix A for details). Such requirement can be dropped if, instead of using the result in \cite{BGL}, we show $H^2$-regularity of $\hat{u}$ in each subdomain. In the latter case, though, we would have a $J$-independent bound on the right-hand side in \eqref{eq:c0boundy} only when, in Assumption \ref{Arcoeffs}, $p<\tfrac{1}{3}$. Moreover, the requirement $\frac{\sigma_{min}^4}{\sigma_{max}^4}\min\left\{\frac{1}{\alpha_2},\alpha_2\right\}\geq 1-\frac{\gamma_p}{\mathcal{C}}$ is not needed at all if instead of boundary conditions with the $\operatorname{DtN}$ map we have Dirichlet or Neumann boundary conditions \cite{BGL}.
 \end{remark}

\subsection{Failure of high order quadrature methods for the point evaluation}\label{ssect:smolyak}

The proof of Proposition \ref{prop:conty} shows that the smoothness of the point evaluation with respect to the high-dimensional parameter depends on the spatial smoothness of the solution $u$ to \eqref{eq:modelpb} across the interface. In particular, as the solution is in general only continuous (or $C^1$) across the interface, we cannot expect, in general, a holomorphic dependence of the point evaluation with respect to $\By$. Consequently, it is likely that high order quadrature methods such as stochastic Galerkin \cite{SG}, stochastic collocation \cite{BNT,SS,XH} or high order quasi-Monte Carlo rules \cite{DLGS}, will not show full convergence rates when trying to compute statistics of $u(\By;\Bx_0)$ for a point $\Bx_0\in\bbR^2$ that can be crossed by the interface. We show this effect on sparse grids.

We have run the Smolyak adaptive algorithm described in \cite{SS} using $\mathfrak{R}$-Leja quadrature points. We have set $J=16$ and $\beta_{2j-1}=\beta_{2j}=r_0 j^{-3}$ for $j=1,\ldots,8$, where $r_0= 0.01$ is the nominal radius (we work with non-dimensional quantities), chosen to be constant. The coefficients in \eqref{eq:modelpb} have been chosen as $\alpha_2=4$, $\kappa_1^2=\kappa_0^2$ and $\kappa_2^2=4\kappa_0^2$, with $k_0=209.44$.\footnote{Computational details: Smolyak algorithm run on a discrete solution obtained from finite element discretization of \eqref{eq:varform} with piecewise linear, globally continuous ansatz functions; the $\operatorname{DtN}$ map \eqref{mdpb:dtn} has been approximated using a circular PML, starting at $R_{out}=0.055$ and ending at $R_{out}'=0.075$, with absorption coefficient $0.5$; the mesh is quasi-uniform and consists of $558705$ nodes; domain mapping \eqref{eq:phicircle} with $\chi(\hat{\Bx})=\frac{\lVert\hat{\Bx}-\frac{r_0}{4}\rVert}{r_0-\frac{r_0}{4}}$ for $\lVert\hat{\Bx}\rVert\in \left[\frac{r_0}{4}, r_0\right]$, $\chi(\hat{\Bx})=\frac{R_{out}-\lVert \hat{\Bx}\rVert}{R_{out}-r_0}$ for $\lVert\hat{\Bx}\rVert\in \left[r_0, R_{out}\right]$ and $\chi(\hat{\Bx})=0$ elsewhere (the interface $\lVert \hat{\Bx}\rVert=\frac{r_0}{4}$ is resolved, so that the discontinuity in the mapping does not affect the finite element convergence); software: NGSolve finite element library (http://sourceforge.net/apps/mediawiki/ngsolve) coupled with the MKL version of PARDISO (https://software.intel.com/en-us/intel-mkl) for the direct solver.} The quantity of interest is $\Re \Bu_h:=\left\{\Re u_h(\By;\Bx_i)\right\}_{i=0}^{N-1}$, for some $N\in\bbN$, where $u_h(\By;\Bx_i)$ denotes the discrete approximation to $u(\By;\Bx_i)$, $i=0,\ldots,N-1$. In the case of holomorphic parameter dependence of the Q.o.I., we would expect a convergence rate of $s=2-\varepsilon$ with respect to the cardinality of the index set $\Lambda$, for $\varepsilon>0$ arbitrary small \cite{SS}. For our experiments, we report the \emph{estimated} error $\sum_{\nu\in\mathcal{N}(\Lambda)}\lVert\Delta^Q_{\nu}(\Re \Bu_h)\rVert_{\infty}$ computed by the algorithm at each iteration; here $\mathcal{N}(\Lambda)$ is the set of neighbors of the index set $\Lambda$, and $\Delta^Q_{\nu}$ are the difference operators (see e.g. \cite{SS} for their definition). The left plot in Figure \ref{fig:smolyak} shows the convergence of the algorithm when applied to one point evaluation ($N=1$), for different points on the horizontal axis. We can see that the curve saturates if the point is crossed by the interface $\Gamma(\By)$ for many parameter realizations ($\Bx_0=(r_0,0)$, with $r_0=0.01$). If the point is never crossed by the interface ($\Bx_0=(0.005,0)$ and $\Bx_0=(0.015,0)$), then the algorithm converges with full rate. If the point is crossed by the interface but only for few parameter realizations ($\Bx_0=(0.009,0)$ and $\Bx_0=(0.011,0)$), we still observe good convergence, although the rate is slightly worse. These results for a single point evaluation can still be considered satisfactory, despite some decrease in the convergence rate when the point is crossed many times by the interface. However, in applications it could be interesting to have a field distribution, and thus the value of the field at many locations simultaneously. The center and right plots in Figure \ref{fig:smolyak} show the error estimated by the adaptive Smolyak algorithm when applied to more point evaluations simultaneously ($N\geq 2$). The more points we consider, the more surfaces of non-smoothness \eqref{eq:hyperplane} are present in the parameter space, and the more the convergence rate deteriorates. Not surprisingly, when considering $N=8$ point evaluations, we observe a convergence rate of $0.5$ with respect to the number of function evaluations, that is the same rate as a Monte Carlo quadrature rule.

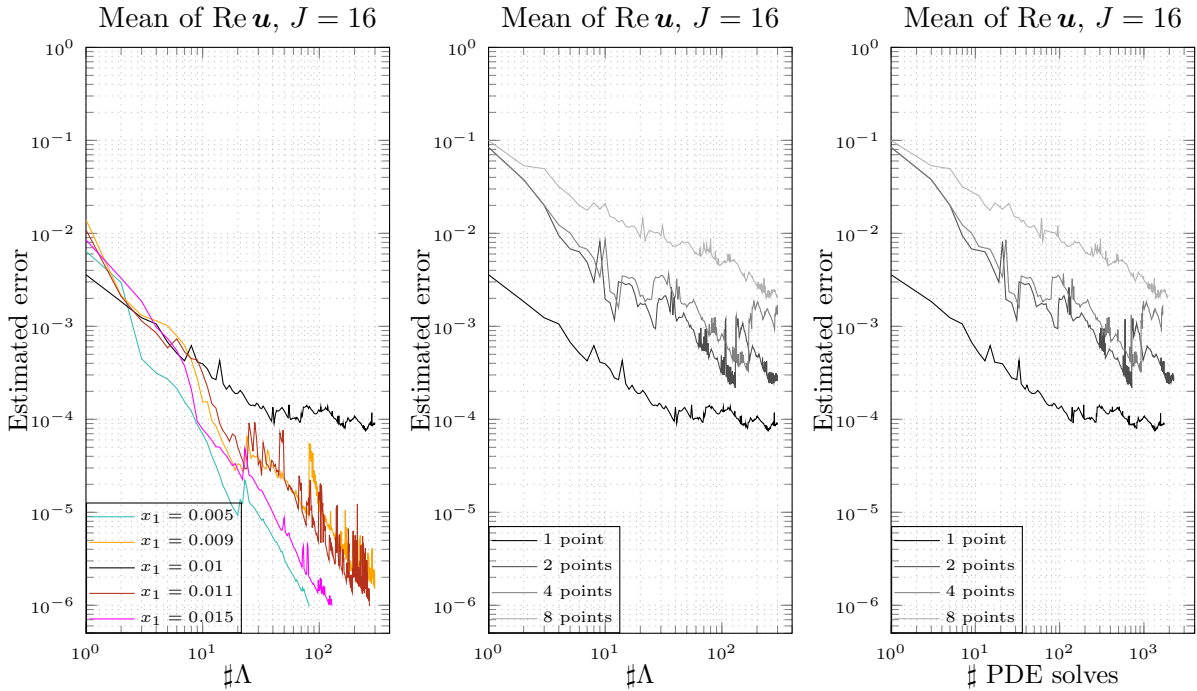
\begin{figure}[t]
\noindent
\begin{tikzpicture}
 \begin{loglogaxis}[
 title={Mean of $\Re \Bu$, $J=16$},
 xlabel={$\sharp \Lambda$},
 ylabel={Estimated error},
 grid=minor, legend entries={ {$x_1=0.005$}, {$x_1=0.009$}, {$x_1=0.01$}, {$x_1=0.011$}, {$x_1=0.015$}}, xmin=1,xmax=400,ymin=5e-7,ymax=1
  ]
  \addplot[Turquoise!85!black] table[x expr =(\coordindex+1),y expr =\thisrowno{0}]{erroriter1pt_005.txt};
  \addplot[Orange] table[x expr =(\coordindex+1),y expr =\thisrowno{0}]{erroriter1pt_009.txt};
  \addplot[black] table[x expr =(\coordindex+1),y expr =\thisrowno{0}]{erroriter1pt.txt};
  \addplot[BrickRed] table[x expr =(\coordindex+1),y expr =\thisrowno{0}]{erroriter1pt_011.txt};
  \addplot[Magenta] table[x expr =(\coordindex+1),y expr =\thisrowno{0}]{erroriter1pt_015.txt};
 \end{loglogaxis}
 \end{tikzpicture}
\begin{tikzpicture}
 \begin{loglogaxis}[
 title={Mean of $\Re \Bu$, $J=16$},
 xlabel={$\sharp \Lambda$},
 ylabel={Estimated error},
 grid=minor, legend entries={ {1 point}, {2 points}, {4 points}, {8 points}}, xmin=1,xmax=400,ymin=5e-7,ymax=1
  ]
    \addplot[black] table[x expr =(\coordindex+1),y expr =\thisrowno{0}]{erroriter1pt.txt};
  \addplot[white!30!black] table[x expr =(\coordindex+1),y expr =\thisrowno{0}]{erroriter2pts.txt};
  \addplot[white!50!black] table[x expr =(\coordindex+1),y expr =\thisrowno{0}]{erroriter4pts.txt};
  \addplot[white!70!black] table[x expr =(\coordindex+1),y expr =\thisrowno{0}]{erroriter8pts.txt};
 \end{loglogaxis}
 \end{tikzpicture}
\begin{tikzpicture}
 \begin{loglogaxis}[
 title={Mean of $\Re \Bu$, $J=16$},
 xlabel={$\sharp$ PDE solves},
 ylabel={Estimated error},
 grid=minor, legend entries={ {1 point}, {2 points}, {4 points}, {8 points}}, xmin=1,xmax=4000,ymin=5e-7,ymax=1
  ]
        \settable{numind1pt.txt}{erroriter1pt.txt}
  \addplot[black] table[x=data,y expr =\thisrowno{0}]{\datatable};
      \settable{numind2pts.txt}{erroriter2pts.txt}
  \addplot[white!30!black] table[x=data,y expr =\thisrowno{0}]{\datatable};
    \settable{numind4pts.txt}{erroriter4pts.txt}
  \addplot[white!50!black] table[x=data,y expr =\thisrowno{0}]{\datatable};
  \settable{numind8pts.txt}{erroriter8pts.txt}
  \addplot[white!70!black] table[x=data,y expr =\thisrowno{0}]{\datatable};
 \end{loglogaxis}
 \end{tikzpicture}
 \caption{Convergence of adaptive Smolyak algorithm for one point evaluation at different points (left) and more point evaluations simultaneously, the latter with respect to the cardinality of the index set $\Lambda$ (center) and to the number of PDE solves (right). In the first plot, $x_1$ denotes the first coordinate of $\Bx_0$. The error reported is the one estimated by the algorithm, $\sum_{\nu\in\mathcal{N}(\Lambda)}\lVert\Delta^Q_{\nu}(\Re \Bu_h)\rVert_{\infty}$, see \cite{SS} for its definition.}\label{fig:smolyak}
 \end{figure}

\subsection{Possible remedies}\label{ssect:discussion}
Two strategies can be identified in order to handle the loss of smoothness of the Q.o.I. with respect to the parameter: detect the surface of non-smoothness and apply a high order quadrature method in each subdomain of $\pspace_J$ where the Q.o.I. is smooth, or adopt a low order quadrature method requiring less smoothness of the Q.o.I.

The problem with the first strategy is that, in our case, the discontinuities are not easy to track. For a single point evaluation, we could apply already existing discontinuity detection techniques, see  \cite{ZWGB} and references therein. For multiple evaluations, the complexity of the surface of non-smoothness increases with the number of points, as we have a hyperplane of discontinuity for each of them. In such a case, the method proposed in \cite{ZWGB} cannot be applied anymore; we do not exclude that the algorithm in \cite{ZWGB} could be adapted to tackle multiple discontinuity detection, but its complexity would probably grow with the number of hyperplanes of discontinuity. Another possibility to pursue the first strategy is to adopt an approach based on X-FEM in the parameter space, as proposed in \cite{NCSM} in the framework of a level set approach to describe the uncertain geometry (and named X-SFEM by the authors). However, the algorithm proposed there to track the uncertain boundary seems to be applicable only to a low-dimensional parameter space (cf. in particular Sect. 6.2.3 in \cite{NCSM}).

Our choice is therefore the second strategy, namely to use a quadrature rule that does not suffer from the so-called `curse of dimensionality' and requires weak assumptions on the regularity of the Q.o.I.. Namely, a quadrature rule that does not need information about the location of the surfaces of non-smoothness to provide the full convergence rate. We opt therefore for a Monte Carlo approach and in particular, in order to reduce the computational effort, to its multilevel version (MLMC). The latter requires only square integrability of the Q.o.I., which is a much weaker smoothness assumption than those for high order quadrature methods. Moreover, since the convergence rate of the MLMC integration does not depend on the dimension of the parameter space, such quadrature rule is well suited for high-dimensional problems.


\section{Multilevel Monte Carlo for high-dimensional problems}\label{sect:mlmcintro}

In this section we provide a brief overview of MLMC, in particular for a Q.o.I. depending on the solution of an elliptic PDE. Our survey is based on \cite{Giles08}, \cite{CGST} and \cite{BSZ}, and we use the same notation as in the introduction.

Due to the need to solve a PDE to compute the Q.o.I., usually we do not have at our disposal the quantity $Q$ itself, but an approximation to it. We consider a sequence $(\mathcal{X}^l)_{l\geq 0}$ of finite-dimensional subspaces of $\mathcal{X}$
\begin{equation}\label{eq:fespaces}
 \mathcal{X}^0\subset \mathcal{X}^1\subset \ldots \subset \mathcal{X}^l\subset\ \ldots \subset \mathcal{X},
\end{equation}
with $\mathcal{X}^l$ associated to the discretization parameter $h_l\in\bbR$, $l\in\bbN_0=\bbN\cup\left\{0\right\}$. Thinking of $h_l$ as the meshsize at level $l$, we can assume, without loss of generality, that $h_0\geq h_1\geq h_2\geq\ldots $. We denote by $u_l$ the discrete solution to \eqref{eq:diffeq} on the \emph{level} $l$:
\begin{equation}
 \begin{split}
   \text{Find } u_l \text{ s.t.}: \quad
&  u_l(\By)\in \mathcal{X}^l \quad\text{for every } \By\in\pspace_J,\\
& \mathcal{D}_l(\By;u_l(\By))=0\quad \text{for every } \By\in\pspace_J,
 \end{split}\label{eq:diffeqdiscr}
\end{equation}
where the subscript in $\mathcal{D}_l$ denotes the discretization of $\mathcal{D}$ in \eqref{eq:diffeq} at level $l$. With $u_l$ at our disposal, for some $l\in\bbN_0$, we can compute the approximation of $Q$ at the level $l$ for every $\By\in\pspace_J$, which we denote by $Q_l(\By):=q_l(\By;u_l(\By))$. Note that the discretization error in $Q_l$ might be due not only to the replacement of $u$ by $u_l$, but also to an error coming from the computation of $q$ on $u_l$ (for example, if $q$ is an output functional defined as an integral quantity that needs numerical integration). 

The MLMC method is a modification to the single-level Monte Carlo (MC) algorithm in order to improve the computational efficiency. In single-level Monte Carlo, the quantity $\bbE_{\mu_J}[Q]$ is estimated by
\begin{equation}\label{eq:mcest}
E_M[Q_L]:=\frac{1}{M}\sum_{i=1}^M Q_L^i \in \mathcal{X}^L,
\end{equation}
where we have assumed that $u$ is approximated by the solution $u_L$ to \eqref{eq:diffeqdiscr} at a \emph{fixed} level $L\in\bbN_0$, and $Q_L^i:=q_L(\By^i;u_L(\By^i))$, $i=1,\ldots,M$, $M\in\bbN$, are independent, identically distributed realizations of $Q_L(\By)$, $\By\in\pspace_J$. Note that the definition \eqref{eq:mcest} is independent of $J\in\bbN$. The approximation error of the MC estimator can be decomposed as (cf. \cite[Sect. 4.2]{BSZ})
\begin{equation}
 \lVert\bbE_{\mu_J}[Q]-E_M[Q_L]\rVert_{L^2(\pspace_J,\mathcal{Y})}\leq \bbE_{\mu_J}\left(\lVert Q-Q_L\rVert_{\mathcal{Y}}\right)+\frac{1}{\sqrt{M}}\Var[Q_L],\label{eq:slmc}
\end{equation}
provided $Q$ and $Q^L$ have finite variance. The norm on the left-hand side is defined as
\begin{equation}
 \lVert v\rVert_{L^p(\pspace_J,\mathcal{Y})}=\begin{cases}
				    \left(\int_{\pspace_J}\lVert v(\By)\rVert_{\mathcal{Y}}^p\dd\mu_J(\By)\right)^{\frac{1}{p}} & \text{if }p<\infty,\\
				    \text{esssup}_{\By\in\pspace_J} \lVert v(\By)\rVert_{\mathcal{Y}} & \text{if }p=\infty
                                 \end{cases}
\end{equation}
(analogous definition holds when replacing $\mathcal{Y}$ by $\mathcal{X}$ or any other separable Banach space). The first summand in \eqref{eq:slmc} measures the bias of $Q_L$ with respect to $Q$, and depends on the spatial discretization error, that is on the accuracy with which $Q_L$ approximates $Q$ for every parameter realization. The second summand is the so-called sampling error, depending on the variance $\Var$ of $Q_L$ and the number of samples $M$. To balance the two error contributions for a certain threshold on the total error, $\sqrt{M}$ has to be chosen to be inversely proportional to the discretization error. For fine meshes, this can be very expensive.

The idea of the multilevel version of Monte Carlo is to reduce the error contribution from the second summand in \eqref{eq:slmc} with a lower computational effort than the MC method. Setting by convention $Q_{-1}:=0$ and exploiting that 
\begin{equation*}
 \bbE_{\mu_J}[Q_L]=\sum_{l=0}^L\bbE_{\mu_J}[Q_l-Q_{l-1}],
\end{equation*}
the classical MLMC method consists in estimating $\bbE_{\mu_J}[Q]$ by
\begin{equation}\label{eq:mlmc}
 E^L[Q]:=\sum_{l=0}^L E_{M_l}[Q_l-Q_{l-1}],
\end{equation}
for a given $L\in\bbN$ (again, note that the definition \eqref{eq:mlmc} is independent of $J$). In this way, the variance is reduced at each level $l\in\bbN_0$ estimating the mean of the tail $Q_l-Q_{l-1}$, and this is the reason why the MLMC is also said to be a variance reduction technique. If $\Var[Q_l-Q_{l-1}]$ decreases as $l$ increases, as we will see to be usually the case, then it is possible to save computational effort with respect to MC, taking more samples on the coarser grids and only few samples on the finer ones. In other words, the advantage of MLMC consists in balancing the two opposite effects, as $l$ increases, of the decay of $\Var[Q_l-Q_{l-1}]$ and the increase of $\operatorname{Work}_l$, the computational cost to compute a sample of $Q_l-Q_{l-1}$. This balancing is achieved by determining the optimal number of samples $M_l$ for each of the levels $l=0,\ldots,L$ (and possibly also the optimal maximal level $L$) in order to achieve a certain accuracy for the total error at minimal computational cost.

\begin{theorem}[Theorem 1 in \cite{CGST}]\label{thm:mlmccvg}
 Suppose that, for every $J\in\bbN$, there exist positive constants $\alpha,\beta,\gamma, C_1, C_2$ and $C_3$, independent of $l\in\bbN_0$, such that $\alpha\geq\tfrac{1}{2}\min(\beta,\gamma)$, and
 \begin{enumerate}[(i)]
  \item $\lVert\bbE_{\mu_J}[Q_l-Q]\rVert_{\mathcal{Y}}\leq C_1 h_l^{\alpha}$,
  \item $\emph{Var}_{\mu_J}[Q_l-Q_{l-1}]\leq C_2 h_l^{\beta}$,
  \item $\operatorname{Work}_l\leq C_3 h_l^{-\gamma}$.
 \end{enumerate}

 Then, for every $\varepsilon<e^{-1}$, there exist a value $L$ and a sequence $(M_l)_{l=0}^L$ such that
 \begin{equation}
  \lVert E^L[Q]-\bbE_{\mu}[Q]\rVert_{L^2(\pspace_J,\mathcal{Y})}<\varepsilon,
 \end{equation}
 for every $J\in\bbN$, and there exists a positive constant $C_4$ such that the total computational cost $\operatorname{Work}_{tot}(E^L)$ is bounded by
 \begin{equation}\label{eq:cost}
  \operatorname{Work}_{tot}(E^L)\leq\begin{cases}
C_3 C_4 \varepsilon^{-2}& \text{if }\beta>\gamma,\\
C_3 C_4 \varepsilon^{-2}(\log\varepsilon)^2 & \text{if }\beta=\gamma,\\
C_3 C_4 \varepsilon^{-2-\frac{\gamma-\beta}{\alpha}} & \text{if }\beta<\gamma.
                            \end{cases}
 \end{equation}
 If the constants $C_1$ and $C_2$ are independent of $J\in\bbN$, then $C_4$ is independent of $J\in\bbN$.
\end{theorem}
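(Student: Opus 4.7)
The plan is to split the mean-square error into a bias (discretization) part and a sampling (statistical) part, then select $L$ to control the former and optimize the sample sizes $(M_l)_{l=0}^{L}$ under a budget constraint on the latter. Throughout, I exploit the Hilbert structure of $\mathcal{Y}$ and the fact that each estimator $E_{M_l}[Q_l-Q_{l-1}]$ is an independent, unbiased estimator of $\bbE_{\mu_J}[Q_l-Q_{l-1}]$.

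First I would write
\begin{equation*}
E^L[Q]-\bbE_{\mu_J}[Q]=\bigl(E^L[Q]-\bbE_{\mu_J}[Q_L]\bigr)+\bigl(\bbE_{\mu_J}[Q_L]-\bbE_{\mu_J}[Q]\bigr),
\end{equation*}
observing that the first summand has mean zero while the second is deterministic. Expanding the squared $\mathcal{Y}$-norm, integrating against $\mu_J$, using bilinearity of the inner product to kill the cross term, and invoking independence of the $L+1$ per-level estimators, I obtain the standard identity
\begin{equation*}
\lVert E^L[Q]-\bbE_{\mu_J}[Q]\rVert_{L^2(\pspace_J,\mathcal{Y})}^2=\lVert\bbE_{\mu_J}[Q_L-Q]\rVert_{\mathcal{Y}}^2+\sum_{l=0}^{L}\frac{\Var[Q_l-Q_{l-1}]}{M_l}.
\end{equation*}
Assumption (i) bounds the first term by $C_1^{2}h_L^{2\alpha}$, while (ii) bounds the $l$-th variance by $V_l:=C_2 h_l^{\beta}$.

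Next, to force the total error below $\varepsilon$, I split the budget evenly: pick $L$ as the smallest integer with $C_1 h_L^{\alpha}\leq\varepsilon/\sqrt{2}$ (so $h_L\sim\varepsilon^{1/\alpha}$), and impose $\sum_l V_l/M_l\leq\varepsilon^2/2$. Minimizing the total work $\sum_l M_l\operatorname{Work}_l$ under this constraint by Lagrange multipliers yields $M_l\propto\sqrt{V_l/\operatorname{Work}_l}$, and matching the constraint fixes
\begin{equation*}
M_l=\Bigl\lceil 2\varepsilon^{-2}\sqrt{V_l/\operatorname{Work}_l}\,\textstyle\sum_{k=0}^{L}\sqrt{V_k\operatorname{Work}_k}\Bigr\rceil.
\end{equation*}
Substituting back, the total cost is bounded by $2\varepsilon^{-2}\bigl(\sum_{l=0}^{L}\sqrt{V_l\operatorname{Work}_l}\bigr)^{2}$, plus a correction $\sum_{l=0}^{L}\operatorname{Work}_l$ coming from the ceilings (one sample per level).

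Finally, assuming a geometric refinement $h_l=s^{l}h_0$ with $s\in(0,1)$ (standard for nested discretizations) and inserting $V_l\operatorname{Work}_l\lesssim h_l^{\beta-\gamma}$ gives the three regimes: if $\beta>\gamma$ the series converges, so the cost is $O(\varepsilon^{-2})$; if $\beta=\gamma$ the sum is $O(L)=O(|\log\varepsilon|)$, whence squaring yields the $(\log\varepsilon)^2$ factor; if $\beta<\gamma$ the sum is dominated by its last term, contributing $h_L^{\beta-\gamma}\sim\varepsilon^{-(\gamma-\beta)/\alpha}$. The main delicate point, and the exact place where the hypothesis $\alpha\geq\tfrac12\min(\beta,\gamma)$ enters, is absorbing the ceiling-induced overhead $\sum_{l=0}^{L}\operatorname{Work}_l\lesssim h_L^{-\gamma}\sim\varepsilon^{-\gamma/\alpha}$ into the dominant rate of each regime: a short case distinction shows this holds precisely when $\alpha\geq\tfrac12\min(\beta,\gamma)$. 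The $J$-independence of $C_4$ is then transparent, since every constant appearing in the estimates traces back to $C_1,C_2,C_3$ and the fixed mesh ratio $s$.
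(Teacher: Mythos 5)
Your proposal is a correct reconstruction of the standard complexity proof: the orthogonal MSE decomposition into squared bias plus summed per-level sampling variance (using the Hilbert structure of $\mathcal{Y}$ and mean-zero independence of the level estimators), choice of $L$ to control the bias, Lagrange-multiplier optimization of the $M_l$ giving $M_l\propto\sqrt{V_l/\operatorname{Work}_l}$, and the three-case analysis of $\sum_l\sqrt{V_l\operatorname{Work}_l}$ under a geometric mesh hierarchy, with the hypothesis $\alpha\geq\tfrac12\min(\beta,\gamma)$ invoked exactly to absorb the $\sum_l\operatorname{Work}_l$ overhead from the ceilings. The paper itself does not prove Theorem \ref{thm:mlmccvg} but cites Appendix A of \cite{CGST}, and your argument is essentially the one given there; the only minor imprecision is your closing remark that $C_4$ ``traces back to $C_1,C_2,C_3$'' — as the paper notes, $C_3$ factors out entirely (appearing only as the prefactor $C_3C_4$ in \eqref{eq:cost}), and the theorem's final claim of $J$-independence of $C_4$ does not and cannot rely on any assumption on $C_3$.
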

The proof can be found in Appendix A of \cite{CGST}, where it can be checked that the constant $C_4$ is dependent on $C_1$ and $C_2$ but not on $C_3$. In general, however, we cannot expect the cost $C_3$ of a single solve $\operatorname{Work}_l$ to be independent of $J$.

The previous theorem indicates that, in order to compute the optimal distribution of samples on each level, it is necessary to determine the values of the exponents $\alpha,\beta$ and $\gamma$. The value of $\gamma$ depends on the method used to discretize \eqref{eq:diffeq} and on the quantity of interest $Q$. For example, if $Q$ is the solution ifself and \eqref{eq:diffeqdiscr} correspond to linear finite element discretizations, then a multigrid solver for the linear system has linear complexity with respect to the number of degrees of freedom, and we can set $\gamma=d$, with $d=1,2,3$ the spatial dimension of the problem. The exponents $\alpha$ and $\beta$ can be determined, instead, from the convergence estimate of $Q_l$ to $Q$ as $l\rightarrow\infty$.

\begin{proposition}\label{prop:cvgrates}
Assume that $Q\in L^1(\pspace_J,\mathcal{Y})$ for every $J\in\bbN$, and that there exists a constant $C_A>0$, independent of $h_l$, $l\in\bbN_0$, of $\By\in\pspace_J$ and of $J\in\bbN$, and a positive real number $t$, independent of $l$, such that the approximations of $Q$ fulfill
 \begin{equation}\label{eq:approxest}
 \lVert q(\By;u(\By))-q_l(\By;u_l(\By))\rVert_{\mathcal{Y}}\leq C_A h_l^t\lVert u(\By)\rVert_{\mathcal{W}},\;\text{for every }\By\in\pspace_J\setminus\mathcal{N}_{\pspace_J}\text{ and every }J\in\bbN,
\end{equation}
for a subspace $\mathcal{W}\subset \mathcal{X}$, and $\mathcal{N}_{\pspace_J}$ any nondense subset of $\pspace_J$ with measure zero. Moreover, let $u\in L^2(\pspace_J,\mathcal{W})$ with a $J$-independent norm bound.

Then, given a geometric sequence $(h_l)_{l\geq 0}$ of discretization parameters:
  \begin{itemize}
   \item the inequality $(i)$ in Theorem \ref{thm:mlmccvg} holds with a constant $C_1$ independent of $J$ and $\alpha=t$;
   \item the inequality $(ii)$ in Theorem \ref{thm:mlmccvg} holds with  a constant $C_2$ independent of $J$ and $\beta=2t$.
  \end{itemize}
\end{proposition}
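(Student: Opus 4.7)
The plan is to derive both bounds directly from the pointwise-in-$\By$ approximation estimate \eqref{eq:approxest}, exploiting that $\mu_J$ is a probability measure and that $\|u\|_{L^2(\pspace_J,\mathcal{W})}$ is bounded uniformly in $J$. The geometric mesh sequence enters only through a $J$- and $l$-independent constant relating $h_{l-1}^t$ and $h_l^t$.

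For (i), I would begin with the Bochner-integral inequality
$$\|\bbE_{\mu_J}[Q_l-Q]\|_{\mathcal{Y}}\leq \bbE_{\mu_J}\left[\|Q_l-Q\|_{\mathcal{Y}}\right],$$
insert \eqref{eq:approxest} (which holds off the $\mu_J$-null set $\mathcal{N}_{\pspace_J}$), and apply Cauchy--Schwarz together with $\mu_J(\pspace_J)=1$ to dominate $\bbE_{\mu_J}[\|u(\By)\|_{\mathcal{W}}]$ by $\|u\|_{L^2(\pspace_J,\mathcal{W})}$. This yields $\alpha=t$ and $C_1=C_A\,\|u\|_{L^2(\pspace_J,\mathcal{W})}$, which is $J$-independent by hypothesis.

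For (ii), I would use the elementary bound $\Var[X]\leq \bbE_{\mu_J}[\|X\|_{\mathcal{Y}}^2]$ with $X=Q_l-Q_{l-1}$, and the triangle inequality
$$\|Q_l-Q_{l-1}\|_{\mathcal{Y}}\leq \|Q-Q_l\|_{\mathcal{Y}}+\|Q-Q_{l-1}\|_{\mathcal{Y}}\leq C_A(h_l^t+h_{l-1}^t)\|u(\By)\|_{\mathcal{W}}.$$
Since $(h_l)_{l\geq 0}$ is geometric, $h_{l-1}^t\leq \rho^{-t}h_l^t$ for a fixed refinement ratio $\rho\in(0,1)$; squaring, applying Minkowski, and integrating against $\mu_J$ then yields $\Var[Q_l-Q_{l-1}]\leq C_2 h_l^{2t}$ with $C_2$ depending only on $C_A$, $t$, $\rho$, and $\|u\|_{L^2(\pspace_J,\mathcal{W})}$---hence $J$- and $l$-independent---so $\beta=2t$.

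The only technicality I anticipate is the base case $l=0$: by the convention $Q_{-1}:=0$, the triangle-inequality argument is not directly available, and one must instead bound $\bbE_{\mu_J}[\|Q_0\|_{\mathcal{Y}}^2]$ on its own. Since $h_0$ is a fixed positive number, any finite bound can be absorbed into $C_2$; finiteness follows from the splitting $Q_0=Q+(Q_0-Q)$ together with the approximation estimate applied at level $0$ and the fact that $\bbE_{\mu_J}[\|Q\|_{\mathcal{Y}}^2]<\infty$, which in our setting is ensured by the uniform $C^0$-bound of Proposition \ref{prop:conty}.
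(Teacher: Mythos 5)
Your proof follows essentially the same route as the paper's: Bochner inequality and \eqref{eq:approxest} for (i), variance-by-second-moment bound plus the triangle inequality and the geometric mesh ratio for (ii). The only cosmetic differences are that the paper dominates $\bbE_{\mu_J}[\lVert u\rVert_{\mathcal{W}}]$ via the $L^1$-norm rather than Cauchy--Schwarz (equivalent here since $\mu_J$ is a probability measure), and it applies the triangle inequality at the $L^2$ level in the form $\lVert Q_l-Q_{l-1}\rVert^2_{L^2}\leq 2\lVert Q_l-Q\rVert^2_{L^2}+2\lVert Q_{l-1}-Q\rVert^2_{L^2}$ rather than pointwise. Your remark about the base case $l=0$ is a genuine observation the paper glosses over: since $Q_{-1}:=0$, the argument ``replace $h_l$ by $h_{l-1}$'' in the paper has no meaning at $l=0$, and one must separately note that $\Var[Q_0]<\infty$ (which indeed follows from $Q\in L^2$, guaranteed in the application by the uniform $C^0$-bound of Proposition \ref{prop:conty} or, more directly, from $Q_0=Q+(Q_0-Q)$ with $Q_0-Q\in L^2$ by \eqref{eq:approxest}) and that the fixed value of $h_0$ lets the resulting constant be absorbed into $C_2$.
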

\begin{proof}
 We first address the bound $(i)$ in Theorem \ref{thm:mlmccvg}. From the properties of the Bochner integral and \eqref{eq:approxest} we have, for every $l\in\bbN_0$:
 \begin{equation*}
 \lVert \bbE_{\mu_J}[Q_l-Q]\rVert_{\mathcal{Y}}\leq\bbE_{\mu_J}[\lVert Q_l-Q\rVert_{\mathcal{Y}}]\leq C_A h_l^t\lVert u\rVert_{L^1(\pspace_J,\mathcal{W})}.
 \end{equation*}
 Thus we obtain the bound $(i)$ in Theorem \ref{thm:mlmccvg} with $C_1=C_A \sup_{J\in\bbN}\lVert u\rVert_{L^1(\pspace_J,\mathcal{W})}$ and $\alpha=t$.
 
 For the bound $(ii)$ in Theorem \ref{thm:mlmccvg}, we have:
 \begin{align}
  \Var[Q_l-Q_{l-1}]&\leq \bbE_{\mu_J}[\lVert Q_l-Q_{l-1}\rVert^2_\mathcal{Y}]=\lVert Q_l-Q_{l-1}\rVert^2_{L^2(\pspace_J,\mathcal{Y})}\label{eq:varub}\\
  & \leq 2\lVert Q_l-Q\rVert^2_{L^2(\pspace_J,\mathcal{Y})} + 2\lVert Q_{l-1}-Q\rVert^2_{L^2(\pspace_J,\mathcal{Y})}.\label{eq:varub2}
 \end{align}
Owing to \eqref{eq:approxest}, the first summand is bounded by
\begin{equation*}
 \lVert Q_l-Q\rVert^2_{L^2(\pspace_J,\mathcal{Y})}\leq C_A^2 h_l^{2t}\sup_{J\in\bbN}\lVert u \rVert^2_{L^2(\pspace_J,\mathcal{W})}.
\end{equation*}
The analogous holds for the second summand in \eqref{eq:varub2} replacing $h_l$ by $h_{l-1}$. We remind that we assume a geometric sequence of discretization parameters, that is $\frac{h_{l-1}}{h_l}\leq C_H$ for every $l\in\bbN_0$ and some $C_H>0$. Then, substituting the above bounds in \eqref{eq:varub2}, we obtain 
\begin{align*}
 \lVert Q_l-Q_{l-1}\rVert^2_{L^2(\pspace_J,\mathcal{Y})}& \leq 2 C_A^2(h_l^{2t}+h_{l-1}^{2t})\lVert u \rVert^2_{L^2(\pspace_J,\mathcal{W})}\\
 &\leq 2 C_A^2(1+C_H^{2t})h_l^{2t}\lVert u \rVert^2_{L^2(\pspace_J,\mathcal{W})},
\end{align*}
that is the bound $(ii)$ in Theorem \ref{thm:mlmccvg} holds with $C_2=2 C_A^2(1+C_H^{2t})\sup_{J\in\bbN}\lVert u \rVert^2_{L^2(\pspace,\mathcal{W})}$ and $\beta=2t$.
\end{proof}
It is clear from the proof that we could slightly relax the assumption on the constant $C_A$, requiring it to belong to $L^2(\pspace_J,\bbR)$ with a $J$-independent bound, instead of being $\By$-independent.

We also note that in \eqref{eq:varub} we have shown that $\lVert Q_l-Q_{l-1}\rVert^2_{L^2(\pspace,Y)}\leq C_2h_l^{\beta}$, which is a stronger requirement that $\operatorname{Var}_{\mu}[Q_l-Q_{l-1}]\leq C_2 h_l^{\beta}$. Such choice gives automatically $\alpha\geq \frac{\beta}{2}$ in Theorem \ref{thm:mlmccvg} (cf. \cite[Sect. 2.1]{Giles15}).


\begin{remark}[log factors in convergence rates]
 The statement of Proposition \ref{prop:cvgrates} can be adapted easily to the case when the convergence rate in \eqref{eq:approxest} is of the kind $h_l^t|\log h_l|^{\bar{t}}\lVert u(\By)\rVert_W$, for some power $\bar{t}>0$ of $\log h_l$. Indeed, as $l\rightarrow\infty$, $h_l^t|\log h_l|^{\bar{t}}\leq h^{t'}$ for any $t'<t$ and any $\bar{t}>0$, and one can use the result of Proposition \ref{prop:cvgrates} with $t'$ in place of $t$. For the estimate \eqref{eq:cost} we have then the following situations: if $2t>\gamma$, we can choose $t'$ such that $2t>2t'>\gamma$, and still obtain $\operatorname{Work}_{tot}(E^L)\leq \varepsilon^{-2}$; if $2t=\gamma$, then, using $t'$, we switch from the second to the third case, with $\operatorname{Work}_{tot}(E^L)\leq \varepsilon^{-2+\delta}$, for any $\delta>0$; if $2t<\gamma$, then $\operatorname{Work}_{tot}(E^L)\leq \varepsilon^{-2-\frac{\gamma-2t}{t}+\delta}$ for any $\delta>0$.
\end{remark}

In the next section we establish under which conditions the assumptions of Proposition \ref{prop:cvgrates} are fulfilled for our model transmission problem when discretized using finite elements. For the point evaluation, in \eqref{eq:approxest} we have $q=(\By;u(\By))=u(\By;\Bx_0)=\hat{u}(\By;\hat{\Bx}_0(\By))$, with $\hat{\Bx}_0=\Phi^{-1}(\By;\hat{\Bx}_0)$, and $\mathcal{Y}=\bbR$, and we need to determine the exponent $t$, the space $\mathcal{W}$, and show that $u\in L^2(\pspace_J,\mathcal{W})$ with a $J$-independent bound. Note that the assumption $Q\in L^1(\pspace_J,\mathcal{Y})$ holds as we have shown in Proposition \ref{prop:conty} the continuity of the map $\By\mapsto u(\By;\Bx_0)$ from $\pspace_J$ to $\bbC$.

\section{Space regularity and finite element convergence for the model transmission problem}\label{sect:pointreg}
To determine the space $\mathcal{W}$ in \eqref{eq:approxest}, we have to understand which is the proper convergence estimate for the point evaluation. Once this has been settled, we can address under which conditions the solution $u$ to \eqref{eq:modelpb} belongs to $L^2(\pspace_J,\mathcal{W})$ with a $J$-independent bound.

A first observation is that, for every $J$, $\psubspace_J$ as defined in \eqref{eq:hyperplane} is a zero measure set in $\pspace_J$ (as it is a hyperplane). Thus, in \eqref{eq:approxest} we can set $\mathcal{N}_{\pspace_J}=\psubspace_J$, and it is sufficient to determine the convergence estimate in the case that $\Bx_0\in\din$ or $\Bx_0\in\doutR$.

A second observation is that \eqref{eq:approxest} has to be established for every $\By\in\pspace_J\setminus\mathcal{N}_{\pspace_J}$ fixed. Therefore, instead of studying the convergence for $u(\By;\Bx_0)$, we can work in the reference configuration and study the convergence estimate for $\hat{u}(\By;\hat{\Bx}_0)$ with $\hat{\Bx}_0=\Phi^{-1}(\By;\Bx_0)$. Note, however, that truncating \eqref{eq:radiusy} at the term with index $J$ means setting to zero all the entries of $\By$ in position grater than $J$, and, for a fixed realization, $\hat{\Bx}_0$ depends on $J$, as the mapping $\Phi^{-1}(\By;\cdot)$ does. For this reason, we need a convergence estimate for $\hat{u}(\By;\hat{\Bx}_0)$ which is uniform in the second argument (that is, independent of $\hat{\Bx}_0$).


An option to determine the convergence rate would be to consider the point evaluation as the Dirac delta functional $\delta_{\xref_0}(\vref)=\vref(\xref_0)$, which is a bounded on the space $H^{1+\varepsilon}(\hdin)\cup H^{1+\varepsilon}(\hdoutR)$, for any $\varepsilon>0$. If $s$ is the finite element convergence rate in $H^1(D_{R_{out}})$  (with respect to the meshwidth)  for the solution to \eqref{eq:varform}, then we would infer the convergence rate $s-\varepsilon$ for the point evaluation, for $\varepsilon>0$ arbitrarily small \cite{BBP}.

Such convergence rate is not optimal, though. If we consider the finite element convergence estimates in the $L^{\infty}$-norm, it is possible to achieve a convergence rate of $h^{s+1}(\log|h|)^{\bar{s}}$ for $h\rightarrow 0$, with $\bar{s}=1$ if $s=1$, and $\bar{s}=0$ if $s\geq 2$ \cite{Schatz98}, provided the solution has $W^{s+1,\infty}$-regularity. We will state these convergence estimates rigorously in subsection \ref{ssect:fecvg}, after having established, in the next subsection, the regularity of the solution to \eqref{eq:varform} in the space
\begin{equation}\label{eq:w}
\mathcal{W}=C^k(\overline{\hdin})\cup C^{k}(\overline{\hdoutR}),
\end{equation}
for some $k\geq 2$, equipped with the norm $\lVert\cdot\rVert_{\mathcal{W}}:=\max\left\{\lVert\cdot\rVert_{C^{k}(\overline{\hdin})},\lVert\cdot\rVert_{C^{k}(\overline{\hdoutR})}\right\}$.

\subsection{Space regularity of the solution}\label{ssect:spacereg}

To obtain upper bounds on $C^k(\overline{\hdin})\cup C^{k}(\overline{\hdoutR})$ for some $k\geq 2$, we consider Schauder estimates (see \cite[Ch. 6]{GT} and \cite[Ch. 6]{WYW}), as they require milder space regularity of the coefficients \eqref{eq:coeffshat} than Sobolev estimates \cite[Ch. 8]{GT} followed by an application of the Sobolev embedding theorem \cite[Thm. 7.26]{GT}.

Starting from $k=2$, we notice that it is not possible to obtain bounds on the norm of $\uref$ in $C^2(\overline{\hdin})\cup C^2(\overline{\hdoutR})$, because estimates in this last norm are in general not well defined (cf. p.52 and Problem 4.9 in \cite{GT}). For this reason, we state estimates in the H\"older spaces $C^{k,\hexp}_{pw}(\overline{D_{R_{out}}}):=C^{k,\hexp}(\overline{\hdin})\cup C^{k,\hexp}(\overline{\hdoutR})$, $\hexp\in(0,1)$.

\begin{theorem}\label{thm:ureg}
 Let $\hexp\in(0,1)$ and $k\geq 2$, and let $\hat{\Gamma}$ and $\partial D_{R_{out}}$ be simple closed curves of class $C^{k,\hexp}$. Let the coefficients in \eqref{eq:varform} be such that, for every $\By\in\pspace_J$: $\hat{\alpha}(\By;\cdot)$ has $J$-, $\By$- and $\hat{\Bx}$-uniform upper and lower bounds $\Lambda_{min}$ and $\Lambda_{max}$ on its singular values, $\lVert\hat{\alpha}(\By;\cdot)\rVert_{C_{pw}^{k-1,\hexp}(\overline{D_{R_{out}}})}\leq C_{\alpha}$ and $\lVert\hat{\kappa}^2(\By;\cdot)\rVert_{C_{pw}^{k-2,\hexp}(\overline{D_{R_{out}}})}\leq C_{\kappa}$, with $C_{\alpha}$ and $C_{\kappa}$ independent of $J\in\bbN$ and $\By\in\pspace_J$. Then the solution $\uref$ to \eqref{eq:varform} is such that
 \begin{equation}\label{eq:holderbound1}
  \lVert\uref(\By)\rVert_{C_{pw}^{k,\hexp}(\overline{D_{R_{out}}})}\leq C\left(\lVert\uref(\By)\rVert_{C^0(\overline{D_{R_{out}}})}+\lVert u_i\rVert_{C^{k,\hexp}(\overline{D_{R_{out}}})}\right),
 \end{equation}
 with a constant $C=C(k,\hexp,C_{\alpha},C_{\kappa},\Lambda_{min},\Lambda_{max})$ independent of $J\in\bbN$ and $\By\in\pspace_J$.
\end{theorem}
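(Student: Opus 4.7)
The plan is to establish the estimate by a covering argument combined with a Schauder bootstrap. I would cover $\overline{D_{R_{out}}}$ by finitely many balls of three types: (i) balls whose closure lies entirely in $\hat{D}_{in}$ or in $\hat{D}_{out,R_{out}}$, (ii) balls centered on $\partial D_{R_{out}}$, and (iii) balls centered on $\hat{\Gamma}$ that meet both subdomains. For each type, a suitable Schauder estimate for $-\hat{\nabla}\cdot(\hat{\alpha}\hat{\nabla}\hat{u}) - \hat{\kappa}^2\hat{u} = 0$ is available, and uniform constants follow from the hypothesized $J$- and $\mathbf{y}$-uniform bounds $C_\alpha$, $C_\kappa$, $\Lambda_{min}$, $\Lambda_{max}$ together with the fixed $C^{k,\beta}$ regularity of $\hat{\Gamma}$ and $\partial D_{R_{out}}$.

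For interior balls, I would apply the classical interior Schauder estimates (for instance Theorem 6.2 of Gilbarg--Trudinger) in each subdomain, which control the $C^{k,\beta}$ norm on a ball in terms of $\|\hat{u}\|_{C^0}$ on a slightly larger concentric ball. For balls at $\hat{\Gamma}$, I would invoke a transmission Schauder estimate of Ladyzhenskaya--Ural'tseva / Li--Nirenberg type, relying on the transmission conditions $\llbracket\hat{u}\rrbracket_{\hat{\Gamma}}=0$ and $\llbracket\hat{\alpha}\hat{\nabla}\hat{u}\cdot\hat{\mathbf{n}}\rrbracket_{\hat{\Gamma}}=0$, flattening $\hat{\Gamma}$ locally via a $C^{k,\beta}$ chart. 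For balls at $\partial D_{R_{out}}$, I would use that the construction of $\Phi$ in \eqref{eq:phicircle} forces $\hat{\alpha}\equiv\mathbf{I}$ and $\hat{\kappa}^2\equiv\kappa_1^2$ on a fixed annular neighborhood of $\partial D_{R_{out}}$ (since $\chi$ vanishes there and the scatterer is strictly inside). Writing $\hat{u} = u_i\circ\Phi + \hat{u}_s$ and exploiting that $\hat{u}_s$ satisfies the constant-coefficient Helmholtz equation together with the Sommerfeld radiation condition, I can extend $\hat{u}_s$ beyond $\partial D_{R_{out}}$ into the exterior using the radiating fundamental solution; this makes $\partial D_{R_{out}}$ an interior point of the extended problem and reduces the estimate there to an interior Schauder estimate on the extension.

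The bootstrap then proceeds inductively: starting from the $J$- and $\mathbf{y}$-uniform $C^{1,\beta}_{pw}$ bound already established in the proof of Proposition \ref{prop:conty} (and Appendix B) using $C^{0,\beta}$ coefficients, I would gain one derivative at each step by applying the three types of Schauder estimates, up to the regularity permitted by the hypothesized coefficients, producing \eqref{eq:holderbound1}. At each step the constant depends only on $k$, $\beta$, $C_\alpha$, $C_\kappa$, $\Lambda_{min}$, $\Lambda_{max}$ and the (fixed, $\mathbf{y}$-independent) $C^{k,\beta}$ parameters of $\hat{\Gamma}$ and $\partial D_{R_{out}}$, yielding $J$- and $\mathbf{y}$-independence of $C$.

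The main obstacle is the boundary ball at $\partial D_{R_{out}}$, because the DtN boundary condition is nonlocal and the standard Schauder framework does not directly accommodate it. My reduction via radiating extension of $\hat{u}_s$ bypasses this difficulty, but requires care to verify that the extension retains the required $C^{k,\beta}$ coefficients on both sides of $\partial D_{R_{out}}$; an alternative fallback would be to retain the oblique-derivative formulation and treat $\operatorname{DtN}(\hat{u})$ as a forcing term, exploiting continuity of the DtN map $C^{k,\beta}(\partial D_{R_{out}})\to C^{k-1,\beta}(\partial D_{R_{out}})$ and absorbing the resulting lower-order term through a standard interpolation/small-constant argument.
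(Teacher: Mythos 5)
Your overall strategy -- a finite covering of $\overline{D_{R_{out}}}$ by interior balls, balls meeting $\hat{\Gamma}$, and balls meeting $\partial D_{R_{out}}$, each handled by a local Schauder estimate with constants controlled only by $k,\hexp,C_\alpha,C_\kappa,\Lambda_{min},\Lambda_{max}$ and the fixed boundary geometry -- matches the structure of the paper's Appendix~B proof. The two routes differ in the two local pieces that are actually delicate. At $\hat{\Gamma}$, you propose invoking known transmission Schauder estimates of Ladyzhenskaya--Ural'tseva / Li--Nirenberg type after flattening, whereas the paper reproves the interface estimate from scratch by freezing coefficients, diagonalizing the frozen matrix $\hat\alpha(\By;\xreffix)$, reducing to a constant-coefficient transmission Poisson problem, and running the Campanato/iteration machinery (Lemmas~\ref{lem:convex}--\ref{lem:iterlem} through Theorem~\ref{thm:withg}); the paper's route is longer but makes the $J$- and $\By$-uniformity of the constants visible at each step, which is precisely what the black-box citation would have to be checked for. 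At $\partial D_{R_{out}}$, you replace the paper's terse appeal to GT~Sect.~6.7 (which concerns local oblique-derivative conditions and does not directly address the nonlocal $\operatorname{DtN}$ map) by a radiating-extension argument that genuinely converts the outer boundary into an interior region; this is a cleaner treatment of the nonlocality and is, if anything, more careful than what the paper offers. It does, however, rest on $\hat\alpha\equiv I$ and $\hat\kappa^2\equiv\kappa_1^2$ in a \emph{fixed} annulus next to $\partial D_{R_{out}}$, which requires $\tilde R<R_{out}$ strictly; the paper's hypotheses on the mollifier only give $\tilde R\leq R_{out}$, so you should add that strict inequality as a (harmless) standing assumption, or note that $\tilde R$ is at our disposal. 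Your fallback of treating $\operatorname{DtN}(\hat u)$ as Neumann data and absorbing it by interpolation is closer to what the paper actually does, but as written it is circular -- $\operatorname{DtN}$ maps $C^{k,\hexp}\to C^{k-1,\hexp}$ without gaining smallness -- so to make it rigorous one would need the standard pseudodifferential splitting $\operatorname{DtN}=(\text{local first-order part})+(\text{smoothing remainder})$ before absorbing; keep the extension argument as the primary route. Finally, you bootstrap regularity from the $C^{1,\hexp}_{pw}$ level, while the paper establishes a priori estimates for a solution assumed already to lie in $C^{k,\hexp}_{pw}$ and delegates qualitative regularity to a separate reference; your version is self-contained but must still record, at the first bootstrap step, that the $C^{1,\hexp}_{pw}$ input bound is itself $J$- and $\By$-uniform (which the paper provides).
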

\begin{proof}
 This result is a slight modification for interface problems of the Schauder estimates in \cite[Ch. 6]{GT} and \cite[Ch. 6]{WYW}, taking care of $J$- and $\By$-independence in the norm bounds. We refer to Appendix B for details.
\end{proof}

We have already seen in the proof of Proposition \ref{prop:conty} that the smoothness of the PDE coefficients in \eqref{eq:varform} derives from the smoothness of the radius. More precisely, if the sequence $\left(\beta_j\right)_{j\geq 1}$ in \eqref{eq:radiusy} fulfills Assumption \ref{Arcoeffs} and the nominal radius $r_0$ is sufficiently smooth, then, for every $J\in\bbN$ and every $\By\in\pspace_J$, $r(\By)\in C^{k,\hexp}_{per}([0,2\pi))$, with $J$- and $\By$-independent norm bound and
\begin{equation}\label{eq:k}
\begin{cases}
 k=\lfloor \frac{1}{p}-1\rfloor,\text{ and } \hexp<\frac{1}{p}-1-k & \text{if } \frac{1}{p}-1 \text{ is not integer},\\
 k=\frac{1}{p}-2,\text{ and any } \hexp\in(0,1) &\text{otherwise}.
\end{cases}
\end{equation}
Using the expression \eqref{eq:phicircle} for the domain mapping and \eqref{eq:coeffshat} for the PDE coefficients, we have $k\geq 2$ in Theorem \ref{thm:ureg} if $p<\tfrac{1}{3}$ in Assumption \ref{Arcoeffs}. The bounds on the singular values of $\hat{\alpha}(\By;\cdot)$ hold if Assumption \ref{Ak1k2} does.

To bound $\lVert\uref(\By)\rVert_{C^0(\overline{D_{R_{out}}})}$, we note that, if Assumption \ref{Ak1k2} holds (and $r_0$ is sufficiently smooth), and if $p<\tfrac{1}{3}$ in Assumption \ref{Arcoeffs}, then $\lVert\uref(\By)\rVert_{H^2(D_{in})\cup H^2(D_{out,R_{out}})}$ for every $\By\in\pspace_J$, and the norm has a $J$-independent bound \cite[Thm. 6.1.7]{LSthesis}. Then the Sobolev embedding theorem \cite[Thm. 7.26]{GT} and the continuity of $\uref$ across $\hat{\Gamma}$ imply
\begin{equation}\label{eq:c0bounduref}
  \lVert\uref(\By)\rVert_{C^0(\overline{D_{R_{out}}})}\leq C\left(\lVert u_i\rVert_{H^{\frac{3}{2}}(\partial D_{R_{out}})}+\Big\lVert \dfrac{\partial u_i}{\partial \Bn_{out}}\Big\rVert_{H^{\frac{1}{2}}(\partial D_{R_{out}})}\right),
\end{equation}
for every $J\in\bbN$ and $\By\in\pspace_J$, with a constant $C=C(\gamma_{-})$ independent of $J\in\bbN$ and $\By\in\pspace_J$, but dependent on the coercivity constant $\gamma_{-}$ of the bilinear form in \eqref{eq:varform} (which is $J$- and $\By$-independent, see \cite[Lemma 3.2.5]{LSthesis}).


We arrive then to the following important corollary to Theorem \ref{thm:ureg}.

\begin{corollary}\label{cor:uwbound}
 Let the sequence $\left(\beta_j\right)_{j\geq 1}$ in \eqref{eq:radiusy} fulfill Assumption \ref{Arcoeffs} with $p<\tfrac{1}{3}$, let $r_0\in C^{k,\hexp}_{per}([0,2\pi))$ with $k\geq 2$ and $\hexp\in(0,1)$, and let Assumption \ref{Ak1k2} hold. Then the solution $\uref$ to \eqref{eq:varform} belongs to $C_{pw}^{k,\hexp}(\overline{D_{R_{out}}})$ with $k\geq 2$ and $\hexp\in(0,1)$ as in \eqref{eq:k}, and 
 \begin{equation}\label{eq:uwbound}
    \lVert\uref(\By)\rVert_{C_{pw}^{k,\hexp}(\overline{D_{R_{out}}})}\leq C \lVert u_i\rVert_{C^{k,\hexp}(\overline{D_{R_{out}}})}.
 \end{equation}
The constant $C=C\left(k,\hexp,\gamma_{-},C_{\alpha},\right.$ $\left.C_{\kappa},\sigma_{min},\sigma_{max}\right)$ is independent of $J\in\bbN$ and $\By\in\pspace_J$ (here $\sigma_{min}$ and $\sigma_{max}$ are the $J$- and $\By$-independent bounds on the singular values of $D\Phi$, and the other constants are as defined in this subsection).
 
 In particular, $\uref\in L^2(\pspace_J,\mathcal{W})$ with a $J$-independent bound and $\mathcal{W}$ as in \eqref{eq:w} (with $k$ from \eqref{eq:k}).
\end{corollary}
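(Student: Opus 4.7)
The plan is to assemble the statement by chaining together Theorem~\ref{thm:ureg} with the $C^0$-bound \eqref{eq:c0bounduref} and the regularity analysis of the coefficients already sketched in the paragraphs immediately preceding the corollary. There is no new PDE analysis to carry out; the task is essentially to verify that, under the strengthened hypotheses ($p<1/3$ in Assumption \ref{Arcoeffs} and $r_0\in C^{k,\hexp}_{per}$ with $k\geq 2$), all constants appearing in Theorem~\ref{thm:ureg} can be taken uniform in $J\in\bbN$ and $\By\in\pspace_J$.

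First I would check the hypotheses of Theorem~\ref{thm:ureg}. From \eqref{eq:k} with $p<1/3$ one obtains $k\geq 2$ and some $\hexp\in(0,1)$ such that $r(\By;\cdot)\in C^{k,\hexp}_{per}([0,2\pi))$ with a norm bound independent of $J$ and $\By$. Combined with the assumed smoothness of $r_0$ and the properties of the mollifier $\chi$ from \eqref{eq:phicircle}, the domain mapping $\Phi(\By;\cdot)$ and its inverse have piecewise $C^{k,\hexp}$-norms bounded uniformly in $J$ and $\By$ on $\overline{\hdin}$ and $\overline{\hdoutR}$. Substituting into \eqref{eq:coeffshat} then yields uniform bounds $C_\alpha$ on $\lVert \hat\alpha(\By;\cdot)\rVert_{C^{k-1,\hexp}_{pw}}$ and $C_\kappa$ on $\lVert \hat\kappa^2(\By;\cdot)\rVert_{C^{k-2,\hexp}_{pw}}$. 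The bounds $\Lambda_{\min},\Lambda_{\max}$ on the singular values of $\hat\alpha$ follow from the uniform bounds $\sigma_{\min},\sigma_{\max}$ on the singular values of $D\Phi$ combined with Assumption~\ref{Ak1k2}, which guarantees coercivity.

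Next, applying Theorem~\ref{thm:ureg} gives \eqref{eq:holderbound1} with constant $C$ independent of $J$ and $\By$. To absorb the $\lVert\uref(\By)\rVert_{C^0(\overline{D_{R_{out}}})}$ term on the right-hand side of \eqref{eq:holderbound1}, I invoke \eqref{eq:c0bounduref}, which provides a $J$- and $\By$-independent bound in terms of the data $u_i$. Plugging this into \eqref{eq:holderbound1} and bounding $\lVert u_i\rVert_{H^{3/2}(\partial D_{R_{out}})}+\lVert\partial u_i/\partial\Bn_{out}\rVert_{H^{1/2}(\partial D_{R_{out}})}\lesssim \lVert u_i\rVert_{C^{k,\hexp}(\overline{D_{R_{out}}})}$ (since $k\geq 2$) yields the asserted estimate \eqref{eq:uwbound} with a constant depending only on $k,\hexp,\gamma_{-},C_\alpha,C_\kappa,\sigma_{\min},\sigma_{\max}$.

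Finally, for the $L^2(\pspace_J,\mathcal{W})$ statement, the embedding $C^{k,\hexp}_{pw}(\overline{D_{R_{out}}})\hookrightarrow \mathcal{W}=C^k(\overline{\hdin})\cup C^k(\overline{\hdoutR})$ together with \eqref{eq:uwbound} gives an essentially uniform bound $\lVert\uref(\By)\rVert_{\mathcal{W}}\leq C\lVert u_i\rVert_{C^{k,\hexp}(\overline{D_{R_{out}}})}$ for every $\By\in\pspace_J$, which, after integration with respect to $\mu_J$ (a probability measure), produces a $J$-independent bound on $\lVert\uref\rVert_{L^2(\pspace_J,\mathcal{W})}$. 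The only subtle point I would flag is measurability of $\By\mapsto\uref(\By)$ as a $\mathcal{W}$-valued map: since $\mathcal{W}$ is not separable, one argues as for the coefficients in \eqref{eq:coeffshat}, noting that $\uref$ takes values in a separable subspace of $\mathcal{W}$ (inherited from the separability of the range of $\Phi$ and the continuous dependence established in Proposition~\ref{prop:conty}). This is the only mildly non-routine step; everything else is a direct substitution of the already-established uniform estimates into Theorem~\ref{thm:ureg}.
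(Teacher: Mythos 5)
Your proposal is correct and reproduces the paper's own (implicit) argument faithfully: the corollary is obtained by chaining Theorem~\ref{thm:ureg} with the preceding discussion of how $p<1/3$ and \eqref{eq:k} yield $J$- and $\By$-uniform $C^{k,\hexp}_{pw}$ bounds on the coefficients, then absorbing the $C^0$-norm via \eqref{eq:c0bounduref}. Your closing remark on measurability of the $\mathcal{W}$-valued map via separable subspaces also matches the convention the paper adopts for H\"older-space-valued random variables.
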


\subsection{Finite element convergence for the point evaluation}\label{ssect:fecvg}
We consider the finite element space of globally continuous ansatz functions which are polynomials of degree $s$ on each element of a quasi-uniform mesh with meshsize $h_l>0$ on the \emph{nominal} configuration. We denote this space by $\mathcal{S}^s_{h_l}(D_{R_{out}})$. Setting $\mathcal{X}^l:=\mathcal{S}^s_{h_l}(D_{R_{out}})$ and considering a nested sequence of meshes and thus a geometric sequence of meshsize parameters $(h_l)_{l\geq 0}$, we are in the  framework for MLMC as in \eqref{eq:fespaces}.

Our starting point is the $L^{\infty}$-estimate for finite element solutions to elliptic boundary value problems. 


%
%

\begin{theorem}[Theorem 2.1 in \cite{Schatz98}]\label{thm:schatz}
For a domain $\mathfrak{D}\subset\bbR^n$, $n\geq 1$, we consider the bilinear form
\begin{equation*}
 a_{bvp}(\By;\hat{w},\hat{v}):=\int_{\mathfrak{D}}\aref_A(\By;\xref)\hat{\nabla} \hat{w}\cdot \hat{\nabla} \hat{v} + \hat{\beta}_A(\By;\xref)\cdot\hat{\nabla} \hat{w} \hat{v} + \cref_A(\By;\xref)\hat{w}\hat{v}\dd\xref,\quad \By\in\pspace_J, J\in\bbN,
\end{equation*}
for every $\hat{w},\hat{v}\in H^1(\mathfrak{D})$, with $\aref_A(\By;\xref)\in\bbR^{n\times n}$, $\hat{\beta}_A(\By;\xref)\in\bbR^n$ and $\cref_A(\By;\xref)\in\bbR$ for every $\hat{\Bx}\in\mathfrak{D}$, $J\in\bbN$, $\By\in\pspace_J$. For $s\geq 1$, let the following assumptions be satisfied:
\begin{enumerate}[(i)]
 \item $\partial\mathfrak{D}$ is of class $C^{s+4}$;
 \item for every $J\in\bbN$ and every $\By\in\pspace_J$, $\aref_A\in C^{s+3}(\overline{\mathfrak{D}})$ and $\hat{\beta}_A, \cref_A\in C^{s+2}(\overline{\mathfrak{D}})$, with $J$- and $\By$-independent bounds on the norms\footnote{According to Remark 1.1 in \cite{Schatz98}, we would need $\aref_A\in C^{s+2}(\mathfrak{D})$. However, the reference provided there for this claim is \cite{Kras}, according to which (see p.107) we need the higher order coefficient in $C^{s+2}(\overline{\mathfrak{D}})$ if the operator is not in divergence form, and thus we need the higher order coefficient in $C^{s+3}(\overline{\mathfrak{D}})$ when considering the operator in divergence form.};
 \item $a_{bvp}(\cdot,\cdot)$ has a $J$- and $\By$-uniform lower, positive bound on the coercivity constant;
 \item the matrix $\aref_A$  has a $J$- and $\By$- and $\hat{\Bx}$-uniform lower, positive bound on the ellipticity constant.
\end{enumerate}
Let $\hat{w}(\By;\cdot)\in C^{1}(\overline{\mathfrak{D}})$ and $\hat{w}_{h_l}(\By;\cdot)\in\mathcal{S}^s_{h_l}(\mathfrak{D})$ satisfy $a_{bvp}(\By;\hat{w}(\By)-\hat{w}_{h_l}(\By),\hat{v}_{h_l})=0$ for all $\hat{v}_{h_l}\in \mathcal{S}^s_{h_l}(\mathfrak{D})$. Then there exists a constant $C$, independent of  $\hat{w}$, $\hat{w}_{h_l}$, $l\in\bbN$, of $J\in\bbN$ and of $\By\in\pspace_J$ such that
 \begin{equation}\label{eq:linfbvp}
  \lVert \hat{w}(\By)-\hat{w}_{h_l}(\By)\rVert_{L^{\infty}(\mathfrak{D})}\leq Ch_l\left(\log \frac{1}{h_l}\right)^{\bar{s}}\inf_{\chi\in\mathcal{S}^s_{h_l}(\mathfrak{D})}\lVert \hat{w}(\By)-\chi\rVert_{C^1(\overline{\mathfrak{D}})},
 \end{equation}
for every $l\in\bbN$, $J\in\bbN$ and $\By\in\pspace_J$, with $\bar{s}=1$ if $s=1$ and $\bar{s}=0$ if $s\geq 2$.
\end{theorem}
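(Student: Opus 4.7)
The plan is to follow the classical weighted-norm/regularized Green's function approach of Schatz and Wahlbin. First, I would fix $\hat{\Bx}_0\in\mathfrak{D}$ achieving (up to $\varepsilon$) the $L^\infty$-norm of the error $\hat e:=\hat w(\By)-\hat w_{h_l}(\By)$, and replace the point value by a regularized point evaluation $\delta^{\hat{\Bx}_0}_{h_l}\in C_c^\infty(B_{h_l}(\hat{\Bx}_0))$, normalized so that $\int\delta^{\hat{\Bx}_0}_{h_l}=1$ and $\hat e(\hat{\Bx}_0)\lesssim \int_{\mathfrak{D}}\delta^{\hat{\Bx}_0}_{h_l}\hat e\,\DX$ up to a harmless approximation error. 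Then introduce the regularized Green's function $\hat G^{\hat{\Bx}_0}_{h_l}\in H^1(\mathfrak{D})$ solving $a_{bvp}(\By;\hat v,\hat G^{\hat{\Bx}_0}_{h_l})=\int_{\mathfrak{D}}\delta^{\hat{\Bx}_0}_{h_l}\hat v\,\DX$ for all test functions (with appropriate boundary condition matching the problem), so that by Galerkin orthogonality
\begin{equation*}
\hat e(\hat{\Bx}_0) \lesssim a_{bvp}(\By;\hat e,\hat G^{\hat{\Bx}_0}_{h_l}-\chi)
\end{equation*}
for any $\chi\in\mathcal{S}^s_{h_l}(\mathfrak{D})$.

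Next I would bound the bilinear form by $\|\hat e\|_{C^1}\,\|\hat G^{\hat{\Bx}_0}_{h_l}-\chi\|_{W^{1,1}(\mathfrak{D})}$, so that the whole argument reduces to showing
\begin{equation*}
\inf_{\chi\in\mathcal{S}^s_{h_l}}\|\hat G^{\hat{\Bx}_0}_{h_l}-\chi\|_{W^{1,1}(\mathfrak{D})}\leq C\,h_l\,(\log(1/h_l))^{\bar s}.
\end{equation*}
For this I would introduce the Schatz weight $\sigma(\hat{\Bx})=(|\hat{\Bx}-\hat{\Bx}_0|^2+h_l^2)^{1/2}$ and prove weighted stability and regularity estimates
\begin{equation*}
\|\sigma^{1/2+\varepsilon}\nabla^2\hat G^{\hat{\Bx}_0}_{h_l}\|_{L^2(\mathfrak{D})}+\|\sigma^{-1/2+\varepsilon}\nabla\hat G^{\hat{\Bx}_0}_{h_l}\|_{L^2(\mathfrak{D})}\leq C,
\end{equation*}
using duality on strips/annuli and the Caccioppoli-type local energy estimates that require assumptions (i)--(iv); this is where the smoothness of the coefficients and boundary of $\mathfrak{D}$ (and the $J,\By$-independent bounds) come in, giving the $J,\By$-independent constant. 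Combining these with the standard local approximation property of $\mathcal{S}^s_{h_l}$ by Cauchy--Schwarz with the weight $\sigma$, the $W^{1,1}$-bound follows, and the integral $\int \sigma^{-1}\DX$ on $\mathfrak{D}\subset\bbR^n$ produces a logarithm that survives only in the lowest-order case $s=1$, yielding $\bar s=1$ there and $\bar s=0$ otherwise.

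Finally, applying $\|\hat e\|_{C^1}\leq \|\hat w-\chi\|_{C^1}+\|\chi-\hat w_{h_l}\|_{C^1}$ and using an inverse inequality on the quasi-uniform mesh followed by a bootstrap argument on $\|\hat e\|_{L^\infty}$ closes the loop and gives \eqref{eq:linfbvp}. The main obstacle, as usual with this type of result, is the careful weighted-norm bookkeeping on dyadic annuli around $\hat{\Bx}_0$ needed to extract precisely the correct power of $\log(1/h_l)$; the $J$- and $\By$-independence of the constant is essentially free once assumptions (ii)--(iv) are invoked, since the Schauder/elliptic constants for $\hat G^{\hat{\Bx}_0}_{h_l}$ depend only on the quantities bounded uniformly there.
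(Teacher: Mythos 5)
Your high-level approach coincides with the paper's, which itself only says ``repeating the proof of Theorem~2.1 in [Schatz98], \ldots the constant $C$ in \eqref{eq:linfbvp} is $J$- and $\By$-independent.'' You have reconstructed the weighted-norm/regularized-Green's-function machinery of Schatz and Wahlbin and correctly identified that the $J$- and $\By$-uniformity of the constant comes for free once assumptions (ii)--(iv) give uniform coefficient, ellipticity, and coercivity bounds; in that respect you are on the right track and provide more detail than the paper does.

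However, there is a genuine gap in the final step. You bound $a_{bvp}(\By;\hat e,\hat G^{\hat\Bx_0}_{h_l}-\chi)$ by $\|\hat e\|_{C^1}\,\|\hat G^{\hat\Bx_0}_{h_l}-\chi\|_{W^{1,1}}$ and then try to trade $\|\hat e\|_{C^1}$ for $\|\hat w-\chi\|_{C^1}$ via the triangle inequality, an inverse inequality, and a ``bootstrap.'' That bootstrap does not close: the inverse inequality gives $\|\chi-\hat w_{h_l}\|_{C^1}\leq Ch_l^{-1}\big(\|\hat w-\chi\|_{L^\infty}+\|\hat e\|_{L^\infty}\big)$, and combining with $\|\hat e\|_{L^\infty}\lesssim h_l(\log(1/h_l))^{\bar s}\|\hat e\|_{C^1}$ produces the absorption term $C(\log(1/h_l))^{\bar s}\|\hat e\|_{L^\infty}$ on the right-hand side, whose coefficient is $O((\log(1/h_l))^{\bar s})$ and certainly not less than one. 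Schatz avoids a bootstrap entirely by using Galerkin orthogonality twice: with $\hat G_{h_l,h_l}^{\hat\Bx_0}$ the Ritz projection of the regularized Green's function onto $\mathcal S^s_{h_l}$ with respect to the \emph{adjoint} form, one writes
\[
(\hat e,\delta^{\hat\Bx_0}_{h_l})
= a_{bvp}\big(\By;\hat e,\hat G^{\hat\Bx_0}_{h_l}-\hat G^{\hat\Bx_0}_{h_l,h_l}\big)
= a_{bvp}\big(\By;\hat w-\tilde\chi,\hat G^{\hat\Bx_0}_{h_l}-\hat G^{\hat\Bx_0}_{h_l,h_l}\big)
\]
for any $\tilde\chi\in\mathcal S^s_{h_l}$, since both $\hat e$ and $\hat G^{\hat\Bx_0}_{h_l}-\hat G^{\hat\Bx_0}_{h_l,h_l}$ are orthogonal (in the appropriate primal/adjoint sense) to $\mathcal S^s_{h_l}$. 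This yields the best-approximation error $\|\hat w-\tilde\chi\|_{C^1}$ directly on the right-hand side, after which the weighted $W^{1,1}$-estimate on $\hat G^{\hat\Bx_0}_{h_l}-\hat G^{\hat\Bx_0}_{h_l,h_l}$ over dyadic annuli, exactly as you describe, produces the factor $h_l(\log(1/h_l))^{\bar s}$. Replacing your bootstrap by this double-orthogonality step would make your outline sound.
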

\begin{proof}
 Repeating the proof of Theorem 2.1 in \cite{Schatz98}, it is easy to check that, under the assumption of $J$- and $\By$-uniform bounds on the norms of the coefficients and on the coercivity and ellipticity constants of the bilinear form, the constant $C$ in \eqref{eq:linfbvp} is $J$- and $\By$-independent, too. 
\end{proof}

To be more precise, Theorem 2.1 in \cite{Schatz98} provides a sharper estimate using a weighted $W^{1,\infty}$-norm instead of the $C^1$-norm on the right-hand side. However, what we are interested in is the convergence rate rather than a quantitative estimate, and for this the $C^1$-norm is sufficient. Moreover, an extension of $L^{\infty}$-estimates to the case that $\mathfrak{D}$ is a convex polygon can be found in \cite{GLRS} (although in the case of constant coefficients).

Going back to our model problem, in the variational formulation \eqref{eq:varform}, differently from the assumptions of Theorem \ref{thm:schatz}, the coefficients are smooth in $\hdin$ and in $\hdoutR$, but in general they are not smooth across $\gammaref$. We can expect that, if the interface $\gammaref$ is resolved `well enough' (in a sense to be made precise), then we still achieve the same convergence rates as in Theorem \ref{thm:schatz} when discretizing our interface problem. Finite element estimates taking into account the resolution of the interface have been proven in \cite{LMWS} for the convergence in the $H^1$- and $L^2$-norms. It is plausible that similar results hold for the convergence in the $L^{\infty}$-norm, but, to the author's knowledge, they seem not to be available in the literature. Also in more recent applications of $L^{\infty}$-estimates to interface problems \cite{GSS}, the issue of the approximation of $\gammaref$ is not addressed. Since proving it goes far beyond the scope of this paper, we formulate the following assumption, and test numerically its plausibility for our model problem in the next subsection.

\begin{assumption}\label{Ainterfcvg}
If as domain $\mathfrak{D}$ we consider $D_{R_{out}}=\hdin\cup\gammaref\cup\hdoutR$, if $\aref_A\in C^{s+3}_{pw}(\overline{D_{R_{out}}})$ and $\bref_A, \cref_A\in C^{s+2}_{pw}(\overline{D_{R_{out}}})$ with $J$- and $\By$-independent norm bounds, and if every finite element mesh provides a piecewise $s^{th}$-order polynomial approximation for $\gammaref$, then the result of Theorem \ref{thm:schatz} still holds, in the sense that, for $\hat{w}\in C^1_{pw}(\overline{\mathfrak{D}})$ and $\hat{w}_{h_l}\in\mathcal{S}^s_{h_l}(\mathfrak{D})$ satisfying $a_{bvp}(\By;\hat{w}(\By)-\hat{w}_{h_l}(\By),\hat{v}_{h_l})=0$ for all $\hat{v}_{h_l}\in \mathcal{S}^s_{h_l}(\mathfrak{D})$:
 \begin{equation}\label{eq:linfinterface}
  \lVert \hat{w}(\By)-\hat{w}_{h_l}(\By)\rVert_{L^{\infty}(D_{R_{out}})}\leq Ch_l\left(\log \frac{1}{h_l}\right)^{\bar{s}}\inf_{\chi\in \mathcal{S}^s_{h_l}(D_{R_{out}})}\lVert \hat{w}(\By)-\chi\rVert_{C^1(\overline{\hdin})\cup C^{1}(\overline{\hdoutR})},
 \end{equation}
 with $\bar{s}$ as in Theorem \ref{thm:schatz} and $C$ a $J$- and $\By$-independent constant.
\end{assumption}
If we set $\aref_A=\hat{\alpha}$, $\bref_A\equiv 0$ and $\cref_A=\hat{\kappa}^2$, then \eqref{eq:linfinterface} gives us the convergence rate for the solution to \eqref{eq:varform} (the boundary condition with the $\operatorname{DtN}$ map is smooth).

As in Corollary \ref{cor:uwbound}, we can deduce the regularity of the coefficients $\hat{\alpha}$ and $\kappa^2$ in \eqref{eq:coeffshat} from the decay of the coefficients in the radius expansion \eqref{eq:radiusy}. Combining this with Corollary \ref{cor:uwbound} itself, we obtain
\begin{theorem}\label{thm:finalcvg}
  Let the sequence $\left(\beta_j\right)_{j\geq 1}$ in \eqref{eq:radiusy} fulfill Assumption \ref{Arcoeffs} with $p<\frac{1}{s+5}$, $s\in\bbN$, and let the wavenumbers fulfill Assumption \ref{Ak1k2}. Let Assumption \ref{Ainterfcvg} hold and let the finite element meshes provide a piecewise $s^{th}$-order polynomial approximation to $\gammaref$. Then the  finite element solutions $\uref_{h_l}\in\mathcal{S}^s_{h_l}(D_{R_{out}})$ to \eqref{eq:varform}, $l\in\bbN$, satisfy:
  \begin{equation}\label{eq:finalcvg}
  \lVert \uref(\By)-\uref_{h_l}(\By)\rVert_{L^{\infty}(D)}\leq Ch_l^{s+1}\left(\log \frac{1}{h_l}\right)^{\bar{s}}\lVert \uref(\By)\rVert_{C^{s+1}(\overline{\hdin})\cup C^{s+1}(\overline{\hdoutR})},
  \end{equation}
with $\bar{s}$ as in Theorem \ref{thm:schatz} and a constant $C$ independent of $l\in\bbN$, of $J\in\bbN$ and of $\By\in\pspace_J$ (but dependent on the mesh regularity parameters, on some $J$- and $\By$-independent bounds on the norms of the coefficients in \eqref{eq:varform} and on a $J$- and $\By$-independent lower bound on the coercivity constant).

Moreover, the norm on the right-hand side in \eqref{eq:finalcvg} is bounded independently of $J\in\bbN$ and $\By\in\pspace_J$.
\end{theorem}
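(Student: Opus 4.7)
My plan is to combine Assumption \ref{Ainterfcvg} with an elementwise polynomial interpolation argument, using the regularity of $\uref$ furnished by Corollary \ref{cor:uwbound}. First, I verify that the bilinear form associated with \eqref{eq:varform} (i.e., diffusion $\hat{\alpha}$, no advection, reaction $-\hat{\kappa}^2$; the non-local $\operatorname{DtN}$ term is smooth on $\partial D_{R_{out}}$ and does not affect the local $L^{\infty}$ analysis) meets the smoothness hypotheses of Assumption \ref{Ainterfcvg} with bounds independent of $J\in\bbN$ and $\By\in\pspace_J$. Under Assumption \ref{Arcoeffs} with $p<1/(s+5)$, \eqref{eq:k} gives $r(\By;\cdot)\in C^{s+4,\hexp}_{per}([0,2\pi))$ uniformly in $J$ and $\By$; via \eqref{eq:phicircle}, \eqref{eq:coeffshat} and the product/chain rule this transfers to $\hat{\alpha}(\By;\cdot),\hat{\kappa}^2(\By;\cdot)\in C^{s+3,\hexp}_{pw}(\overline{D_{R_{out}}})$ with $J$- and $\By$-uniform norm bounds. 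The $\hat{\Bx}$-, $\By$- and $J$-uniform ellipticity of $\hat{\alpha}$ is inherited from the bounds on the singular values of $D\Phi$, and the $J$- and $\By$-uniform coercivity of the form follows from Assumption \ref{Ak1k2}, as already exploited in the proof of Proposition \ref{prop:conty}.

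Next I apply \eqref{eq:linfinterface} to $\hat{w}=\uref(\By)$ and $\hat{w}_{h_l}=\uref_{h_l}(\By)$, reducing the claim to a best-approximation estimate for $\uref(\By)$ in the $C^1_{pw}$-norm by elements of $\mathcal{S}^s_{h_l}(D_{R_{out}})$. Crucially, the nominal interface $\gammaref$ is fixed (independent of $\By$), so a single quasi-uniform mesh family that approximates $\gammaref$ by a piecewise polynomial of degree $s$ (in agreement with the hypothesis of the theorem) can be used for all realizations. A globally continuous, elementwise Lagrange interpolant $\chi\in\mathcal{S}^s_{h_l}(D_{R_{out}})$ of $\uref(\By)$ that respects the induced partition of $D_{R_{out}}$ into elements lying in $\overline{\hdin}$ or in $\overline{\hdoutR}$ satisfies the standard Bramble--Hilbert estimate $\lVert\uref(\By)-\chi\rVert_{C^1(K)}\leq Ch_l^{s}\lVert\uref(\By)\rVert_{C^{s+1}(K)}$ on each such element $K$. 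Taking the maximum over all elements on each side of $\gammaref$ yields the $C^1_{pw}$-bound $Ch_l^{s}\lVert\uref(\By)\rVert_{C^{s+1}(\overline{\hdin})\cup C^{s+1}(\overline{\hdoutR})}$, and inserting this into \eqref{eq:linfinterface} produces the $h_l^{s+1}(\log(1/h_l))^{\bar{s}}$ rate. The uniform control of the right-hand side is exactly the content of Corollary \ref{cor:uwbound}, which applies since $p<1/(s+5)$ forces $k\geq s+4$ in \eqref{eq:k}; this also establishes the second assertion of the theorem.

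The step I anticipate to be the most delicate is the piecewise interpolation on the thin sliver elements that straddle the true interface $\gammaref$ but are only partitioned by its $s$-th order polynomial approximant: there $\uref(\By)$ is merely $C^0$ across the exact jump locus, so a $C^1$ estimate on each subdomain $\overline{\hdin}$, $\overline{\hdoutR}$ has to absorb a geometric mismatch of size $O(h_l^{s+1})$ near $\gammaref$. This is precisely the same type of geometric error circumvented by Assumption \ref{Ainterfcvg}, and the heuristic which makes that assumption plausible (higher-order geometry perturbation feeding into a lower-order target rate) also underpins the optimal-order piecewise interpolation estimate used above, modulo harmless lower-order contributions.
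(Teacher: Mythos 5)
Your proof follows essentially the same route as the paper: verify the $J$- and $\By$-uniform coefficient regularity and coercivity from the decay of $(\beta_j)_{j\geq 1}$, the mapping $\Phi$ and Assumption~\ref{Ak1k2}; feed these into Theorem~\ref{thm:schatz} via Assumption~\ref{Ainterfcvg} to get \eqref{eq:linfinterface}; then bound the $C^1_{pw}$ best-approximation by $h_l^s$ times the $C^{s+1}_{pw}$-norm of $\uref$ (the paper quotes this interpolation estimate where you spell out a Lagrange/Bramble--Hilbert construction), and close with Corollary~\ref{cor:uwbound}. The only minor imprecision is attributing the need for $p<1/(s+5)$ (i.e., $k\geq s+4$) to Corollary~\ref{cor:uwbound} --- the corollary only needs $k\geq s+1$; the stronger decay is required by the coefficient-regularity hypotheses of Assumption~\ref{Ainterfcvg} inherited from \cite{Schatz98}, as Remark~\ref{rmk:milderass} notes --- but this does not affect the validity of your argument.
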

\begin{proof}
 The decay of the sequence $\left(\beta_j\right)_{j\geq 1}$ ensures that, for every $\By\in\pspace_J$ and every $J\in\bbN$, the radius \eqref{eq:radiusy} belongs to $C^{s+4,\hexp}_{per}([0,2\pi)$ for some $\hexp\in (0,1)$, with a $J$- and $\By$-independent norm bound, see subsection \ref{ssect:spacereg}. Proceeding as in the proof of Corollary \ref{cor:uwbound}, the smoothness of the mapping $\Phi$ ensures that, for every $\By\in\pspace_J$ and every $J\in\bbN$, $\hat{\alpha}(\By;\cdot)$ and $\hat{\kappa}^2(\By;\cdot)$ belong to $C^{s+3,\hexp}_{pw}(\overline{D_{R_{out}}})$, with $J$- and $\By$-independent norm bounds. The $J$- and $\By$-independent lower and upper bounds on the singular values of $D\Phi$ ensure a $J$- and $\By$-independent lower bound on the ellipticity constant of $\hat{\alpha}$, which, together with Assumption \ref{Ak1k2}, implies a $J$- and $\By$-uniform lower bound on the coercivity constant of the bilinear form in \eqref{eq:varform} \cite[Lemma 3.2.5]{LSthesis}. Then Theorem \ref{thm:schatz}, together with Assumption \ref{Ainterfcvg}, implies the estimate \eqref{eq:linfinterface}. Finally, the interpolation properties of the spaces $\mathcal{S}^s_{h_l}(D_{R_{out}})$ ensure that
 \begin{equation}
  \inf_{\chi\in\mathcal{S}^s_{h_l}(D_{R_{out}})}\lVert \uref(\By)-\chi\rVert_{C^1(\overline{\hdin})\cup C^{1}(\overline{\hdoutR})}\leq C' h_l^{s}\lVert\uref(\By)\rVert_{C^{s+1}(\overline{\hdin})\cup C^{s+1}(\overline{\hdoutR})},
 \end{equation}
for a constant $C'$ dependent on the mesh regularity parameters but clearly not on $J\in\bbN$ and $\By\in\pspace_J$. The norm on the right-hand side has a $J$- and $\By$-independent bound thanks to Corollary \ref{cor:uwbound}.
\end{proof}

In Theorem \ref{thm:finalcvg} we have not formulated any regularity assumption on $\partial D_{R_{out}}$ as we assume it to be a circle, and thus of class $C^{\infty}$. 

\begin{corollary}
 Under the assumptions of Theorem \ref{thm:finalcvg}, assumption $(ii)$ of Proposition \ref{prop:cvgrates} holds with $t=2-\varepsilon$ and any $\varepsilon>0$ for linear finite elements, and with $t=s+1$ for Lagrangian finite elements of degree $s$ with $s\geq 2$. 
\end{corollary}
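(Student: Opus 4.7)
The plan is to verify the hypothesis \eqref{eq:approxest} of Proposition \ref{prop:cvgrates} directly from Theorem \ref{thm:finalcvg}. First, I would rewrite the Q.o.I. via the domain mapping: for every $\By\in\pspace_J\setminus\psubspace_J(\Bx_0)$ (so that $\Bx_0$ does not lie on the random interface), one has $q(\By;u(\By))=u(\By;\Bx_0)=\uref(\By;\Phi^{-1}(\By;\Bx_0))$, and analogously $q_l(\By;u_l(\By))=\uref_{h_l}(\By;\Phi^{-1}(\By;\Bx_0))$, since the finite element discretization is performed on the nominal configuration. Taking absolute values and bounding pointwise by the supremum on $D_{R_{out}}$ yields, for every such $\By$,
\begin{equation*}
|q(\By;u(\By))-q_l(\By;u_l(\By))| \;\leq\; \lVert\uref(\By;\cdot)-\uref_{h_l}(\By;\cdot)\rVert_{L^\infty(D_{R_{out}})}.
\end{equation*}
The exceptional set $\mathcal{N}_{\pspace_J}$ appearing in \eqref{eq:approxest} can therefore be chosen as the hyperplane $\psubspace_J(\Bx_0)$, which has $\mu_J$-measure zero and is nondense in $\pspace_J$.

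Next I would apply Theorem \ref{thm:finalcvg}, whose hypotheses are in force by assumption, to obtain
\begin{equation*}
\lVert\uref(\By)-\uref_{h_l}(\By)\rVert_{L^\infty(D_{R_{out}})} \;\leq\; C\,h_l^{\,s+1}\Bigl(\log\tfrac{1}{h_l}\Bigr)^{\bar s}\lVert\uref(\By)\rVert_{\mathcal{W}},
\end{equation*}
with $\mathcal{W}=C^{s+1}(\overline{\hdin})\cup C^{s+1}(\overline{\hdoutR})$ as in \eqref{eq:w} and the constant $C$ independent of $l$, $J$ and $\By$. For $s\geq 2$ we have $\bar s=0$, so \eqref{eq:approxest} holds with the constant $C_A=C$ and the exponent $t=s+1$. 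For $s=1$ we have $\bar s=1$; here I would invoke the remark immediately following Proposition \ref{prop:cvgrates}: since $h_l^2\lvert\log h_l\rvert\leq C_\varepsilon h_l^{2-\varepsilon}$ for every $\varepsilon>0$ (uniformly in $l$, once $h_l$ is bounded away from, e.g., $1$), the estimate \eqref{eq:approxest} holds with $t=2-\varepsilon$ for any $\varepsilon>0$ after absorbing the logarithm into a slightly larger constant $C_A$.

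Finally, to close the application of Proposition \ref{prop:cvgrates} I would check that $\uref\in L^2(\pspace_J,\mathcal{W})$ with a $J$-independent norm bound. This is exactly the second assertion of Corollary \ref{cor:uwbound}: under Assumption \ref{Arcoeffs} with $p<\tfrac{1}{s+5}$ (which is strictly stronger than the threshold $p<\tfrac{1}{3}$ needed there, for every $s\geq 1$) and Assumption \ref{Ak1k2}, we have the pointwise bound $\lVert\uref(\By)\rVert_{\mathcal{W}}\leq C\lVert u_i\rVert_{C^{s+1,\hexp}(\overline{D_{R_{out}}})}$ with a $J$- and $\By$-uniform constant, which implies in particular $\uref\in L^\infty(\pspace_J,\mathcal{W})\subset L^2(\pspace_J,\mathcal{W})$ with $J$-independent norm.

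There is no real obstacle: the corollary is a straightforward consequence of assembling Theorem \ref{thm:finalcvg} (providing the space discretization rate in the $L^\infty$-norm, uniform in $\By$ and $J$) with Corollary \ref{cor:uwbound} (providing the $J$-uniform bound on $\lVert\uref\rVert_{\mathcal{W}}$), together with a careful bookkeeping of the logarithmic factor in the case $s=1$ and the identification of the null exceptional set $\psubspace_J(\Bx_0)$ on which the mapping-based argument degenerates.
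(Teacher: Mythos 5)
Your proof is correct and takes essentially the same approach the paper intends for this (unproved) corollary: pass to the nominal domain so the point evaluation is dominated by the $L^\infty$ finite element error of Theorem \ref{thm:finalcvg}, absorb the logarithmic factor for $s=1$ via the remark on log factors following Proposition \ref{prop:cvgrates}, take $\mathcal{N}_{\pspace_J}=\psubspace_J(\Bx_0)$ as the null exceptional set, and invoke Corollary \ref{cor:uwbound} for the $J$-uniform $L^2(\pspace_J,\mathcal{W})$ bound on $\uref$. This is exactly the chain of observations the paper sets up at the beginning of Section \ref{sect:pointreg} and in subsection \ref{ssect:fecvg}, so the argument and the paper's are the same.
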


\begin{remark}[Regularity of coefficients]\label{rmk:milderass}
In order to have a $J$- and $\By$-independent bound on $\lVert \uref(\By)\rVert_{C^{s+1}(\overline{\hdin})\cup C^{s+1}(\overline{\hdout})}$, it is sufficient that the decay parameter $p$ for the sequence $(\beta_j)_{j\geq 1}$ satisfies $p<\frac{1}{s+2}$, see Theorem \ref{thm:ureg}. The stronger requirement that $p<\frac{1}{s+5}$ is due to a technicality in the proof of the $L^{\infty}$-estimate \eqref{eq:linfbvp} presented in \cite{Schatz98}, requiring stronger smoothness on the PDE coefficients. In particular, it is needed for the decay estimate of the Green's function associated to \eqref{eq:varform}. One might ask whether such stronger requirement is necessary. 

The decay estimate on the Green's function and the space regularity required on the coefficients is reported Lemma 1.1 and Remark  1.1 of \cite{Schatz98}, which refer to \cite{Kras} (whose assumptions on the coefficients can be found on p. 107). It might be, however, that the estimate reported in \cite{Kras} still holds on milder assumptions on the regularity of the boundary and of the coefficients (cf. estimate (8.3) and Theorem 19.VII in \cite{Mir}, and Theorem 8.1.11, Corollary 8.1.12 and Remark 8.1.13 in \cite{BS}).
\end{remark}

\subsection{Finite element convergence: numerical experiments}\label{ssect:fecvgnumexp}
In this subsection we show numerical results to validate the convergence estimates of the previous subsections. We address the case $s=1$ in Theorem \ref{thm:finalcvg}, because in the MLMC simulations we will use linear finite elements.

As in subsection \ref{ssect:smolyak}, we work with non-dimensional quantities. In \eqref{eq:coeffs}, we set $\alpha_1=4$, $\alpha_2=1$, $\kappa_1=\kappa_0$ and $\kappa_2=2\kappa_0$, where $\kappa_0=209.44$ denotes the wavenumber in free space. The incident wave $u_i(\Bx)=e^{j\kappa_1\Bd\cdot\Bx}$ is coming from the left, that is $\Bd=(1,0)$. The $\operatorname{DtN}$ map is approximated truncating the domain with a circular Perfecly Matched Layer (PML, see \cite{B,CM}) starting at $R_{out}=0.055$, with thickness $0.02$ and absorption coefficient (or damping parameter) $0.5$ \cite{CM}. The nominal geometry is a circle with radius $r_0=0.01$. In \eqref{eq:radiusy}, we consider $\beta_{2j-1}=\beta_{2j}=\frac{0.1 r_0}{j^{\frac{1}{p}}}$, $j=1,\ldots,\tfrac{J}{2}$, with three decays $p=\tfrac{1}{2},\tfrac{1}{3},\tfrac{1}{4}$, and four dimension truncations $J=8,16,32,64$. The case $J=8$ will not be used in the numerical experiments for MLMC, but we consider it here in order to better investigate the dependence of the convergence estimates of Theorem \ref{thm:finalcvg} on the dimension of the parameter space. The domain mapping is \eqref{eq:phicircle} with mollifier
\begin{equation}\label{eq:chi2}
\chi(\hat{\Bx})=
\begin{cases}
 0 & \text{if }\lVert\hat{\Bx}\rVert \leq \frac{r_0}{4},\\
 \frac{\lVert\hat{\Bx}\rVert-\frac{r_0}{4}}{r_0-\frac{r_0}{4}} & \text{if }\frac{r_0}{4}<\lVert\hat{\Bx}\rVert\leq r_0,\\
 \frac{R_{out}-\lVert\hat{\Bx}\rVert}{R_{out}-r_0} & \text{if }r_0\leq\lVert\hat{\Bx}\rVert\leq R_{out}.
\end{cases} 
\end{equation}
The non-smoothness of this mollifier at $\lVert \hat{\Bx}\rVert=\frac{r_0}{4}$ can be easily handled treating the circle of radius $\tfrac{r_0}{4}$ as an additional interface resolved by the finite element meshes, cf. Assumption \ref{Ainterfcvg}, Theorem \ref{thm:finalcvg} and \cite{LMWS}.

We consider six nested, unstructed quasi-uniform meshes on the reference configuration, with $581$, $2250$, $8855$, $35133$, $139961$ and $558705$ degrees of freedom, respectively, and use an additional refinement, with $2232545$ degrees of freedom, to obtain reference solutions. The circles of radius $r_0$ and $\tfrac{r_0}{4}$ have been approximated by piecewise linear curves.

Each finite element solution has been obtained using the NGSolve finite element library\footnote{http://sourceforge.net/apps/mediawiki/ngsolve} (version 5.1), coupled to the MKL PARDISO\footnote{https://software.intel.com/en-us/intel-mkl. See also http://www.pardiso-project.org/ for other versions of the PARDISO solver.} direct solver to solve the algebraic system resulting from the discretization.

We study the convergence of the point evaluation of $\Re u(\By;\cdot)$, the real part of the solution to \eqref{eq:modelpb}, for two points in the actual configuration: $\Bx_0^1=(r_0,0)$ and $\Bx_0^2=(0,r_0)$. For $\Bx_0^1$, we consider the realization $\By$ with all entries set to $1$, so that, for every $J$, $\hat{\Bx}_0^1=\Phi^{-1}(\By;\Bx_0^1)$ is located in $\hdin$ (although the coordinates of $\hat{\Bx}_0^1$ depend on $J$). For $\Bx_0^2$, we consider the realization $\By$ with all entries set to $-1$, so that, for every $J$, $\hat{\Bx}_0^2=\Phi^{-1}(\By;\Bx_0^2)\in\hdoutR$.

The results are reported in Figures \ref{fig:fecvgin} and \ref{fig:fecvgout}. From Theorem \ref{thm:finalcvg}, we expect a convergence rate close to $2$ with respect to meshwidth, and thus a rate close to $1$ with respect to the number of degrees of freedom $N_{dof}$, for every decay $p$ and every dimension $J$. However, we expect the constant multiplying the rate in \eqref{eq:linfinterface} (incorporating the norm of the solution) to have a $J$-independent upper bound only for $p<\tfrac{1}{6}$, and thus in none of our test cases. Taking into account Remark \ref{rmk:milderass}, we could expect $J$-independence of the constant for $p<\tfrac{1}{3}$. Figures \ref{fig:fecvgin} and \ref{fig:fecvgout} show that the convergence rate predicted by the theory is correct, but the constant seems to have a $J$-independent upper bound for \emph{all} values of $p$ considered. 

The last observation can indicate two things. A possibility is that our theory of subsections \ref{ssect:spacereg} and \ref{ssect:fecvg} is not sharp and can be improved. Another possible interpretation is that, due to the decay of the coefficient sequence $(\beta_j)_{j\geq 1}$, there is a `natural' dimension truncation from the mesh, that does not allow to track the high frequency perturbations. Furthermore, because of the nonlinear dependence of the Q.o.I. on the high-dimensional parameter, it could be that, also when the mesh is able to capture some high-frequency shape variations, they contribute to a variation in the Q.o.I. which is smaller than the discretization error. To give an idea about the size of the shape perturbations, the maximum shape variation for $p=\tfrac{1}{2}$ is around $0.3055 r_0$ for $J=16$ and $0.3169 r_0$ for $J=32$, which means that the harmonics added from $J=16$ to $J=32$ contribute for $1.14\cdot 10^{-4} r_0$ to the maximum shape variation. The meshsize around $r_0$ is instead of the order of $1.3\cdot 10^{-5}$ on the finest mesh. In Figures \ref{fig:fecvgin} and \ref{fig:fecvgout}, for $p=\tfrac{1}{2}$, we see indeed a slight difference in the convergence curves at the finest level, but it is negligible. Passing from $J=32$ to $J=64$, the contribution of the higher order shape variations is even smaller than from $J=16$ to $J=32$, and the convergence curves are indistinguishable. To further investigate the influence of shape variations, we may ask ourselves how far are the solutions corresponding to $J=16$, $J=32$ and $J=64$, for a fixed decay $p$ of the coefficient sequence. The fact that the convergence lines are very close to each other gives us no information about this. We have performed a crossed comparison for each of the cases $\tfrac{1}{p}=2$ and $\tfrac{1}{p}=3$: we have considered as reference solution the one obtained on the finest grid for $J=16$, and studied the convergence to this value for the solutions corresponding to $J=32$ and $J=64$. The outcome for the evaluation at  $\Bx_0^2=(0,r_0)$ and with all entries of $\By$ set to $-1$ is shown in Figure \ref{fig:fecvgcross}. The left plot in Figure \ref{fig:fecvgcross} tells us that, for each of the cases $\tfrac{1}{p}=2$ and $\tfrac{1}{p}=3$, the solution for $J=32$ converges to a value that differs from the exact solution for $J=16$ by a quantity that is some orders of magnitude smaller than the finite element error on the last mesh considered. The right plot in Figure \ref{fig:fecvgcross} shows instead that, for $\tfrac{1}{p}=2$, the exact solution for $J=16$ and the exact solution for $J=64$ differ by a quantity of the order of $10^{-4}$, and this affects only the convergence on the last two meshes. Returning to the left plot in Figure \ref{fig:fecvgout}, we see that the line for $J=64$ slightly departs from the line for $J=16$. This does not happen for $J=64$ and the faster decay $\tfrac{1}{p}=3$, and in the correponding line in the right plot of Figure \ref{fig:fecvgcross} we observe convergence until the last mesh considered. From these last experiments we can conclude that the high frequency perturbations of the shape can be observed only when going to very fine meshes, supporting the hypothesis of `natural' dimension truncation coming from the discretization.

Finally, we mention that the achievement of the full convergence rate prescribed by Theorem \ref{thm:schatz} when using a piecewise linear approximation for $\gammaref$ supports the validity of Assumption \ref{Ainterfcvg}.

 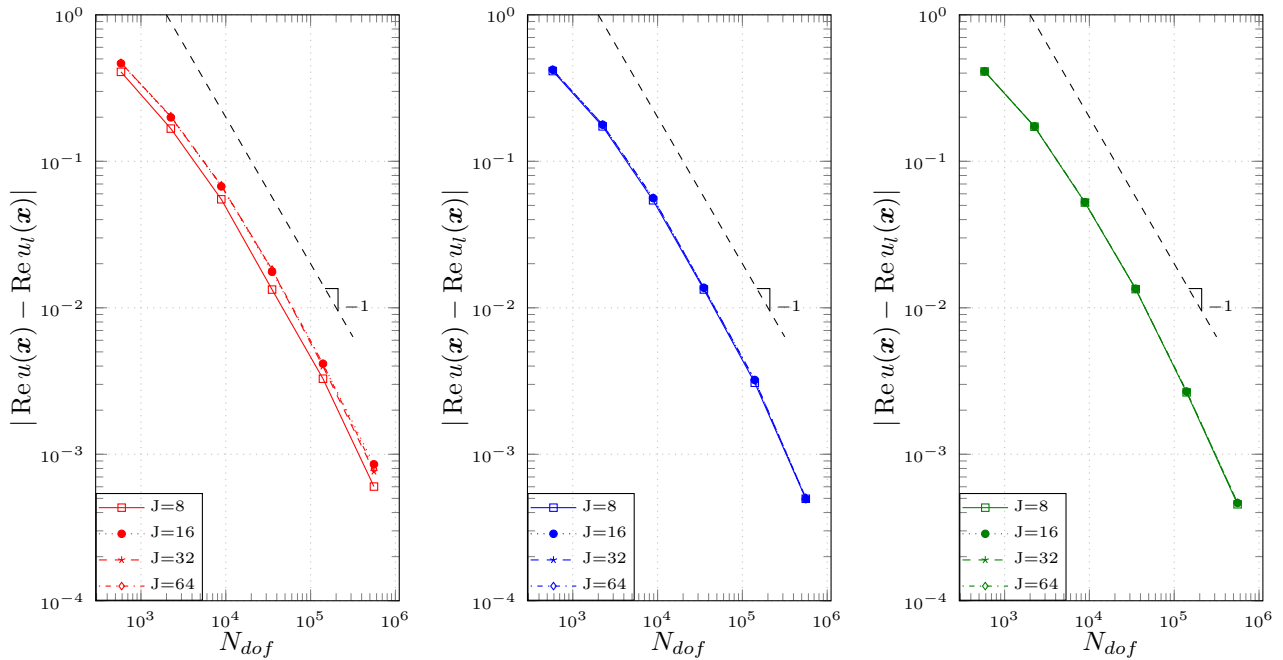
\begin{figure}
  \begin{center}
\begin{tikzpicture}
  \begin{loglogaxis}[
 xlabel={\footnotesize $N_{dof}$},
 ylabel={\footnotesize $| \Re u(\Bx)-\Re u_l(\Bx)|$},
 grid=major, legend entries={{J=8},{J=16},{J=32},{J=64}}, ymin=1e-4,ymax=1
  ]
    \settable{ndofs.txt}{FEcvg010_z2_d8.txt}
    \addplot[mark=square,mark size=1.4pt,Red,solid,mark options={solid}] table[x=data,y expr = \thisrowno{0}]{\datatable};
      \settable{ndofs.txt}{FEcvg010_z2_d16.txt}
    \addplot[mark=*,mark size=1.4pt,Red,dotted,mark options={solid}] table[x=data,y expr = \thisrowno{0}]{\datatable};
      \settable{ndofs.txt}{FEcvg010_z2_d32.txt}
    \addplot[mark=star,mark size=1.4pt,Red,dashed,mark options={solid}] table[x=data,y expr = \thisrowno{0}]{\datatable};
      \settable{ndofs.txt}{FEcvg010_z2_d64.txt}
    \addplot[mark=diamond,mark size=1.4pt,Red,dash pattern=on 0.5pt off 2pt on 2pt off 2pt,mark options={solid}] table[x=data,y expr = \thisrowno{0}]{\datatable};
    
        \addplot[black,domain=2*10^3:10^5.5,samples=97,style=dashed]{x^(-1)/(0.5*10^(-3))*(1)}
          coordinate [pos=0.85] (A)
  coordinate [pos=0.92] (B)
  ;
  \draw (A) -| (B) node[pos=0.9,anchor= west] {\hspace{-0.2cm} $\scriptscriptstyle -1$};
  \end{loglogaxis}
\end{tikzpicture}
\begin{tikzpicture}
  \begin{loglogaxis}[
 xlabel={\footnotesize $N_{dof}$},
 ylabel={\footnotesize $| \Re u(\Bx)-\Re u_l(\Bx)|$},
 grid=major, legend entries={{J=8},{J=16},{J=32},{J=64}}, ymin=1e-4,ymax=1
  ]
    \settable{ndofs.txt}{FEcvg010_z3_d8.txt}
    \addplot[mark=square,mark size=1.4pt,Blue,solid,mark options={solid}] table[x=data,y expr = \thisrowno{0}]{\datatable};
      \settable{ndofs.txt}{FEcvg010_z3_d16.txt}
    \addplot[mark=*,mark size=1.4pt,Blue,dotted,mark options={solid}] table[x=data,y expr = \thisrowno{0}]{\datatable};
      \settable{ndofs.txt}{FEcvg010_z3_d32.txt}
    \addplot[mark=star,mark size=1.4pt,Blue,dashed,mark options={solid}] table[x=data,y expr = \thisrowno{0}]{\datatable};
      \settable{ndofs.txt}{FEcvg010_z3_d64.txt}
    \addplot[mark=diamond,mark size=1.4pt,Blue,dash pattern=on 0.5pt off 2pt on 2pt off 2pt,mark options={solid}] table[x=data,y expr = \thisrowno{0}]{\datatable};
    
        \addplot[black,domain=2*10^3:10^5.5,samples=97,style=dashed]{x^(-1)/(0.5*10^(-3))*(1)}
          coordinate [pos=0.85] (A)
  coordinate [pos=0.92] (B)
  ;
  \draw (A) -| (B) node[pos=0.9,anchor= west] {\hspace{-0.2cm} $\scriptscriptstyle -1$};
  \end{loglogaxis}
\end{tikzpicture}
\begin{tikzpicture}
  \begin{loglogaxis}[
 xlabel={\footnotesize $N_{dof}$},
 ylabel={\footnotesize $| \Re u(\Bx)-\Re u_l(\Bx)|$},
 grid=major, legend entries={{J=8},{J=16},{J=32},{J=64}}, ymin=1e-4,ymax=1
  ]
    \settable{ndofs.txt}{FEcvg010_z4_d8.txt}
    \addplot[mark=square,mark size=1.4pt,Green,solid,mark options={solid}] table[x=data,y expr = \thisrowno{0}]{\datatable};
      \settable{ndofs.txt}{FEcvg010_z4_d16.txt}
    \addplot[mark=*,mark size=1.4pt,Green,dotted,mark options={solid}] table[x=data,y expr = \thisrowno{0}]{\datatable};
      \settable{ndofs.txt}{FEcvg010_z4_d32.txt}
    \addplot[mark=star,mark size=1.4pt,Green,dashed,mark options={solid}] table[x=data,y expr = \thisrowno{0}]{\datatable};
      \settable{ndofs.txt}{FEcvg010_z4_d64.txt}
    \addplot[mark=diamond,mark size=1.4pt,Green,dash pattern=on 0.5pt off 2pt on 2pt off 2pt,mark options={solid}] table[x=data,y expr = \thisrowno{0}]{\datatable};
    
        \addplot[black,domain=2*10^3:10^5.5,samples=97,style=dashed]{x^(-1)/(0.5*10^(-3))*(1)}
          coordinate [pos=0.85] (A)
  coordinate [pos=0.92] (B)
  ;
  \draw (A) -| (B) node[pos=0.9,anchor= west] {\hspace{-0.2cm} $\scriptscriptstyle -1$};
  \end{loglogaxis}
\end{tikzpicture}
 \end{center}\caption{Finite element convergence for the point evaluation at $\Bx=(0.01,0)$, with, in \eqref{eq:radiusy}, $y_j=1$ for $j=1,\ldots,J$, using linear finite elements. Coefficient sequence $\beta_{2j-1}=\beta_{2j}=j^{-\frac{1}{p}}$ with $\frac{1}{p}=2$ (left), $\frac{1}{p}=3$ (center) and $\frac{1}{p}=4$ (right) and $j=1\ldots \tfrac{J}{2}$.}\label{fig:fecvgin}
 \end{figure}
 
  \begin{figure}
  \begin{center}
\begin{tikzpicture}
  \begin{loglogaxis}[
 xlabel={\footnotesize $N_{dof}$},
 ylabel={\footnotesize $| \Re u(\Bx)-\Re u_l(\Bx)|$},
 grid=major, legend entries={{J=8},{J=16},{J=32},{J=64}}, ymin=5e-6,ymax=1e-1
  ]
    \settable{ndofs.txt}{FEcvg001out_z2_d8.txt}
    \addplot[mark=square,mark size=1.4pt,Red,solid,mark options={solid}] table[x=data,y expr = \thisrowno{0}]{\datatable};
      \settable{ndofs.txt}{FEcvg001out_z2_d16.txt}
    \addplot[mark=*,mark size=1.4pt,Red,dotted,mark options={solid}] table[x=data,y expr = \thisrowno{0}]{\datatable};
      \settable{ndofs.txt}{FEcvg001out_z2_d32.txt}
    \addplot[mark=star,mark size=1.4pt,Red,dashed,mark options={solid}] table[x=data,y expr = \thisrowno{0}]{\datatable};
      \settable{ndofs.txt}{FEcvg001out_z2_d64.txt}
    \addplot[mark=diamond,mark size=1.4pt,Red,dash pattern=on 0.5pt off 2pt on 2pt off 2pt,mark options={solid}] table[x=data,y expr = \thisrowno{0}]{\datatable};
    
        \addplot[black,domain=2*10^3:10^5.5,samples=97,style=dashed]{x^(-1)/(0.5*10^(-3))*(1e-1)}
          coordinate [pos=0.85] (A)
  coordinate [pos=0.92] (B)
  ;
  \draw (A) -| (B) node[pos=0.9,anchor= west] {\hspace{-0.2cm} $\scriptscriptstyle -1$};
  \end{loglogaxis}
\end{tikzpicture}
\begin{tikzpicture}
  \begin{loglogaxis}[
 xlabel={\footnotesize $N_{dof}$},
 ylabel={\footnotesize $| \Re u(\Bx)-\Re u_l(\Bx)|$},
 grid=major, legend entries={{J=8},{J=16},{J=32},{J=64}}, ymin=5e-6,ymax=1e-1
  ]
    \settable{ndofs.txt}{FEcvg001out_z3_d8.txt}
    \addplot[mark=square,mark size=1.4pt,Blue,solid,mark options={solid}] table[x=data,y expr = \thisrowno{0}]{\datatable};
      \settable{ndofs.txt}{FEcvg001out_z3_d16.txt}
    \addplot[mark=*,mark size=1.4pt,Blue,dotted,mark options={solid}] table[x=data,y expr = \thisrowno{0}]{\datatable};
      \settable{ndofs.txt}{FEcvg001out_z3_d32.txt}
    \addplot[mark=star,mark size=1.4pt,Blue,dashed,mark options={solid}] table[x=data,y expr = \thisrowno{0}]{\datatable};
      \settable{ndofs.txt}{FEcvg001out_z3_d64.txt}
    \addplot[mark=diamond,mark size=1.4pt,Blue,dash pattern=on 0.5pt off 2pt on 2pt off 2pt,mark options={solid}] table[x=data,y expr = \thisrowno{0}]{\datatable};
    
        \addplot[black,domain=2*10^3:10^5.5,samples=97,style=dashed]{x^(-1)/(0.5*10^(-3))*(1e-1)}
          coordinate [pos=0.85] (A)
  coordinate [pos=0.92] (B)
  ;
  \draw (A) -| (B) node[pos=0.9,anchor= west] {\hspace{-0.2cm} $\scriptscriptstyle -1$};
  \end{loglogaxis}
\end{tikzpicture}
\begin{tikzpicture}
  \begin{loglogaxis}[
 xlabel={\footnotesize $N_{dof}$},
 ylabel={\footnotesize $| \Re u(\Bx)-\Re u_l(\Bx)|$},
 grid=major, legend entries={{J=8},{J=16},{J=32},{J=64}}, ymin=5e-6,ymax=1e-1
  ]
    \settable{ndofs.txt}{FEcvg001out_z4_d8.txt}
    \addplot[mark=square,mark size=1.4pt,Green,solid,mark options={solid}] table[x=data,y expr = \thisrowno{0}]{\datatable};
      \settable{ndofs.txt}{FEcvg001out_z4_d16.txt}
    \addplot[mark=*,mark size=1.4pt,Green,dotted,mark options={solid}] table[x=data,y expr = \thisrowno{0}]{\datatable};
      \settable{ndofs.txt}{FEcvg001out_z4_d32.txt}
    \addplot[mark=star,mark size=1.4pt,Green,dashed,mark options={solid}] table[x=data,y expr = \thisrowno{0}]{\datatable};
      \settable{ndofs.txt}{FEcvg001out_z4_d64.txt}
    \addplot[mark=diamond,mark size=1.4pt,Green,dash pattern=on 0.5pt off 2pt on 2pt off 2pt,mark options={solid}] table[x=data,y expr = \thisrowno{0}]{\datatable};
    
        \addplot[black,domain=2*10^3:10^5.5,samples=97,style=dashed]{x^(-1)/(0.5*10^(-3))*(1e-1)}
          coordinate [pos=0.85] (A)
  coordinate [pos=0.92] (B)
  ;
  \draw (A) -| (B) node[pos=0.9,anchor= west] {\hspace{-0.2cm} $\scriptscriptstyle -1$};
  \end{loglogaxis}
\end{tikzpicture}
 \end{center}\caption{Finite element convergence for the point evaluation at $\Bx=(0,0.01)$, with, in \eqref{eq:radiusy}, $y_j=-1$ for $j=1,\ldots,J$, using linear finite elements. Coefficient sequence $\beta_{2j-1}=\beta_{2j}=j^{-\frac{1}{p}}$ with $\frac{1}{p}=2$ (left), $\frac{1}{p}=3$ (center) and $\frac{1}{p}=4$ (right) and $j=1\ldots \tfrac{J}{2}$.}\label{fig:fecvgout}
 \end{figure}
 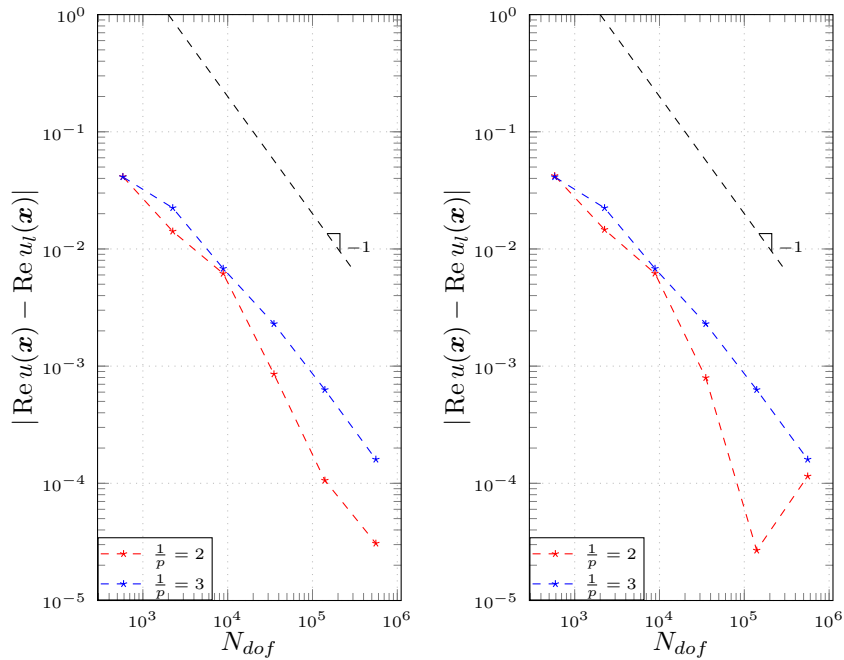
\begin{figure}
  \begin{center}
\begin{tikzpicture}
  \begin{loglogaxis}[
 xlabel={\footnotesize $N_{dof}$},
 ylabel={\footnotesize $| \Re u(\Bx)-\Re u_l(\Bx)|$},
 grid=major, legend entries={{$\tfrac{1}{p}=2$},{$\tfrac{1}{p}=3$}}, ymin=1e-5,ymax=1
  ]
    \settable{ndofs.txt}{FEcvgcross_z2_d32.txt}
    \addplot[mark=star,mark size=1.4pt,Red,dashed,mark options={solid}] table[x=data,y expr = \thisrowno{0}]{\datatable};
      \settable{ndofs.txt}{FEcvgcross_z3_d32.txt}
    \addplot[mark=star,mark size=1.4pt,Blue,dashed,mark options={solid}] table[x=data,y expr = \thisrowno{0}]{\datatable};    
        \addplot[black,domain=2*10^3:10^5.5,samples=97,style=dashed]{x^(-1)/(0.5*10^(-3))*(1)}
          coordinate [pos=0.85] (A)
  coordinate [pos=0.92] (B)
  ;
  \draw (A) -| (B) node[pos=0.9,anchor= west] {\hspace{-0.2cm} $\scriptscriptstyle -1$};
  \end{loglogaxis}
\end{tikzpicture}
\begin{tikzpicture}
  \begin{loglogaxis}[
 xlabel={\footnotesize $N_{dof}$},
 ylabel={\footnotesize $| \Re u(\Bx)-\Re u_l(\Bx)|$},
 grid=major, legend entries={{$\tfrac{1}{p}=2$},{$\tfrac{1}{p}=3$}}, ymin=1e-5,ymax=1
  ]
    \settable{ndofs.txt}{FEcvgcross_z2_d64.txt}
    \addplot[mark=star,mark size=1.4pt,Red,dashed,mark options={solid}] table[x=data,y expr = \thisrowno{0}]{\datatable};
      \settable{ndofs.txt}{FEcvgcross_z3_d64.txt}
    \addplot[mark=star,mark size=1.4pt,Blue,dashed,mark options={solid}] table[x=data,y expr = \thisrowno{0}]{\datatable};    
        \addplot[black,domain=2*10^3:10^5.5,samples=97,style=dashed]{x^(-1)/(0.5*10^(-3))*(1)}
          coordinate [pos=0.85] (A)
  coordinate [pos=0.92] (B)
  ;
  \draw (A) -| (B) node[pos=0.9,anchor= west] {\hspace{-0.2cm} $\scriptscriptstyle -1$};
  \end{loglogaxis}
\end{tikzpicture}
\end{center}\caption{Point evaluation at $\Bx=(0,0.01)$, with, in \eqref{eq:radiusy}, $y_j=-1$ for $j=1,\ldots,J$, using linear finite elements: convergence of solution for $J=32$ to solution for $J=16$ (left) and convergence of solution for $J=64$ to solution for $J=16$ (right). In each of the two cases, the coefficient sequences $\beta_{2j-1}=\beta_{2j}=j^{-\frac{1}{p}}$, $j=1,\ldots,\tfrac{J}{2}$, with $\frac{1}{p}=2$ and $\frac{1}{p}=3$ are considered.}\label{fig:fecvgcross}
\end{figure}

\section{MLMC for point evaluation: numerical experiments}\label{sect:numexp}
In this section we report the numerical results for the estimation of $\bbE_{\mu}\left[\Re \Bu\right]$, where $\Bu(\By)=\left\{u(\By;\Bx_i)\right\}_{i=0}^{N-1}$ is a set of $N$ point evaluations of the solution $u$ to \eqref{eq:modelpb}. We consider the cases of $N=1,2,4,8$ point evaluations, and define, for $N$ fixed, $\Bx_i = r_0(\cos\varphi_i,\sin\varphi_i)$ with $\varphi_i=2\pi\frac{i}{N}$, $i=0,\ldots,N-1$.

In the radius expansion, we compare the three decays of the coefficient sequence $\beta_{2j-1}=\beta_{2j}=0.1 r_0 j^{-\frac{1}{p}}$, $j=1,\ldots,\tfrac{J}{2}$, with $\tfrac{1}{p}=2,3,4$, and dimensions $J=16,32,64$ of the parameter space.

The physical and geometrical parameters and the domain mapping are as in subsection \ref{ssect:fecvgnumexp}. For the MLMC levels, we consider the first five meshes used in the finite element convergence studies of the previous section, that is unstructed, quasi-uniform meshes with $581$, $2250$, $8855$, $35133$, and $139961$ degrees of freedom, corresponding to $L=0,\ldots,4$, respectively. The finite element setting is as in the previous section (same PML parameters, first order elements, same finite element solver).

The MLMC estimators have been computed using the gMLQMC library\footnote{https://gitlab.math.ethz.ch/gantnerr/gMLQMC} \cite{Gantner}, with distribution of the samples among the levels determined by solving the optimization problem of minimizing the total error for a given amount of total computational cost. The work per sample has been estimated as $\operatorname{Work}_l=N_{dof,l}\cdot J$, $l=0,\ldots L$, where $N_{dof,l}$ is the number of finite element degrees of freedom at level $l$, and $J$ the dimension of the parameter space. The total work is calculated as $\operatorname{Work}_{tot}=\sum_{l=0}^L\operatorname{Work}_l$. To compute the total error, we have taken into account the logarithmic factor in the convergence rate as from Theorem \ref{thm:finalcvg}. The distribution of the samples among the levels used in all our experiments is reported in Table \ref{tab:samples}.

The error $\lVert \bbE_{\mu}[\Re\Bu]-E^L[\Re\Bu]\rVert_{L^2(\pspace_J,\bbR^N)}$ has been approximated by the average over $10$ realizations of it, considering, on $\bbR^N$, the Euclidean norm. As reference solution for $\bbE_{\mu}[\Re\Bu]$, we use the MLMC estimator $E^L[\Re\Bu]$ for $L=5$, where the mesh at the fifth level consists of $558705$ degrees of freedom.

Figure \ref{fig:mlmc1pt} shows the error versus work for one point evaluation, that is when $\Bu=u(\Bx)$ with $\Bx=(r_0,0)$. For this case, the error has been computed not only with respect to the MLMC estimator for $L=5$, but also with respect to the solution obtained by the Smolyak algorithm with $\mathfrak{R}$-Leja quadrature points before the estimated error saturates (cf. Fig. \ref{fig:smolyak}). The dashed line reports the theoretical rate of error versus work estimated when running the optimization algorithm to choose the number of samples at each level. In Figure \ref{fig:slmc}, we compare, for the case of a $16$-dimensional parameter space, the performance of the MLMC estimator with the single level estimator when samples chosen as $M=N_{dof}^2(\log N_{dof})^{-2}$ (for the single level estimator the error is computed over $15$ repetitions).

Figures \ref{fig:mlmc2pts}, \ref{fig:mlmc4pts} and \ref{fig:mlmc8pts} show the performance of MLMC when considering, respectively, $2$, $4$ and $8$ point evaluations.

\begin{table}
\footnotesize
\centering
 \begin{tabular}{|c|c|c|c|c|c|c|}
 \hline
 Maximal level & $M_0$ & $M_1$ & $M_2$ & $M_3$ & $M_4$ & $M_5$\\
 \hline
 $L=0$ & 1 & & & & & \\
 $L=1$ & 31 & 6 & & & & \\
 $L=2$ & 570 & 107 & 20 & & & \\
 $L=3$ & 9075 & 1697 & 305 & 54 & & \\
 $L=4$ & 134460 & 25144 & 4513 & 790 & 136 & \\
 $L=5$ & 1923719 & 359729 & 64557 & 11293 & 1943 & 331 \\
 \hline
\end{tabular} \caption{Number of samples $(M_l)_{l=0}^L$ for the numerical experiments of this section.}\label{tab:samples}
\end{table}

 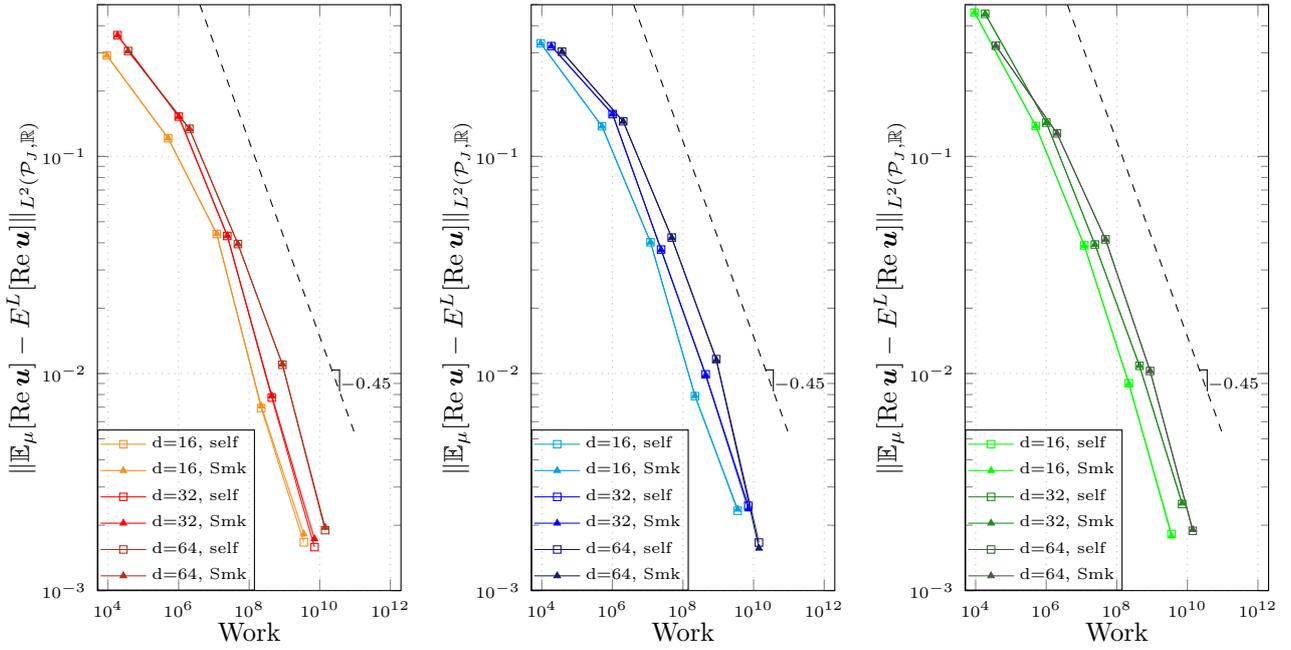
\begin{figure}
 \begin{center}
\noindent\begin{tikzpicture}
 \begin{loglogaxis}[
 xlabel={\footnotesize Work},
 ylabel={\footnotesize $\lVert \bbE_{\mu}[\Re\Bu]-E^L[\Re\Bu]\rVert_{L^2(\pspace_J,\bbR)}$},
 grid=major, legend entries={{d=16, self},{d=16, Smk},{d=32, self},{d=32, Smk},{d=64, self},{d=64, Smk}}, xmin=5e3, xmax=2e12, ymin=1e-3, ymax=5e-1
  ]
 
    \settable{work16.txt}{L2err_self_z2d16_1pt.txt}
    \addplot[mark=square,mark size=1.4pt,BurntOrange] table[x=data,y expr = \thisrowno{0}]{\datatable};
        \settable{work16.txt}{L2err_Smolyak_z2d16.txt}
    \addplot[mark=triangle*,mark size=1.4pt,BurntOrange] table[x=data,y expr = \thisrowno{0}]{\datatable};
    
        \settable{work32.txt}{L2err_self_z2d32_1pt.txt}
    \addplot[mark=square,mark size=1.4pt,Red] table[x=data,y expr = \thisrowno{0}]{\datatable};
        \settable{work32.txt}{L2err_Smolyak_z2d32.txt}
    \addplot[mark=triangle*,mark size=1.4pt,Red] table[x=data,y expr = \thisrowno{0}]{\datatable};
    
            \settable{work64.txt}{L2err_self_z2d64_1pt.txt}
    \addplot[mark=square,mark size=1.4pt,BrickRed] table[x=data,y expr = \thisrowno{0}]{\datatable};
        \settable{work64.txt}{L2err_Smolyak_z2d64.txt}
    \addplot[mark=triangle*,mark size=1.4pt,BrickRed] table[x=data,y expr = \thisrowno{0}]{\datatable};
    
                \addplot[black,domain=4*10^6:10^11,samples=97,style=dashed]{x^(-0.45)/((4^(-0.45))*10^(-(6*0.45)))*(5e-1)}
          coordinate [pos=0.85] (A)
  coordinate [pos=0.9] (B)
  ;
  \draw (A) -| (B) node[pos=0.85,anchor= west] {\hspace{-0.27cm} $\scriptscriptstyle -0.45$};
  \end{loglogaxis}
  \end{tikzpicture}
  \begin{tikzpicture}
   \begin{loglogaxis}[
 xlabel={\footnotesize Work},
 ylabel={\footnotesize $\lVert \bbE_{\mu}[\Re\Bu]-E^L[\Re\Bu]\rVert_{L^2(\pspace_J,\bbR)}$},
 grid=major, legend entries={{d=16, self},{d=16, Smk},{d=32, self},{d=32, Smk},{d=64, self},{d=64, Smk}}, xmin=5e3, xmax=2e12, ymin=1e-3, ymax=5e-1
  ]
    
    \settable{work16.txt}{L2err_self_z3d16_1pt.txt}
    \addplot[mark=square,mark size=1.4pt,Cerulean] table[x=data,y expr = \thisrowno{0}]{\datatable};
        \settable{work16.txt}{L2err_Smolyak_z3d16.txt}
    \addplot[mark=triangle*,mark size=1.4pt,Cerulean] table[x=data,y expr = \thisrowno{0}]{\datatable};
    
        \settable{work32.txt}{L2err_self_z3d32_1pt.txt}
    \addplot[mark=square,mark size=1.4pt,blue] table[x=data,y expr = \thisrowno{0}]{\datatable};
        \settable{work32.txt}{L2err_Smolyak_z3d32.txt}
    \addplot[mark=triangle*,mark size=1.4pt,blue] table[x=data,y expr = \thisrowno{0}]{\datatable};
    
            \settable{work64.txt}{L2err_self_z3d64_1pt.txt}
    \addplot[mark=square,mark size=1.4pt,MidnightBlue] table[x=data,y expr = \thisrowno{0}]{\datatable};
        \settable{work64.txt}{L2err_Smolyak_z3d64.txt}
    \addplot[mark=triangle*,mark size=1.4pt,MidnightBlue] table[x=data,y expr = \thisrowno{0}]{\datatable};
    
                \addplot[black,domain=4*10^6:10^11,samples=97,style=dashed]{x^(-0.45)/((4^(-0.45))*10^(-(6*0.45)))*(5e-1)}
          coordinate [pos=0.85] (A)
  coordinate [pos=0.9] (B)
  ;
  \draw (A) -| (B) node[pos=0.85,anchor= west] {\hspace{-0.27cm} $\scriptscriptstyle -0.45$};
  \end{loglogaxis}
  \end{tikzpicture}
    \begin{tikzpicture}
   \begin{loglogaxis}[
 xlabel={\footnotesize Work},
 ylabel={\footnotesize $\lVert \bbE_{\mu}[\Re\Bu]-E^L[\Re\Bu]\rVert_{L^2(\pspace_J,\bbR)}$},
 grid=major, legend entries={{d=16, self},{d=16, Smk},{d=32, self},{d=32, Smk},{d=64, self},{d=64, Smk}}, xmin=5e3, xmax=2e12, ymin=1e-3, ymax=5e-1
  ]
    
    \settable{work16.txt}{L2err_self_z4d16_1pt.txt}
    \addplot[mark=square,mark size=1.4pt,green] table[x=data,y expr = \thisrowno{0}]{\datatable};
        \settable{work16.txt}{L2err_Smolyak_z4d16.txt}
    \addplot[mark=triangle*,mark size=1.4pt,green] table[x=data,y expr = \thisrowno{0}]{\datatable};
    
        \settable{work32.txt}{L2err_self_z4d32_1pt.txt}
    \addplot[mark=square,mark size=1.4pt,ForestGreen] table[x=data,y expr = \thisrowno{0}]{\datatable};
        \settable{work32.txt}{L2err_Smolyak_z4d32.txt}
    \addplot[mark=triangle*,mark size=1.4pt,ForestGreen] table[x=data,y expr = \thisrowno{0}]{\datatable};
    
            \settable{work64.txt}{L2err_self_z4d64_1pt.txt}
    \addplot[mark=square,mark size=1.4pt,OliveGreen!50!Black] table[x=data,y expr = \thisrowno{0}]{\datatable};
        \settable{work64.txt}{L2err_Smolyak_z4d64.txt}
    \addplot[mark=triangle*,mark size=1.4pt,OliveGreen!50!Black] table[x=data,y expr = \thisrowno{0}]{\datatable};
    
                \addplot[black,domain=4*10^6:10^11,samples=97,style=dashed]{x^(-0.45)/((4^(-0.45))*10^(-(6*0.45)))*(5e-1)}
          coordinate [pos=0.85] (A)
  coordinate [pos=0.9] (B)
  ;
  \draw (A) -| (B) node[pos=0.85,anchor= west] {\hspace{-0.27cm} $\scriptscriptstyle -0.45$};
  \end{loglogaxis}
  \end{tikzpicture}
  \end{center}\caption{MLMC convergence for $5$ mesh levels ($L=4$) and one point evaluation ($\Bu=u(\Bx)$ with $\Bx=(r_0,0)$). Coefficient sequence $\beta_j=(j')^{-\frac{1}{p}}$ with $\frac{1}{p}=2$ (left), $\frac{1}{p}=3$ (center) and $\frac{1}{p}=4$ (right). Reference solution computed with MLMC on $6$ levels ($L=5$, label `self') and with the adaptive Smolyak algorithm (label `Smk'). The dashed line corresponds to the theoretical error versus work rate estimated when running the optimization algorithm to choose the number of samples at each level.}\label{fig:mlmc1pt}
  \end{figure}
 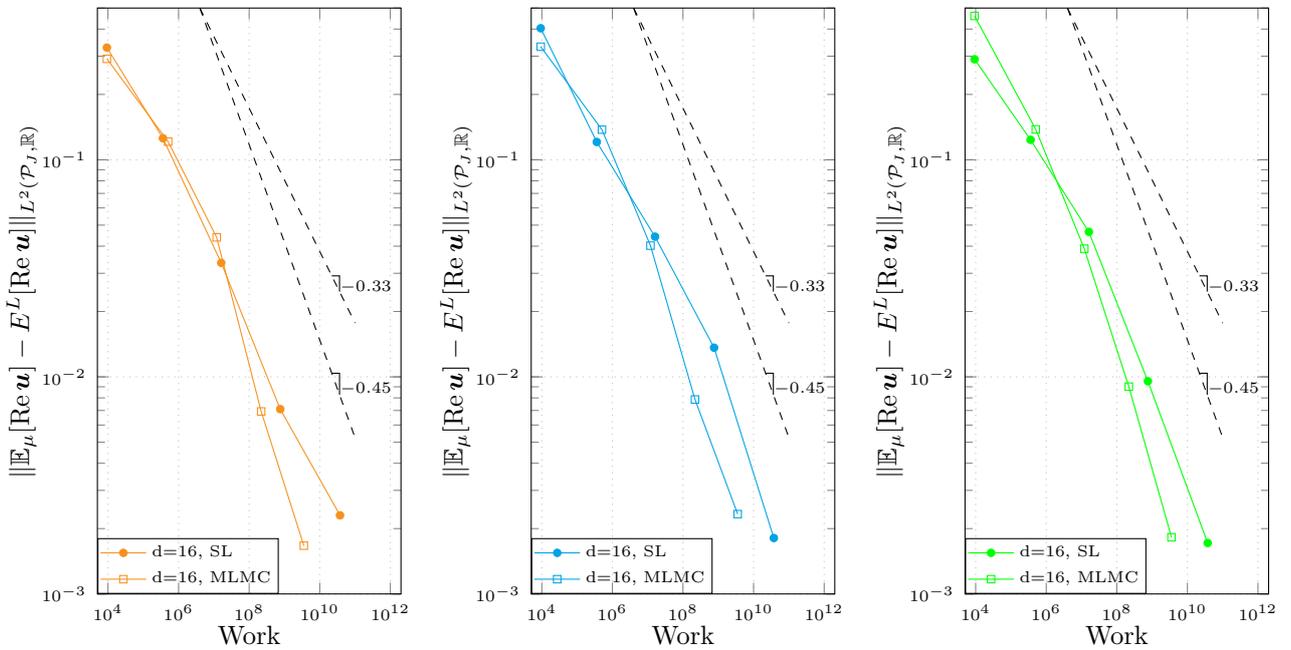
\begin{figure}
 \begin{center}
\noindent\begin{tikzpicture}
 \begin{loglogaxis}[
 xlabel={\footnotesize Work},
 ylabel={\footnotesize $\lVert \bbE_{\mu}[\Re\Bu]-E^L[\Re\Bu]\rVert_{L^2(\pspace_J,\bbR)}$},
 grid=major, legend entries={{d=16, SL},{d=16, MLMC}}, xmin=5e3, xmax=2e12, ymin=1e-3, ymax=5e-1
  ]
  \settable{workMC.txt}{L2errMC_z2.txt}
    \addplot[mark=*,mark size=1.4pt,BurntOrange] table[x=data,y expr = \thisrowno{0}]{\datatable};
  
    \settable{work16.txt}{L2err_self_z2d16_1pt.txt}
    \addplot[mark=square,mark size=1.4pt,BurntOrange] table[x=data,y expr = \thisrowno{0}]{\datatable};
    
                \addplot[black,domain=4*10^6:10^11,samples=97,style=dashed]{x^(-0.45)/((4^(-0.45))*10^(-(6*0.45)))*(5e-1)}
          coordinate [pos=0.85] (A)
  coordinate [pos=0.9] (B)
  ;
           \addplot[black,domain=4*10^6:10^11,samples=97,style=dashed]{x^(-0.33)/((4^(-0.33))*10^(-(6*0.33)))*(5e-1)}
          coordinate [pos=0.85] (C)
  coordinate [pos=0.9] (D)
  ;
  \draw (A) -| (B) node[pos=0.85,anchor= west] {\hspace{-0.27cm} $\scriptscriptstyle -0.45$};
  \draw (C) -| (D) node[pos=0.85,anchor= west] {\hspace{-0.27cm} $\scriptscriptstyle -0.33$};
  \end{loglogaxis}
  \end{tikzpicture}
  \begin{tikzpicture}
   \begin{loglogaxis}[
 xlabel={\footnotesize Work},
 ylabel={\footnotesize $\lVert \bbE_{\mu}[\Re\Bu]-E^L[\Re\Bu]\rVert_{L^2(\pspace_J,\bbR)}$},
 grid=major, legend entries={{d=16, SL},{d=16, MLMC}}, xmin=5e3, xmax=2e12, ymin=1e-3, ymax=5e-1
  ]
  
  \settable{workMC.txt}{L2errMC_z3.txt}
    \addplot[mark=*,mark size=1.4pt,Cerulean] table[x=data,y expr = \thisrowno{0}]{\datatable};
    
    \settable{work16.txt}{L2err_self_z3d16_1pt.txt}
    \addplot[mark=square,mark size=1.4pt,Cerulean] table[x=data,y expr = \thisrowno{0}]{\datatable};
    
    \addplot[black,domain=4*10^6:10^11,samples=97,style=dashed]{x^(-0.45)/((4^(-0.45))*10^(-(6*0.45)))*(5e-1)}
          coordinate [pos=0.85] (A)
  coordinate [pos=0.9] (B)
  ;
  \addplot[black,domain=4*10^6:10^11,samples=97,style=dashed]{x^(-0.33)/((4^(-0.33))*10^(-(6*0.33)))*(5e-1)}
          coordinate [pos=0.85] (C)
  coordinate [pos=0.9] (D)
  ;
  \draw (A) -| (B) node[pos=0.85,anchor= west] {\hspace{-0.27cm} $\scriptscriptstyle -0.45$};
  \draw (C) -| (D) node[pos=0.85,anchor= west] {\hspace{-0.27cm} $\scriptscriptstyle -0.33$};
  \end{loglogaxis}
  \end{tikzpicture}
    \begin{tikzpicture}
   \begin{loglogaxis}[
 xlabel={\footnotesize Work},
 ylabel={\footnotesize $\lVert \bbE_{\mu}[\Re\Bu]-E^L[\Re\Bu]\rVert_{L^2(\pspace_J,\bbR)}$},
 grid=major, legend entries={{d=16, SL}, {d=16, MLMC}}, xmin=5e3, xmax=2e12, ymin=1e-3, ymax=5e-1
  ]
  
  \settable{workMC.txt}{L2errMC_z4.txt}
    \addplot[mark=*,mark size=1.4pt,green] table[x=data,y expr = \thisrowno{0}]{\datatable};
    
    \settable{work16.txt}{L2err_self_z4d16_1pt.txt}
    \addplot[mark=square,mark size=1.4pt,green] table[x=data,y expr = \thisrowno{0}]{\datatable};
    
    \addplot[black,domain=4*10^6:10^11,samples=97,style=dashed]{x^(-0.45)/((4^(-0.45))*10^(-(6*0.45)))*(5e-1)}
          coordinate [pos=0.85] (A)
  coordinate [pos=0.9] (B)
  ;
  
   \addplot[black,domain=4*10^6:10^11,samples=97,style=dashed]{x^(-0.33)/((4^(-0.33))*10^(-(6*0.33)))*(5e-1)}
          coordinate [pos=0.85] (C)
  coordinate [pos=0.9] (D)
  ;
  \draw (A) -| (B) node[pos=0.85,anchor= west] {\hspace{-0.27cm} $\scriptscriptstyle -0.45$};
  \draw (C) -| (D) node[pos=0.85,anchor= west] {\hspace{-0.27cm} $\scriptscriptstyle -0.33$};
  \end{loglogaxis}
  \end{tikzpicture}
  \end{center}\caption{Comparison of MLMC and single level MC for $d=16$, $5$ mesh levels and one point evaluation ($\Bu=u(\Bx)$ with $\Bx=(r_0,0)$). Coefficient sequence $\beta_j=(j')^{-\frac{1}{p}}$ with $\frac{1}{p}=2$ (left), $\frac{1}{p}=3$ (center) and $\frac{1}{p}=4$ (right). Reference solution computed with MLMC on $6$ levels. The dashed line corresponds to the theoretical error versus work rates: $0.45$ for MLMC and $0.33$ for single level MC.}\label{fig:slmc}
  \end{figure}
 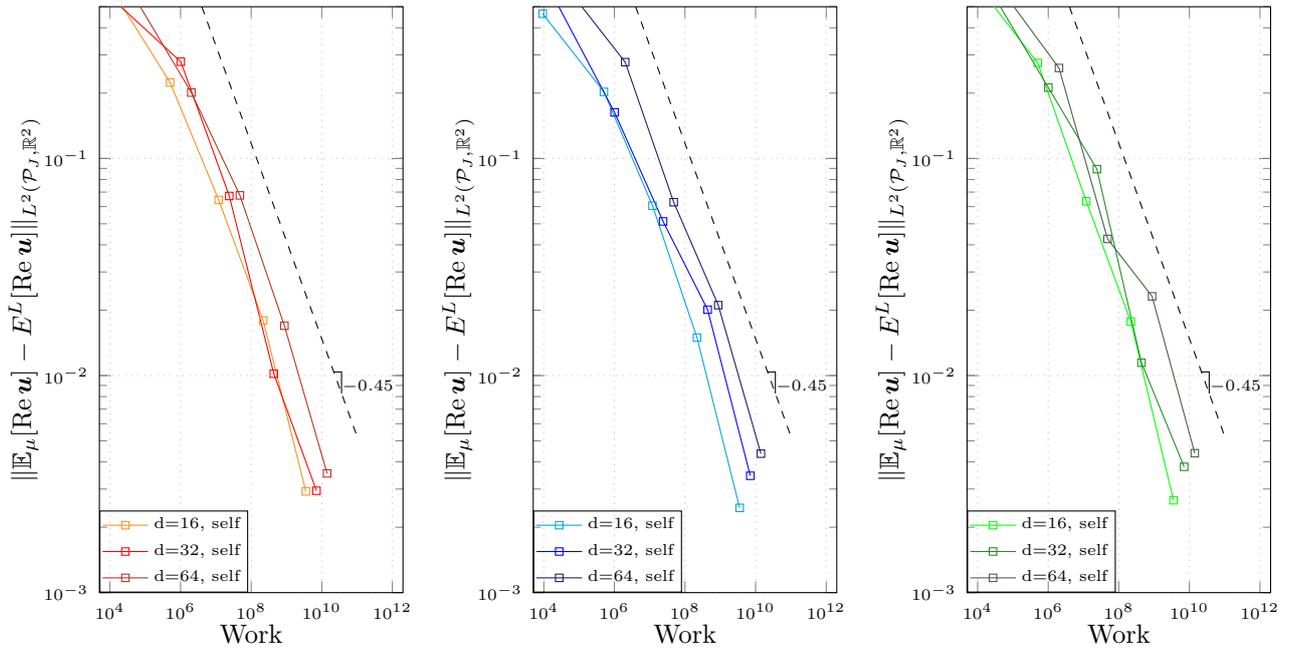
\begin{figure}
 \begin{center}
\noindent\begin{tikzpicture}
 \begin{loglogaxis}[
 xlabel={\footnotesize Work},
 ylabel={\footnotesize $\lVert \bbE_{\mu}[\Re\Bu]-E^L[\Re\Bu]\rVert_{L^2(\pspace_J,\bbR^2)}$},
 grid=major, legend entries={{d=16, self},{d=32, self},{d=64, self}}, xmin=5e3, xmax=2e12, ymin=1e-3, ymax=5e-1
  ]
    \settable{work16.txt}{L2err_self_z2d16_2pt.txt}
    \addplot[mark=square,mark size=1.4pt,BurntOrange] table[x=data,y expr = \thisrowno{0}]{\datatable};
    
        \settable{work32.txt}{L2err_self_z2d32_2pt.txt}
    \addplot[mark=square,mark size=1.4pt,Red] table[x=data,y expr = \thisrowno{0}]{\datatable};
    
            \settable{work64.txt}{L2err_self_z2d64_2pt.txt}
    \addplot[mark=square,mark size=1.4pt,BrickRed] table[x=data,y expr = \thisrowno{0}]{\datatable};
    
                \addplot[black,domain=4*10^6:10^11,samples=97,style=dashed]{x^(-0.45)/((4^(-0.45))*10^(-(6*0.45)))*(5e-1)}
          coordinate [pos=0.85] (A)
  coordinate [pos=0.9] (B)
  ;
  \draw (A) -| (B) node[pos=0.85,anchor= west] {\hspace{-0.27cm} $\scriptscriptstyle -0.45$};
  \end{loglogaxis}
  \end{tikzpicture}
  \begin{tikzpicture}
   \begin{loglogaxis}[
 xlabel={\footnotesize Work},
 ylabel={\footnotesize $\lVert \bbE_{\mu}[\Re\Bu]-E^L[\Re\Bu]\rVert_{L^2(\pspace_J,\bbR^2)}$},
 grid=major, legend entries={{d=16, self},{d=32, self},{d=64, self}}, xmin=5e3, xmax=2e12, ymin=1e-3, ymax=5e-1
  ]
    \settable{work16.txt}{L2err_self_z3d16_2pt.txt}
    \addplot[mark=square,mark size=1.4pt,Cerulean] table[x=data,y expr = \thisrowno{0}]{\datatable};
    
        \settable{work32.txt}{L2err_self_z3d32_2pt.txt}
    \addplot[mark=square,mark size=1.4pt,blue] table[x=data,y expr = \thisrowno{0}]{\datatable};
    
            \settable{work64.txt}{L2err_self_z3d64_2pt.txt}
    \addplot[mark=square,mark size=1.4pt,MidnightBlue] table[x=data,y expr = \thisrowno{0}]{\datatable};
    
                \addplot[black,domain=4*10^6:10^11,samples=97,style=dashed]{x^(-0.45)/((4^(-0.45))*10^(-(6*0.45)))*(5e-1)}
          coordinate [pos=0.85] (A)
  coordinate [pos=0.9] (B)
  ;
  \draw (A) -| (B) node[pos=0.85,anchor= west] {\hspace{-0.27cm} $\scriptscriptstyle -0.45$};
  \end{loglogaxis}
  \end{tikzpicture}
    \begin{tikzpicture}
   \begin{loglogaxis}[
 xlabel={\footnotesize Work},
 ylabel={\footnotesize $\lVert \bbE_{\mu}[\Re\Bu]-E^L[\Re\Bu]\rVert_{L^2(\pspace_J,\bbR^2)}$},
 grid=major, legend entries={{d=16, self},{d=32, self},{d=64, self}}, xmin=5e3, xmax=2e12, ymin=1e-3, ymax=5e-1
  ]
    \settable{work16.txt}{L2err_self_z4d16_2pt.txt}
    \addplot[mark=square,mark size=1.4pt,green] table[x=data,y expr = \thisrowno{0}]{\datatable};
    
        \settable{work32.txt}{L2err_self_z4d32_2pt.txt}
    \addplot[mark=square,mark size=1.4pt,ForestGreen] table[x=data,y expr = \thisrowno{0}]{\datatable};
    
            \settable{work64.txt}{L2err_self_z4d64_2pt.txt}
    \addplot[mark=square,mark size=1.4pt,OliveGreen!50!Black] table[x=data,y expr = \thisrowno{0}]{\datatable};
    
                \addplot[black,domain=4*10^6:10^11,samples=97,style=dashed]{x^(-0.45)/((4^(-0.45))*10^(-(6*0.45)))*(5e-1)}
          coordinate [pos=0.85] (A)
  coordinate [pos=0.9] (B)
  ;
  \draw (A) -| (B) node[pos=0.85,anchor= west] {\hspace{-0.27cm} $\scriptscriptstyle -0.45$};
  \end{loglogaxis}
  \end{tikzpicture}
  \end{center}\caption{MLMC convergence for $5$ mesh levels ($L=4$) and two point evaluations ($\Bu=\left\{u(\Bx_i)\right\}_{i=1}^2$, with $\Bx_1=(r_0,0)$ and $\Bx_2=(-r_0,0)$). Coefficient sequence $\beta_j=(j')^{-\frac{1}{p}}$ with $\frac{1}{p}=2$ (left), $\frac{1}{p}=3$ (center) and $\frac{1}{p}=4$ (right). Reference solution computed with MLMC on $6$ levels ($L=5$). The dashed line corresponds to the theoretical error versus work rate estimated when running the optimization algorithm to choose the number of samples at each level.}\label{fig:mlmc2pts}
  \end{figure}
 \begin{figure}
 \begin{center}
\noindent\begin{tikzpicture}
 \begin{loglogaxis}[
 xlabel={\footnotesize Work},
 ylabel={\footnotesize $\lVert \bbE_{\mu}[\Re\Bu]-E^L[\Re\Bu]\rVert_{L^2(\pspace_J,\bbR^4)}$},
 grid=major, legend entries={{d=16, self},{d=32, self},{d=64, self}}, xmin=5e3, xmax=2e12, ymin=1e-3, ymax=5e-1
  ]
    \settable{work16.txt}{L2err_self_z2d16_4pt.txt}
    \addplot[mark=square,mark size=1.4pt,BurntOrange] table[x=data,y expr = \thisrowno{0}]{\datatable};
    
        \settable{work32.txt}{L2err_self_z2d32_4pt.txt}
    \addplot[mark=square,mark size=1.4pt,Red] table[x=data,y expr = \thisrowno{0}]{\datatable};
    
            \settable{work64.txt}{L2err_self_z2d64_4pt.txt}
    \addplot[mark=square,mark size=1.4pt,BrickRed] table[x=data,y expr = \thisrowno{0}]{\datatable};
    
                \addplot[black,domain=4*10^6:10^11,samples=97,style=dashed]{x^(-0.45)/((4^(-0.45))*10^(-(6*0.45)))*(5e-1)}
          coordinate [pos=0.85] (A)
  coordinate [pos=0.9] (B)
  ;
  \draw (A) -| (B) node[pos=0.85,anchor= west] {\hspace{-0.27cm} $\scriptscriptstyle -0.45$};
  \end{loglogaxis}
  \end{tikzpicture}
  \begin{tikzpicture}
   \begin{loglogaxis}[
 xlabel={\footnotesize Work},
 ylabel={\footnotesize $\lVert \bbE_{\mu}[\Re\Bu]-E^L[\Re\Bu]\rVert_{L^2(\pspace_J,\bbR^4)}$},
 grid=major, legend entries={{d=16, self},{d=32, self},{d=64, self}}, xmin=5e3, xmax=2e12, ymin=1e-3, ymax=5e-1
  ]
    \settable{work16.txt}{L2err_self_z3d16_4pt.txt}
    \addplot[mark=square,mark size=1.4pt,Cerulean] table[x=data,y expr = \thisrowno{0}]{\datatable};
    
        \settable{work32.txt}{L2err_self_z3d32_4pt.txt}
    \addplot[mark=square,mark size=1.4pt,blue] table[x=data,y expr = \thisrowno{0}]{\datatable};
    
            \settable{work64.txt}{L2err_self_z3d64_4pt.txt}
    \addplot[mark=square,mark size=1.4pt,MidnightBlue] table[x=data,y expr = \thisrowno{0}]{\datatable};
    
                \addplot[black,domain=4*10^6:10^11,samples=97,style=dashed]{x^(-0.45)/((4^(-0.45))*10^(-(6*0.45)))*(5e-1)}
          coordinate [pos=0.85] (A)
  coordinate [pos=0.9] (B)
  ;
  \draw (A) -| (B) node[pos=0.85,anchor= west] {\hspace{-0.27cm} $\scriptscriptstyle -0.45$};
  \end{loglogaxis}
  \end{tikzpicture}
    \begin{tikzpicture}
   \begin{loglogaxis}[
 xlabel={\footnotesize Work},
 ylabel={\footnotesize $\lVert \bbE_{\mu}[\Re\Bu]-E^L[\Re\Bu]\rVert_{L^2(\pspace_J,\bbR^4)}$},
 grid=major, legend entries={{d=16, self},{d=32, self},{d=64, self}}, xmin=5e3, xmax=2e12, ymin=1e-3, ymax=5e-1
  ]
    \settable{work16.txt}{L2err_self_z4d16_4pt.txt}
    \addplot[mark=square,mark size=1.4pt,green] table[x=data,y expr = \thisrowno{0}]{\datatable};
    
        \settable{work32.txt}{L2err_self_z4d32_4pt.txt}
    \addplot[mark=square,mark size=1.4pt,ForestGreen] table[x=data,y expr = \thisrowno{0}]{\datatable};
    
            \settable{work64.txt}{L2err_self_z4d64_4pt.txt}
    \addplot[mark=square,mark size=1.4pt,OliveGreen!50!Black] table[x=data,y expr = \thisrowno{0}]{\datatable};
    
                \addplot[black,domain=4*10^6:10^11,samples=97,style=dashed]{x^(-0.45)/((4^(-0.45))*10^(-(6*0.45)))*(5e-1)}
          coordinate [pos=0.85] (A)
  coordinate [pos=0.9] (B)
  ;
  \draw (A) -| (B) node[pos=0.85,anchor= west] {\hspace{-0.27cm} $\scriptscriptstyle -0.45$};
  \end{loglogaxis}
  \end{tikzpicture}
  \end{center}\caption{MLMC convergence for $5$ mesh levels ($L=4$) and four point evaluations ($\Bu=\left\{u(\Bx_i)\right\}_{i=1}^4$, with $\Bx_1=(r_0,0)$, $\Bx_2=(0,r_0)$, $\Bx_3=(-r_0,0)$, $\Bx_4=(0,-r_0)$). Coefficient sequence $\beta_j=(j')^{-\frac{1}{p}}$ with $\frac{1}{p}=2$ (left), $\frac{1}{p}=3$ (center) and $\frac{1}{p}=4$ (right). Reference solution computed with MLMC on $6$ levels ($L=5$). The dashed line corresponds to the theoretical error versus work rate estimated when running the optimization algorithm to choose the number of samples at each level.}\label{fig:mlmc4pts}
  \end{figure}
 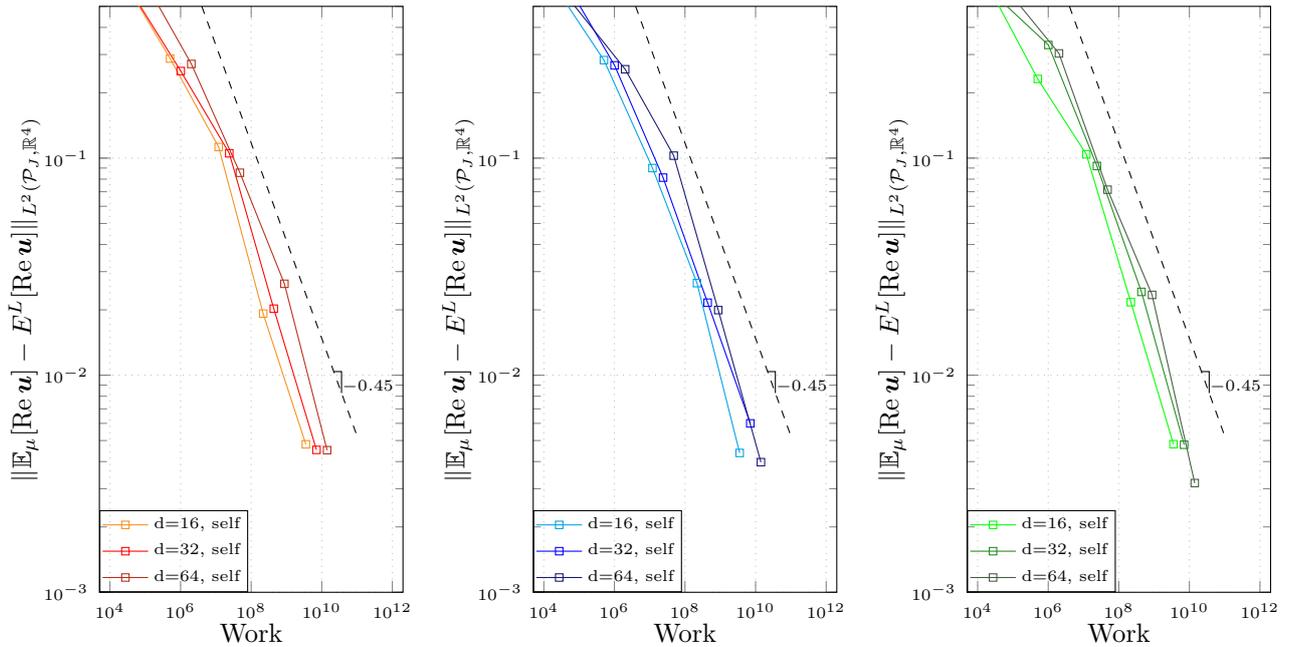
\begin{figure}
 \begin{center}
\noindent\begin{tikzpicture}
 \begin{loglogaxis}[
 xlabel={\footnotesize Work},
 ylabel={\footnotesize $\lVert \bbE_{\mu}[\Re\Bu]-E^L[\Re\Bu]\rVert_{L^2(\pspace_J,\bbR^4)}$},
 grid=major, legend entries={{d=16, self},{d=32, self},{d=64, self}}, xmin=5e3, xmax=2e12, ymin=1e-3, ymax=5e-1
  ]
    \settable{work16.txt}{L2err_self_z2d16_8pt.txt}
    \addplot[mark=square,mark size=1.4pt,BurntOrange] table[x=data,y expr = \thisrowno{0}]{\datatable};
    
        \settable{work32.txt}{L2err_self_z2d32_8pt.txt}
    \addplot[mark=square,mark size=1.4pt,Red] table[x=data,y expr = \thisrowno{0}]{\datatable};
    
            \settable{work64.txt}{L2err_self_z2d64_8pt.txt}
    \addplot[mark=square,mark size=1.4pt,BrickRed] table[x=data,y expr = \thisrowno{0}]{\datatable};
    
                \addplot[black,domain=4*10^6:10^11,samples=97,style=dashed]{x^(-0.45)/((4^(-0.45))*10^(-(6*0.45)))*(5e-1)}
          coordinate [pos=0.85] (A)
  coordinate [pos=0.9] (B)
  ;
  \draw (A) -| (B) node[pos=0.85,anchor= west] {\hspace{-0.27cm} $\scriptscriptstyle -0.45$};
  \end{loglogaxis}
  \end{tikzpicture}
  \begin{tikzpicture}
   \begin{loglogaxis}[
 xlabel={\footnotesize Work},
 ylabel={\footnotesize $\lVert \bbE_{\mu}[\Re\Bu]-E^L[\Re\Bu]\rVert_{L^2(\pspace_J,\bbR^4)}$},
 grid=major, legend entries={{d=16, self},{d=32, self},{d=64, self}}, xmin=5e3, xmax=2e12, ymin=1e-3, ymax=5e-1
  ]
    \settable{work16.txt}{L2err_self_z3d16_8pt.txt}
    \addplot[mark=square,mark size=1.4pt,Cerulean] table[x=data,y expr = \thisrowno{0}]{\datatable};
    
        \settable{work32.txt}{L2err_self_z3d32_8pt.txt}
    \addplot[mark=square,mark size=1.4pt,blue] table[x=data,y expr = \thisrowno{0}]{\datatable};
    
            \settable{work64.txt}{L2err_self_z3d64_8pt.txt}
    \addplot[mark=square,mark size=1.4pt,MidnightBlue] table[x=data,y expr = \thisrowno{0}]{\datatable};
    
                \addplot[black,domain=4*10^6:10^11,samples=97,style=dashed]{x^(-0.45)/((4^(-0.45))*10^(-(6*0.45)))*(5e-1)}
          coordinate [pos=0.85] (A)
  coordinate [pos=0.9] (B)
  ;
  \draw (A) -| (B) node[pos=0.85,anchor= west] {\hspace{-0.27cm} $\scriptscriptstyle -0.45$};
  \end{loglogaxis}
  \end{tikzpicture}
    \begin{tikzpicture}
   \begin{loglogaxis}[
 xlabel={\footnotesize Work},
 ylabel={\footnotesize $\lVert \bbE_{\mu}[\Re\Bu]-E^L[\Re\Bu]\rVert_{L^2(\pspace_J,\bbR^4)}$},
 grid=major, legend entries={{d=16, self},{d=32, self},{d=64, self}}, xmin=5e3, xmax=2e12, ymin=1e-3, ymax=5e-1
  ]
    \settable{work16.txt}{L2err_self_z4d16_8pt.txt}
    \addplot[mark=square,mark size=1.4pt,green] table[x=data,y expr = \thisrowno{0}]{\datatable};
    
        \settable{work32.txt}{L2err_self_z4d32_8pt.txt}
    \addplot[mark=square,mark size=1.4pt,ForestGreen] table[x=data,y expr = \thisrowno{0}]{\datatable};
    
            \settable{work64.txt}{L2err_self_z4d64_8pt.txt}
    \addplot[mark=square,mark size=1.4pt,OliveGreen!50!Black] table[x=data,y expr = \thisrowno{0}]{\datatable};
    
                 \addplot[black,domain=4*10^6:10^11,samples=97,style=dashed]{x^(-0.45)/((4^(-0.45))*10^(-(6*0.45)))*(5e-1)}
          coordinate [pos=0.85] (A)
  coordinate [pos=0.9] (B)
  ;
  \draw (A) -| (B) node[pos=0.85,anchor= west] {\hspace{-0.27cm} $\scriptscriptstyle -0.45$};
  \end{loglogaxis}
  \end{tikzpicture}
  \end{center}\caption{MLMC convergence for $5$ mesh levels ($L=4$) and eight point evaluations ($\Bu=\left\{u(\Bx_i)\right\}_{i=1}^8$, with $\Bx_i=(\cos(\varphi_i),\sin(\varphi_i))$, and $\varphi_i=\frac{2\pi}{8}(i-1)$, $i=1,\ldots,8$). Coefficient sequence $\beta_j=(j')^{-\frac{1}{p}}$ with $\frac{1}{p}=2$ (left), $\frac{1}{p}=3$ (center) and $\frac{1}{p}=4$ (right). Reference solution computed with MLMC on $6$ levels ($L=5$). The dashed line corresponds to the theoretical error versus work rate estimated when running the optimization algorithm to choose the number of samples at each level.}\label{fig:mlmc8pts}
  \end{figure}

From Figures \ref{fig:mlmc1pt}, \ref{fig:slmc}, \ref{fig:mlmc2pts}, \ref{fig:mlmc4pts} and \ref{fig:mlmc8pts} we can draw the following conclusions:
\begin{itemize}
 \item the convergence rate of error versus work predicted when running the optimization algorithm (dashed line with slope $-0.45$) is achieved, in all experiments; for low error thresholds, significant cost savings can be observed when comparing MLMC with single level MC;
 \item the right shift of the error curves as the dimension $J$ of the parameter space increases is only due to the fact that we compute the work of a single solve as  $\operatorname{Work}_l=N_{dof,l}\cdot J$, $l=0,\ldots L$; this increase of the computational cost with respect to $J$ is inevitable unless an algorithm to adapt $J$ to the discretization level is considered, as also suggested in the conclusions in \cite{CGST};
 \item the rate of convergence of MLMC is dimension robust; as already observed for the finite element convergence, and thus not surprisingly here, the results are even better than predicted by theory, in the sense that dimension robustness occurs also for $\frac{1}{p}=2$ ($\frac{1}{p}=3$ is a the limit case, see Theorem \ref{thm:finalcvg});
 \item requiring only square integrability of the Q.o.I., MLMC is robust with respect to the number of singularities in the parameter space, and provides full convergence rate for $N=2,4,8$ point evaluations; the plots show that the error increases as the number of point evaluations considered increases, but this is because the dimension of $\Bu$ does.
\end{itemize}

\section{Conclusions and extensions}
We have shown that the MLMC method is effective in computing statistics (in particular the mean) of a Q.o.I. whose dependence on the parameter is non-smooth, with discontinuities which are not easy to track. As model we have considered the computation of point values of the solution to a Helmholtz transmission problem with stochastic interface. For this case, we have analyzed the convergence rate of the finite element discretization and shown how it can be used to compute the optimal distribution of samples in the MLMC algorithm. Particular attention has been dedicated to the robustness of the convergence rates with respect to the dimension of the parameter space. The numerical experiments confirm the theoretical results, and show that maybe the result on the $J$-independence of the finite element convergence rate for the point evaluation can be improved.

Concerning the application to the point evaluation, we highlight that the results are not confined to our model problem. The affine parametrization of the stochastic interface does not need to be in polar coordinates and with respect to the Fourier basis: a more general expansion for a stochastic interface is possible, as long as $C^{1,\beta}$-smoothness is guaranteed. Moreover, the analysis on the space regularity of the solution to the PDE carries over to any other elliptic PDE associated to a coercive bilinear form, with a parameter-independent lower bound on the coercivity constant. Finally, the methodology presented in this paper still holds for three-dimensional problems.

The results of this work open the way to further investigations. As observed in Proposition \ref{prop:conty}, if the highest order PDE coefficient is continuous across the interface (i.e. $\alpha_2=1$ in our model problem), then the solution has $C^1$-dependence on the parameter, and it would be interesting to analyze the performance of quasi-Monte Carlo quadrature rules in this case. Another interesting aspect is the possibility to adapt the truncation dimension $J$ to the mesh levels in the MLMC algorithm. As observed in subsection \ref{ssect:fecvgnumexp},  indeed, it is likely that on coarser levels the high-frequency perturbations of the domain cannot be captured by the discretization, and this could be exploited to save computational effort and have a $J$-dependence of the cost of one solve which is milder than $\operatorname{Work}_l=N_{dof,l}\cdot J$ (for $l=0,\ldots,L$). This observation can also be found in \cite{CGST}; a first step in this direction has been done \cite{DKST}, where the truncation levels have been chosen empirically.

\renewcommand{\abstractname}{Acknowledgements}
\begin{abstract}
The author would like to thank Prof.~Ralf Hiptmair and Prof.~Christoph Schwab for their suggestions during the development of this work, and for their feedback on drafts of this paper. She would also like to thank Dr.~Vanja Nikoli\'c for pointing out the reference \cite{Joc}, and Robert Gantner for his support with the MLMC library. This work has been funded by ETH under CHIRP Grant CH1-02 11-1 and partially by the Technical University of M\"unich.
\end{abstract}

\titleformat{\section}{\large\bfseries}{\appendixname~\thesection .}{0.5em}{}
\begin{appendices}
\section{$H^{1+\beta'}$-regularity of the solution}
In the proof to Proposition \ref{prop:conty} we have used the fact that, if the coefficients $\hat{\alpha}(\By;\cdot)$ and $\hat{\kappa}^2(\By;\cdot)$ are piecewise H\"older continuous, then the scattered wave $\hat{u}_s(\By;\cdot)$ is in $H^{1+\beta'}(D_{R_{out}})$ for some $\beta'>0$, and from this continuity of $\hat{u}(\By;\cdot)$ follows. Here we present the $H^{1+\beta'}$-regularity result on the scattered wave, and we do it slightly modifying the proofs contained in \cite{Joc} and \cite[Sect. 3]{BGL}. 

The scattered wave fulfills the variational formulation
\begin{equation}
\begin{split}\label{eq:varformus}
 &\text{Find }\uref_s(\By)\in H^1(D_{R_{out}}): a(\By;\uref_s,\vref)=\langle f_s(\By),\vref\rangle_{\langle (H^1(D_{R_{out}}))',H^1(D_{R_{out}})\rangle}, \\
 &\text{for every }\vref\in H^1(D_{R_{out}}), J\in\bbN, \By\in\pspace_J,
 \end{split}
 \end{equation}
 where $(H^1(D_{R_{out}}))'$ denotes the dual space of $(H^1(D_{R_{out}}))$,
 \begin{equation}
 \begin{split}
 &a(\By;\uref_s,v):=\int_{D_{R_{out}}}\hat{\alpha}(\By;\hat{\Bx})\hat{\nabla} \uref_s(\By)\cdot\hat{\nabla}\vref- \hat{\kappa}^2(\By;\hat{\Bx})\uref_s(\By) \vref\dd\xref-\int_{\partial D_{R_{out}}}\operatorname{DtN}(\uref_s(\By))\vref \dd S, \hspace{-0.5cm}\\
 & f_s(\By):= \hat{\nabla}\cdot\left(\hat{\alpha}(\By;\hat{\Bx})\hat{\nabla} \uref_i(\By)\right)+ \hat{\kappa}^2(\By;\hat{\Bx})\uref_i(\By),
\end{split}\label{eq:alus}
\end{equation}
and $\uref_i(\hat{\Bx}):=u_i(\Phi(\By;\hat{\Bx}))$, for all $\hat{\Bx}\in D_{R_{out}}$. Our goal is to prove the following:
\begin{theorem}\label{thm:contreg}
 If $\hat{\alpha}(\By;\cdot)\in C_{pw}^{\beta}(\overline{D_{R_{out}}})$, $\hexp\in (0,\tfrac{1}{2})$, and $\hat{\kappa}^2(\By;\cdot)\in C_{pw}^{0}(\overline{D_{R_{out}}})$, with $J$- and $\By$-independent bounds on the norms, if Assumption \ref{Ak1k2} holds and $\frac{\sigma_{min}^4}{\sigma_{max}^4}\min\left\{\frac{1}{\alpha_2},\alpha_2\right\}\geq 1-\frac{\gamma_p}{\mathcal{C}}$ (with $\gamma_p$, $\mathcal{C}$ as in Proposition \ref{prop:conty}), then there exists $0<\beta'<\beta$ such that
 \begin{equation}
  \lVert\uref_s\rVert_{H^{1+\beta'}(D_{R_{out}})}\leq C \lVert u_i \rVert_{C^{1}(\overline{D_{R_{out}}})},
 \end{equation}
 with a constant $C$ independent of $J\in\bbN$ and $\By\in\pspace_J$.
\end{theorem}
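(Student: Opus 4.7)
The plan is to adapt the strategy of Theorem~3.1 in [BGL] to accommodate the nonlocal DtN boundary condition, using the auxiliary bilinear form $a_p$ introduced just before Proposition~\ref{prop:conty}. First I would establish the baseline $H^1(D_{R_{out}})$ bound: under Assumption~\ref{Ak1k2} the bilinear form $a(\By;\cdot,\cdot)$ is coercive with a $J$- and $\By$-independent constant $\gamma_{-}$, and the right-hand side $f_s$ in \eqref{eq:alus} satisfies
\begin{equation*}
 \lVert f_s(\By)\rVert_{(H^1(D_{R_{out}}))'} \leq C\bigl(\lVert\hat{\alpha}(\By;\cdot)\rVert_{C^0_{pw}(\overline{D_{R_{out}}})} + \lVert\hat{\kappa}^2(\By;\cdot)\rVert_{C^0_{pw}(\overline{D_{R_{out}}})}\bigr)\lVert u_i\rVert_{C^1(\overline{D_{R_{out}}})},
\end{equation*}
with a $J$- and $\By$-uniform constant (since $\Phi$ and its Jacobian are uniformly bounded). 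Lax--Milgram then yields $\lVert\hat{u}_s(\By)\rVert_{H^1(D_{R_{out}})}\leq C\lVert u_i\rVert_{C^1(\overline{D_{R_{out}}})}$, uniformly in $J$ and $\By$.

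Second, I would reformulate \eqref{eq:varformus} as a Neumann series / fixed-point problem built on the reference form $a_p$. Write $a(\By;\cdot,\cdot) = a_p(\cdot,\cdot) + b(\By;\cdot,\cdot)$, where $b$ collects the multiplicative perturbation arising from $\hat\alpha - I$, the lower-order term in $\hat\kappa^2$, and the difference between the true DtN operator and its principal (gradient-trace) part already contained in $a_p$. The point is to keep the \emph{nonlocal} part of the problem inside $a_p$, so that the perturbation $b$ is a multiplication-type operator supported only in $D_{R_{out}}$. The quantitative bound $\lVert b\rVert \leq \mathcal{C}\bigl(1 - \sigma_{min}^4/\sigma_{max}^4\cdot\min\{1/\alpha_2,\alpha_2\}\bigr)$ together with the coercivity $\gamma_p$ of $a_p$ gives, under the hypothesis $\sigma_{min}^4/\sigma_{max}^4\min\{1/\alpha_2,\alpha_2\}\geq 1-\gamma_p/\mathcal{C}$, a contraction constant strictly below $1$, so that the solution operator of $a_p$ plus the perturbation inherits the smoothing properties of $a_p$.

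Third, for $a_p$ I would invoke standard elliptic/transmission regularity: interior $H^{1+\beta}$ regularity on each subdomain $\hat{D}_{in}$, $\hat{D}_{out,R_{out}}$ follows from the Laplace principal part and piecewise $C^\beta$ coefficients as in [Joc, Lemma~2], while boundary regularity at $\partial D_{R_{out}}$ is obtained from smoothness of the DtN map on a $C^\infty$ circle. Combining these gives $\hat{u}_s \in H^{1+\beta'}(D_{R_{out}})$ for some $0<\beta'<\beta$ with $\beta'$ depending only on the Hölder exponent and the interior angles (none here), and with a norm bound in terms of $\lVert \hat{u}_s\rVert_{L^2}$ plus the data. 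Feeding in the $H^1$ bound from the first step closes the estimate with a $J$- and $\By$-independent constant.

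The main obstacle is exactly the DtN term: standard BGL-type arguments rely on local cut-offs and lifting of the boundary trace, which do not apply to a nonlocal operator. The device that makes the proof work is to put the leading-order piece of the DtN term into the reference form $a_p$ rather than into the perturbation; this is cheap because the resulting form is still coercive with constant $\gamma_p$, but it forces the multiplicative perturbation to be genuinely small, whence the technical condition $\sigma_{min}^4/\sigma_{max}^4\min\{1/\alpha_2,\alpha_2\}\geq 1-\gamma_p/\mathcal{C}$ that appears in the statement. The loss $\beta\mapsto\beta'$ is the usual one in transmission regularity and comes from the interface regularization step.
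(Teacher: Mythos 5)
Your high-level strategy matches the paper's: keep the nonlocal DtN contribution inside the reference form $a_p$ so that the residual perturbation is a purely local, multiplicative operator, then close a contraction/Neumann-series argument and transfer regularity from $H^1$ to $H^{1+\beta'}$ through a chain of mapping and multiplier estimates. Two points, however, deserve attention.

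The genuine gap is the missing normalization parameter. You describe the perturbation $b$ as arising from $\hat\alpha - \bbI$ and assert a bound $\lVert b\rVert \leq \mathcal{C}\bigl(1 - \tfrac{\sigma_{\min}^4}{\sigma_{\max}^4}\min\{1/\alpha_2,\alpha_2\}\bigr)$, but this does not follow from the hypotheses: $\hat\alpha$ is only controlled through the ratio $\Lambda_{\min}/\Lambda_{\max}$ of its eigenvalue bounds, not through $\lVert\hat\alpha - \bbI\rVert$, which need not be small at all. The paper's proof introduces a free scalar $k > 0$ and works with $\bar\alpha := \bbI - \tfrac{1}{k}\hat\alpha$, choosing $k$ in the interval $\bigl(\Lambda_{\max},\; \Lambda_{\min}/(1-\gamma_p/\mathcal{C})\bigr)$ (nonempty precisely under the hypothesis $\frac{\sigma_{\min}^4}{\sigma_{\max}^4}\min\{\tfrac{1}{\alpha_2},\alpha_2\}\geq 1-\tfrac{\gamma_p}{\mathcal{C}}$, since $\Lambda_{\min}/\Lambda_{\max}\geq\frac{\sigma_{\min}^4}{\sigma_{\max}^4}\min\{\tfrac{1}{\alpha_2},\alpha_2\}$). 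This makes $\bar\alpha_{\max}<\gamma_p/\mathcal{C}$ and hence the norm of $\mathcal{Q}=\mathcal{J}\mathcal{S}\mathcal{E}_{\bar\alpha}\hat\nabla$ strictly below one for small enough $\beta'$. Without this normalization the contraction does not close, so your step 2 as stated would fail.

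Separately, you misattribute the role of [Joc, Lemma 2]: it is a multiplier estimate, namely that a piecewise $C^{\hexp}$ tensor multiplication maps $\VH^{\beta'}$ to $\VH^{\beta'}$ for $\beta'<\hexp$; it does not give interior transmission regularity for $a_p$. The $H^{1+q}$ regularity of the Laplacian-with-DtN solution operator $\mathcal{J}$ on the smooth disc comes from the result of Savar\'e cited in the paper, and the mapping property of the modified divergence $\mathcal{S}$ follows by Riesz--Thorin interpolation. These three lemmas play distinct and essential roles in the chain $\mathcal{Q}=\mathcal{J}\circ\mathcal{S}\circ\mathcal{E}_{\bar\alpha}\circ\hat\nabla$; collapsing them as you do obscures where the Hölder regularity of the coefficients enters versus where the smoothness of the artificial boundary is used.
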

We first note that showing the above result for \eqref{eq:varformus} is equivalent to showing the result for the variational formulation
\begin{equation}
\begin{split}
& \text{Find }\uref_s(\By)\in H^1(D_{R_{out}}): a_p(\By;\uref_s,v)=\langle \tilde{f}_s(\By),\vref\rangle_{\langle (H^1(D_{R_{out}}))',H^1(D_{R_{out}})\rangle}, \\
 &\text{for every }\vref\in H^1(D_{R_{out}}), \By\in\pspace_J, J\in\bbN,
 \end{split}
 \end{equation}
 with the low order term of the bilinear form moved to the right-hand side:
 \begin{equation}
 \begin{split}
 &a_p(\By;\uref_s,v):=\int_{D_{R_{out}}}\hat{\alpha}(\By;\hat{\Bx})\hat{\nabla} \uref_s(\By)\cdot\hat{\nabla}\vref\dd\xref-\int_{\partial D_{R_{out}}}\operatorname{DtN}(\uref_s(\By))\vref \dd S, \\
 &\tilde{f}_s(\By):= \hat{\nabla}\cdot\left(\hat{\alpha}(\By;\hat{\Bx})\hat{\nabla} \uref_i(\By)\right)+ \hat{\kappa}^2(\By;\hat{\Bx})\uref(\By).
\end{split}
\end{equation}

From now on, we use bold symbols for Sobolev spaces of vector-valued functions; for instance, $\VL^2(D_{R_{out}}):=\left(L^2(D_{R_{out}})\right)^2$. Using the notation of \cite{Joc} and \cite{BGL}, we define the operators $\mathcal{J}: (H^1(D_{R_{out}}))'\rightarrow H^1(D_{R_{out}})$ and $\mathcal{S}: \VL^2(D_{R_{out}})\rightarrow (H^1(D_{R_{out}}))'$ by:
\begin{align}
  \langle\hat{\nabla} (\mathcal{J}f),\hat{\nabla}\vref\rangle_{\langle \VL^2(D_{R_{out}}),\VL^2(D_{R_{out}})\rangle}&-\langle\operatorname{DtN}(\mathcal{J}f),\vref\rangle_{\langle H^{-\frac{1}{2}}(\partial D_{R_{out}}),H^{\frac{1}{2}}(\partial D_{R_{out}})\rangle}\nonumber\\
  & =\langle f,\vref\rangle_{\langle (H^1(D_{R_{out}}))',H^1(D_{R_{out}})\rangle}, \label{eq:J}\\
 \langle \mathcal{S}\VF,\vref\rangle_{\langle (H^1(D_{R_{out}}))',H^1(D_{R_{out}})\rangle} &= \langle \VF,\hat{\nabla}\vref\rangle_{\langle \VL^2(D_{R_{out}}),\VL^2(D_{R_{out}})\rangle} \nonumber\\
 &- \langle \VF\cdot \Bn_{out},\vref\rangle_{\langle H^{-\frac{1}{2}}(\partial D_{R_{out}}),H^{\frac{1}{2}}(\partial D_{R_{out}})\rangle},\label{eq:S}
\end{align}
for all $\vref\in H^1(D_{R_{out}})$, $f\in (H^1(D_{R_{out}}))'$ and $\VF\in \VL^2(D_{R_{out}})$.

On the lines of Lemmas 3.1 and 3.2 in \cite{BGL}, we prove the mapping properties of the operators $\mathcal{J}$ and $\mathcal{S}$.

\begin{lemma}[Analogous to Lemma 3.1 in \cite{BGL}]\label{lem:S} For all $s\in[0,1]$ and all $\VF\in \VH^s(D_{R_{out}})$, $\mathcal{S}\VF\in H^{s-1}(D_{R_{out}})$ and
 \begin{equation}
  \lVert \mathcal{S}\VF\rVert_{H^{s-1}(D_{R_{out}})}\leq \mathcal{C}^{1-s}\lVert\VF\rVert_{\VH^s(D_{R_{out}})},
 \end{equation}
with $\mathcal{C}$ as in Proposition \ref{prop:conty}. 
\end{lemma}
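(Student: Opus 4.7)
The plan is to establish the inequality at the two endpoints $s=0$ and $s=1$, and then obtain the general case by complex interpolation between Hilbert spaces.

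\emph{Endpoint $s=1$.} For $\VF\in\VH^1(D_{R_{out}})\subset\boldsymbol{H}(\Div,D_{R_{out}})$ the normal trace $\VF\cdot\Bn_{out}$ belongs to $H^{-1/2}(\partial D_{R_{out}})$, so integration by parts applied to \eqref{eq:S} yields
\begin{equation*}
\langle \mathcal{S}\VF,\vref\rangle_{\langle (H^1(D_{R_{out}}))',H^1(D_{R_{out}})\rangle} = -\int_{D_{R_{out}}}(\hat{\nabla}\cdot\VF)\,\vref\,\dd\xref
\end{equation*}
for every $\vref\in H^1(D_{R_{out}})$. Hence $\mathcal{S}\VF=-\hat{\nabla}\cdot\VF\in L^2(D_{R_{out}})$ with $\lVert\mathcal{S}\VF\rVert_{L^2(D_{R_{out}})}\leq\lVert\hat{\nabla}\cdot\VF\rVert_{L^2(D_{R_{out}})}\leq\lVert\VF\rVert_{\VH^1(D_{R_{out}})}$, which is the desired bound with constant $\mathcal{C}^{1-1}=1$.

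\emph{Endpoint $s=0$.} For $\VF\in\VL^2(D_{R_{out}})$ one estimates $\lVert\mathcal{S}\VF\rVert_{(H^1(D_{R_{out}}))'}$ by taking the supremum of $|\langle\mathcal{S}\VF,\vref\rangle|$ over $\vref\in H^1(D_{R_{out}})$ with $\lVert\vref\rVert_{H^1(D_{R_{out}})}\leq 1$. The point is that the right-hand side of \eqref{eq:S} has exactly the same algebraic form as the bilinear form $a_p(\cdot,\cdot)$ with the gradient of the first argument replaced by $\VF$. Since $\mathcal{C}$ is by definition the continuity constant of $a_p$ on $H^1(D_{R_{out}})\times H^1(D_{R_{out}})$, the same estimate, carried out term-by-term on the $L^2$-pairing and on the $H^{-1/2}\times H^{1/2}$ duality (the latter via the boundedness of the normal-trace map and of the operator associated with the boundary term), yields
\begin{equation*}
|\langle\mathcal{S}\VF,\vref\rangle|\leq \mathcal{C}\,\lVert\VF\rVert_{\VL^2(D_{R_{out}})}\lVert\vref\rVert_{H^1(D_{R_{out}})}.
\end{equation*}
Hence $\lVert\mathcal{S}\VF\rVert_{H^{-1}(D_{R_{out}})}\leq\mathcal{C}\,\lVert\VF\rVert_{\VL^2(D_{R_{out}})}$, which is the desired bound with constant $\mathcal{C}^{1-0}=\mathcal{C}$.

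\emph{Interpolation.} The two endpoint statements say that $\mathcal{S}$ is a bounded linear map $\VL^2\to H^{-1}$ of norm at most $\mathcal{C}$ and $\VH^1\to L^2$ of norm at most $1$. Using the complex interpolation identities $[\VL^2(D_{R_{out}}),\VH^1(D_{R_{out}})]_s=\VH^s(D_{R_{out}})$ and $[H^{-1}(D_{R_{out}}),L^2(D_{R_{out}})]_s=H^{s-1}(D_{R_{out}})$ for $s\in[0,1]$ (the latter obtained by duality from the corresponding identity on positive-order Sobolev scales), the Riesz--Thorin theorem for complex interpolation of Hilbert-space-valued operators gives
\begin{equation*}
\lVert\mathcal{S}\rVert_{\VH^s(D_{R_{out}})\to H^{s-1}(D_{R_{out}})}\leq\mathcal{C}^{1-s}\cdot 1^{s}=\mathcal{C}^{1-s},
\end{equation*}
which is the claim. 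The only real subtlety lies in the correct identification of the dual interpolation scale (in particular the interpretation of $H^{s-1}$ as the dual of a suitable $H^{1-s}$-space compatible with the non-homogeneous boundary conditions used to define $\mathcal{S}$), but this is a standard verification once the endpoint mapping properties are in place.
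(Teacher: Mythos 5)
Your proposal is correct and follows essentially the same route as the paper: establish the endpoint bounds at $s=1$ (integration by parts, $\mathcal{S}\VF=-\hat{\nabla}\cdot\VF$, giving constant $1$) and at $s=0$ (recognizing the defining pairing of $\mathcal{S}$ as having the same form and hence the same continuity constant $\mathcal{C}$ as $a_p$), then interpolate via Riesz--Thorin. The paper's proof is a terser version of exactly this argument.
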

\begin{proof}
 For $s=0$:
 \begin{equation*}
  \langle \mathcal{S}\VF,\vref\rangle_{\langle (H^1(D_{R_{out}}))',H^1(D_{R_{out}})\rangle}\leq \mathcal{C}\lVert\VF\rVert_{\VL^2(D_{R_{out}})}\lVert\vref\rVert_{H^1(D_{R_{out}})},
 \end{equation*}
for every $\vref\in H^1(D_{R_{out}})$. For $s=1$:
 \begin{equation*}
  \langle \mathcal{S}\VF,\vref\rangle_{\langle L^2(D_{R_{out}}),L^2(D_{R_{out}})\rangle}=-\langle \hat{\nabla}\cdot\VF,\vref\rangle_{\langle L^2(D_{R_{out}}),L^2(D_{R_{out}})\rangle}\leq \lVert\VF\rVert_{\VH^1(D_{R_{out}})}\lVert\vref\rVert_{L^2(D_{R_{out}})},
  \end{equation*}
  for every $\vref\in L^2(D_{R_{out}})$. The claim follows then from the Riesz-Thorin Theorem.
\end{proof}
\begin{lemma}[Analogous to Lemma 3.2 in \cite{BGL}]\label{lem:J}
For all $q\in[0,\tfrac{1}{2})$, there exists $K=K(D_{R_{out}},q,\gamma_p)$ such that, for all $f\in H^{q-1}(D_{R_{out}})$, $\mathcal{J}f\in H^{1+q}(D_{R_{out}})$ and
 \begin{equation}
  \lVert\mathcal{J}f\rVert_{H^{1+q}(D_{R_{out}})}\leq K\lVert f \rVert_{H^{q-1}(D_{R_{out}})},
 \end{equation}
 and, for all $s\in[0,q]$ and all $f\in H^{s-1}(D_{R_{out}})$, $\mathcal{J}f\in H^{1+s}(D_{R_{out}})$ and 
\begin{equation}\label{eq:sbound}
   \lVert\mathcal{J}f\rVert_{H^{1+s}(D_{R_{out}})}\leq \left(\frac{1}{\gamma_p}\right)^{1-\tfrac{s}{q}}K^{\frac{s}{q}}\lVert f \rVert_{H^{s-1}(D_{R_{out}})},
\end{equation}
with $\gamma_p$ as in Proposition \ref{prop:conty}.
\end{lemma}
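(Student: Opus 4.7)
The plan is to establish the two endpoint bounds $s=0$ and $s=q$ separately, and then obtain the intermediate estimate \eqref{eq:sbound} by complex interpolation of the linear operator $\mathcal{J}$. For the lower endpoint $s=0$, I would appeal directly to the Lax--Milgram theorem applied to the variational problem defining $\mathcal{J}f$ in \eqref{eq:J}: the bilinear form on the left-hand side is coercive with constant $\gamma_p$ (as introduced in Proposition \ref{prop:conty}, since the $\operatorname{DtN}$ boundary term automatically enforces the outgoing/radiation character on which the coercivity is valid). This immediately yields $\|\mathcal{J}f\|_{H^1(D_{R_{out}})}\leq (1/\gamma_p)\|f\|_{H^{-1}(D_{R_{out}})}$, which is precisely \eqref{eq:sbound} for $s=0$.

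For the upper endpoint $s=q\in(0,1/2)$, I would invoke the classical elliptic shift theorem on the smooth boundary $\partial D_{R_{out}}$. Writing \eqref{eq:J} in strong form, $\mathcal{J}f$ satisfies $-\Delta u = f$ in $D_{R_{out}}$ together with the nonlocal boundary condition $\partial_{\Bn_{out}}u = \operatorname{DtN}(u)$ on $\partial D_{R_{out}}$. Since $\operatorname{DtN}$ is a classical pseudodifferential operator of order $1$ with smooth symbol on the circle $\partial D_{R_{out}}$, the Robin-type operator $\partial_{\Bn_{out}}-\operatorname{DtN}$ has the same principal symbol as the Neumann trace, and the Lopatinsky--Shapiro covering condition is satisfied. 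Standard localization and parametrix arguments---flatten the boundary, freeze the symbol, apply half-space $H^s$ a priori estimates, then patch---then yield $\mathcal{J}f\in H^{1+q}(D_{R_{out}})$ together with an estimate of the form $\|\mathcal{J}f\|_{H^{1+q}} \leq C_q\bigl(\|f\|_{H^{q-1}}+\|\mathcal{J}f\|_{H^1}\bigr)$, and the $s=0$ case absorbs the $H^1$-term into a final $K=K(D_{R_{out}},q,\gamma_p)$. The restriction $q<1/2$ is the usual trace threshold: for $q\geq 1/2$ the dual $H^{q-1}$ would see nontrivial boundary data, producing compatibility obstructions that invalidate the pure shift.

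The intermediate estimate \eqref{eq:sbound} then follows by complex interpolation applied to the linear operator $\mathcal{J}$ between its two endpoint boundedness properties, $\mathcal{J}:H^{-1}\to H^1$ with norm $1/\gamma_p$ and $\mathcal{J}:H^{q-1}\to H^{1+q}$ with norm $K$. Using the standard identifications $[H^{-1},H^{q-1}]_\theta = H^{s-1}$ and $[H^1,H^{1+q}]_\theta = H^{1+s}$ with $s=\theta q \in [0,q]$, the Riesz--Thorin three-lines inequality gives exactly $\|\mathcal{J}f\|_{H^{1+s}(D_{R_{out}})} \leq (1/\gamma_p)^{1-s/q} K^{s/q} \|f\|_{H^{s-1}(D_{R_{out}})}$.

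The hard part is the rigorous execution of the shift theorem in the presence of the nonlocal $\operatorname{DtN}$ operator: in the local parametrix construction the smoothing order of $\operatorname{DtN}$ must be carried along, rather than treated as a pointwise coefficient perturbation of the Neumann condition. However, because $\partial D_{R_{out}}$ is a circle, the symbol of $\operatorname{DtN}$ is explicit in Fourier modes on the boundary, so the symbol-level ellipticity and covering checks reduce to essentially explicit computations; the remaining bookkeeping is the tracking of the coercivity constant $\gamma_p$ through the constants, which enters only via the absorption of the $H^1$-term mentioned above.
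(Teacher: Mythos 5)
Your overall strategy coincides with the paper's: establish the two endpoint bounds $s=0$ and $s=q$, then interpolate. The $s=0$ step is identical in both proofs (coercivity of the DtN-augmented bilinear form yields the $1/\gamma_p$ bound directly), and the interpolation step is also the same, including the identification $s=\theta q$. The only genuine divergence is in how the endpoint $s=q$ is obtained: the paper simply invokes Theorem~4 and Remark~4.5 of Savar\'e's abstract variational regularity theory (reference \cite{Sav}), which produces the constant $K=K(D_{R_{out}},q,\gamma_p)$ directly from the variational formulation \eqref{eq:J} without ever passing to the strong form, while you propose a pseudodifferential-operator/shift-theorem argument (flattening the boundary, freezing the symbol, checking Lopatinsky--Shapiro, building a parametrix). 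Your route is in principle sound on the smooth circle $\partial D_{R_{out}}$, but it carries a nontrivial burden you yourself flag: the boundary condition $\partial_{\Bn_{out}} u - \operatorname{DtN}(u)=0$ is nonlocal, so one must verify covering with the full order-one pseudodifferential symbol of $\operatorname{DtN}$ included rather than treating it as a lower-order or pointwise perturbation of the Neumann condition, and one must then trace the dependence of the resulting constant on $\gamma_p$. Savar\'e's result sidesteps all of this: it works directly on the bilinear form (so the nonlocal boundary term is absorbed automatically), it applies to Lipschitz domains (hence is robust, and is precisely what the analogous Lemma~3.2 of \cite{BGL} relies on), and it gives the $\gamma_p$-dependence of $K$ as part of the statement. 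Your approach, by contrast, effectively uses the full smoothness of the boundary circle, which is more than what the threshold $q<\tfrac12$ in the statement requires. Two minor points of precision: the claim that the composite boundary operator has ``the same principal symbol as the Neumann trace'' is not quite right, since $\operatorname{DtN}$ is itself of order one and contributes to the principal symbol at the boundary (ellipticity must be verified, though it does follow from coercivity); and the heuristic you give for the origin of the restriction $q<\tfrac12$ (compatibility obstructions from boundary data) is not the mechanism in the variational framework the paper uses, where the restriction is inherited from the abstract regularity theory and, in the prototype \cite{BGL}, from the Lipschitz regularity of the domain.
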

\begin{proof}
 For $s=0$ we have:
 \begin{align*}
  \gamma_p \lVert \mathcal{J}f\rVert^2_{H^1(D_{R_{out}})}&\leq \langle\hat{\nabla} (\mathcal{J}f),\hat{\nabla}(\mathcal{J}f)\rangle_{\langle \VL^2(D_{R_{out}}),\VL^2(D_{R_{out}})\rangle}-\langle\operatorname{DtN}(\mathcal{J}f),\mathcal{J}f\rangle_{\langle H^{-\frac{1}{2}}(\partial D_{R_{out}}),H^{\frac{1}{2}}(\partial D_{R_{out}})\rangle}\\
  &= \langle f,\mathcal{J}f\rangle_{\langle (H^1(D_{R_{out}}))',H^1(D_{R_{out}})\rangle}\\
  &\leq \lVert f\rVert_{(H^1(D_{R_{out}}))'}\lVert\mathcal{J}f\rVert_{H^1(D_{R_{out}})},
 \end{align*}
and thus
\begin{equation*}
 \lVert\mathcal{J}f\rVert_{H^1(D_{R_{out}})}\leq\frac{1}{\gamma_p}\lVert f \rVert_{(H^1(D_{R_{out}}))'}.
\end{equation*}
For $s=q$, the inequality follows from Theorem 4 and Remark 4.5 in \cite{Sav}. The latter ensures the existence of a constant $K=K(D_{R_{out}},q,\gamma_p)$ such that
\begin{equation*}
 \lVert\mathcal{J}f\rVert_{H^{1+q}(D_{R_{out}})}\leq K  \lVert f\rVert_{H^{q-1}(D_{R_{out}})},
\end{equation*}
for all $f\in H^{q-1}(D_{R_{out}})$. The estimate \eqref{eq:sbound} is then obtained by interpolation.
\end{proof}
In the next lemma, we use the symbol $\mathcal{E}_{\nu}$ to denote the multiplier associated to a tensor $\nu$ \cite{Joc,BGL}.
\begin{lemma}[Similar to Prop. 2.1 in \cite{BGL}]\label{lem:mult}
 If a tensor $\nu$ belongs to $C^{\hexp}_{pw}(\overline{D_{R_{out}}})$, then, for $q\in[0,\hexp)$, there exists a constant $C>0$ such that
 \begin{equation}\label{eq:multiplier}
  \lVert\mathcal{E}_{\nu}\rVert_{\VH^q\rightarrow \VH^q}\leq \nu_{\max}N_{\nu,q},\quad\text{with }N_{\nu,q}=\max\left\{1,\frac{C \lVert \nu\rVert_{C^{\hexp}_{pw}(\overline{D_{R_{out}}})}}{\nu_{\max}}\right\}
 \end{equation}
and $\nu_{\max}$ the maximum singular value of $\nu$. Moreover, for $s\in[0,q)$,
\begin{equation}\label{eq:multiplier2}
   \lVert\mathcal{E}_{\nu}\rVert_{\VH^s\rightarrow \VH^s}\leq \nu_{\max}N_{\nu,q}^{\frac{s}{q}}.
\end{equation}
\end{lemma}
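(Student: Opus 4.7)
The plan is to establish the $\mathbf{H}^q$ bound by direct Slobodeckij estimates, then obtain intermediate $\mathbf{H}^s$ bounds by interpolation with the elementary $\mathbf{L}^2$ estimate.

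\medskip
\noindent\textbf{Step 1 ($s=0$).} For every $v\in\mathbf{L}^2(D_{R_{out}})$, pointwise $|(\mathcal{E}_\nu v)(x)|\le \nu_{\max}|v(x)|$, hence $\|\mathcal{E}_\nu\|_{\mathbf{L}^2\to\mathbf{L}^2}\le \nu_{\max}$.

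\medskip
\noindent\textbf{Step 2 (the $\mathbf{H}^q$ estimate, main work).} Fix $q\in [0,\hexp)$ and $v\in\mathbf{H}^q(D_{R_{out}})$. Using the Slobodeckij norm on $D_{R_{out}}\subset\bbR^2$, I would split
\[
|\nu(x)v(x)-\nu(y)v(y)|^2 \le 2\nu_{\max}^2|v(x)-v(y)|^2 + 2|v(y)|^2|\nu(x)-\nu(y)|^2,
\]
so that $[\mathcal{E}_\nu v]_{H^q}^2 \le 2\nu_{\max}^2\,[v]_{H^q}^2 + 2T$, where
\[
T=\int_{D_{R_{out}}}|v(y)|^2\,I(y)\,\mathrm{d}y,\qquad I(y)=\int_{D_{R_{out}}}\frac{|\nu(x)-\nu(y)|^2}{|x-y|^{2+2q}}\,\mathrm{d}x.
\]
For every $y$ (say $y\in\overline{\hdin}$), I would decompose $I(y)=I^{\mathrm{same}}(y)+I^{\mathrm{opp}}(y)$ according to whether $x$ lies on the same side of $\hat\Gamma$ as $y$ or on the opposite side. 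On the same side, the piecewise H\"older bound gives $|\nu(x)-\nu(y)|\le \|\nu\|_{C^{\hexp}_{pw}}|x-y|^{\hexp}$, and the integral
\[
\int_{\hdin}|x-y|^{2(\hexp-q)-2}\,\mathrm{d}x
\]
is uniformly bounded in $y$ because $\hexp>q$. This yields $\int|v|^2 I^{\mathrm{same}}\le C(q,\hexp,D_{R_{out}})\|\nu\|_{C^{\hexp}_{pw}}^2\|v\|_{L^2}^2$.

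\medskip
\noindent\textbf{Step 3 (jump across $\hat\Gamma$).} For $I^{\mathrm{opp}}(y)$, the Hölder bound fails, but $|\nu(x)-\nu(y)|\le 2\nu_{\max}$ and integration in polar coordinates around $y$ give $I^{\mathrm{opp}}(y)\le C\nu_{\max}^2\,\mathrm{dist}(y,\hat\Gamma)^{-2q}$. Here the assumption $\hexp<\tfrac12$, which forces $q<\tfrac12$, becomes essential: the fractional Hardy inequality on Lipschitz subdomains,
\[
\int_{\hdin}\frac{|v(y)|^2}{\mathrm{dist}(y,\partial\hdin)^{2q}}\,\mathrm{d}y \le C_H(q,\hdin)\,\|v\|_{H^q(\hdin)}^2,
\]
(see e.g.\ Dyda or Marschall) then controls $\int|v|^2 I^{\mathrm{opp}} \le C\nu_{\max}^2\|v\|_{H^q}^2$. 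Collecting the estimates,
\[
\|\mathcal{E}_\nu v\|_{H^q}^2 \le \nu_{\max}^2\|v\|_{L^2}^2 + 2\nu_{\max}^2[v]_{H^q}^2 + C\bigl(\|\nu\|_{C^{\hexp}_{pw}}^2 + \nu_{\max}^2\bigr)\|v\|_{H^q}^2,
\]
so that $\|\mathcal{E}_\nu\|_{H^q\to H^q}\le C\max\{\nu_{\max},\|\nu\|_{C^{\hexp}_{pw}}\}=\nu_{\max} N_{\nu,q}$ upon choosing the constant in the definition of $N_{\nu,q}$ appropriately.

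\medskip
\noindent\textbf{Step 4 (interpolation).} The bounds from Steps 1 and 3 read $\|\mathcal{E}_\nu\|_{\mathbf{L}^2\to\mathbf{L}^2}\le \nu_{\max}$ and $\|\mathcal{E}_\nu\|_{\mathbf{H}^q\to \mathbf{H}^q}\le \nu_{\max}N_{\nu,q}$. Since $\{\mathbf{L}^2,\mathbf{H}^q\}$ forms a complex interpolation couple with $[\mathbf{L}^2,\mathbf{H}^q]_{s/q}=\mathbf{H}^s$ for $s\in[0,q]$, the Riesz--Thorin-type interpolation theorem yields
\[
\|\mathcal{E}_\nu\|_{\mathbf{H}^s\to \mathbf{H}^s}\le \nu_{\max}^{1-s/q}\bigl(\nu_{\max}N_{\nu,q}\bigr)^{s/q}=\nu_{\max}N_{\nu,q}^{s/q},
\]
which is \eqref{eq:multiplier2}.

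\medskip
\noindent\textbf{Main obstacle.} The only non-routine step is Step 3: the piecewise H\"older regularity fails to control $\nu(x)-\nu(y)$ when $x$ and $y$ lie on different sides of $\hat\Gamma$, and the naive bound $\int|x-y|^{-2-2q}$ diverges near the interface. Using the distance to $\hat\Gamma$ together with the fractional Hardy inequality (which is available precisely because $q<\tfrac12$) is the crucial technical ingredient. Note that this is where the hypothesis $\hexp\in(0,\tfrac12)$ in Theorem \ref{thm:contreg} enters the argument, and it explains why the formulation of Lemma \ref{lem:mult} involves $\nu_{\max}$ separately from $\|\nu\|_{C^{\hexp}_{pw}}$ rather than simply $\|\nu\|_{C^{\hexp}_{pw}}$.
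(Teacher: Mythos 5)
Your proof is correct and follows the same overall structure as the paper's — establish the $\mathbf{H}^q$ multiplier bound, then interpolate against the trivial $\mathbf{L}^2$ bound — but where the paper's proof of the $\mathbf{H}^q$ estimate is a one-line citation of Lemma 2 in \cite{Joc}, you provide a self-contained Slobodeckij-norm argument. Your decomposition into a same-side integral, controlled by the piecewise H\"older seminorm and the convergence of $\int_0 r^{2(\hexp-q)-1}\,\mathrm{d}r$ (which needs $q<\hexp$), and an opposite-side integral across $\hat{\Gamma}$, controlled by the fractional Hardy inequality, is exactly the substance of the cited lemma. Your remark that the Hardy step requires $q<\tfrac{1}{2}$ is a valuable clarification: it explains why Theorem \ref{thm:contreg} carries the hypothesis $\hexp\in(0,\tfrac{1}{2})$, and shows that Lemma \ref{lem:mult} as stated (``$q\in[0,\hexp)$'' with no explicit cap on $\hexp$) implicitly needs this restriction as well, since multiplication by a tensor with a jump across $\hat{\Gamma}$ fails to be bounded on $\mathbf{H}^q$ once $q\geq\tfrac{1}{2}$. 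The final interpolation step ($[\mathbf{L}^2,\mathbf{H}^q]_{s/q}=\mathbf{H}^s$ on the Lipschitz domain $D_{R_{out}}$, then the standard bound on interpolated operator norms) matches the paper's. A minor point is that your final constant comes out as $C\max\{\nu_{\max},\lVert\nu\rVert_{C^\hexp_{pw}}\}$ rather than literally $\nu_{\max}N_{\nu,q}$ with $N_{\nu,q}=\max\{1,\cdot\}$, so the displayed form requires absorbing the multiplicative constant into the definition of $N_{\nu,q}$; you flag this, and it is the same convention the paper inherits from \cite{Joc} and \cite{BGL}.
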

\begin{proof}
For $s=0$, $\lVert\mathcal{E}_{\nu}\Bv\rVert_{\VL^2}\leq \nu_{\max}\lVert\Bv\rVert_{\VL^2}$, for every $\Bv\in\VL^2(D_{R_{out}})$. Equation \eqref{eq:multiplier} is a direct consequence of Lemma 2 in \cite{Joc}. Then \eqref{eq:multiplier2} is obtained by interpolation.
\end{proof}

We are now ready to address the proof of Theorem \ref{thm:contreg}. For this, we proceed on the lines of the proof of Theorem 3.1 in \cite{BGL}, with our modified definition of the operators $\mathcal{J}$ and $\mathcal{S}$ as in \eqref{eq:J}-\eqref{eq:S}.

For a positive number $k>0$, we can write:
\begin{equation*}
 \tilde{f}_s (\By) = \mathcal{S}(\hat{\alpha}(\By;\cdot)\hat{\nabla}\hat{u}_s(\By))=\mathcal{S}\hat{\nabla}(k\hat{u}_s(\By))-\mathcal{S}\left(\left(\bbI - \frac{1}{k}\hat{\alpha}(\By;\cdot)\right)\hat{\nabla}(k\hat{u}_s(\By))\right)\,\text{in }\left(H^1(D_{R_{out}})\right)',
\end{equation*}
for every $J\in\bbN$ and every $\By\in\pspace_J$, where $\bbI\in\bbR^{2\times 2}$ denotes the identity matrix (for the equation above, we remind that $\hat{\alpha}(\By;\hat{\Bx})=\bbI$ for $\hat{\Bx}\in\partial D_{R_{out}}$, for every $J\in\bbN$ and every $\By\in\pspace_J$).

We set $\bar{\alpha}(\By;\cdot):=\bbI- \frac{1}{k}\hat{\alpha}(\By;\cdot)$ and $\hat{w}(\By):=k\hat{u}_s(\By)$. Since $\hat{w}(\By)\in H^1(D_{R_{out}})$ fulfills the radiation condition, we have that $\mathcal{J}\mathcal{S}\hat{\nabla}\hat{w}=\hat{w}$. Thus, we can write:
\begin{equation*}
 \hat{w}(\By)-\mathcal{Q}(\By;\hat{w}(\By))=\mathcal{J} \tilde{f}_s (\By),\quad\text{for every }J\in\bbN\text{ and }\By\in\pspace_J,
\end{equation*}
with $\mathcal{Q}(\By):=\mathcal{J}\mathcal{S}(\mathcal{E}_{\bar{\alpha}(\By)}\hat{\nabla})$. If we can show that $\mathcal{Q}(\By)\in \mathcal{L}\left(H^{\beta'+1}(D_{R_{out}}),H^{\beta'+1}(D_{R_{out}})\right)$ and $\lVert\mathcal{Q}\rVert_{H^{\beta'+1}\rightarrow H^{\beta'+1}}\leq C_Q<1$ for every $J\in\bbN$, every $\By\in\pspace_J$ and some $\beta'\in(0,1)$, then
\begin{equation}\label{eq:kubound}
 k\lVert\hat{u}_s(\By)\rVert_{H^{\beta'+1}(D_{R_{out}})}\leq\frac{\lVert\mathcal{J}\rVert}{1-C_Q}\lVert\tilde{f}_s(\By)\rVert_{H^{\beta'-1}(D_{R_{out}})},
\end{equation}
for every $\By$. If $k$ is chosen to be independent of $J$ and $\By$, the claim of Theorem \ref{thm:contreg} follows once we prove that $\lVert\tilde{f}_s(\By)\rVert_{H^{\beta'-1}(D_{R_{out}})}$ has a $J$- and $\By$-independent bound.

We now show that, for every $\By$, $\mathcal{Q}(\By)$ is a contraction from $H^{\beta'+1}(D_{R_{out}})$ to $H^{\beta'+1}(D_{R_{out}})$. For $\hat{v}\in H^{\beta'+1}(D_{R_{out}})$, we have that $\hat{\nabla}\hat{v}\in \VH^{\beta'}(D_{R_{out}})$. Since, by assumption, $\hat{\alpha}(\By;\cdot)$ and thus $\bar{\alpha}(\By;\cdot)$ are piecewise H\"older continuous, Lemma \ref{lem:mult} ensures that $\bar{\alpha}(\By)\hat{\nabla}\hat{v}\in\VH^{\beta'}(D_{R_{out}})$ for any $0<\beta'<\beta$, and every $J\in\bbN$, $\By\in\pspace_J$. Finally, using Lemmas \ref{lem:S} and \ref{lem:J}, we obtain that $\mathcal{Q}(\By;\hat{v})\in H^{\beta'+1}(D_{R_{out}})$. Moreover, for every $J\in\bbN$, every $\By\in\pspace_J$ and for $0<\beta'<q<\hexp$:
\begin{align*}
 \lVert\mathcal{Q}(\By)\rVert_{H^{\beta'+1}\rightarrow H^{\beta'+1}}&\leq \lVert\mathcal{J}\rVert_{H^{\beta'-1}\rightarrow H^{\beta'+1}}\lVert\mathcal{S}\rVert_{H^{\beta'}\rightarrow H^{\beta'-1}}\lVert\mathcal{E}_{\bar{\alpha}(\By)}\rVert_{\VH^{\beta'}\rightarrow\VH^{\beta'}}\\
 &\leq \left(\frac{1}{\gamma_p}\right)^{1-\tfrac{\beta'}{q}}K^{\frac{\beta'}{q}}\mathcal{C}^{1-\beta'}\bar{\alpha}_{\max}(\By)N_{\bar{\alpha}(\By),q}^{\frac{\beta'}{q}}\\
 &\leq \frac{\mathcal{C}}{\gamma_p}(K\mathcal{C}^{1-q}N_{\bar{\alpha}(\By),q})^{\frac{\beta'}{q}}\bar{\alpha}_{\max}(\By),
\end{align*}
where for the last inequality we have used that $\frac{\mathcal{C}}{\gamma_p}\geq 1$, and $\bar{\alpha}_{\max}(\By)$ denotes the maximum singular value of $\bar{\alpha}(\By;\cdot)$. Let us denote by $\Lambda_{min}$ and $\Lambda_{max}$, respectively, the $J$- and $\By$-independent lower and upper bounds on the eigenvalues of $\hat{\alpha}$ (these bounds exist thanks to the bounds on the singular values of $D\Phi$). Then, if we choose $k$ such that $1-\frac{1}{k}\Lambda_{max}>0$, we have $0<1-\frac{1}{k}\Lambda_{max}\leq \bar{\alpha}_{\max}(\By) \leq 1-\frac{1}{k}\Lambda_{min}$ for every $J$ and every $\By$, and thus $\bar{\alpha}_{\max}(\By)$ has a $J$- and $\By$-independent upper bound. For the same reason and because of the $J$- and $\By$-uniform upper bound on the H\"older norm of $\hat{\alpha}$, if we choose $k$ independent of $J$ and $\By$ then $N_{\bar{\alpha}(\By),q}$ has a $J$- and $\By$-independent upper bound $N_{q}$.

The norm $\lVert\mathcal{Q}(\By)\rVert_{H^{\beta'+1}\rightarrow H^{\beta'+1}}$ has a $J$- and $\By$-independent bound which is smaller than one if
\begin{equation}
 \beta'< q \min\left\{1,\frac{\log\left(\frac{\gamma_p}{\mathcal{C}\left(1-\frac{1}{k}\Lambda_{min}\right)}\right)}{\log(K\mathcal{C}^{1-q}N_q)}\right\}.
\end{equation}
A $\beta'>0$ exists if $\left(1-\frac{1}{k}\Lambda_{min}\right)<\frac{\gamma_p}{\mathcal{C}}$, which, combined with the requirement that $0<1-\frac{1}{k}\Lambda_{max}$, implies that we must choose $\Lambda_{max}<k<\frac{\Lambda_{min}}{1-\frac{\gamma_p}{\mathcal{C}}}$.  Such a $k$ exists and can be chosen independently of $J$ and $\By$ if $\Lambda_{max}<\frac{\Lambda_{min}}{1-\frac{\gamma_p}{\mathcal{C}}}$, that is $\frac{\Lambda_{min}}{\Lambda_{max}}>1-\frac{\gamma_p}{\mathcal{C}}$ and this is ensured by the requirement $\frac{\sigma_{min}^4}{\sigma_{max}^4}\min\left\{\frac{1}{\alpha_2},\alpha_2\right\}\geq 1-\frac{\gamma_p}{\mathcal{C}}$. Note that $N_{q}\geq 1$ by definition, and $K\mathcal{C}^{1-q}\geq 1$ because $\mathcal{J}\mathcal{S}\hat{\nabla}\hat{w}=\hat{w}$ for all $\hat{w}$ in $H^1(D_{R_{out}})$ that satisfy the radiation condition (see Remark 3.2 in \cite{BGL}).

To complete the proof of Theorem \ref{thm:contreg}, we have to show a $J$- and $\By$-uniform bound on $\lVert\tilde{f}_s(\By)\rVert_{H^{\beta'-1}(D_{R_{out}})}$. We have:
\begin{align*}
 \lVert\tilde{f}_s(\By)\rVert_{H^{\beta'-1}(D_{R_{out}})}\leq &\,\lVert \mathcal{E}_{\hat{\alpha}(\By)} \hat{\nabla} \uref_i(\By)\rVert_{H^{\beta'}(D_{R_{out}})} + \bar{C}\lVert \mathcal{E}_{\hat{\kappa}(\By)}\uref(\By)\rVert_{L^2(D_{R_{out}})}\\
 \leq &\,C \lVert \hat{\alpha}(\By;\cdot)\rVert_{C_{pw}^{\hexp}(\overline{D_{R_{out}}})}\lVert \uref_i(\By)\rVert_{H^{\beta'+1}(D_{R_{out}})}\\
 & + \bar{C}\lVert \hat{\kappa}(\By;\cdot)\rVert_{C_{pw}^0(\overline{D_{R_{out}}})}\lVert\uref(\By)\rVert_{L^2(D_{R_{out}})}
\end{align*}
with $C$ (the same as in \eqref{eq:multiplier}) and $\bar{C}$ independent of $J$ and $\By$. The norms of the coefficients have a $J$- and $\By$-independent bound thanks to the assumptions of Theorem \ref{thm:contreg}. The norm $\lVert\uref(\By)\rVert_{L^2(D_{R_{out}})}$ can be bounded as
\begin{equation*}
 \lVert\uref(\By)\rVert_{L^2(D_{R_{out}})}\leq \lVert\uref(\By)\rVert_{H^1(D_{R_{out}})}\leq C\left(\lVert u_i\rVert_{H^{\frac{1}{2}}(\partial D_{R_{out}})} + \Big\lVert\dfrac{\partial u_i}{\partial\Bn_{out}}\Big\rVert_{H^{-\frac{1}{2}}(\partial D_{R_{out}})}\right),
\end{equation*}
where $C$ is a $J$- and $\By$-independent constant, thanks to Assumptions \ref{Ak1k2} and the properties of $\Phi$ \cite[Cor. 3.2.6]{LSthesis}. Finally, there exists $C$ independent of $J$ and $\By$ such that $\lVert \uref_i(\By)\rVert_{H^{\beta'+1}(D_{R_{out}})}\leq C \lVert \uref_i(\By)\rVert_{C^{1,\beta'}_{pw}(\overline{D_{R_{out}})}}$ $\leq C\max\left\{1,\lVert D\Phi(\By)\rVert_{C^{\beta'}_{pw}(\overline{D_{R_{out}}})}\right\}\lVert u_i\rVert_{C^{1,\beta'}(\overline{D_{R_{out}}})}$, and $\lVert D\Phi(\By)\rVert_{C^0_{pw}(\overline{D_{R_{out}}})}$ is uniformly bounded with respect to $J$ and $\By$ thanks to the uniform bounds on the radius.\label{appendix:continuity}
\section{Schauder estimates for the transmission problem}
We present here the proof to Theorem \ref{thm:ureg}. We adapt the results of \cite[Ch. 6]{GT} and \cite[Ch. 6]{WYW}, stated for boundary value problems, to the transmission problem \eqref{eq:varform}, with particular emphasis on having constants which are independent of $J\in\bbN$ and $\By\in\pspace_J$. We first address the local regularity at the interface, then the interior regularity, and finally the global regularity estimate. Also, we prove these estimates for $k=2$ in \eqref{eq:holderbound1}, and extend them for any $k\geq 2$ at the end. We denote generically by $n$ the spatial dimension, that in Theorem \ref{thm:ureg} is $n=2$. 

We use the following abbreviations for norms and seminorms:
 \begin{notation} Let $v(\Bx)$ be a function on $\overline{\Omega}\subset\bbR^n$, $n\geq 1$. For $\hexp\in(0,1)$ and $k\in\bbN$, we use the following notation for the norms and seminorms in $C^{k}(\overline{\Omega})$:
\begin{equation*}
  \cont{v}{\overline{\Omega}}=\sup_{\Bx\in\overline{\Omega}}|v(\Bx)|,\quad \scont[k]{v}{\overline{\Omega}}=\sum_{|\Bnu|=k}\cont{\dfrac{\partial^{|\Bnu|}v}{\partial \Bx^{\Bnu}}}{\overline{\Omega}},\quad \cont[k]{v}{\overline{\Omega}}=\sum_{|\Bnu|\leq k}\cont{\dfrac{\partial^{|\Bnu|}v}{\partial \Bx^{\Bnu}}}{\overline{\Omega}},
\end{equation*}
and for the norms and seminorms in $C^{k,\hexp}(\overline{\Omega})$:
\begin{align*}
 \sholder{v}{\overline{\Omega}}=\sup_{\substack{{\Bx_1,\Bx_2\in\overline{\Omega},}\\{\Bx_1\neq\Bx_2}}}\frac{|v(\Bx_1)-v(\Bx_2)|}{|\Bx_1-\Bx_2|^{\hexp}}, \quad& \sholderk{v}{\overline{\Omega}}{k}=\sum_{|\Bnu|=k}\sholder{\dfrac{\partial^{|\Bnu|}v}{\partial \Bx^{\Bnu}}}{\overline{\Omega}},\\
 \holder{v}{\overline{\Omega}}=\cont{v}{\overline{\Omega}}+ \sholder{v}{\overline{\Omega}},\quad& \holderk{v}{\overline{\Omega}}{k}=\sum_{|\Bnu|\leq k}\sholder{\dfrac{\partial^{|\Bnu|}v}{\partial \Bx^{\Bnu}}}{\overline{\Omega}}.
\end{align*}
If $\Omega=\Omega_{in}\cup\Gamma\cup\Omega_{out}$, where $\Gamma$ is an interface separating the two subdomains, then we denote
\begin{align*}
 \sholderpw{v}{\Omega}=\sholder{v}{\Omega_{in}\cup\Gamma}+\sholder{v}{\Omega_{out}\cup\Gamma},\qquad&\holderpw{v}{\Omega}=\holder{v}{\Omega_{in}\cup\Gamma}+\holder{v}{\Omega_{out}\cup\Gamma},\\
  \sholderpw{v}{\overline{\Omega}}=\sholder{v}{\overline{\Omega_{in}}}+\sholder{v}{\overline{\Omega_{out}}},\qquad&\holderpw{v}{\overline{\Omega}}=\holder{v}{\overline{\Omega_{in}}}+\holder{v}{\overline{\Omega_{out}}},
\end{align*}
and analogously for the piecewise-$C^k$ and piecewise-$C^{k,\hexp}$ norms and seminorms.
\end{notation}

\subsection{Local estimates at the interface $\hat{\Gamma}$}
Without loss of generality, we assume $\hat{\Gamma}$ to be the boundary of the upper half-plane, as every $C^{2,\hexp}$ boundary is $C^{2,\hexp}$-diffeomorphic to the upper half-plane (with $J$- and $\By$-independent continuity constants).

The standard technique to prove Schauder estimates for the solution to \eqref{eq:varform} is the method of solidifying coefficients (see \cite[Sect. 6.3.2]{WYW} and \cite[Proof of Thm. 6.2]{GT}).

Fixed $\xreffix\in\hat{\Gamma}$ and a ball $B_R(\xreffix)$ of radius $R$ centered in $\xreffix$, we can write \eqref{eq:varform} restricted to $B_R(\xreffix)$ as

\begin{subequations}\label{eq:modelpbconst}
\begin{align}[left=\empheqlbrace]
& -\refgrad\cdot\left(\aref(\By;\xreffix)\refgrad \uref\right)= \freffix(\By;\xref) \quad \text{in }B_R^{-}(\xreffix)\cup B_R^{+}(\xreffix),\\
& \llbracket \uref \rrbracket_{B_R(\xreffix)\cap\gammaref} = 0, \quad\Big\llbracket \aref(\By;\xreffix)\dfrac{\partialref \uref}{\partialref \nref}\Big\rrbracket_{B_R(\xreffix)\cap\gammaref} = \greffix(\By;\xref),\label{eq:tcconst}
\end{align}
\end{subequations}
with
\begin{align}
 \freffix(\By;\xref)&:=\cref(\By;\xref)\uref+\refgrad\cdot\left((\aref(\By;\xref)-\aref(\By;\xreffix))\refgrad\uref\right),\label{eq:fhat}\\
 \greffix(\By;\xref)&:=\Big\llbracket \left(\aref(\By;\xreffix)-\aref(\By;\xref)\right)\dfrac{\partialref \uref}{\partialref \nref}\Big\rrbracket_{B_R(\xreffix)\cap\gammaref},\label{eq:ghat}
\end{align}
and with $B_R^{+}(\xreffix):=B_R(\xreffix)\cap\hdoutR$ and $B_R^{-}(\xreffix):=B_R(\xreffix)\cap\hdin$. We develop our analysis taking
\begin{equation}\label{eq:Fsimplified}
 \freffix(\By;\xref) = \fref(\By;\xref)+(\aref(\By;\xref)-\aref(\By;\xreffix))\circ\hat{\jac}^2 \uref,
\end{equation}
where $(A\circ B)_{ij}=A_{ij}B_{ij}$, $i,j=1,\ldots,n$, is the Hadamard product for matrices, and $\hat{\jac}^2 \uref$ denotes the Hessian matrix of $\uref$. The term $\fref(\By;\xref)$ is a generic right-hand side, possibly including lower order terms; in our case, $\fref(\By;\xref)=\cref(\By;\xref)\uref+\left(\refgrad\cdot \aref(\By;\xref)\right)\cdot\refgrad\uref$.

Since, by assumption, the constant matrix $\aref(\By;\xreffix)$ is symmetric positive definite, for every $\By\in\pspace_J$ and every $J\in\bbN$ there exists an orthonormal matrix $\omatrix_y$, dependent on $\By$ and $J$, such that
\begin{equation}\label{eq:diaga}
 \omatrix^{\top}_y\aref(\By;\xreffix)\omatrix_y=\Lambda_y,
\end{equation}
with $\diagmatrix_y$ a diagonal matrix dependent of $\By\in\pspace_J$, $J\in\bbN$. The entries of $\diagmatrix_y$ have $J$- and $\By$-uniform lower and upper bounds, because, by assumption, $\hat{\alpha}(\By;\cdot)$ has $J$-, $\By$- and $\hat{\Bx}$-uniform lower and upper bounds on its singular values. We denote these bounds by $\evmin$ and $\evmax$, respectively.

Introducing the change of coordinates $\xif=\omatrix^{\top}_y\xref$ for $\xref\in B_R(\xreffix)$, and using the symbols $\xifgrad$ and $\partialxif$ to denote differentiation with respect to $\xif$, \eqref{eq:modelpbconst} becomes:
\begin{subequations}\label{eq:modelpbxi}
\begin{align}[left=\empheqlbrace]
& -\xifgrad\cdot\left(\diagmatrix_y\xifgrad \uxif\right)= \fxif_y \quad \text{in }B_R^{-}(\xiffix)\cup B_R^{+}(\xiffix),\label{eq:pdexi}\\
& \llbracket \uxif \rrbracket_{\gammaxif_y} = 0, \quad\Big\llbracket \diagmatrix_y\dfrac{\partialxif \uxif}{\partialxif \nxif}\Big\rrbracket_{\gammaxif_y} = \gxif_y,\label{eq:tcxi}
\end{align}
\end{subequations}
for every $\By\in\pspace_J$ and every $J\in\bbN$. We have denoted $\fxif_y:=\freffix(\By;\omatrix_y\xif)$ and $\gxif_y:=\greffix(\By;\omatrix_y\xif)$. Since $\omatrix_y$ is orthonormal, $B_R(\xreffix)$ is mapped to another ball with the same radius $R$ and just a different center $\xiffix$. In \eqref{eq:pdexi}, $B_R^{-}(\xiffix)$ is the preimage of $B_R^{-}(\xreffix)$ under $\omatrix_y$, corresponding to a half-ball with radius $R$ and center in $\xiffix=\omatrix_y^{\top}\xreffix$; the same convention applies for $B_R^{+}(\xiffix)$. In \eqref{eq:tcxi}, $\gammaxif$ is a short notation for the preimage of $B_R(\xreffix)\cap\gammaref$ under $\omatrix_y$.

In the following, $\Ckpwxi{k}{B_R(\xiffix)}:=C^{k,\hexp}(B_R^{+}(\xiffix)\cup\gammaxif)\cup C^{k,\hexp}(B_R^{-}(\xiffix)\cup\gammaxif)$, $k\in\bbN$.

In subsection \ref{A2}, we provide the proof to the following lemma:
\begin{lemma}\label{lem:locestxi}
 Let $R>0$ and let $\uxif$ be a solution to \eqref{eq:modelpbxi} in $B_R(\xiffix)$, $\xiffix\in\gammaxif_y$. If $\fxif_y\in\Ckpwxi{0}{B_R^{-}(\xiffix)}$ and $\gxif_y\in C^{1,\hexp}(\gammaxif_y)$, then
 \begin{align}
 \sholderkpwb{\uxif}{R/2}{2}\leq C &\left(\frac{1}{R^{2+\hexp}}\contpwb{\uxif}{R}+\frac{1}{R^{\hexp}}\contpwb{\fxif_y}{R}+\sholderpwb{\fxif_y}{R}\right.\nonumber\\
  &\left.+\frac{1}{R^{1+\hexp}}\cont{\gxif_y}{\gammaxif_y}+\sholderk{\gxif_y}{\gammaxif_y}{1}\right).\label{eq:xiest}
 \end{align}
The constant $C=C(n,\evmax,\evmin,\hexp)$ is \emph{independent} of the center $\xiffix$ of the ball $B_R$, of $J\in\bbN$ and of $\By\in\pspace_J$.
\end{lemma}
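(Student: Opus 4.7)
The plan is to reduce the constant-coefficient transmission problem \eqref{eq:modelpbxi} to Poisson's equation on two half-balls separated by a flat interface, then apply classical Schauder theory on each side, coupling the estimates through the transmission conditions.

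First, I would perform a further anisotropic scaling. Since $\Lambda_y$ is diagonal with eigenvalues uniformly bounded between $\Lambda_{min}$ and $\Lambda_{max}$ (independently of $\By, J$), the change of variables $\Bxi \mapsto \Lambda_y^{1/2}\Bxi$ converts \eqref{eq:pdexi} into $-\hat{\Delta}v^{\pm} = \tilde f^{\pm}$ on each half, with the flat interface $\tilde{\Gamma}_y$ remaining flat (though possibly translated) and the transmission condition becoming $[v]=0$, $[\partial_{\tilde\nu} v] = \tilde g_y$ (up to a uniformly bounded weight). All transformations of piecewise Hölder norms involve constants depending only on $\Lambda_{min},\Lambda_{max}$, preserving $J$- and $\By$-uniformity, and the ball $B_R(\tilde{\Bx}_{\tilde\Gamma})$ is distorted only within fixed aspect ratios.

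Second, I would split $v$ into its even and odd parts with respect to the flat interface: $v_e(\Bxi)=\tfrac12(v(\Bxi)+v(\Bxi^*))$ and $v_o(\Bxi)=\tfrac12(v(\Bxi)-v(\Bxi^*))$, where $\Bxi^*$ is the reflection across the interface. The transmission condition $\llbracket v\rrbracket=0$ forces $v_o=0$ on the interface (Dirichlet), while $\llbracket \partial_{\tilde\nu} v\rrbracket=\tilde g_y$ reduces to $\partial_{\tilde\nu} v_e = \tfrac12 \tilde g_y$ on the interface (Neumann). Each of $v_e, v_o$ now solves a boundary value problem for Poisson's equation on a single half-ball, and fits into the classical framework of \cite[Ch.~6]{GT} or \cite[Ch.~6]{WYW}.

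Third, I would apply the scaled boundary Schauder estimate on each half-ball of radius $R/2$, in the form
\begin{equation*}
\sholderk{v_{e/o}}{B^{\pm}_{R/2}}{2}\le C\Bigl(\tfrac{1}{R^{2+\hexp}}\cont{v_{e/o}}{B^{\pm}_R}+\tfrac{1}{R^{\hexp}}\cont{\tilde f_{e/o}}{B^{\pm}_R}+\sholder{\tilde f_{e/o}}{B^{\pm}_R}+\mathrm{B.D.}\Bigr),
\end{equation*}
where the boundary-data contribution B.D.\ for $v_e$ contains $R^{-(1+\hexp)}\lVert \tilde g_y\rVert_0+\lvert \tilde g_y\rvert_{1,\hexp}$ and vanishes for $v_o$. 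Recombining $v=v_e+v_o$, undoing the anisotropic scaling of step one and the orthogonal change of variables, and translating back to $\tilde{\Bx}_{\tilde\Gamma}$, yields \eqref{eq:xiest}. All reduction constants depend only on $n,\hexp,\Lambda_{min},\Lambda_{max}$, which delivers the claimed independence of $J$, $\By$ and $\tilde{\Bx}_{\tilde\Gamma}$.

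The main obstacle will be tracking the scaling factors in $R$ through each step so that the final estimate carries the precise mixed weights $R^{-(2+\hexp)}$, $R^{-\hexp}$, $R^{-(1+\hexp)}$ that appear in \eqref{eq:xiest}. This is a standard dilation/translation argument (cf.\ the treatment after \cite[Thm.~6.2]{GT}), but it must be carried out separately on each side of the interface, keeping in mind that the even/odd decomposition mixes contributions from the two sides: both $\lVert v_e\rVert_0$ and $\lVert v_o\rVert_0$ are controlled by $\lVert v^{+}\rVert_0+\lVert v^{-}\rVert_0$, and similarly for $\tilde f$. Hence the piecewise Hölder norms on the right-hand side reassemble correctly, and no cross-term between the two half-balls survives.
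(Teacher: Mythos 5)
Your plan hinges on the reflection $\Bxi\mapsto\Bxi^{*}$ being a symmetry of the constant-coefficient problem \eqref{eq:modelpbxi}, but this is false in the setting of the paper: the frozen coefficient $\hat\alpha(\By;\xreffix)$ (hence $\Lambda_y$) is \emph{piecewise constant}, taking one value on $B_R^{+}$ and a different value on $B_R^{-}$, because it is obtained by freezing the two one-sided limits of the jumping coefficient $\hat\alpha$. (This is forced by \eqref{eq:fhat}--\eqref{eq:ghat} and \eqref{eq:Fsimplified}: if $\hat\alpha(\By;\xreffix)$ were a single matrix, then $\hat\alpha(\By;\hat{\Bx})-\hat\alpha(\By;\xreffix)$ would fail to vanish as $\hat{\Bx}\to\xreffix$ from one side, and $\hat g$, $\hat F$ would contain $O(1)$ multiples of $\partial_{\Bn}\hat u$ and $\hat D^{2}\hat u$ that could never be absorbed into the left-hand side.) With $\Lambda_y^{+}\ne\Lambda_y^{-}$, your first step --- the anisotropic rescaling $\Bxi\mapsto\Lambda_y^{1/2}\Bxi$ --- must be carried out with a different dilation on each half-ball; after that, the two half-balls are no longer mirror images of one another, the tangential coordinates on the interface are stretched differently, and $u^{+}$ and $u^{-}$ solve Poisson equations with \emph{different} residual tangential coefficients. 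The ``even'' and ``odd'' parts $v_e,v_o$ then satisfy neither a pure Neumann nor a pure Dirichlet problem: the weighted flux condition $(\Lambda_y^{+})_{nn}\partial_{\Bn}u^{+}-(\Lambda_y^{-})_{nn}\partial_{\Bn}u^{-}=\tilde g_y$ mixes into both, and in addition neither $v_e$ nor $v_o$ solves a single elliptic equation, because the two half-ball operators do not coincide under reflection. Even the weighted decomposition $W=b^{+}u^{+}+b^{-}\,u^{-}(\cdot,-\cdot)$, $V=u^{+}-u^{-}(\cdot,-\cdot)$ with $b^{\pm}=(\Lambda_y^{\pm})_{nn}^{1/2}$, which does decouple the boundary conditions, still leaves $W$ and $V$ coupled through the tangential part of the operator unless $(\Lambda_y^{+})_{ii}/(\Lambda_y^{+})_{nn}=(\Lambda_y^{-})_{ii}/(\Lambda_y^{-})_{nn}$ for all $i<n$ --- a condition that does not hold for the Helmholtz coefficients \eqref{eq:coeffshat} with the mollifier \eqref{eq:chi2}.

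This is why the paper takes a different route: it works directly with the transmission problem, proving Cacciopoli inequalities (Theorem~\ref{thm:cacc}) using test functions that respect the piecewise structure and the transmission conditions, then Campanato-type decay estimates (Theorems~\ref{thm:laplace}--\ref{thm:bdpoisson2}) and a weighted lifting (Theorem~\ref{thm:withg}) to handle $\tilde g_y\neq 0$, without ever attempting to decouple the two sides. Your reflection argument would be a shortcut, and indeed a valid one, in the special case where $\Lambda_y$ is \emph{identical} on both sides of the interface (which corresponds essentially to $\alpha_2=1$ together with a $C^1$ domain map); in that case the transmission problem degenerates into a constant-coefficient Poisson equation with a jump only in the normal derivative, and the even/odd split reduces it cleanly to Dirichlet and Neumann half-ball problems. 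But Lemma~\ref{lem:locestxi} is needed precisely in the general case $\Lambda_y^{+}\ne\Lambda_y^{-}$, so your proof does not cover it.
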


Inserting in \eqref{eq:xiest} the expressions obtained from \eqref{eq:Fsimplified} and \eqref{eq:ghat}, and denoting $\axif:=\aref(\By;\omatrix_y\xif)$ and $\fxifs_y:=\fref(\By;\omatrix_y\xif)$,  we obtain:
\begin{lemma}\label{lem:locestxi2}
  Let $0<R\leq 1$ and let $\uxif$ be a solution to \eqref{eq:modelpbxi} in $B_R(\xiffix)$, $\xiffix\in\gammaxif_y$, with $\freffix$ given by \eqref{eq:Fsimplified}. Let the assumptions of Theorem \ref{thm:ureg} hold. Then
  \begin{equation}
  \sholderkpwb{\uxif}{R/2}{2}\leq C\left(\frac{1}{R^{2+\hexp}}\cont{\uxif}{B_R(\xiffix)}+R^{\hexp}\sholderkpwb{\uxif}{R}{2} + \frac{1}{R^{\hexp}}\contpwb{\fxifs_y}{R}+\sholderpwb{\fxifs_y}{R}\right).\label{eq:xiest2}
  \end{equation}
  The constant $C=C\left(n,\evmin,\evmax,\holderkpwb{\axif}{R}{1}\right)$
is \emph{independent} of the center $\xiffix$ of the ball $B_R$, of $J\in\bbN$ and of $\By\in\pspace_J$.
\end{lemma}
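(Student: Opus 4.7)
The plan is to derive \eqref{eq:xiest2} from the generic local estimate \eqref{eq:xiest} of Lemma \ref{lem:locestxi} by substituting the explicit forms \eqref{eq:Fsimplified} and \eqref{eq:ghat} into the right-hand side, and then re-expressing every intermediate-order norm of $\uxif$ in terms of the two surviving quantities $\cont{\uxif}{B_R(\xiffix)}$ and $\sholderkpwb{\uxif}{R}{2}$ via scale-invariant interpolation inequalities in piecewise H\"older spaces.

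First I would rewrite, in the rotated coordinates and piecewise in $B_R^{\pm}(\xiffix)$,
$\fxif_y = \fxifs_y + (\axif - \axif(\xiffix^{\pm}))\circ \hat{\jac}^2\uxif$ and $\gxif_y = \llbracket(\axif(\xiffix) - \axif)\,\partialxif\uxif/\partialxif\nxif\rrbracket$, understanding $\axif(\xiffix^{\pm})$ as the one-sided limit from the corresponding subdomain. The structural observation driving the whole proof is that for every $\xif \in B_R^{\pm}(\xiffix)$,
$$|\axif(\xif) - \axif(\xiffix^{\pm})| \leq \holderkpwb{\axif}{R}{1}\,R^{\hexp},$$
so every \emph{undifferentiated} factor $(\axif - \axif(\xiffix))$ automatically carries a small $R^{\hexp}$. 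Expanding $\contpwb{\fxif_y}{R}$ and $\sholderpwb{\fxif_y}{R}$, as well as $\cont{\gxif_y}{\gammaxif_y}$ and $\sholderk{\gxif_y}{\gammaxif_y}{1}$, via the product rule for piecewise H\"older functions produces the desired target terms $R^{-\hexp}\contpwb{\fxifs_y}{R}$ and $\sholderpwb{\fxifs_y}{R}$, plus a collection of intermediate-order quantities built from $\contpwb[j]{\uxif}{R}$ for $j=0,1,2$ and from the piecewise $C^{1,\hexp}$-seminorm, each weighted by a suitable H\"older norm of $\axif$ and by an explicit power of $R$.

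The second step is to apply the scale-invariant interpolation inequalities in piecewise H\"older spaces on each half-ball $B_R^{\pm}(\xiffix)$, obtained from Lemma 6.32 of \cite{GT} by the change of variables $\xif \mapsto \xif/R$ to the unit ball. For $j \in \{0,1,2\}$ and any $\delta>0$ these take the form
$$\contpwb[j]{\uxif}{R} \leq \delta\,R^{2+\hexp-j}\sholderkpwb{\uxif}{R}{2} + C(\delta)\,R^{-j}\cont{\uxif}{B_R(\xiffix)},$$
with a completely analogous bound on the piecewise $C^{1,\hexp}$-seminorm. Since by hypothesis $0 < R \leq 1$, any positive surplus power of $R$ can simply be dropped, so that each intermediate-order norm of $\uxif$ collapses into a small multiple of $R^{\hexp}\sholderkpwb{\uxif}{R}{2}$ plus a constant multiple of $R^{-2-\hexp}\cont{\uxif}{B_R(\xiffix)}$. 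Summing all contributions yields \eqref{eq:xiest2} with a constant depending only on $n$, $\evmin$, $\evmax$, $\hexp$ and $\holderkpwb{\axif}{R}{1}$; the $J$- and $\By$-independence is automatic, since the interpolation constants on $B_1$ depend only on $n$ and $\hexp$ and the rescaling introduces no dependence on $\xiffix$, $J$ or $\By$.

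The main technical obstacle is the interface seminorm $\sholderk{\gxif_y}{\gammaxif_y}{1}$. Differentiating tangentially the product $(\axif(\xiffix) - \axif)\,\partialxif\uxif/\partialxif\nxif$ yields, besides the harmless summand $(\axif(\xiffix) - \axif)\,\partial_{\tau}\partialxif\uxif/\partialxif\nxif$, also $-(\partial_\tau\axif)\,\partialxif\uxif/\partialxif\nxif$, in which the small quantity $\axif - \axif(\xiffix)$ has been differentiated away, so that the cheap $R^{\hexp}$-gain is unavailable. This forces $\partialxif\uxif/\partialxif\nxif$ and its $C^{\hexp}$-seminorm to be controlled purely by interpolation, and one must verify carefully that, after multiplication by $\holderkpwb{\axif}{R}{1}$ and combination with the prefactor $1$ already sitting in front of $\sholderk{\gxif_y}{\gammaxif_y}{1}$ in \eqref{eq:xiest}, no factor worse than $R^{-2-\hexp}$ ever multiplies $\cont{\uxif}{B_R(\xiffix)}$, which is precisely the role of the hypothesis $R \leq 1$. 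The extension from $k=2$ to general $k \geq 2$ in \eqref{eq:holderbound1} then follows by an induction on $k$ applied to the already-proved case, after differentiating \eqref{eq:modelpbxi} once and repeating the argument for the system satisfied by $\refgrad\uref$.
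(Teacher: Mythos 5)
Your proposal is correct and follows essentially the same route as the paper's proof: substitute \eqref{eq:Fsimplified} and \eqref{eq:ghat} into \eqref{eq:xiest}, exploit the $R^{\hexp}$-gain from the undifferentiated H\"older-continuous factor $\axif-\axif(\xiffix)$, and then use the product rule for piecewise H\"older norms together with the interpolation inequalities and $R\le 1$ to reduce every intermediate-order quantity to $R^{-2-\hexp}\cont{\uxif}{B_R(\xiffix)}$ plus $R^{\hexp}\sholderkpwb{\uxif}{R}{2}$; you also correctly isolate the only delicate term, where the H\"older-continuous coefficient gets differentiated in $\sholderk{\gxif_y}{\gammaxif_y}{1}$, which is exactly what the paper handles by interpolation. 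The closing remark about extending from $k=2$ to general $k\ge 2$ is not part of this lemma (that induction appears later, in the proof of Theorem \ref{thm:ureg}) but it does not affect the validity of the argument.
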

\begin{proof}
  In this proof we denote $B_R:=B_R(\xiffix)$ and use the symbol $\tilde{\jac}^2$ for the Hessian with respect to $\xif$.
  
  We have $\fxif_y=\fxifs_y(\By;\xif)+\left(\omatrix_y^{\top}\left(\axif(\By;\xif)-\axif(\By;\xiffix)\right)\omatrix_y\right) : \tilde{\jac}^2_y \uxif$ and $\gxif_y=\Big\llbracket(\axif(\By;\xiffix)-\axif(\By;\xif))\omatrix_y\dfrac{\partialxif \uxif}{\partialxif \nxif}\Big\rrbracket_{\gammaxif_y}$. Using interpolation inequalities (cf. \cite[Cor. 1.2.1]{WYW}), together with the fact that $\omatrix_y$ is orthonormal for every $\By$, and that $0<R\leq 1$, we obtain:
 \begin{align*}
  \sholderpwbr{\omatrix_y^{\top}\left(\axif(\By;\xif)-\axif(\By;\xiffix)\right)\omatrix_y \circ \tilde{\jac}^2_y \uxif}{R}&\leq \scontpwbr[2]{\uxif}{R}\sholderpwbr{\axif}{R}+\sholderkpwbr{\uxif}{R}{2}\sholderpwbr{\axif}{R}R^{\hexp}\\
  & \leq \holderpwbr{\axif}{R}\left(\scontpwbr[2]{\uxif}{R}+R^{\hexp}\sholderkpwbr{\uxif}{R}{2}\right)\\
  &\leq \holderpwbr{\axif}{R}\left(\frac{1}{R^2}\contpwbr{\uxif}{R}+2 R^{\hexp}\sholderkpwbr{\uxif}{R}{2}\right)\\
    &\leq \holderpwbr{\axif}{R}\left(\frac{1}{R^{2+\hexp}}\contpwbr{\uxif}{R}+2 R^{\hexp}\sholderkpwbr{\uxif}{R}{2}\right),
    \end{align*}
    \begin{align*}
    \contpwbr{\omatrix_y^{\top}\left(\axif(\By;\xif)-\axif(\By;\xiffix)\right)\omatrix_y \circ \tilde{\jac}^2_y \uxif}{R}&\leq  R^{\hexp}\scontpwbr[2]{\uxif}{R}\sholderpwbr{\axif}{R}\\
    &\leq R^{\hexp}\sholderpwbr{\axif}{R}\left(\frac{1}{R^{2+\hexp}}\contpwbr{\uxif}{R}+R^{\hexp}\sholderkpwbr{\uxif}{R}{2}\right)
    \end{align*}
    and
    \begin{align*}
  \sholderk{\gxif_y}{\gammaxif_y}{1}&\leq \scontpwbr[1]{\uxif}{R}\sholderkpwbr{\axif}{R}{1}+\sholderkpwbr{\uxif}{R}{1}\scontpwbr[1]{\axif}{R}+\scontpwbr[2]{\uxif}{R}\sholderpwbr{\axif}{R}+R^{\hexp}\sholderkpwbr{\uxif}{R}{2}\sholderpwbr{\axif}{R}\\
  &\leq 2\holderkpwbr{\axif}{R}{1}\left(\left(\frac{1}{R}+\frac{1}{R^{1+\hexp}}+\frac{1}{R^2}\right)\contpwbr{\uxif}{R}+\left(R^{1+\hexp}+R+R^{\hexp}\right)\sholderkpwbr{\uxif}{R}{2}\right)\\
  &\leq 8\holderkpwbr{\axif}{R}{1}\left(\frac{1}{R^{2+\hexp}}\contpwbr{\uxif}{R}+R^{\hexp}\sholderkpwbr{\uxif}{R}{2}\right),\\
  \cont{\gxif_y}{\gammaxif_y}&\leq R^{\hexp}\sholderpwbr{\axif}{R}\scontpwbr[1]{\uxif}{R}\\
  &\leq R^{\hexp}\sholderpwbr{\axif}{R}\left(\frac{1}{R}\cont{\uxif}{B_R}+R^{1+\hexp}\sholderkpwbr{\uxif}{R}{2}\right)\\
  &= R^{1+\hexp}\sholderpwbr{\axif}{R}\left(\frac{1}{R^2}\cont{\uxif}{B_R}+R^{\hexp}\sholderkpwbr{\uxif}{R}{2}\right)\\
  &\leq  R^{1+\hexp}\sholderpwbr{\axif}{R}\left(\frac{1}{R^{2+\hexp}}\cont{\uxif}{B_R}+R^{\hexp}\sholderkpwbr{\uxif}{R}{2}\right).
  \end{align*}
Inserting these estimates in \eqref{eq:xiest}, we obtain \eqref{eq:xiest2}.
\end{proof}

We now return to the variable $\xref$. We notice that $\omatrix_y\left(B_R(\xiffix)\right)=B_R(\xreffix)$, and, thanks to the orthonormality of $\omatrix_y$, the H\"older norms in the $\xref$-space and in the $\xif$-space do coincide, for every $\xreffix\in \gammaref$, every $\By\in\pspace_J$ and every $J\in\bbN$. Let us denote by $C_{\alpha}$ the $J$- and $\By$-uniform upper bound on the $C_{pw}^{1,\hexp}(\overline{D_{R_{out}}})$-norm of  $\hat{\alpha}(\By;\cdot)$. From \eqref{eq:xiest2}, we have the following estimate:
\begin{equation}\label{eq:estloc1}
      \sholderkpw{\uref}{B^{\pm}_{R/2}(\xreffix)}{2}\leq C\left(\frac{1}{R^{2+\hexp}}\cont{\uref}{\overline{\dom}}+R^{\hexp}\sholderkpw{\uref}{\overline{\dom}}{2} + \frac{1}{R^{\hexp}}\contpw{\fref}{\overline{\dom}}+\sholderpw{\fref}{\overline{\dom}}\right),
\end{equation}
with $C=C\left(n,\evmin,\evmax,C_a\right)$ \emph{independent} of $\xreffix$, of $J\in\bbN$ and of $\By\in\pspace_J$.

To take into account the lower order terms in \eqref{eq:fhat}, we proceed as following:
\begin{itemize}
 \item we write, in \eqref{eq:estloc1}, $\fref(\By;\xref)=\cref(\By;\xref)\uref+\left(\refgrad\cdot \aref(\By;\xref)\right)\cdot\refgrad\uref$;
 \item use the interpolation inequalities (cf. \cite[Cor. 1.2.1]{WYW}) to obtain the bounds $\scontpw[1]{\uref}{\overline{\dom}}\leq R^{1+\hexp}\sholderkpw{\uref}{\overline{\dom}}{2} + \frac{1}{R}\cont{\uref}{\overline{\dom}}$ and $\sholderkpw{\uref}{\overline{\dom}}{1}\leq R\sholderkpw{\uref}{\overline{\dom}}{2} + \frac{1}{R^{1+\hexp}}\cont{\uref}{\overline{\dom}}$;
 \item exploit that $0<R\leq 1$, as we have done in the proof of Lemma \ref{lem:locestxi2}.
\end{itemize}

Summarizing the last steps, the local estimate at $\gammaref$ reads:
\begin{theorem}\label{thm:locestgamma}
 Let the assumptions of Theorem \ref{thm:ureg} be fulfilled, and let us denote by $C_{\alpha}$ and $C_{\kappa}$ the $J$- and $\By$-independent upper bounds on the $C_{pw}^{1,\hexp}(\overline{D_{R_{out}}})$-norm of  $\hat{\alpha}(\By;\cdot)$ and on the $C_{pw}^{0,\hexp}(\overline{D_{R_{out}}})$-norm of $\hat{\kappa}^2(\By,\cdot)$, respectively. If $\uref\in C_{\hat{pw}}^{2,\hexp}(\overline{\dom})$ is a solution to \eqref{eq:varform}, then, for every $\xreffix\in\gammaref$:
 \begin{equation}\label{eq:estloc2}
      \sholderkpw{\uref}{B^{\pm}_{R/2}(\xreffix)}{2}\leq C\left(\frac{1}{R^{2+\hexp}}\cont{\uref}{\overline{\dom}}+R^{\hexp}\sholderkpw{\uref}{\overline{\dom}}{2}\right),
\end{equation}
for a radius $0<R< \min\left\{1,\emph{dist}\left(\xreffix,\partial D_{R_{out}}\right)\right\}$ such that $B_R^{+}(\xreffix)\subset\doutR\cup\gammaref$ and $B_R^{-}(\xreffix)\subset\din\cup\gammaref$. 

The constant $C=C\left(n,\evmin,\evmax,C_{\alpha},C_{\kappa}\right)$ in \eqref{eq:estloc2} is independent of $\xreffix$, of $J\in\bbN$ and of $\By\in\pspace_J$.
\end{theorem}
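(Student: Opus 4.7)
The plan is to take the intermediate estimate~\eqref{eq:estloc1} as the starting point and to eliminate the right-hand-side norms of $\fref$ by substituting its explicit expression together with standard interpolation inequalities for intermediate H\"older norms. Recall that $\fref(\By;\xref)=\cref(\By;\xref)\uref+(\refgrad\cdot\aref(\By;\xref))\cdot\refgrad\uref$, so expanding the sup and H\"older seminorms of these products via the Leibniz rule, and using the $J$- and $\By$-uniform bounds $C_\alpha$ and $C_\kappa$ on the coefficients, I would bound both $\contpw{\fref}{\overline{\dom}}$ and $\sholderpw{\fref}{\overline{\dom}}$ in terms of $\cont{\uref}{\overline{\dom}}$, $\sholderpw{\uref}{\overline{\dom}}$, $\scontpw[1]{\uref}{\overline{\dom}}$ and $\sholderkpw{\uref}{\overline{\dom}}{1}$, with constants depending only on $n,\hexp,C_\alpha,C_\kappa$.

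Next I would eliminate the intermediate norms that are neither $\cont{\uref}{\overline{\dom}}$ nor $\sholderkpw{\uref}{\overline{\dom}}{2}$ by invoking standard interpolation inequalities (cf.~\cite[Cor.~1.2.1]{WYW}): for every $0<R\leq 1$,
\begin{align*}
\scontpw[1]{\uref}{\overline{\dom}}&\leq R^{1+\hexp}\sholderkpw{\uref}{\overline{\dom}}{2}+R^{-1}\cont{\uref}{\overline{\dom}},\\
\sholderkpw{\uref}{\overline{\dom}}{1}&\leq R\,\sholderkpw{\uref}{\overline{\dom}}{2}+R^{-(1+\hexp)}\cont{\uref}{\overline{\dom}},\\
\sholderpw{\uref}{\overline{\dom}}&\leq R^{2}\sholderkpw{\uref}{\overline{\dom}}{2}+R^{-\hexp}\cont{\uref}{\overline{\dom}}.
\end{align*}
Substituting these inequalities into the bounds for $\contpw{\fref}{\overline{\dom}}$ and $\sholderpw{\fref}{\overline{\dom}}$, plugging the result back into~\eqref{eq:estloc1}, and finally using $R\leq 1$ to dominate every positive power of $R$ by $R^{\hexp}$ and every negative power by $R^{-(2+\hexp)}$, reduces the right-hand side to precisely the form of~\eqref{eq:estloc2}.

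The delicate point will be to keep track of constants so that the final $C$ depends only on $n,\evmin,\evmax,C_\alpha,C_\kappa$. This is guaranteed because the constant in~\eqref{eq:estloc1} is already $J$- and $\By$-uniform by Lemma~\ref{lem:locestxi2} (the uniformity ultimately stemming from the orthogonality of $\omatrix_y$ and from the assumptions of Theorem~\ref{thm:ureg}), and every new constant produced by the Leibniz rule or by the interpolation inequalities depends solely on $n,\hexp,C_\alpha,C_\kappa$. In other words, the only genuinely new work here is book-keeping of constants and powers of $R$; the substantive analytic content lies upstream, in Lemmas~\ref{lem:locestxi} and~\ref{lem:locestxi2}.
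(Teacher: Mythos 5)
Your proposal matches the paper's proof essentially step for step: start from the intermediate estimate \eqref{eq:estloc1}, substitute the explicit lower-order expression for $\fref$, apply the interpolation inequalities of \cite[Cor.~1.2.1]{WYW} to absorb the intermediate norms, and use $0<R\leq1$ to collapse the powers of $R$; the paper lists exactly these steps in the bulleted remarks preceding the theorem. Your tracking of constants is also consistent with the paper's claim, so this is correct and the same approach.
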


\subsection{Local interior estimates}
Proceeding as for the local estimate at $\gammaref$, it is easy to verify that analogous estimates hold in the interior of $\din$ and $\doutR$:

\begin{theorem}\label{thm:locestint}
  Let the assumptions of Theorem \ref{thm:ureg} be fulfilled. If $\uref\in C_{\hat{pw}}^{2,\hexp}(\overline{D}_{R_{out}})$ is a bounded solution to \eqref{eq:varform}, then, for every $\xref\in\hdin\cup\hdoutR$:
 \begin{equation}\label{eq:estlocint}
      \sholderk{\uref}{B_{R/2}(\xref)}{2}\leq C\left(\frac{1}{R^{2+\hexp}}\cont{\uref}{\overline{\dom}}+R^{\hexp}\sholderkpw{\uref}{\overline{\dom}}{2}\right),
\end{equation}
for a radius $0<R< \min\left\{1,\emph{dist}\left(\xref,\partial D_{R_{out}}\right)\right\}$ if $\xref\in\hdoutR$, $0<R< \min\left\{1,\emph{dist}\left(\xref,\gammaref\right)\right\}$ if $\xref\in\hdin$. 
The constant $C=C\left(n,\evmin,\evmax,C_{\alpha}, C_{\kappa}\right)$ in \eqref{eq:estlocint} is independent of $\xref$, of $J\in\bbN$ and of $\By\in\pspace_J$ (with $C_{\alpha}$ and $C_{\kappa}$ as in Theorem \ref{thm:locestgamma}).
\end{theorem}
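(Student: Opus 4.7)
The plan is to mimic the strategy used in Theorem \ref{thm:locestgamma}, with the crucial simplification that the ball $B_R(\xref)$ is entirely contained in one of the two subdomains $\hdin$ or $\hdoutR$. In particular, no transmission conditions enter the local problem, so we work directly with $C^{2,\hexp}$ (not piecewise) norms inside $B_R(\xref)$, and the formalism of Lemma \ref{lem:locestxi} degenerates to a standard interior Schauder estimate for a constant-coefficient, second-order elliptic equation on a ball.

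First I would apply the method of solidifying coefficients, freezing $\aref$ at the center $\xref$ of the ball. Since $B_R(\xref)\subset\hdin\cup\gammaref\cup\hdoutR$ is disjoint from $\gammaref$, the restriction of \eqref{eq:varform} to $B_R(\xref)$ is
\begin{equation*}
-\refgrad\cdot\left(\aref(\By;\xref)\refgrad \uref\right) = \freffix(\By;\cdot) \quad \text{in } B_R(\xref),
\end{equation*}
with $\freffix$ of the form \eqref{eq:Fsimplified}, i.e. $\freffix=\fref + (\aref(\By;\cdot)-\aref(\By;\xref))\circ\hat{\jac}^2\uref$, and with lower-order contribution $\fref(\By;\cdot)=\cref(\By;\cdot)\uref + (\refgrad\cdot \aref(\By;\cdot))\cdot\refgrad\uref$. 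Next I would diagonalize $\aref(\By;\xref)=\omatrix_y\diagmatrix_y\omatrix_y^\top$ as in \eqref{eq:diaga} and pass to the variable $\xif=\omatrix_y^\top\hat{\Bx}$, obtaining the constant-coefficient equation
\begin{equation*}
-\xifgrad\cdot(\diagmatrix_y\xifgrad\uxif)=\fxif_y\quad\text{in } B_R(\xiffix),
\end{equation*}
without any interface condition.

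At this point I would invoke the classical interior Schauder estimate for constant-coefficient elliptic equations on balls (see \cite[Ch.~6]{GT} or \cite[Ch.~6]{WYW}), which gives
\begin{equation*}
\sholderk{\uxif}{B_{R/2}(\xiffix)}{2}\leq C\left(\frac{1}{R^{2+\hexp}}\cont{\uxif}{B_R(\xiffix)}+\frac{1}{R^\hexp}\cont{\fxif_y}{B_R(\xiffix)}+\sholder{\fxif_y}{B_R(\xiffix)}\right),
\end{equation*}
with $C=C(n,\evmin,\evmax,\hexp)$ depending only on the $J$- and $\By$-uniform ellipticity bounds of $\diagmatrix_y$. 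This is the analogue of Lemma \ref{lem:locestxi} in the interior setting; note the absence of the $\gxif_y$-terms, since there is no interface through which to enforce a jump condition. Repeating the interpolation estimate of Lemma \ref{lem:locestxi2}, now for the single ball $B_R(\xiffix)$ (no $\pm$ decomposition), I bound the contribution of the commutator $\left(\axif(\By;\cdot)-\axif(\By;\xiffix)\right)\circ\tilde{\jac}^2\uxif$ by $\holderk{\axif}{B_R}{1}\bigl(R^{-2-\hexp}\cont{\uxif}{B_R}+R^\hexp\sholderk{\uxif}{B_R}{2}\bigr)$.

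Finally I would transform back via the orthogonal change of variables, which preserves all H\"older norms and maps $B_R(\xiffix)$ onto $B_R(\xref)$, so the estimate is unchanged in form. The remaining lower-order terms $\cref\uref$ and $(\refgrad\cdot\aref)\cdot\refgrad\uref$ in $\fref$ are absorbed by the standard interpolation inequalities $\scont[1]{\uref}{B_R}\leq R^{1+\hexp}\sholderk{\uref}{B_R}{2}+R^{-1}\cont{\uref}{B_R}$ and $\sholderk{\uref}{B_R}{1}\leq R\sholderk{\uref}{B_R}{2}+R^{-1-\hexp}\cont{\uref}{B_R}$, using $0<R\le 1$, exactly as in the derivation of Theorem \ref{thm:locestgamma}. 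Since all constants arising in the derivation depend only on $n$, $\evmin$, $\evmax$, $C_\alpha$, $C_\kappa$ (which are $J$- and $\By$-independent by Assumption \ref{Ak1k2} and the hypotheses of Theorem \ref{thm:ureg}), the final constant $C$ is uniform in $\xref$, $J$ and $\By$, yielding \eqref{eq:estlocint}. The main technical point, rather than being an obstacle, is simply to observe that the machinery of Lemma \ref{lem:locestxi} and Lemma \ref{lem:locestxi2} transfers verbatim (and in a simpler form) to a ball lying strictly inside one subdomain; the argument is therefore merely the interior specialization of the interface estimate already established.
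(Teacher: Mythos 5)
Your proof is correct and follows the same route the paper takes: interior Schauder estimates via solidifying coefficients, with the $J$- and $\By$-uniformity of the constant following from the uniform ellipticity and norm bounds on the coefficients. The paper's own proof is just a pointer to the relevant sections of \cite{WYW}; you have usefully spelled out how the machinery of Lemmas \ref{lem:locestxi} and \ref{lem:locestxi2} specializes (and simplifies, since there is no transmission condition and hence no $\gxif_y$-term) to a ball strictly contained in one subdomain.
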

\begin{proof}
 We refer to \cite{WYW}, Sections 6.2.2, 6.2.3, 6.2.6 and 6.3.2. We remark that the $J$- and $\By$-uniform ellipticity condition on $\hat{\alpha}$, together with $J$- and $\By$-independence of the norms of $\aref$ and $\cref$, ensure the $J$- and $\By$-independence of the constant $C$ in \eqref{eq:estlocint}.
\end{proof}

\subsection{Global estimates}
The local estimates at $\partial D_{R_{out}}$ are very similar to the local estimates at $\hat{\Gamma}$. Therefore, we do not present them explicitly, and refer to \cite[Sect. 6.7]{GT} for details. What we obtain is that, under the assumptions of Theorems \ref{thm:locestgamma} and \ref{thm:locestint}, for every $\xref_{\partial D}\in\partial\dout$ and $0<R< \min\left\{1,\text{dist}\left(\xref_{\partial D},\gammaref\right)\right\}$:
     \begin{equation}\label{eq:estlocbdobl}
      \sholderk{\uref}{B^{-}_{R/2}(\xref_{\partial D})}{2}\leq\, C_1\left(\frac{1}{R^{2+\hexp}}\cont{\uref}{\overline{\dom}}+R^{\hexp}\sholderkpw{\uref}{\overline{\dom}}{2}\right)+ C_2\holderk{u_i}{\overline{\dom}}{2},
    \end{equation}
    with $B^{-}_{R/2}(\xref_{\partial D}):=B_{R/2}(\xref_{\partial D})\cap \overline{\hdoutR}$ and the constants $C_1$ and $C_2$ independent of $\xref_{\partial D}$, of $J\in\bbN$ and $\By\in\pspace_J$ ($C_2$ is possibly depending on $R$).

For the global estimate, we recall that,owing to the interpolation inequalities \cite[Cor. 1.2.1]{WYW}, in order to bound $\holderkpw{\uref}{\overline{\dom}}{2}$ it is sufficient to bound $\sholderkpw{\uref}{\overline{\dom}}{2}$ and $\cont{\uref}{\overline{\dom}}$. Using a finite covering argument on $\overline{\dom}$ together with Theorems \ref{thm:locestgamma}, \ref{thm:locestint} and equation \eqref{eq:estlocbdobl}, we obtain:
\begin{theorem}\label{thm:globest2}
  Let the assumptions of Theorem \ref{thm:ureg} be fulfilled. If $\uref\in C_{pw}^{2,\hexp}(\overline{D}_{R_{out}})$ is a bounded solution to \eqref{eq:varform}, then
       \begin{equation}\label{eq:genestobl}
      \holderkpw{\uref}{\overline{\dom}}{2}\leq C\left(\cont{\uref}{\overline{\dom}}+\holderk{u_i}{\overline{\dom}}{2}\right),
\end{equation}
with a constant $C=C\left(n,\evmin,\evmax,C_{\alpha},C_{\kappa}\right)$ independent $J\in\bbN$ and of $\By\in\pspace_J$ (with $C_{\alpha}$ and $C_{\kappa}$ the $J$- and $\By$-uniform bounds on the norms of the coefficients as in Theorem \ref{thm:locestgamma}).
  \end{theorem}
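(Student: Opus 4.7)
\textbf{Proof plan for Theorem \ref{thm:globest2}.} The plan is to glue the three classes of local estimates (Theorems \ref{thm:locestgamma}, \ref{thm:locestint} and the boundary analogue \eqref{eq:estlocbdobl}) by a covering-type argument, and then to exploit the factor $R^{\hexp}$ appearing in each of them to absorb the global Hölder seminorm into the left-hand side. Write $M:=\sholderkpw{\uref}{\overline{\dom}}{2}$. By the Hölder-space interpolation inequalities \cite[Cor.~1.2.1]{WYW}, the full norm $\holderkpw{\uref}{\overline{\dom}}{2}$ is controlled by $M+\cont{\uref}{\overline{\dom}}$, so it suffices to produce an estimate for $M$.

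Fix $R\in(0,1]$, to be specified later depending only on $n,\evmin,\evmax,C_\alpha,C_\kappa,\hexp$. For any two distinct points $\hat{\Bx}_1,\hat{\Bx}_2$ lying in the same closed subdomain ($\overline{\hdin}$ or $\overline{\hdoutR}$) and any multi-index $\nu$ with $|\nu|=2$, I would split the Hölder quotient into two regimes. In the far regime $|\hat{\Bx}_1-\hat{\Bx}_2|\geq R/4$ one bounds
$$\frac{|D^{\nu}\uref(\hat{\Bx}_1)-D^{\nu}\uref(\hat{\Bx}_2)|}{|\hat{\Bx}_1-\hat{\Bx}_2|^{\hexp}}\leq 2(R/4)^{-\hexp}\scontpw[2]{\uref}{\overline{\dom}},$$
and a pointwise bound on $\scontpw[2]{\uref}{\overline{\dom}}$ is obtained by combining the interpolation estimate $\scontpw[2]{\uref}{B_{R/2}(\hat{\Bz})}\leq C\bigl(R^{\hexp}\sholderkpw{\uref}{B_{R/2}(\hat{\Bz})}{2}+R^{-2}\cont{\uref}{\overline{\dom}}\bigr)$ with the relevant local estimate at $\hat{\Bz}$, yielding
$$\scontpw[2]{\uref}{\overline{\dom}}\leq C\bigl(R^{-2}\cont{\uref}{\overline{\dom}}+R^{2\hexp}M+\holderk{u_i}{\overline{\dom}}{2}\bigr).$$
In the near regime $|\hat{\Bx}_1-\hat{\Bx}_2|<R/4$, both points sit inside a common ball $B_{R/2}(\hat{\Bz})$ for some $\hat{\Bz}\in\overline{\dom}$; depending on whether $\hat{\Bz}$ lies in the interior, on $\hat{\Gamma}$, or on $\partial D_{R_{out}}$, I would apply respectively Theorem \ref{thm:locestint}, Theorem \ref{thm:locestgamma}, or \eqref{eq:estlocbdobl}, each of which bounds the Hölder quotient by $C\bigl(R^{-(2+\hexp)}\cont{\uref}{\overline{\dom}}+R^{\hexp}M+\holderk{u_i}{\overline{\dom}}{2}\bigr)$ with a constant independent of $\hat{\Bz}$, $J$, and $\By$.

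Taking the supremum over all admissible pairs yields
$$M\leq C\bigl(R^{-(2+\hexp)}\cont{\uref}{\overline{\dom}}+R^{\hexp}M+\holderk{u_i}{\overline{\dom}}{2}\bigr),$$
with $C$ depending only on the structural data $n,\evmin,\evmax,C_\alpha,C_\kappa,\hexp$. Now choose $R$ small enough that $CR^{\hexp}\leq 1/2$; this choice is $J$- and $\By$-independent. Absorbing the $M$-term into the left-hand side gives $M\leq C'\bigl(\cont{\uref}{\overline{\dom}}+\holderk{u_i}{\overline{\dom}}{2}\bigr)$, and a final application of interpolation produces \eqref{eq:genestobl}.

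The main obstacle is bookkeeping in the near regime so that, whenever the pair $(\hat{\Bx}_1,\hat{\Bx}_2)$ is close to $\hat{\Gamma}$, a center $\hat{\Bz}\in\hat{\Gamma}$ is used and Theorem \ref{thm:locestgamma} is invoked on the appropriate half-ball, never pairing points on opposite sides of the interface (consistent with the piecewise definition of $\sholderkpw$). One must also check that the constants in the pointwise interpolation step do not smuggle back in dependence on $J$ or $\By$, which is the case since these inequalities depend only on $R$ and $\hexp$. Once $R$ is fixed, the number of balls in any concrete covering depends only on the fixed geometry of $\overline{\dom}$, so the resulting constant $C'$ retains the claimed independence of $J$ and $\By$.
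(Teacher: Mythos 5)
Your proposal is correct and follows essentially the same route as the paper: the paper's own proof is a two-sentence appeal to interpolation plus a finite covering argument with Theorems \ref{thm:locestgamma}, \ref{thm:locestint} and \eqref{eq:estlocbdobl}, and you have simply made explicit the standard absorption step that these local estimates are designed for, namely choosing $R$ small enough (uniformly in $J,\By$) that the $R^{\hexp}\sholderkpw{\uref}{\overline{\dom}}{2}$ term on the right can be moved to the left. The near/far split of the Hölder quotient, the use of interface-centered balls when a pair lies close to $\hat{\Gamma}$, and the tracking of $J,\By$-independence are all consistent with what the paper intends.
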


  To obtain the estimate on $\holderkpw{\uref}{\overline{\dom}}{k}$ for $k>2$, one proceeds considering the difference quotient for $k=3$ and then, for $k>3$, proceeds by induction. The $J$- and $\By$-independence on the constants is preserved, provided the assumptions of Theorem \ref{thm:ureg} are fulfilled. We refer to \cite[Thm. 6.17]{GT} and \cite[Thm. 6.19]{GT} for details.

  \begin{remark}
   It is clear that the regularity results reported in this section are not restricted to the Helmholtz transmission problem. In particular, they still hold true if the elliptic operator contains a transport term $\Bb(\By;\hat{\Bx})\cdot\refgrad\uref$, where $\lVert\hat{\Bb}(\By;\cdot)\rVert_{C_{pw}^{k-2,\hexp}(\overline{D_{R_{out}}})}$ is bounded independently of $J$ and $\By$. Indeed, the results in \cite{GT} and \cite{WYW} (our guidelines throughtout this section) are stated for an elliptic operator containing a tranport term. A nonzero right-hand side in \eqref{eq:varform} can be treated adding it to $\hat{F}$ in \eqref{eq:fhat} and including it in $\hat{f}$ in \eqref{eq:Fsimplified}. An extension to nonhomogeneous transmission conditions at $\hat{\Gamma}$ is also possible: a jump in the Dirichlet trace can be treated similarly to nonhomogeneous Dirichlet boundary conditions, and a jump in the Neumann trace can be added to $\hat{g}$ in \eqref{eq:ghat}. 
  \end{remark}

\subsection{Proof of Lemma \ref{lem:locestxi}}\label{A2}
We present here the proof to the Schauder estimate of Lemma \ref{lem:locestxi}. Schauder estimates can be proved either using Green's representation formula for the solution to \eqref{eq:modelpbxi}, as done in \cite[Ch. 6]{GT}, or using Campanato norms, as in \cite[Ch. 6]{WYW}. Here we follow the latter approach.

We consider the solution to the Poisson equation \eqref{eq:modelpbxi}. In this section, we denote $B_R:=B_R(\xiffix)$, $B_R^{+}:=B_R^{+}(\xiffix)$ and $B_R^{-}:=B_R^{-}(\xiffix)$. Without loss of generality, we assume that $\gammaxif_y=\left\{\xif\in\bbR^n: \tilde{x}_{y,n} =0\right\}$, where $\tilde{x}_{y,i}$ denotes the $i^{th}$ component of $\xif$, $i=1,\ldots,n$. If $\gammaxif_y$ is a generic hyperplane in $\bbR^n$, the estimates we will obtain still work if we substitute derivatives with respect to the cartesian coordinates by derivatives with respect to the normal and tangential directions with respect to $\gammaxif_y$, see \cite[Rmk. 6.2.8]{WYW}. Also, we can assume the solution $\uxif$ to be sufficiently smooth, see Proposition 6.2.1 in \cite{WYW} (the latter still holds true if we consider the transmission problem \eqref{eq:modelpbxi}). 

Analogously to \cite[Sect. 6.7]{GT}, we first assume that $\gxif_y\equiv 0$, and only at the end return to the general case of nonzero $\gxif_y$. 

\subsubsection{Preliminaries}
 This subsection contains some technical lemmas that will be used in the next subsections.
 
 \begin{lemma}[p. 174 in \cite{WYW}]\label{lem:convex}
  For every $w\in L^2(\Omega)$, $\Omega\subset \bbR^n$, the function 
  $$p(\lambda):=\int_{\Omega}\left(w(\Bz)-\lambda\right)^2\dz,\quad\lambda\in\bbR$$
  is strictly convex, and attains its minimum at $\lambda=w_{\Omega}:=\frac{1}{|\Omega|}\int_{\Omega}w(\Bz)\dz$.
 \end{lemma}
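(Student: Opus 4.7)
The plan is to reduce the statement to the elementary fact that a univariate quadratic polynomial with positive leading coefficient is strictly convex and attains its minimum at the vertex. First I would expand the integrand $(w(\Bz)-\lambda)^2 = w(\Bz)^2 - 2\lambda w(\Bz) + \lambda^2$ and integrate term by term, which is legitimate since $w\in L^2(\Omega)$ implies $w\in L^1(\Omega)$ on the bounded set $\Omega$ (the case $|\Omega|=\infty$ is excluded implicitly, as otherwise $w_\Omega$ is not defined; for finite $|\Omega|$ the Cauchy--Schwarz inequality gives $w\in L^1(\Omega)$). This yields the explicit formula
\begin{equation*}
p(\lambda) = |\Omega|\lambda^2 - 2\lambda \int_{\Omega} w(\Bz)\dz + \int_{\Omega} w(\Bz)^2\dz,
\end{equation*}
so $p$ is a polynomial of degree two in $\lambda$ with leading coefficient $|\Omega|>0$.

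Next I would read off strict convexity directly from $p''(\lambda) = 2|\Omega|>0$, and locate the minimum by solving $p'(\lambda)=0$, that is $2|\Omega|\lambda - 2\int_\Omega w\dz = 0$, which gives $\lambda = w_\Omega$. Since the second derivative is positive, this critical point is the unique global minimizer.

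I do not anticipate any real obstacle: the only subtlety is ensuring that the constant function $\lambda\equiv \text{const}$ belongs to $L^2(\Omega)$ so that the integral defining $p(\lambda)$ makes sense, which again requires $|\Omega|<\infty$; this is implicit in the statement through the definition of $w_\Omega$. The proof is thus essentially a one-line computation once the polynomial form of $p$ is displayed.
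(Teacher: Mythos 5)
Your proof is correct and follows essentially the same route as the paper: compute the first and second derivatives of $p$ (here via explicit expansion into a quadratic in $\lambda$), conclude strict convexity from $p''(\lambda)=2|\Omega|>0$, and identify the unique minimizer from $p'(\lambda)=0$. The extra remark about needing $|\Omega|<\infty$ is a reasonable observation that the paper leaves implicit.
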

\begin{proof}
 By trivial calculations one sees that $\dfrac{d^2 p}{d\lambda^2}(\lambda)\equiv 2|\Omega|>0$ and $\dfrac{dp}{d\lambda}(\lambda)=0\Leftrightarrow \lambda=w_{\Omega}$.
\end{proof}

\begin{lemma}[Thm. 6.1.1 and Rmk. 6.1.2 in \cite{WYW}]\label{lem:campanato}
 Let $B_R\in\bbR^n$ be a ball with radius $R$ and, for any $0<\lambda<1$, consider the quantity
 \begin{equation}\label{eq:campanatonorm}
  \left| w\right|^{(\lambda)}_{p,\mu;B_R}:=\sup_{\substack{\Bx\in B_R,\\ 0<\rho<\lambda \diam B_R}}\left(\rho^{-\mu}\int_{B_{\rho}(\Bx)\cap B_R}|w(\Bx)-w_{\Bx,\rho}|^p\dz\right)^{\frac{1}{p}},
 \end{equation}
where $w_{\Bx,\rho}:=\frac{1}{|B_{\rho}(\Bx)\cap B_R|}\int_{B_{\rho}(\Bx)\cap B_R}w(\Bz)\dz$.

Then, for $\hexp=\frac{\mu-n}{p}\in(0,1]$, \eqref{eq:campanatonorm} is a seminorm equivalent to the H\"older seminorm $\sholder{w}{B_R}$, that is, there exist positive constants $C_1,C_2$, depending only on $n,R,p,\mu$ and $\lambda$, such that
\begin{equation*}
C_1\sholder{w}{B_R}\leq    \left| w\right|^{(\lambda)}_{p,\mu;B_R} \leq C_2 \sholder{w}{B_R}.
\end{equation*}
\end{lemma}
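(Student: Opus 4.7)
That $|\cdot|^{(\lambda)}_{p,\mu;B_R}$ is a seminorm follows from the linearity of the averaging operator $w\mapsto w_{\Bx,\rho}$ together with Minkowski's inequality in $L^p(B_\rho(\Bx)\cap B_R)$, and it vanishes on constants by inspection; the content of the lemma is thus the equivalence of seminorms. I would prove the two sides separately, since they are of very different difficulty.

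\emph{Easy direction.} Assume $w\in C^{0,\hexp}(B_R)$. By Lemma \ref{lem:convex}, replacing $w_{\Bx,\rho}$ by $w(\Bx)$ only enlarges the integrand, so
\begin{equation*}
\int_{B_\rho(\Bx)\cap B_R}|w(\Bz)-w_{\Bx,\rho}|^p\dz
\leq \int_{B_\rho(\Bx)\cap B_R}|w(\Bz)-w(\Bx)|^p\dz
\leq \sholder{w}{B_R}^p\,\rho^{p\hexp}|B_\rho(\Bx)\cap B_R|.
\end{equation*}
Since $|B_\rho(\Bx)\cap B_R|\leq\omega_n\rho^n$ and $p\hexp+n=\mu$, multiplying by $\rho^{-\mu}$ and taking the supremum gives the upper bound with $C_2=\omega_n^{1/p}$.

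\emph{Hard direction (Campanato embedding).} Fix $\Bx\in B_R$ and a geometric sequence $\rho_k=2^{-k}\rho_0$ with $\rho_0<\lambda\diam B_R$. The central estimate is
\begin{equation*}
|w_{\Bx,\rho_k}-w_{\Bx,\rho_{k+1}}|\leq C\,\rho_k^{\hexp}\,|w|^{(\lambda)}_{p,\mu;B_R},
\end{equation*}
obtained by writing the difference as the average over $B_{\rho_{k+1}}(\Bx)\cap B_R$ of $(w_{\Bx,\rho_k}-w)+(w-w_{\Bx,\rho_{k+1}})$, applying Hölder's inequality, and using the measure bound $|B_{\rho_{k+1}}(\Bx)\cap B_R|\geq c_n\rho_{k+1}^n$ (valid because $B_R$ is convex, hence has the uniform cone property, so that a fixed fraction of every sufficiently small ball centered in $B_R$ lies inside $B_R$). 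Summing the resulting geometric series in $\hexp$ shows that $(w_{\Bx,\rho_k})_k$ is Cauchy, with limit equal to $w(\Bx)$ at every Lebesgue point, and yields $|w(\Bx)-w_{\Bx,\rho}|\leq C'\rho^{\hexp}|w|^{(\lambda)}_{p,\mu;B_R}$ for all admissible $\rho$. To turn this into a Hölder modulus of continuity, for $\Bx_1,\Bx_2\in B_R$ with $\delta:=|\Bx_1-\Bx_2|$ sufficiently small, I would choose $\rho=2\delta$ and split
\begin{equation*}
|w(\Bx_1)-w(\Bx_2)|\leq |w(\Bx_1)-w_{\Bx_1,\rho}|+|w_{\Bx_1,\rho}-w_{\Bx_2,\rho}|+|w_{\Bx_2,\rho}-w(\Bx_2)|.
\end{equation*}
The outer terms are handled by the previous step, while the middle term is controlled by observing that $B_{\rho}(\Bx_1)\cap B_{\rho}(\Bx_2)\cap B_R$ contains a ball of radius comparable to $\delta$, so Hölder's inequality on this common core reduces the estimate once more to $|w|^{(\lambda)}_{p,\mu;B_R}$.

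\emph{Main obstacle.} The crux of the argument is the uniform lower bound $|B_\rho(\Bx)\cap B_R|\geq c_n\rho^n$ for all $\Bx\in B_R$ and all $\rho<\diam B_R$, with $c_n$ independent of the position of $\Bx$ (in particular not degenerating as $\Bx$ approaches $\partial B_R$). Convexity of $B_R$ is what makes this work, but one has to track the constant carefully through the dyadic chain, through the comparison of averages at two different centers, and finally through the choice $\rho=2\delta$, in order to obtain the constants $C_1,C_2$ depending only on $n$, $R$, $p$, $\mu$ and $\lambda$ as claimed.
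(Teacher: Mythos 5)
The paper does not prove this lemma; it imports it verbatim from Thm.~6.1.1 and Rmk.~6.1.2 of~\cite{WYW}, so there is no in-paper argument to compare against. Your sketch is the standard proof of Campanato's integral characterization of H\"older continuity and is correct in structure: the two directions, the dyadic chaining estimate $|w_{\Bx,\rho_k}-w_{\Bx,\rho_{k+1}}|\le C\rho_k^{\hexp}|w|^{(\lambda)}_{p,\mu;B_R}$, identification of the limit with $w$ at Lebesgue points (so $w$ agrees a.e.\ with a H\"older representative), the comparison of $w_{\Bx_1,\rho}$ and $w_{\Bx_2,\rho}$ through the common core, and the measure-density bound $|B_\rho(\Bx)\cap B_R|\ge c_n\rho^n$ for convex $B_R$ are exactly the right ingredients. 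One small inaccuracy in the easy direction: Lemma~\ref{lem:convex} is stated for $p=2$ only (it is the variance characterization of the mean), so for general $p$ the inequality
\begin{equation*}
\int_{B_\rho(\Bx)\cap B_R}|w-w_{\Bx,\rho}|^p\dz \le \int_{B_\rho(\Bx)\cap B_R}|w-w(\Bx)|^p\dz
\end{equation*}
does not hold literally. You instead need the weaker but sufficient bound
\begin{equation*}
\left(\int_{A}|w-w_{\Bx,\rho}|^p\dz\right)^{1/p}\le 2\inf_{\lambda\in\bbR}\left(\int_{A}|w-\lambda|^p\dz\right)^{1/p},\qquad A:=B_\rho(\Bx)\cap B_R,
\end{equation*}
which follows from the triangle inequality together with $|w_{\Bx,\rho}-\lambda|\le\frac{1}{|A|}\int_A|w-\lambda|$; this just changes the constant $C_2$. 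Since the paper only uses the lemma with $p=2$ (in the proof of Theorem~\ref{thm:hessbound}), your appeal to Lemma~\ref{lem:convex} is exactly right in that case, but the statement you are proving is for general $p$, so the caveat should be recorded.
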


\begin{lemma}[Iteration lemma, Sect. 6.2.5 in \cite{WYW}]\label{lem:iterlem}
 Assume that $\psi(R)$ is a nonnegative and nondecreasing function on $[0,R_0]$, satisfying
 \begin{equation*}
  \psi(\rho)\leq A \left(\frac{\rho}{R}\right)^{\alpha_1}\psi(R)+BR^{\alpha_2},\quad 0<\rho<R\leq R_0,
 \end{equation*}
where $\alpha_1,\alpha_2$ are constants with $0<\alpha_2<\alpha_1$. Then there exists a constant $C$, depending only on $A,\alpha_1$ and $\alpha_2$, such that
\begin{equation*}
 \psi(\rho)\leq C\left(\frac{\rho}{R}\right)^{\alpha_2}\left(\psi(R)+BR^{\alpha_2}\right),\quad 0<\rho<R\leq R_0.
\end{equation*}
\end{lemma}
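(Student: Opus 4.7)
The plan is a standard geometric-iteration argument at dyadic scales. First I would fix a contraction ratio $\tau \in (0,1)$, depending only on $A$, $\alpha_1$ and $\alpha_2$, chosen so that $A\tau^{\alpha_1-\alpha_2} \leq \tfrac{1}{2}$; such $\tau$ exists because $\alpha_1 > \alpha_2 > 0$, which makes $\tau^{\alpha_1-\alpha_2}\to 0$ as $\tau\to 0^+$. Applying the hypothesis with the pair $(\tau R, R)$ then yields $\psi(\tau R) \leq \tfrac{1}{2}\tau^{\alpha_2}\psi(R) + BR^{\alpha_2}$.

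Next I would iterate this inequality by replacing $R$ with $\tau^{k-1}R$, producing the recursion $\psi(\tau^k R) \leq \tfrac{1}{2}\tau^{\alpha_2}\psi(\tau^{k-1}R) + B\tau^{(k-1)\alpha_2}R^{\alpha_2}$ for every integer $k \geq 1$. Unrolling the recursion gives $\psi(\tau^k R) \leq \bigl(\tfrac{\tau^{\alpha_2}}{2}\bigr)^k \psi(R) + B R^{\alpha_2}\sum_{j=1}^{k} \bigl(\tfrac{\tau^{\alpha_2}}{2}\bigr)^{k-j}\tau^{(j-1)\alpha_2}$. The telescoped sum equals $\tau^{(k-1)\alpha_2}\sum_{l=0}^{k-1}2^{-l} \leq 2\tau^{(k-1)\alpha_2} = 2\tau^{-\alpha_2}\tau^{k\alpha_2}$, and combining with $(\tau^{\alpha_2}/2)^k \leq \tau^{k\alpha_2}$ I arrive at $\psi(\tau^k R) \leq 2\tau^{-\alpha_2}\tau^{k\alpha_2}\bigl(\psi(R)+BR^{\alpha_2}\bigr)$.

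Finally, to extend this discrete-scale estimate to every $\rho\in(0,R]$, I would invoke the monotonicity of $\psi$: for each such $\rho$ there is a unique integer $k \geq 0$ with $\tau^{k+1}R < \rho \leq \tau^k R$, so $\psi(\rho)\leq\psi(\tau^k R)$ while $\tau^{k\alpha_2} \leq \tau^{-\alpha_2}(\rho/R)^{\alpha_2}$. Substituting delivers the claim with $C := 2\tau^{-2\alpha_2}$, a quantity depending only on $A$, $\alpha_1$ and $\alpha_2$, as required.

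There is no genuine obstacle in this argument; it is essentially algebraic once $\tau$ has been chosen. The only detail that warrants attention is the geometric-series bookkeeping, which ensures the coefficient multiplying $B$ does not accumulate a $k$-dependent factor (this is where the strict inequality $\alpha_2 < \alpha_1$ enters in an essential way, through the contractivity $A\tau^{\alpha_1} \leq \tfrac{1}{2}\tau^{\alpha_2}$). Monotonicity of $\psi$ plays no role until the last step, where it is used to bridge the estimate at the discrete scales $\tau^k R$ with the claim for arbitrary radii.
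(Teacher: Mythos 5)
Your proof is correct. The paper states this lemma without proof, simply citing Sect.~6.2.5 of \cite{WYW}, and your argument --- choosing $\tau$ with $A\tau^{\alpha_1-\alpha_2}\leq\tfrac12$, iterating at the dyadic scales $\tau^k R$, and bridging to arbitrary $\rho$ via monotonicity of $\psi$ --- is precisely the standard iteration argument given in that reference, with all the geometric-series bookkeeping carried out correctly.
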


In the next lemma, we consider the matrix $\diagmatrix_y$ as defined in \eqref{eq:diaga}.

\begin{lemma}\label{lem:diagequiv}
 For every $\Bw\in\bbR^n$,
 \begin{equation*}
  \evmin \lVert \Bw\rVert^2\leq \lVert \diagmatrix_y^{\frac{1}{2}}\Bw\rVert^2\leq \evmax \lVert \Bw\rVert^2,
 \end{equation*}
where $\evmin,\evmax>0$ are, respectively, the $J$- and $\By$-independent lower and upper bounds for the eigenvalues of $\diagmatrix_y$.
\end{lemma}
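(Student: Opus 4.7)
The plan is to exploit the fact that, by construction in \eqref{eq:diaga}, $\diagmatrix_y$ is a diagonal matrix whose diagonal entries are precisely the eigenvalues of the symmetric positive definite matrix $\aref(\By;\xreffix)$. Under the assumptions of Theorem~\ref{thm:ureg}, these eigenvalues admit $J$- and $\By$-uniform bounds $\evmin$ and $\evmax$, as discussed right after \eqref{eq:diaga}.

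First I would write $\diagmatrix_y=\operatorname{diag}(\lambda_1,\dots,\lambda_n)$ with $0<\evmin\leq \lambda_i\leq \evmax$ for every $i=1,\dots,n$, every $\By\in\pspace_J$ and every $J\in\bbN$. Since all $\lambda_i$ are positive, $\diagmatrix_y^{1/2}=\operatorname{diag}(\lambda_1^{1/2},\dots,\lambda_n^{1/2})$ is well defined. Then, for any $\Bw=(w_1,\dots,w_n)^\top\in\bbR^n$, a direct computation gives
\begin{equation*}
\lVert \diagmatrix_y^{1/2}\Bw\rVert^2=\sum_{i=1}^n \lambda_i\, w_i^2.
\end{equation*}

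The final step is to bound each term $\lambda_i w_i^2$ from below by $\evmin w_i^2$ and from above by $\evmax w_i^2$, sum over $i=1,\dots,n$, and use $\lVert\Bw\rVert^2=\sum_{i=1}^n w_i^2$ to obtain the claimed two-sided inequality. This argument does not depend on $\By$ or $J$ since $\evmin$ and $\evmax$ do not. There is no real obstacle here: the statement is a straightforward consequence of the diagonal structure of $\diagmatrix_y$ and of the uniform spectral bounds on $\aref(\By;\xreffix)$ guaranteed by the hypotheses of Theorem~\ref{thm:ureg}.
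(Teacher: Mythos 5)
Your proposal is correct and follows the same route the paper intends: the paper simply declares the proof trivial (noting only that $\diagmatrix_y^{1/2}$ is well defined because $\evmin,\evmax>0$), and your write-up spells out exactly that elementary computation using the diagonal structure of $\diagmatrix_y$ and the uniform eigenvalue bounds.
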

\begin{proof}
 The proof is trivial. We just remark that $\diagmatrix_y^{\frac{1}{2}}$ is well defined thanks to the assumption that $\evmin,\evmax>0$.
\end{proof}

\begin{notation}
 Given an open domain $\Omega=\Omega^{+}\cup\gammaxif_y\cup\Omega^{-}\subset\bbR^n$ divided into two parts, $\Omega^{+}$ and $\Omega^{-}$, by $\gammaxif_y$, and given a function $h\in L^2(\Omega^{+})\cup L^2(\Omega^{-})$, we introduce the short notation
 \begin{equation*}
\int_{\Omega^{\pm}}h^{\pm}(\Bz)\dz := \int_{\Omega^{+}}h(\Bz)|_{\Omega^{+}}\dz + \int_{\Omega^{-}}h(\Bz)|_{\Omega^{-}}\dz.  
 \end{equation*}
 Also, $(h^{\pm})^2:=\begin{cases}
                           (h|_{\Omega^{+}})^2,&\text{in }\Omega^{+},\\
                           (h|_{\Omega^{-}})^2,&\text{in }\Omega^{-},
                          \end{cases}$, and, if $h\in H^1(\Omega^{+})\cup H^1(\Omega^{-})$, $(\xifgrad h)^{\pm}:=\begin{cases}
                                                                                   \xifgrad h|_{\Omega^{+}}&\text{in }\Omega^{+},\\
                                                                                    \xifgrad h|_{\Omega^{-}}&\text{in }\Omega^{-}.
                                                                                  \end{cases}$
                                                                                  Analogous notations with the symbol $\pm$ as exponent will follow the same rule.
 
 Furthermore, we use the symbol $\partialxifi{i}:=\dfrac{\partialxif}{\partialxif\tilde{x}_{y,i}}$, $i=1,\ldots,n$, to denote partial differentiation, $\tilde{\jacsl}^2$ for the Hessian in the $\xif$-coordinates, and in general, $\tilde{\jacsl}^j$, $j\in\bbN$, to denote the tensor containing all the partial derivatives of order $j$.
\end{notation}

\subsubsection{Cacciopoli's inequalities}
\begin{theorem}\label{thm:cacc}
 Let $\uxif$ be a solution to \eqref{eq:modelpbxi} in $B_R=B_R(\xiffix)$ with $\gxif_y\equiv 0$. Then, for every $0<\rho<R$ and every $\lambda\in\bbR$, it holds:
 \begin{align}
  \hspace{-0.3cm}\int_{B_{\rho}^{\pm}}\left|(\xifgrad\uxif(\Bz))^{\pm}\right|^2\dz&\leq C_1 \left[\frac{1}{(R-\rho)^2}\int_{B_R}(\uxif(\Bz)-\lambda)^2\dz+(R-\rho)^2\int_{B_R^{\pm}}(\fxif_y^{\pm})^2(\Bz)\dz\right],\hspace{-0.3cm}\label{eq:cacc1}\\
  \hspace{-0.3cm}\int_{B_{\rho}^{\pm}}\left|(\xifgrad\wxif(\Bz))^{\pm}\right|^2\dz&\leq C_2\left[\frac{1}{(R-\rho)^2}\int_{B_R}\wxif^2(\Bz)\dz+\int_{B_R^{\pm}}\left(\fxif_y^{\pm}(\Bz)-\fxif_{y,R}^{\pm}\right)^2\dz\right],\label{eq:cacc2}
 \end{align}
where $\wxif:=\partialxifi{i}\uxif$, $i=1,\ldots,n-1$, $B_{\rho}=B_{\rho}(\xiffix)$ and
\begin{equation*}
\fxif^{+}_{y,R}:=\frac{1}{|B_R^{+}|}\int_{B_R^{+}}\fxif_y(\Bz)\dz,\quad  \fxif^{-}_{y,R}:=\frac{1}{|B_R^{-}|}\int_{B_R^{-}}\fxif_y(\Bz)\dz.
\end{equation*}
The constants $C_1=C_1(n,\evmin,\evmax)$, $C_2=C_2(n,\evmin,\evmax)$ are independent of the center $\xiffix$ of $B_R$ and $B_{\rho}$, and, overall, they are independent of $J\in\bbN$ and $\By\in\pspace_J$.
\end{theorem}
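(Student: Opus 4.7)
The proof follows the classical Caccioppoli scheme, adapted so that the transmission conditions contribute no boundary terms. Since $\gxif_y\equiv 0$ and $\llbracket\uxif\rrbracket_{\gammaxif_y}=0$, any test function $\psi\in H^1_0(B_R)$ with $\llbracket\psi\rrbracket_{\gammaxif_y}=0$ can be used in the weak formulation of \eqref{eq:modelpbxi} without producing any interface contribution, which reduces both claims to one-phase energy estimates on $B_R^+$ and $B_R^-$ separately. Throughout I use a standard cutoff $\eta\in C_c^\infty(B_R)$ with $\eta\equiv 1$ on $B_\rho$, $0\le\eta\le 1$, and $|\xifgrad\eta|\le C_0/(R-\rho)$, and invoke Lemma~\ref{lem:diagequiv} for the ellipticity of $\diagmatrix_y$.

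For \eqref{eq:cacc1} I would test against $\psi=\eta^2(\uxif-\lambda)$, which lies in $H^1_0(B_R)$ and is continuous across $\gammaxif_y$. Expanding $\xifgrad\psi$ produces the coercive bulk term $\int_{B_R^\pm}\eta^2\,\diagmatrix_y\xifgrad\uxif\cdot\xifgrad\uxif\,\dz$ together with a cross term $2\int_{B_R^\pm}\eta\,\diagmatrix_y\xifgrad\uxif\cdot\xifgrad\eta\,(\uxif-\lambda)\,\dz$, while the right-hand side contributes $\int_{B_R^\pm}\fxif_y^{\pm}\,\eta^2(\uxif-\lambda)\,\dz$. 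Cauchy--Schwarz and Young's inequality, with a parameter $\varepsilon$ chosen so that the resulting $\varepsilon\int\eta^2|\xifgrad\uxif|^2$ pieces are absorbed into the left-hand side via Lemma~\ref{lem:diagequiv}, leave the remaining contributions bounded by $(R-\rho)^{-2}\int_{B_R}(\uxif-\lambda)^2\,\dz$ and $(R-\rho)^2\int_{B_R^\pm}(\fxif_y^\pm)^2\,\dz$, yielding \eqref{eq:cacc1} with $C_1=C_1(n,\evmin,\evmax)$.

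For \eqref{eq:cacc2} I exploit that, for $i=1,\ldots,n-1$, the direction $\partialxifi{i}$ is tangential to $\gammaxif_y=\{\tilde{x}_{y,n}=0\}$. Because $\diagmatrix_y$ is constant, a tangential difference-quotient argument shows that $\wxif=\partialxifi{i}\uxif$ satisfies
\begin{equation*}
\int_{B_R^\pm}\diagmatrix_y\xifgrad\wxif\cdot\xifgrad\psi\,\dz \;=\; -\int_{B_R^\pm}\fxif_y^{\pm}\,\partialxifi{i}\psi\,\dz
\end{equation*}
for every $\psi\in H^1_0(B_R)$ with $\llbracket\psi\rrbracket_{\gammaxif_y}=0$, since the transmission conditions $\llbracket\wxif\rrbracket=\partialxifi{i}\llbracket\uxif\rrbracket=0$ and $\llbracket\diagmatrix_y\partialxif\wxif/\partialxif\nxif\rrbracket=\partialxifi{i}\gxif_y=0$ are both preserved. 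Crucially, the outward unit normal to $\gammaxif_y$ has vanishing $i$-th component, so $\int_{B_R^\pm}\partialxifi{i}\psi\,\dz=0$ separately on each side, which permits replacing $\fxif_y^\pm$ by $\fxif_y^\pm-\fxif_{y,R}^\pm$ on the right-hand side.

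Testing with $\psi=\eta^2\wxif$ (admissible because $\llbracket\wxif\rrbracket=0$) and repeating the Young/Cauchy--Schwarz absorption from the previous step delivers \eqref{eq:cacc2} with $C_2=C_2(n,\evmin,\evmax)$. The main obstacle to handle carefully is the rigorous justification of the weak equation for $\wxif$: I would do this via tangential difference quotients $D^h_i\uxif=(\uxif(\cdot+he_i)-\uxif)/h$, which commute with the flat interface $\gammaxif_y$, and pass to the limit $h\to 0$ relying on the a priori smoothness of $\uxif$ inside each subdomain (cf.~\cite[Prop.~6.2.1]{WYW}). The $J$- and $\By$-independence of $C_1,C_2$ is automatic, since only $n$, $\evmin$, $\evmax$ enter the estimates and all three are uniform under the hypotheses.
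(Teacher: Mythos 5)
Your proposal is correct and follows essentially the same route as the paper: test \eqref{eq:modelpbxi} with $\eta^2(\uxif-\lambda)$ for \eqref{eq:cacc1}, differentiate tangentially and test with $\eta^2\wxif$ for \eqref{eq:cacc2}, and close with Young's inequality plus the ellipticity bounds of Lemma~\ref{lem:diagequiv}. The only cosmetic difference is that you justify replacing $\fxif_y^{\pm}$ by $\fxif_y^{\pm}-\fxif_{y,R}^{\pm}$ via $\int_{B_R^{\pm}}\partialxifi{i}\psi\,\dz=0$, whereas the paper subtracts the constant before differentiating and then observes that the interface contribution $\llbracket\eta^2\wxif(\fxif_y^{\pm}-\fxif_{y,R}^{\pm})\tilde{n}_{y,i}\rrbracket$ vanishes for $i<n$ — both rest on the same tangentiality of $\partialxifi{i}$ to the flat interface.
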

\begin{proof}(On the lines of the proof of Thm. 6.2.2 in \cite{WYW}.) We first proof \eqref{eq:cacc1}. Let $\eta\in C^{\infty}_0(B_R)$ be a cut-off function such that:
\begin{equation}\label{eq:etacond}
 0\leq \eta(\xif)\leq 1,\quad \eta(\xif)=1\text{ in }B_{\rho},\quad \max_{\xif\in B_R}\lVert\xifgrad\eta(\xif)\rVert\leq \frac{C_{\eta}}{(R-\rho)\evmax},
\end{equation}
for some $J$-  and $\By$-independent constant $C_{\eta}>0$ (possibly dependent on $n$). Multiplying \eqref{eq:pdexi} by $\eta^2(\uxif-\lambda)$, integrating by parts on $B_R^{+}$ and $B_R^{-}$ and using \eqref{eq:tcxi}, we obtain:
\begin{equation*}
 \int_{B_R^{\pm}}\eta^2\left|\left(\diagmatrix_y^{\frac{1}{2}}\xifgrad\uxif\right)^{\pm}\right|^2\dz=-2\int_{B_R^{\pm}}\eta\left(\diagmatrix_y\xifgrad\uxif\right)^{\pm}\cdot\left(\xifgrad\eta\right)(\uxif-\lambda)\dz+ \int_{B_R^{\pm}}\eta^2(\uxif-\lambda)\fxif_y^{\pm}\dz.
\end{equation*}
From this, applying Cauchy's inequality on the right-hand side with $\varepsilon=\frac{1}{2}$ for the first term and $\varepsilon=\frac{1}{(R-\rho)^2}$ for the second term:
\begin{align*}
 \int_{B_R^{\pm}}\eta^2\left|\left(\diagmatrix_y^{\frac{1}{2}}\xifgrad\uxif\right)^{\pm}\right|^2\dz\leq\; & \frac{1}{2}\int_{B_R^{\pm}}\eta^2\left|\left(\diagmatrix_y^{\frac{1}{2}}\xifgrad\uxif\right)^{\pm}\right|^2\dz+2\int_{B_R^{\pm}}\left|\left(\diagmatrix_y^{\frac{1}{2}}\xifgrad\eta\right)^{\pm}\right|^2 (\uxif-\lambda)^2\dz\\
 & +\frac{(R-\rho)^2}{2}\int_{B_R^{\pm}}\eta^2 \left(\fxif_y^{\pm}\right)^2\dz + \frac{1}{2(R-\rho)^2}\int_{B_R^{\pm}}\eta^2 (\uxif-\lambda)^2\dz.
\end{align*}
Exploiting the properties \eqref{eq:etacond} of $\eta$ in the equation above and Lemma \ref{lem:diagequiv}, we have:
\begin{equation*}
  \int_{B_{\rho}^{\pm}}\left|\left(\diagmatrix_y^{\frac{1}{2}}\xifgrad\uxif\right)^{\pm}\right|^2\dz\leq \left(C_{\eta}^2+\frac{1}{4}\right)\frac{1}{(R-\rho)^2}\int_{B_R}(\uxif(\Bz)-\lambda)^2\dz + \frac{(R-\rho)^2}{4}\int_{B_R^{\pm}}(\fxif_y^{\pm})^2(\Bz)\dz.
\end{equation*}
The estimate \eqref{eq:cacc1} follows then just using Lemma \ref{lem:diagequiv} to have the lower bound $\evmin\left|\left(\xifgrad\uxif\right)^{\pm}\right|^2\leq\left|\left(\diagmatrix_y^{\frac{1}{2}}\xifgrad\uxif\right)^{\pm}\right|^2$ on the integrand at the left-hand side. It is clear then that $C_1$ in \eqref{eq:cacc1} depends only on $n$, $\evmin$ and $\evmax$ and it does not depend on the center $\xiffix$ of $B_R$.

To prove \eqref{eq:cacc2}, we differentiate \eqref{eq:modelpbxi} by $\tilde{x}_{y,i}$, $i=1,\ldots,n-1$. Then $\wxif=\partialxifi{i}\uxif$, $i=1,\ldots,n-1$, satisfies
\begin{subequations}
\begin{align}[left=\empheqlbrace]
 & -\xifgrad \cdot \left(\diagmatrix_y \xifgrad \wxif\right) = \partialxifi{i}\left(\fxif_y-\fxif^{\pm}_{y,R}\right),\quad \text{in }B_R^{+}\cup B_R^{-},\label{eq:poissonwpde}\\
 & \llbracket \wxif \rrbracket_{\gammaxif_y} =0,\quad\Big\llbracket \diagmatrix_y\dfrac{\partialxif \wxif}{\partialxif \nxif}\Big\rrbracket_{\gammaxif_y} = 0.\label{eq:poissonwtc}
\end{align}\label{eq:poissonw}
 \end{subequations}
 Then, multiplying \eqref{eq:poissonwpde} by $\eta^2\wxif$ and integrating by parts:
 \begin{equation*}
\int_{B_R^{\pm}}\eta^2\left|\left(\diagmatrix_y^{\frac{1}{2}}\xifgrad\wxif\right)^{\pm}\right|^2\dz = -2\int_{B_R^{\pm}}\eta\wxif\left(\diagmatrix_y \xifgrad \wxif\right)^{\pm}\cdot\xifgrad\eta\dz + \int_{B_R^{\pm}}\partialxifi{i}\left(\fxif^{\pm}_y-\fxif^{\pm}_{y,R}\right)\eta^2\wxif\dz.
 \end{equation*}
 If we integrate by parts (on $B_R^{+}$ and $B_R^{-}$ separately) the last term on the right-hand side, then $\llbracket \eta^2\wxif\left(\fxif^{\pm}_y-\fxif^{\pm}_{y,R}\right)\tilde{n}_{y,i}\rrbracket_{\gammaxif_y}=0$ for $i=1,\ldots,n-1$, due to the fact that the tangential components (with respect to $\gammaxif_y$) of $\nxif$ are zero. Thus
  \begin{equation*}
\int_{B_R^{\pm}}\eta^2\left|\left(\diagmatrix_y^{\frac{1}{2}}\xifgrad\wxif\right)^{\pm}\right|^2\dz = -2\int_{B_R^{\pm}}\eta\wxif\left(\diagmatrix_y \xifgrad \wxif\right)^{\pm}\cdot\xifgrad\eta\dz - \int_{B_R^{\pm}}\partialxifi{i}(\eta^2 \wxif)\left(\fxif^{\pm}_y-\fxif^{\pm}_{y,R}\right)\dz.
 \end{equation*}
Splitting the derivative in the last integral, and using Cauchy's inequality on each term with $\varepsilon=\varepsilon(\evmin)$ sufficiently small, the properties \eqref{eq:etacond} of $\eta$ lead to \eqref{eq:cacc2}. As for the constant in \eqref{eq:cacc1}, it is clear that $C_2=C_2(n,\evmin,\evmax)$ in  \eqref{eq:cacc2} is independent of the center $\xiffix$ of $B_R$ and in general of $J\in\bbN$ and $\By\in\pspace_J$.
\end{proof}
\begin{remark}[Analogous to Rmk. 6.2.5 in \cite{WYW}]\label{rmk:caccn} Since \eqref{eq:pdexi} can be rewritten as $\left(\diagmatrix_y\right)_{nn}\partialxifi{nn}^2\uxif=-\sum_{j=1}^{n-1}\left(\diagmatrix_y\right)_{jj}\partialxifi{jj}^2\uxif-\fxif_y$, we obtain, using \eqref{eq:cacc2}:
\begin{align}
 \int_{B_{\rho}^{\pm}}\left|\left(\tilde{\jacsl}^2 \uxif\right)^{\pm}(\Bz)\right|^2\dz\leq\; &C\left(\sum_{j=1}^{n-1}\int_{B_{\rho}^{\pm}}\left|\left(\xifgrad\left(\partialxifi{j}\uxif\right)\right)^{\pm}\right|^2\dz +\int_{B_{\rho}^{\pm}}\left(\fxif_y^{\pm}\right)^2\dz\right)\nonumber\\
 \leq\; &C'\left[\frac{1}{(R-\rho)^2}\int_{B_R}\left|\left(\xifgrad\uxif\right)^{\pm}\right|^2(\Bz)\dz\right.\nonumber\\
 &\left.+\int_{B_R^{\pm}}\left(\fxif_y^{\pm}(\Bz)-\fxif_{y,R}^{\pm}\right)^2\dz+\int_{B_{R}^{\pm}}\left(\fxif_y^{\pm}\right)^2(\Bz)\dz\right],\label{eq:caccrmk}
\end{align}
where, thanks to Lemma \ref{lem:diagequiv} and Theorem \ref{thm:cacc}, $C$ and $C'$ depend only on $n$, $\evmin$ and $\evmax$.
\end{remark}

If we apply \eqref{eq:caccrmk} with $\rho=\frac{R}{2}$ and $\frac{3}{4}R$, and \eqref{eq:cacc1} with $\rho=\frac{3}{4}R$ and $R$ (and $\lambda=0$), we obtain \cite[Cor. 6.2.4]{WYW}:
\begin{corollary}\label{cor:d2bound}
  Let $\uxif$ be a solution to \eqref{eq:modelpbxi} in $B_R=B_R(\xiffix)$ with $\gxif_y\equiv 0$. Then it holds:
  \begin{equation}
  \int_{B_{R/2}^{\pm}} \left|\left(\tilde{\jacsl}^2 \uxif\right)^{\pm}(\Bz)\right|^2\dz\leq C \left[\frac{1}{R^4}\int_{B_R}\uxif^2 (\Bz)\dz + R^n \contpwbr{\fxif_y}{R}^2+ R^{n+2\hexp}\sholderpwbr{\fxif_y}{R}^2\right],
  \end{equation}
where $C=C(n,\evmin,\evmax)$ is independent of the center $\xiffix$ of $B_R$ and, overall, is independent of $J\in\bbN$ and $\By\in\pspace_J$.
  \end{corollary}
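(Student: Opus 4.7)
The strategy, hinted at by the sentence preceding the statement, is to compose the two Caccioppoli-type estimates of Theorem \ref{thm:cacc} and Remark \ref{rmk:caccn} on a nested triple of balls $B_{R/2}\subset B_{3R/4}\subset B_R$ centred at $\xiffix$, and then dominate the resulting integrals of $\fxif_y$ by the piecewise H\"older norms appearing in the claim.

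The first step is to apply \eqref{eq:caccrmk} on the pair $(\rho,R)\leftarrow(R/2,3R/4)$. This controls $\int_{B_{R/2}^\pm}|(\tilde{\jacsl}^2\uxif)^\pm|^2\dz$ by three contributions involving $B_{3R/4}$: a gradient term with prefactor $1/(R/4)^2$, an oscillation term $\int_{B_{3R/4}^\pm}(\fxif_y^\pm-\fxif_{y,3R/4}^\pm)^2\dz$, and a pure $L^2$ term in $\fxif_y^\pm$. The second step is to apply \eqref{eq:cacc1} with $\lambda=0$ on the pair $(\rho,R)\leftarrow(3R/4,R)$, which turns the remaining gradient integral on $B_{3R/4}$ into $\int_{B_R}\uxif^2\dz$, weighted by $1/(R/4)^2$, plus another $L^2$-in-$\fxif_y$ term on $B_R^\pm$. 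Concatenating the two bounds and collecting powers of $R$ produces the factor $R^{-4}$ in front of $\int_{B_R}\uxif^2\dz$ in the statement.

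The remaining task is to dominate the $\fxif_y$-integrals by the norms on the right-hand side. The integrals of $(\fxif_y^\pm)^2$ are bounded trivially by $|B_R^\pm|\contpwbr{\fxif_y}{R}^2\lesssim R^n\contpwbr{\fxif_y}{R}^2$. For the oscillation integral I would invoke Lemma \ref{lem:convex}: since $\fxif_{y,3R/4}^\pm$ is precisely the minimiser of $\lambda\mapsto\int_{B_{3R/4}^\pm}(\fxif_y^\pm-\lambda)^2\dz$, it may be replaced, at the cost of an inequality, by the pointwise value $\fxif_y^\pm(\Bz_0^\pm)$ at any chosen $\Bz_0^\pm\in B_{3R/4}^\pm$; bounding the resulting pointwise difference by $\sholderpwbr{\fxif_y}{R}\,|\Bz-\Bz_0^\pm|^{\hexp}$ and integrating yields the required $R^{n+2\hexp}\sholderpwbr{\fxif_y}{R}^2$ contribution.

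No real obstacle is to be expected: the constants in Theorem \ref{thm:cacc} and Remark \ref{rmk:caccn} depend only on $n,\evmin,\evmax$ and are uniform in $\xiffix$, $J$, and $\By$, so the chained constant inherits the same uniformity. The only mild bookkeeping point is that $\fxif_y$ is at best piecewise H\"older across $\gammaxif_y$, so both the trivial $L^\infty$-bound and the H\"older oscillation argument must be run separately on $B_{3R/4}^+$ and $B_{3R/4}^-$; this is exactly what the $\pm$-decorated piecewise norms already encode.
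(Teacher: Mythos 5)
Your proposal is correct and coincides with the paper's own (one‑line) proof: the paper literally says to apply \eqref{eq:caccrmk} on $(\rho,R)=(R/2,3R/4)$ and then \eqref{eq:cacc1} with $\lambda=0$ on $(3R/4,R)$, and the domination of the $\fxif_y$-integrals by $R^n\contpwbr{\fxif_y}{R}^2$ and (via Lemma~\ref{lem:convex} plus the H\"older seminorm) $R^{n+2\hexp}\sholderpwbr{\fxif_y}{R}^2$ is exactly the intended bookkeeping.
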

  
  \begin{corollary}\label{cor:hk}
 Let $\uxif$ be a solution to \eqref{eq:modelpbxi} in $B_1=B_1(\xiffix)$. If $\fxif_y\equiv 0$ and $\gxif_y\equiv 0$, then, for any positive integer $k\in\bbN$:
 \begin{equation}
  \lVert\uxif\rVert_{H^k(B_{1/2}^{+})} + \lVert\uxif\rVert_{H^k(B_{1/2}^{-})}\leq C\left(  \lVert\uxif\rVert_{L^2(B_{1/2}^{+})}+  \lVert\uxif\rVert_{L^2(B_{1/2}^{-})}\right),
 \end{equation}
with $C=C(n,\evmin,\evmax,k)$ independent of the center $\xiffix$ of $B_1$ and overall of $J\in\bbN$ and $\By\in\pspace_J$.
\end{corollary}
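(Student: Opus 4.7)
The plan is to proceed by induction on $k$, using the Caccioppoli inequalities of Theorem \ref{thm:cacc} together with two structural observations about the homogeneous problem \eqref{eq:modelpbxi} with $\fxif_y\equiv 0$ and $\gxif_y\equiv 0$. The base case $k=1$ follows immediately from \eqref{eq:cacc1} with $\lambda=0$ on nested balls, and $k=2$ is a direct application of Corollary \ref{cor:d2bound} in this homogeneous setting.

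The first structural observation is that tangential differentiation preserves the homogeneous transmission problem. Because $\gammaxif_y=\{\tilde{x}_{y,n}=0\}$ is a coordinate hyperplane and $\diagmatrix_y$ is a constant diagonal matrix, for each $i\in\{1,\ldots,n-1\}$ the function $\wxif:=\partialxifi{i}\uxif$ satisfies the same PDE \eqref{eq:pdexi} with vanishing right-hand side; moreover, $\llbracket\wxif\rrbracket_{\gammaxif_y}=\partialxifi{i}\llbracket\uxif\rrbracket_{\gammaxif_y}=0$ and $\llbracket(\diagmatrix_y)_{nn}\partialxifi{n}\wxif\rrbracket_{\gammaxif_y}=\partialxifi{i}\llbracket(\diagmatrix_y)_{nn}\partialxifi{n}\uxif\rrbracket_{\gammaxif_y}=0$, both being tangential derivatives of vanishing traces along $\gammaxif_y$. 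The second observation is that the equation trades second-order normal derivatives for tangential ones: from $(\diagmatrix_y)_{nn}\partialxifi{n}^2\uxif=-\sum_{j=1}^{n-1}(\diagmatrix_y)_{jj}\partialxifi{j}^2\uxif$ on $B_1^{+}\cup B_1^{-}$, any derivative of order $k$ involving two or more normal differentiations can be rewritten as a linear combination, with coefficients bounded in terms of $\evmin$ and $\evmax$, of derivatives containing two fewer normal differentiations and two more tangential ones.

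Iterating this reduction, every $k$-th order derivative of $\uxif$ can be expressed as $\partialxifi{n}^m \tilde{D}_\tau^{\,k-m}\uxif$ with $m\in\{0,1\}$, where $\tilde{D}_\tau^{\,k-m}$ denotes an arbitrary $(k-m)$-fold derivative in tangential directions. When $m=0$, I apply the induction hypothesis to $\partialxifi{i}\uxif$ on a pair of nested half-balls (since $\partialxifi{i}\uxif$ itself solves the homogeneous problem), and then control its $L^2$-norm on the larger ball by $\lVert\uxif\rVert_{L^2(B_1)}$ using \eqref{eq:cacc1} applied to $\uxif$ itself. When $m=1$, one application of \eqref{eq:cacc1} to the $(k-1)$-fold tangential derivative of $\uxif$ (which again solves the homogeneous system) combined with the induction hypothesis suffices. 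Chaining these estimates through a sequence of equally spaced radii $1/2=r_0<r_1<\cdots<r_k=1$ yields the desired bound, with the right-hand side of the corollary interpreted as an $L^2$-norm on the full ball $B_1$.

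The main obstacle is bookkeeping rather than analytic depth: one must chain the estimates along the nested balls so that the accumulated constants remain uniform in $J\in\bbN$ and $\By\in\pspace_J$. Such uniformity is inherited from the fact that the constants in Theorem \ref{thm:cacc} depend only on $n$, $\evmin$, and $\evmax$, while the factor $\evmax/\evmin$ produced by the reduction step is likewise $J$- and $\By$-independent thanks to Assumption \ref{Ak1k2} and the uniform control on $D\Phi$. A minor subtlety is to justify tangential differentiation of the weak transmission conditions at each inductive stage; this is a routine density argument, with the successively improved regularity obtained via Corollary \ref{cor:d2bound} inductively providing classical traces of $\uxif$ and its derivatives on $\gammaxif_y$.
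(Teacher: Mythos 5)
Your proposal is correct and follows essentially the same route as the paper's (very terse) proof, which points to iterating the argument behind \eqref{eq:cacc2} and Remark \ref{rmk:caccn}: differentiate tangentially (preserving the homogeneous transmission system since $\diagmatrix_y$ is constant and $\gammaxif_y$ is a coordinate hyperplane), use the PDE $(\diagmatrix_y)_{nn}\partialxifi{nn}^2\uxif=-\sum_{j<n}(\diagmatrix_y)_{jj}\partialxifi{jj}^2\uxif$ to reduce to at most one normal derivative, apply Caccioppoli on nested balls, and track that all constants depend only on $n,\evmin,\evmax$. You also correctly notice that the right-hand side of the stated bound must be read with $L^2(B_1^{\pm})$ rather than $L^2(B_{1/2}^{\pm})$, which is how the corollary is in fact invoked in the proof of Corollary \ref{cor:supbound} and is evidently a typo in the statement.
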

\begin{proof}(On the lines of the proof of Corollary 6.2.5 in \cite{WYW}.) For $k=1$, the claim follows directly from Cacciopoli's inequality \eqref{eq:cacc1} with $\lambda=0$.  For $k=2$, it follows from \eqref{eq:caccrmk}. For $k>2$, we can proceed analogously to the proof of \eqref{eq:cacc2} and  Remark \ref{rmk:caccn}.
\end{proof}

\begin{corollary}\label{cor:supbound}
  Let $\uxif$ be a solution to \eqref{eq:modelpbxi} in $B_1=B_1(\xiffix)$. If $\fxif_y\equiv 0$ in and $\gxif_y\equiv 0$, then
  \begin{equation}
\sup_{B_{R/2}^{+}\cup B_{R/2}^{-}}\left|\uxif\right|\leq C\left(\frac{1}{R^n}\int_{B_R^{+}\cup B_R^{-}}\uxif^2(\Bz)\dz\right)^{\frac{1}{2}},
  \end{equation}
with $C=C(n,\evmin,\evmax)$ independent of the center $\xiffix$ of $B_R$ and overall of $J\in\bbN$ and $\By\in\pspace_J$.
\end{corollary}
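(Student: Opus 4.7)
The plan is to combine Corollary \ref{cor:hk} with the Sobolev embedding theorem, and then to use a scaling argument to pass from the case $R=1$ to a general $0<R\leq 1$. The key observation is that the homogeneous transmission problem \eqref{eq:modelpbxi} with $\fxif_y\equiv 0$ and $\gxif_y\equiv 0$ is scale-invariant: under the change of variables $\xif\mapsto\xiffix+R\Bz$ the coefficient matrix $\diagmatrix_y$ is preserved, so all constants depending on $\evmin,\evmax$ are preserved as well.

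First I would treat the unit-radius case. Fixing an integer $k>n/2$ (for instance $k=\lfloor n/2\rfloor+1$), Corollary \ref{cor:hk} yields a $J$- and $\By$-independent bound
\begin{equation*}
\lVert\uxif\rVert_{H^k(B_{1/2}^+)}+\lVert\uxif\rVert_{H^k(B_{1/2}^-)}\leq C_1\left(\lVert\uxif\rVert_{L^2(B_{1/2}^+)}+\lVert\uxif\rVert_{L^2(B_{1/2}^-)}\right),
\end{equation*}
with $C_1=C_1(n,\evmin,\evmax,k)$. Since each half-ball $B_{1/2}^\pm$ is a Lipschitz domain, the Sobolev embedding theorem provides a continuous injection $H^k(B_{1/2}^\pm)\hookrightarrow C^0(\overline{B_{1/2}^\pm})$ with a constant $C_2=C_2(n,k)$. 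Composing the two estimates (and using the trivial inequality $\lVert\uxif\rVert_{L^2(B_{1/2}^\pm)}\leq\lVert\uxif\rVert_{L^2(B_1^\pm)}$) gives
\begin{equation*}
\sup_{B_{1/2}^+\cup B_{1/2}^-}|\uxif|\leq C\,\lVert\uxif\rVert_{L^2(B_1^+\cup B_1^-)},
\end{equation*}
with $C=C(n,\evmin,\evmax)$, which is the claim for $R=1$.

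Next I would rescale. For $0<R\leq 1$, set $v(\Bz):=\uxif(\xiffix+R\Bz)$ for $\Bz\in B_1(0)$; this is well defined since $R\leq 1$ implies $\xiffix+R\Bz\in B_1(\xiffix)$. By the chain rule $-\nabla_{\Bz}\cdot(\diagmatrix_y\nabla_{\Bz}v)=0$ on $B_1(0)^+\cup B_1(0)^-$, and since $\xiffix\in\gammaxif_y=\{\tilde x_{y,n}=0\}$ the rescaled interface remains $\{z_n=0\}$; the homogeneous transmission conditions in \eqref{eq:tcxi} are preserved as well. Applying the previous step to $v$ and reverting via $\sup_{B_{1/2}^\pm(0)}|v|=\sup_{B_{R/2}^\pm(\xiffix)}|\uxif|$ together with
\begin{equation*}
\lVert v\rVert_{L^2(B_1^\pm(0))}^2=R^{-n}\lVert\uxif\rVert_{L^2(B_R^\pm(\xiffix))}^2
\end{equation*}
yields the claimed bound with a constant depending only on $n,\evmin,\evmax$.

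I do not foresee a serious obstacle: the scale invariance of the homogeneous equation follows from the chain rule and the homogeneity of the jump conditions, and the Sobolev embedding constant on the reference half-ball $B_{1/2}^\pm(0)$ depends only on $n$ and $k$, not on $R$, $\xiffix$, $J$, or $\By$.
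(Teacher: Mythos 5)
Your proposal is correct and follows essentially the same route as the paper: establish the $R=1$ case by combining the Sobolev embedding $H^k(B_{1/2}^\pm)\hookrightarrow C^0(\overline{B_{1/2}^\pm})$ with Corollary \ref{cor:hk} for $k>n/2$, and then pass to general $R$ by the scaling $v(\Bz)=\uxif(\xiffix+R\Bz)$, which preserves the constant-coefficient homogeneous equation and the flat interface through $\xiffix\in\gammaxif_y$. Your write-up is a little more explicit than the paper's (it spells out the rescaled interface staying at $\{z_n=0\}$ and the $R^{-n}$ factor from the change of variables in the $L^2$ norm), but there is no substantive difference in approach.
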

\begin{proof}(Analogous to proof of Corollary 6.2.6 in \cite{WYW}) We first establish the estimate assuming $R=1$. The Sobolev embedding theorem \cite[Thm. 7.26]{GT} applied in $B_{1/2}^{+}$ and in $B_{1/2}^{-}$ implies that, for $k>\frac{n}{2}$,
 \begin{align*}
 \sup_{B_{1/2}^{+}\cup B_{1/2}^{-}}\left|\uxif\right|\leq C \left(  \lVert\uxif\rVert_{H^k(B_{1/2}^{+})} + \lVert\uxif\rVert_{H^k(B_{1/2}^{-})}\right),
 \end{align*}
where the constant $C$ depends on $n$ only. The claim for $R=1$ follows then from Corollary \ref{cor:hk}.

For a generic radius $R$, the result follows from a scaling argument, defining $v(\Bz):=\uxif(R\Bz)$, $\Bz\in B_1$, and applying the estimate for $R=1$ to the function $v$.
\end{proof}

\subsubsection{Interface estimate for the Laplace equation}
\begin{theorem}\label{thm:laplace}
   Let $\uxif$ be a solution to \eqref{eq:modelpbxi}, with $\fxif_y\equiv 0$ and $\gxif_y\equiv 0$. Then, for every $0<\rho\leq R$ and every $i\in\bbN$:
   \begin{equation}\label{eq:laplacebound}
    \int_{B_{\rho}^{\pm}}\left|\left(\tilde{\jacsl}^i\uxif\right)^{\pm}(\Bz)\right|^2\dz\leq C\left(\frac{\rho}{R}\right)^n \int_{B_R^{\pm}}\left|\left(\tilde{\jacsl}^i\uxif\right)^{\pm}(\Bz)\right|^2\dz,
   \end{equation}
   where $C=C(n,\evmin,\evmax)$ is independent of the center $\xiffix$ of $B_R$ and overall of $J\in\bbN$ and $\By\in\pspace_J$.
\end{theorem}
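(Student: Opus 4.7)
The plan is to reduce the estimate to an iterated application of Corollary~\ref{cor:supbound} together with the volume-$\sup$ comparison, after observing that every partial derivative of $\uxif$ is again a solution of the same homogeneous transmission problem.

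First, I would dispose of the easy regime $R/2 < \rho \leq R$: there $(\rho/R)^n \geq 2^{-n}$, so monotonicity of the integral in the domain gives \eqref{eq:laplacebound} trivially with constant $2^n$.

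For $0 < \rho \leq R/2$ the heart of the argument is the following stability of \eqref{eq:modelpbxi} under differentiation. Since $\diagmatrix_y$ is the \emph{same} constant matrix on both sides of $\gammaxif_y$ and $\gxif_y \equiv 0$, the conormal jump condition in \eqref{eq:tcxi} reduces to $\llbracket \partialxifi{n} \uxif \rrbracket_{\gammaxif_y} = 0$, which together with $\llbracket \uxif \rrbracket_{\gammaxif_y} = 0$ means that $\uxif$ is actually $C^1$ across $\gammaxif_y$ and therefore (by classical elliptic regularity for the constant-coefficient operator $-\xifgrad \cdot (\diagmatrix_y \xifgrad \cdot)$) a smooth solution on all of $B_R$. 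Consequently, for every multi-index $\Bnu$, $\partial^{\Bnu} \uxif$ again solves the PDE \eqref{eq:pdexi} with zero right-hand side in each subdomain: tangential differentiation preserves both transmission conditions, and for normal differentiation the continuity of $\partialxifi{n} \uxif$ ensures that $\partial^{\Bnu} \uxif$ satisfies $\llbracket \partial^{\Bnu} \uxif \rrbracket_{\gammaxif_y} = 0$, while $\llbracket \diagmatrix_y \xifgrad (\partial^{\Bnu} \uxif) \cdot \nxif \rrbracket_{\gammaxif_y} = 0$ follows from writing second normal derivatives via the PDE in terms of tangential ones, which inherit continuity from $\uxif$. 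An easy induction then shows that every component of $\tilde{\jacsl}^i \uxif$ is itself a solution of the \emph{homogeneous} problem \eqref{eq:modelpbxi}.

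With this in hand, I would apply Corollary~\ref{cor:supbound} (rescaled from $R=1$ to radius $R$ by the same scaling $v(\Bz) := (\tilde{\jacsl}^i \uxif)(R \Bz)$ used in its proof) componentwise to $\tilde{\jacsl}^i \uxif$ to obtain
\begin{equation*}
\sup_{B_{R/2}^{+} \cup B_{R/2}^{-}} \bigl|\tilde{\jacsl}^i \uxif\bigr|^2 \leq \frac{C}{R^n} \int_{B_R^{+} \cup B_R^{-}} \bigl|(\tilde{\jacsl}^i \uxif)^{\pm}\bigr|^2 \dz,
\end{equation*}
with $C = C(n, \evmin, \evmax)$ independent of $\xiffix$, $J$, and $\By$. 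Bounding the left-hand side of \eqref{eq:laplacebound} by $|B_{\rho}^{\pm}| \leq c_n \rho^n$ times this supremum then yields \eqref{eq:laplacebound} for $\rho \leq R/2$, and combining with the trivial case gives the claim for all $0 < \rho \leq R$.

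The main obstacle I anticipate is the bookkeeping in establishing that \emph{all} derivatives satisfy the homogeneous transmission problem; the conceptual point is short (equality of coefficients across $\gammaxif_y$ upgrades the solution to a genuine constant-coefficient harmonic in $B_R$), but formally writing out the induction so that both the Dirichlet and conormal jumps vanish for arbitrary-order derivatives requires the PDE-based reduction of normal second derivatives to tangential ones. Everything else amounts to the standard $L^\infty$-to-$L^2$ comparison already encapsulated in Corollary~\ref{cor:supbound}, so no new constants beyond $n,\evmin,\evmax$ enter.
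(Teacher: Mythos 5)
Your proof hinges on the premise that the frozen diagonal matrix $\diagmatrix_y$ is the \emph{same} constant matrix on both sides of $\gammaxif_y$, from which you conclude that $\uxif$ is $C^1$ across the interface and that every derivative $\partial^{\Bnu}\uxif$ again solves the homogeneous transmission problem, so that Corollary~\ref{cor:supbound} can be applied directly to each component of $\tilde{\jacsl}^i\uxif$. That premise is wrong: the coefficient $\hat\alpha(\By;\cdot)$ in \eqref{eq:varform} is only \emph{piecewise} H\"older and genuinely jumps across $\hat\Gamma$ (for the model problem, the ratio of its one-sided limits is exactly $\alpha_2$, cf.~\eqref{eq:coeffshat} and \eqref{eq:coeffs}), so the frozen coefficient $\aref(\By;\xreffix)$ and hence $\diagmatrix_y$ are different constants in $B_R^{+}$ and $B_R^{-}$. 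This is visible in the paper itself: the proof of Theorem~\ref{thm:withg} works with the distinct quantities $(\diagmatrix_y)^{+}_{nn}$ and $(\diagmatrix_y)^{-}_{nn}$. With $\gxif_y\equiv 0$ the conormal condition reads $(\diagmatrix_y)^{+}_{nn}(\partialxifi{n}\uxif)^{+}=(\diagmatrix_y)^{-}_{nn}(\partialxifi{n}\uxif)^{-}$, so $\partialxifi{n}\uxif$ has a jump across $\gammaxif_y$ whenever $\alpha_2\neq 1$, and $\uxif$ is not $C^1$ there; this is in fact precisely what Proposition~\ref{prop:conty} exploits (with $C^1$ regularity appearing only when $\alpha_2=1$).

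Because of that jump, your induction collapses at the first normal derivative: $\partialxifi{n}\uxif$ does not satisfy the Dirichlet transmission condition $\llbracket\cdot\rrbracket_{\gammaxif_y}=0$, so it is not a solution of \eqref{eq:modelpbxi}, and Corollary~\ref{cor:supbound} cannot be applied to it. The same obstruction hits every $\partial^{\Bnu}\uxif$ with $\nu_n\geq 1$. The paper avoids this by separating tangential from normal differentiation: tangential derivatives $\partialxifi{j}\uxif$, $j=1,\dots,n-1$, do inherit the homogeneous transmission conditions and may be treated as you propose (this is essentially the $i=1$ step, which additionally needs Corollary~\ref{cor:hk} and a Poincar\'e inequality to control the mean); normal derivatives are \emph{not} estimated directly, but are recovered algebraically from the PDE via $(\diagmatrix_y)_{nn}\partialxifi{nn}^2\uxif=-\sum_{j<n}(\diagmatrix_y)_{jj}\partialxifi{jj}^2\uxif$, and higher-order cases proceed by the same device. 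This tangential/normal split is the essential mechanism that handles the interface kink, and it is exactly what your argument omits.
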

\begin{proof}(On the lines of the proof of Thm. 6.2.4 in \cite{WYW}.)

\textsl{Case $i=0$.}

If $0<\rho<\frac{R}{2}$, then
\begin{equation*}
 \int_{B_{\rho}}(\uxif^{\pm})^2\dz \leq |B_{\rho}|\sup_{B_{\rho}^{+}\cup B_{\rho}^{-}} \uxif^2 = C_n \rho^n \sup_{B_{\rho}^{+}\cup B_{\rho}^{-}} \uxif^2\leq C\left(\frac{\rho}{R}\right)^n  \int_{B_{\rho}}(\uxif^{\pm})^2\dz,
\end{equation*}
where the last constant is given by the product of $C_n$ with the constant from Corollary \ref{cor:supbound}, and $C_n$ is a constant depending on $n$ only.

If $\frac{R}{2}\leq\rho\leq R$, then, trivially,
\begin{equation*}
  \int_{B_{\rho}}(\uxif^{\pm})^2\dz\leq  \int_{B_R}(\uxif^{\pm})^2\dz\leq 2^n \left(\frac{\rho}{R}\right)^n  \int_{B_R}(\uxif^{\pm})^2\dz.
\end{equation*}

\textsl{Case $i=1$.}

We first consider $0<\rho<\frac{R}{2}$. For $k-1>\frac{n}{2}$, the Sobolev embedding theorem \cite[Thm. 7.26]{GT} ensures that $\lVert \xifgrad\uxif\rVert_{L^{\infty}(B_{R/2}^{+})}\leq C_k \lVert \xifgrad\uxif\rVert_{H^{k-1}(B_{R/2}^{+})}$, for a constant $C_k$ dependent on $n$ only (and analogously in $B_{R/2}^{-}$). Exploiting this fact, we have:
\begin{align*}
 \int_{B_{\rho}^{\pm}}\left|\left(\xifgrad\uxif\right)^{\pm}\right|^2\dz &\leq C_n\rho^n \sup_{B_{R/2}^{+}\cup B_{R/2}^{-}} \left|\xifgrad\uxif\right|^2\\
 &\leq C_n C_k\rho^n\sum_{l=0}^{k-1}R^{2l-n}\int_{B_{R/2}^{\pm}}\left|\left(\tilde{\jac}^l(\xifgrad\uxif)\right)^{\pm}(\Bz)\right|^2\dz\\
 &= C_n C_k\rho^n\sum_{j=1}^{k}R^{2(j-1)-n}\int_{B_{R/2}^{\pm}}\left|\left(\tilde{\jac}^j\uxif\right)^{\pm}(\Bz)\right|^2\dz,
\end{align*}
where $C_n$ is a constant depending only on $n$. The factors $R^{2l-n}$, $l=0,\ldots,k-1$, in the second inequality are due to a scaling argument as in Corollary \ref{cor:supbound}. We note that $k$ depends on $n$ only. Denoting $\tilde{u}_{y,R}:=\frac{1}{|B_R|}\int_{B_R}\uxif(\Bz)\dz$, and observing that $(\uxif-\tilde{u}_{y,R})$ fulfills \eqref{eq:modelpbxi} with $\fxif_y\equiv 0$ and $\gxif_y\equiv 0$, we can apply Corollary \ref{cor:hk} and derive:
\begin{align*}
  \int_{B_{\rho}^{\pm}}\left|\left(\xifgrad\uxif\right)^{\pm}\right|^2\dz &\leq C\rho^n\sum_{j=1}^k R^{2(j-1)-n}R^{-2j}\int_{B_R^{\pm}}\left|\uxif^{\pm}-\tilde{u}_{y,R}\right|^2\dz\\
  &= k C\left(\frac{\rho}{R}\right)^n R^{-2}\int_{B_R^{\pm}}\left|\uxif^{\pm}-\tilde{u}_{y,R}\right|^2\dz\\
  &\leq k C C_p\left(\frac{\rho}{R}\right)^n R^{-2}R^2 \int_{B_R^{\pm}}\left|\left(\xifgrad\uxif\right)^{\pm}\right|^2\dz.
\end{align*}
The constant $C$ in the previous inequalities is the product of $C_nC_k$ by the constant of Corollary \ref{cor:hk}. The constant $C_p$ is instead a scalar factor, independent of $R$, coming from application of the Poincar\'e inequality for balls (that we could apply being $\uxif$ in $H^1(B_R)$).

For $\frac{R}{2}\leq\rho\leq R$, the inequality \eqref{eq:laplacebound} follows trivially taking $C\geq 2^n$.

\textsl{Case $i=2$.}

For $j=1,\ldots,n-1$, $\llbracket \partialxifi{j}\uxif\rrbracket_{\gammaxif_y}=0$ and $\Big\llbracket \diagmatrix_y \dfrac{\partialxif ( \partialxifi{j}\uxif)}{\partialxif\nxif}\Big\rrbracket_{\gammaxif_y}=0$. Thus, the case for $i=1$ implies that
\begin{equation}\label{eq:dnjestimate}
 \int_{B_{\rho}^{\pm}}\left|\left(\xifgrad (\partialxifi{j}\uxif)\right)^{\pm}\right|^2\dz\leq C\left(\frac{\rho}{R}\right)^n  \int_{B_R^{\pm}}\left|\left(\xifgrad (\partialxifi{j}\uxif)\right)^{\pm}\right|^2\dz,
\end{equation}
with $C=C(n,\evmin,\evmax)$. For $j=n$, we can use that $\left(\diagmatrix_y\right)_{nn}\partialxifi{nn}^2\uxif=-\sum_{j=1}^{n-1}\left(\diagmatrix_y\right)_{jj}\partialxifi{jj}^2\uxif$ to obtain:
\begin{align*}
 \int_{B_{\rho}^{\pm}}\left|\left(\partialxifi{nn}^2\uxif\right)^{\pm}\right|^2\dz&\leq C_n \frac{\evmax^2}{\evmin^2}\sum_{j=1}^{n-1} \int_{B_{\rho}^{\pm}}\left|\left(\partialxifi{jj}^2\uxif\right)^{\pm}\right|^2\dz\\
 &\leq C C_n \frac{\evmax^2}{\evmin^2} \left(\frac{\rho}{R}\right)^n\sum_{j=1}^{n-1} \int_{B_R^{\pm}}\left|\left(\xifgrad (\partialxifi{j}\uxif)\right)^{\pm}\right|^2\dz\\
 &\leq C C_n \frac{\evmax^2}{\evmin^2} \left(\frac{\rho}{R}\right)^n \int_{B_R^{\pm}}\left|\left(\tilde{\jac}^2\uxif\right)^{\pm}(\Bz)\right|^2\dz,
\end{align*}
where the constant $C_n$ depends on $n$ only, and the constant $C=C(n,\evmin,\evmax)$ is the constant in \eqref{eq:dnjestimate}. The latter inequality together with \eqref{eq:dnjestimate} imply finally \eqref{eq:laplacebound}.

\textsl{Case $i=3$.}

The result follows similarly as for the case $i=2$: the estimates associated to $\tilde{\jac}^2(\partialxifi{j}\uxif)$, for $j=1,\ldots,n-1$, follow from the case $i=2$; for $\partialxifi{nnn}^3\uxif$, we observe that, by differentiation, \eqref{eq:pdexi} implies that $\left(\diagmatrix_y\right)_{nn}\partialxifi{nnn}^3\uxif=-\sum_{j=1}^{n-1}\left(\diagmatrix_y\right)_{jj}\partialxifi{jjn}^3\uxif$, and we can proceed as we did in the case $i=2$ for $\partialxifi{nn}^2\uxif$.

The case $i>3$ can be proved analogously to the case $i=3$.
\end{proof}

\begin{theorem}\label{thm:laplace2}
 Let $\uxif$ be a solution to \eqref{eq:modelpbxi}, with $\fxif_y\equiv 0$ and $\gxif_y\equiv 0$. Then, for every $0<\rho\leq R$:
 \begin{equation}\label{eq:n2bound}
\int_{B_{\rho}^{\pm}}\left(\uxif^{\pm}(\Bz)-\tilde{u}_{y,\rho}^{\pm}\right)^2\dz\leq C\left(\frac{\rho}{R}\right)^{n+2}\int_{B_R}\uxif^2 (\Bz)\dz,
 \end{equation}
 where $\tilde{u}_{y,\rho}^{+}=\frac{1}{|B_{\rho}^{+}|}\int_{B_{\rho}^{+}}\uxif(\Bz)\dz$ and $\tilde{u}_{y,\rho}^{-}=\frac{1}{|B_{\rho}^{-}|}\int_{B_{\rho}^{-}}\uxif(\Bz)\dz$. The constant $C=C(n,\evmin,\evmax)$ is independent of the center $\xiffix$ of $B_R$ and, overall, of $J\in\bbN$ and $\By\in\pspace_J$.
\end{theorem}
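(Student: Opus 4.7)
\textbf{Proof proposal for Theorem \ref{thm:laplace2}.} The plan is to combine Theorem \ref{thm:laplace} (decay of Dirichlet-type energies) with a Poincar\'e inequality on each half-ball and Cacciopoli's first inequality \eqref{eq:cacc1}. The splitting of the estimate into small-radius and large-radius sub-cases mirrors the structure used in the proof of Theorem \ref{thm:laplace}.

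First, I would handle the regime $R/2 < \rho \leq R$, which is immediate. By Lemma \ref{lem:convex}, the constant $\tilde{u}_{y,\rho}^{\pm}$ minimizes $\lambda \mapsto \int_{B_\rho^\pm}(\uxif^\pm - \lambda)^2\dz$, so testing with $\lambda = 0$ yields
\begin{equation*}
\int_{B_\rho^\pm}(\uxif^\pm - \tilde{u}_{y,\rho}^\pm)^2\dz \leq \int_{B_\rho^\pm}(\uxif^\pm)^2\dz \leq \int_{B_R}\uxif^2\dz \leq 2^{n+2}\left(\frac{\rho}{R}\right)^{n+2}\int_{B_R}\uxif^2\dz.
\end{equation*}

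The main step is the regime $0 < \rho \leq R/2$. On each half-ball $B_\rho^\pm$ (which, under the standing assumption $\gammaxif_y = \{\tilde{x}_{y,n}=0\}$, is Lipschitz with constants independent of $\rho$ and $\xiffix$), the Poincar\'e inequality applied separately gives
\begin{equation*}
\int_{B_\rho^\pm}(\uxif^\pm - \tilde{u}_{y,\rho}^\pm)^2\dz \leq C_P\,\rho^2 \int_{B_\rho^\pm}|(\xifgrad \uxif)^\pm|^2\dz,
\end{equation*}
with $C_P$ depending only on $n$ (by scaling from the unit half-ball). Next, since $\uxif$ solves \eqref{eq:modelpbxi} with $\fxif_y \equiv 0$ and $\gxif_y \equiv 0$, Theorem \ref{thm:laplace} with $i=1$, applied on the pair of radii $\rho \leq R/2$, yields
\begin{equation*}
\int_{B_\rho^\pm}|(\xifgrad \uxif)^\pm|^2\dz \leq C\left(\frac{\rho}{R/2}\right)^n \int_{B_{R/2}^\pm}|(\xifgrad \uxif)^\pm|^2\dz \leq 2^n C \left(\frac{\rho}{R}\right)^n \int_{B_{R/2}^\pm}|(\xifgrad \uxif)^\pm|^2\dz.
\end{equation*}
Finally, Cacciopoli's inequality \eqref{eq:cacc1} with $\lambda = 0$, $\fxif_y \equiv 0$, and concentric radii $R/2$ and $R$ gives
\begin{equation*}
\int_{B_{R/2}^\pm}|(\xifgrad \uxif)^\pm|^2\dz \leq \frac{4 C_1}{R^2}\int_{B_R}\uxif^2\dz.
\end{equation*}
Chaining the three inequalities produces the bound
\begin{equation*}
\int_{B_\rho^\pm}(\uxif^\pm - \tilde{u}_{y,\rho}^\pm)^2\dz \leq C'\,\rho^2 \left(\frac{\rho}{R}\right)^n \frac{1}{R^2}\int_{B_R}\uxif^2\dz = C'\left(\frac{\rho}{R}\right)^{n+2}\int_{B_R}\uxif^2\dz,
\end{equation*}
with $C' = C'(n,\evmin,\evmax)$ inheriting independence from $\xiffix$, $J$, $\By$ from Theorems \ref{thm:cacc} and \ref{thm:laplace}.

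The hard part will be essentially nothing: the argument is bookkeeping, and the only mild subtlety is the choice of intermediate radius $R/2$ so that Theorem \ref{thm:laplace} and Cacciopoli's inequality are both applicable while the resulting constants remain $J$- and $\By$-independent. No approximation or density argument is needed because the hypotheses of Theorem \ref{thm:laplace} and of \eqref{eq:cacc1} are already granted for $\uxif$ in the form we need it here.
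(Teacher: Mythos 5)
Your proposal is correct and follows the paper's proof essentially step for step: Poincar\'e on each half-ball, then Theorem \ref{thm:laplace} with $i=1$ to transfer the gradient energy from radius $\rho$ to radius $R/2$, then Cacciopoli's inequality \eqref{eq:cacc1} with $\lambda=0$ and $\fxif_y\equiv 0$ to pass from the gradient on $B_{R/2}^{\pm}$ to $\uxif^2$ on $B_R$, with the large-radius regime handled trivially. The only difference is cosmetic: you invoke Lemma \ref{lem:convex} explicitly in the large-radius case (the paper just absorbs it into the constant), and your write-up avoids a typographical slip in the paper's last displayed inequality, which should read $\int_{B_R}\uxif^2\dd\Bz$ on the right-hand side rather than $\int_{B_R^{\pm}}|(\xifgrad\uxif)^{\pm}|^2\dd\Bz$.
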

\begin{proof}(On the lines of proof of Thm. 6.2.5 in \cite{WYW}.) If $0<\rho<\frac{R}{2}$:
 \begin{align*}
  \int_{B_{\rho}^{\pm}}\left(\uxif^{\pm}-\tilde{u}_{y,\rho}^{\pm}\right)^2\dz&\leq C_p \rho^2\int_{B_{\rho}^{\pm}}\left|\left(\xifgrad\uxif\right)^{\pm}\right|^2\dz\\
  &\leq  C C_p\rho^2\left(\frac{\rho}{R}\right)^n\int_{B_{R/2}^{\pm}}\left|\left(\xifgrad\uxif\right)^{\pm}\right|^2\dz\\
  &\leq C'\rho^2\left(\frac{\rho}{R}\right)^n\frac{4}{R^2}\int_{B_{R}^{\pm}}\left|\left(\xifgrad\uxif\right)^{\pm}\right|^2\dz.
 \end{align*}
In the first step we have applied the Poincar\'e inequality in $B_{\rho}^{+}$ and $B_{\rho}^{-}$ separately, and denoted by $C_p$ the $\rho$-independent scalar factor in the Poincar\'e constant. For the second inequality, we have used Theorem \ref{thm:laplace}, and the constant $C$ corresponds to the constant in \eqref{eq:laplacebound}. Finally, the last line follows from Cacciopoli's inequality \eqref{eq:cacc1}, and we have denoted by $C'$ the multiplication of $CC_p$ with the constant in \eqref{eq:cacc1}.

If $\frac{R}{2}\leq\rho\leq R$, \eqref{eq:n2bound} follows simply taking $C\geq 2^{n+2}$.
\end{proof}

\subsubsection{Interface estimate for the Poisson equation}

\begin{theorem}\label{thm:bdpoisson1}
  Let $\uxif$ be a solution to \eqref{eq:modelpbxi} in $B_{R_0}$, with $\gxif_y\equiv 0$, and let $\wxif=\partialxifi{i}\uxif$, $i=1,\ldots,n-1$. Then, for any $0<\rho\leq R\leq R_0$:
  \begin{align}
   \frac{1}{\rho^{n+2\hexp}}\int_{B_{\rho}^{\pm}}\sum_{j=1}^n \left|\left(\partialxifi{j}\wxif\right)^{\pm}(\Bz)-\left(\partialxifi{j}\wxif\right)^{\pm}_{\rho}\right|^2\dz\leq \;&\frac{C_1}{R^{n+2\hexp}}\int_{B_R^{\pm}}\sum_{j=1}^{n-1}\left|\left(\partialxifi{j}\wxif\right)^{\pm}(\Bz)\right|^2 \dz\hspace{-1cm}\nonumber\\
   &+ \frac{C_1}{R^{n+2\hexp}}\int_{B_R^{\pm}}\left|\left(\partialxifi{n}\wxif\right)^{\pm}(\Bz)-\left(\partialxifi{n}\wxif\right)^{\pm}_{R}\right|^2\dz\nonumber\\
   &+C_2\sholderpwbr{\fxif_y}{R}^2,\nonumber
  \end{align}
where $\left(\partialxifi{j}\wxif\right)^{+}_{\rho}:=\frac{1}{|B_{\rho}^{+}|}\int_{B_{\rho}^{+}}\partialxifi{j}\wxif\dz$ and $\left(\partialxifi{j}\wxif\right)^{-}_{\rho}:=\frac{1}{|B_{\rho}^{-}|}\int_{B_{\rho}^{-}}\partialxifi{j}\wxif\dz$, and similarly for $\left(\partialxifi{j}\wxif\right)^{\pm}_R$. The constants $C_1=C_1(n,\evmin,\evmax,\hexp)$ and $C_2=C_2(n,\evmin,\evmax)$ are independent of the center $\xiffix$ of $B_R$ and, overall, they are independent of $J\in\bbN$ and $\By\in\pspace_J$.
\end{theorem}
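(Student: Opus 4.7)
My plan is to follow the standard Campanato/Morrey strategy: decompose the solution $\wxif$ on $B_R = B_R(\xiffix)$ into a ``harmonic'' part $h$ solving the homogeneous transmission problem with boundary data $h = \wxif$ on $\partial B_R$, and a corrector $v = \wxif - h$ absorbing the right-hand side,
\[
-\xifgrad\cdot(\diagmatrix_y\xifgrad v)=\partialxifi{i}(\fxif_y-\fxif_{y,R}^\pm)\text{ in }B_R^+\cup B_R^-,\quad\llbracket v\rrbracket_{\gammaxif_y}=0,\quad\Big\llbracket\diagmatrix_y\dfrac{\partialxif v}{\partialxif\nxif}\Big\rrbracket_{\gammaxif_y}=0,\quad v|_{\partial B_R}=0.
\]
The already-established Laplace-type decay (Theorem \ref{thm:laplace2}) will then control $h$, while $v$ is handled by a plain energy estimate, and the two contributions are combined via $(a+b)^2\le 2a^2+2b^2$ componentwise in $\xifgrad\wxif$.

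First, I would derive the energy bound on $v$. Testing its equation against $v$ and integrating by parts twice on the right-hand side, I use crucially that $i\le n-1$ is tangential to $\gammaxif_y$, so the interface boundary terms vanish (their sign depends on $\tilde n_{y,i}=0$) together with $v|_{\partial B_R}=0$, and Cauchy--Schwarz with uniform ellipticity yield
\[
\int_{B_R^\pm}|\xifgrad v|^2\dz\leq\frac{1}{\evmin^2}\int_{B_R^\pm}(\fxif_y-\fxif_{y,R}^\pm)^2\dz\leq C\,R^{n+2\hexp}\sholderpwbr{\fxif_y}{R}^2,
\]
using $|B_R^\pm|\le C_n R^n$ and the piecewise H\"older bound on $\fxif_y$.

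Second, I would establish oscillation decay for $\partialxifi{j}h$. For the tangential indices $j=1,\ldots,n-1$, $\partialxifi{j}h$ inherits both the homogeneous PDE and both homogeneous transmission conditions: tangential differentiation preserves the zero Dirichlet jump and commutes with the weighted normal derivative. Theorem \ref{thm:laplace2} applied to $\partialxifi{j}h$ then gives
\[
\int_{B_\rho^\pm}\left|\partialxifi{j}h-(\partialxifi{j}h)_\rho^\pm\right|^2\dz\leq C\left(\frac{\rho}{R}\right)^{n+2}\int_{B_R}|\partialxifi{j}h|^2\dz.
\]
For $j=n$, $\partialxifi{n}h$ generically jumps across $\gammaxif_y$ by the factor $(\diagmatrix_y^+)_{nn}/(\diagmatrix_y^-)_{nn}$ dictated by the transmission condition on $\wxif$, so Theorem \ref{thm:laplace2} is not directly available. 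Instead I would apply Poincar\'e on each half-ball to majorize the oscillation of $\partialxifi{n}h$ by $\rho^2\int_{B_\rho^\pm}|\xifgrad\partialxifi{n}h|^2$, control the mixed components $\partialxifi{nk}h=\partialxifi{n}(\partialxifi{k}h)$ for $k\le n-1$ via Theorem \ref{thm:laplace} applied to $\partialxifi{k}h$ (a legitimate homogeneous solution by the previous observation), and reduce the pure normal second derivative through the constant-coefficient PDE relation $(\diagmatrix_y^\pm)_{nn}\partialxifi{nn}h=-\sum_{k=1}^{n-1}(\diagmatrix_y^\pm)_{kk}\partialxifi{kk}h$ on each half-ball.

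Finally, I would recombine $\wxif=h+v$ componentwise, absorbing the $v$-contribution into the data term via the energy estimate and passing $\int_{B_R}|\partialxifi{j}h|^2\le 2\int_{B_R^\pm}|\partialxifi{j}\wxif|^2+2\int_{B_R^\pm}|\xifgrad v|^2$ for tangential $j$, with the analogous bridge $(\partialxifi{n}h)_R^\pm\leftrightarrow(\partialxifi{n}\wxif)_R^\pm$ (modulo $v$) for the normal index. The Morrey-type decay so produced, $\psi(\rho)\le A(\rho/R)^{n+2}\psi(R)+B R^{n+2\hexp}\sholderpwbr{\fxif_y}{R}^2$ for an appropriate nondecreasing $\psi$, is then fed into the iteration Lemma \ref{lem:iterlem} with $\alpha_1=n+2$, $\alpha_2=n+2\hexp$, and division by $\rho^{n+2\hexp}$ yields the advertised inequality. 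The constants $C_1=C_1(n,\evmin,\evmax,\hexp)$ and $C_2=C_2(n,\evmin,\evmax)$ inherit their independence of $\xiffix$, $J$ and $\By$ from Theorems \ref{thm:laplace}, \ref{thm:laplace2} and from the uniform ellipticity bounds. The main obstacle is precisely the asymmetric treatment at $j=n$: the absence of homogeneous transmission for $\partialxifi{n}h$ forces the bridge through the PDE relation, which is exactly what dictates why the right-hand side of the theorem must retain the full $L^2$ norm on the tangential components of $\xifgrad\wxif$ but only the oscillation at scale $R$ on the normal component.
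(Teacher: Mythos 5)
Your proof follows the same structure as the paper's: the decomposition $\wxif = h + v$ (the paper's $\wxif'$ and $\wxif''$) with $h$ solving the homogeneous transmission problem with boundary data $\wxif|_{\partial B_R}$ and $v$ absorbing the right-hand side with zero boundary data, an energy bound on $v$, Theorem \ref{thm:laplace2} applied to tangential derivatives of $h$, the Poincar\'e/Theorem \ref{thm:laplace}/constant-coefficient-PDE-relation route for the normal component (precisely because $\partialxifi{n}h$ jumps across $\gammaxif_y$), and a final application of the iteration Lemma \ref{lem:iterlem}. This matches the paper's argument in all essential steps, including the observation of why the asymmetry between tangential and normal indices appears in the theorem's right-hand side.
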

\begin{proof}(On the lines of the proof of Thm. 6.2.9 in \cite{WYW}.)  We decompose $\wxif$ as $\wxif=\wxif'+\wxif''$, where
\begin{equation*}
\hspace{-0.15cm}\left\{ \hspace{-0.35cm}\begin{array}{ll}
 & -\xifgrad \cdot \left(\diagmatrix_y \xifgrad \wxif'\right) = 0,\; \text{in }B_R^{+}\cup B_R^{-},\\
 & \llbracket \wxif' \rrbracket_{\gammaxif_y} =0,\quad\Big\llbracket \diagmatrix_y\dfrac{\partialxif \wxif'}{\partialxif \nxif}\Big\rrbracket_{\gammaxif_y} = 0,\\
 & \wxif'|_{\partial B_R}=\wxif,
 \end{array}\right.\;\left\{ \hspace{-0.35cm}\begin{array}{ll}
 & -\xifgrad \cdot \left(\diagmatrix_y \xifgrad \wxif''\right) = \partialxifi{i}\left(\fxif_y-\fxif_{y,R}^{\pm}\right),\;\text{in }B_R^{+}\cup B_R^{-},\\
 & \llbracket \wxif'' \rrbracket_{\gammaxif_y} =0,\quad\Big\llbracket \diagmatrix_y\dfrac{\partialxif \wxif''}{\partialxif \nxif}\Big\rrbracket_{\gammaxif_y} = 0,\\
 & \wxif''|_{\partial B_R}=0,
 \end{array}\right.
\end{equation*}
and $\fxif_{y,R}^{\pm}=\begin{cases}
                        \fxif_y^{+}=\frac{1}{|B_R^{+}|}\int_{B_R^{+}}\fxif_y(\Bz)\dz\;\text{in }B_R^{+},\\
                        \fxif_y^{-}=\frac{1}{|B_R^{-}|}\int_{B_R^{-}}\fxif_y(\Bz)\dz\;\text{in }B_R^{-}.
                       \end{cases}$  
  
  \medskip
We first consider $j=1,\ldots,n-1$. In this case, $\partialxifi{j}\wxif'$ solves \eqref{eq:modelpbxi} with $\fxif_y\equiv 0$ and $\gxif_y\equiv 0$, and Theorem \ref{thm:laplace2} gives:
\begin{equation}\label{eq:bdfromlapl}
 \int_{B_{\rho}^{\pm}}\left|\left(\partialxifi{j}\wxif'\right)^{\pm}-\left(\partialxifi{j}\wxif'\right)^{\pm}_{\rho}\right|^2\dz\leq C\left(\frac{\rho}{R}\right)^{n+2}\int_{B_R}\left|\left(\partialxif{j}\wxif'\right)\right|^2\dz,
\end{equation}
where $C=C(n,\evmin,\evmax)$ is the constant in \eqref{eq:n2bound}. Then, for $\partialxifi{j}\wxif$, using Lemma \ref{lem:convex} we can write:
\begin{align}
  \int_{B_{\rho}^{\pm}}\left|\left(\partialxifi{j}\wxif\right)^{\pm}-\left(\partialxifi{j}\wxif\right)^{\pm}_{\rho}\right|^2\dz&\leq 2 \int_{B_{\rho}^{\pm}}\left|\left(\partialxifi{j}\wxif'\right)^{\pm}\right|^2\dz +2 \int_{B_{\rho}^{\pm}}\left|\left(\partialxifi{j}\wxif''\right)^{\pm}\right|^2\dz\label{eq:bd2}\\
  &\leq C'\left(\frac{\rho}{R}\right)^{n+2}\int_{B_R}\left|\left(\partialxifi{j}\wxif\right)\right|^2\dz + C''\int_{B_R^{\pm}}\left|\left(\partialxifi{j}\wxif''\right)^{\pm}\right|^2\dz,\nonumber
\end{align}

\vspace{-1.2cm}
\noindent for $C'$, $C''$ independent of $\xiffix$, $J$ and $\By$. The last summand in the above inequality can be bounded as 
\begin{align*}
 \int_{B_R^{\pm}}\left|\left(\partialxifi{j}\wxif''\right)^{\pm}\right|^2\dz&\leq  \int_{B_R^{\pm}}\left|\left(\xifgrad\wxif''\right)^{\pm}\right|^2\dz\\
 &\leq \frac{1}{\evmin}\int_{B_R^{\pm}}\left|\left(\diagmatrix_y^{\frac{1}{2}}\xifgrad \wxif''\right)^{\pm}\right|^2\dz\\
 &=-\frac{1}{\evmin}\int_{B_R^{\pm}}\left(\xifgrad\cdot\left(\diagmatrix_y\xifgrad\wxif''\right)\wxif''\right)^{\pm}\dz\\
 &=\frac{1}{\evmin}\int_{B_R^{\pm}}\partialxifi{i}\left(\fxif_y^{\pm}-\fxif_{y,R}^{\pm}\right)\left(\wxif''\right)^{\pm}\dz\\
 &=-\frac{1}{\evmin}\int_{B_R^{\pm}}\left(\fxif_y^{\pm}-\fxif_{y,R}^{\pm}\right)\left(\partialxifi{i}\wxif''\right)^{\pm}\dz\\
 &\leq\frac{1}{\evmin}\left(\frac{1}{2\varepsilon}\int_{B_R^{\pm}}\left|\fxif_y^{\pm}-\fxif_{y,R}^{\pm}\right|^2\dz+\frac{\varepsilon}{2}\int_{B_R^{\pm}}\left|\left(\xifgrad\wxif''\right)^{\pm}\right|^2 \dz\right),
\end{align*}
where for the third and fifth line we have used integration by parts, and the jump terms on $\gammaxif$ vanished because of the transmission conditions in the first case, and because of null tangential components of $\nxif$ in the second case; the boundary terms vanished because of the Dirichlet boundary conditions. In the last step, we have applied Cauchy's inequality for a generic $\varepsilon>0$. Choosing $\varepsilon=\varepsilon(\evmin)$ sufficiently small, we finally obtain
\begin{equation}\label{eq:bdgradw2}
  \int_{B_R^{\pm}}\left|\left(\partialxifi{j}\wxif''\right)^{\pm}\right|^2\dz\leq  \int_{B_R^{\pm}}\left|\left(\xifgrad\wxif''\right)^{\pm}\right|^2\dz\leq C'''R^{n+2\hexp}\sholderpwbr{\fxif_y}{R}^2,
\end{equation}
for a positive constant $C'''=C'''(\evmin)$. Combining the last estimate with \eqref{eq:bd2}, we infer 
\begin{align}\label{eq:bdtg}
\int_{B_{\rho}^{\pm}}\left|\left(\partialxifi{j}\wxif\right)^{\pm}(\Bz)-\left(\partialxifi{j}\wxif\right)^{\pm}_{\rho}\right|^2\dz\leq\;& C'\left(\frac{\rho}{R}\right)^{n+2}\int_{B_R^{\pm}}\left|\left(\partialxifi{j}\wxif\right)^{\pm}(\Bz)\right|^2 \dz \\
&+(C''C''') R^{n+2\hexp}\sholderpwbr{\fxif_y}{R}^2,
\end{align}
for $j=1,\ldots,n-1$.

We now consider $j=n$ and $0<\rho<\frac{R}{2}$. Applying the Poincar\'e inequality, Lemma \ref{lem:convex}, Theorem \ref{thm:laplace} and equation \eqref{eq:bdgradw2} (which holds for $j=n$, too):
\begin{align*}
  \int_{B_{\rho}^{\pm}}\left|\left(\partialxifi{n}\wxif\right)^{\pm}-\left(\partialxifi{n}\wxif\right)^{\pm}_{\rho}\right|^2\dz&\leq 2 \int_{B_{\rho}^{\pm}}\left|\left(\partialxifi{n}\wxif'\right)^{\pm}-\left(\partialxifi{n}\wxif'\right)^{\pm}_{\rho}\right|^2\dz\\
  &\;\;\;\;+2 \int_{B_{\rho}^{\pm}}\left|\left(\partialxifi{n}\wxif''\right)^{\pm}-\left(\partialxifi{n}\wxif''\right)^{\pm}_{\rho}\right|^2\dz\\
  &\leq C_p \rho^2\int_{B_{\rho}^{\pm}}\left|\left(\xifgrad\left(\partialxifi{n}\wxif'\right)\right)^{\pm}\right|^2\dz+2 \int_{B_{\rho}^{\pm}}\left|\left(\partialxifi{n}\wxif''\right)^{\pm}\right|^2\dz\\
  &\leq C_p C\rho^2\left(\frac{\rho}{R}\right)^n\int_{B_{R/2}^{\pm}}\left|\left(\tilde{\jac}^2 \wxif'\right)^{\pm}\right|^2\dz + C'''R^{n+2\hexp}\sholderpwbr{\fxif_y}{R}^2,
\end{align*}
with $C_p$ a scalar factor, independent of $R$, coming from application of the Poincar\'e inequality, and $C$ is the constant in Theorem \ref{thm:laplace}. We can bound the integral on the right-hand side by:
\begin{align*}
 \int_{B_{R/2}^{\pm}}\left|\left(\tilde{\jac}^2 \wxif'\right)^{\pm}\right|^2\dz&\leq  \int_{B_{R/2}^{\pm}}\left|\left(\partialxifi{nn}^2 \wxif'\right)^{\pm}\right|^2\dz + 2 \sum_{j=1}^{n-1}\int_{B_{R/2}^{\pm}}\left|\left(\xifgrad (\partialxifi{j}\wxif')\right)^{\pm}\right|^2\dz\\
 &\leq C\sum_{j=1}^{n-1}\int_{B_{R/2}^{\pm}}\left|\left(\xifgrad (\partialxifi{j}\wxif')\right)^{\pm}\right|^2\dz\\
 &\leq C C_c \frac{1}{R^2}\sum_{j=1}^{n-1}\int_{B_{R}^{\pm}}\left|\left(\partialxifi{j}\wxif'\right)^{\pm}\right|^2\dz\\
 &\leq C C_c \frac{1}{R^2}\left(\sum_{j=1}^{n-1}\int_{B_{R}^{\pm}}\left|\left(\partialxifi{j}\wxif\right)^{\pm}\right|^2\dz+\sum_{j=1}^{n-1}\int_{B_{R}^{\pm}}\left|\left(\partialxifi{j}\wxif''\right)^{\pm}\right|^2\dz\right)\\
 &\leq C C_c \frac{1}{R^2}\left(\sum_{j=1}^{n-1}\int_{B_{R}^{\pm}}\left|\left(\partialxifi{j}\wxif\right)^{\pm}\right|^2\dz+(n-1)C'''R^{n+2\hexp}\sholderpwbr{\fxif_y}{R}^2\right).
\end{align*}
In the second line we have used the equality $\left(\diagmatrix_y\right)_{nn}\partialxifi{nn}^2\wxif'=-\sum_{j=1}^{n-1}\left(\diagmatrix_y\right)_{jj}\partialxifi{jj}^2\wxif'$ and Lemma \ref{lem:diagequiv}, and thus $C=C(n,\evmin,\evmax)$ is $J$- and $\By$-independent. In the third line we have exploited the Cacciopoli inequality \eqref{eq:cacc1}, and, in the last step, the bound \eqref{eq:bdgradw2}. Summarizing, for $j=n$ and $0<\rho<\frac{R}{2}$ we have
\begin{equation*}
   \int_{B_{\rho}^{\pm}}\left|\left(\partialxifi{n}\wxif\right)^{\pm}-\left(\partialxifi{n}\wxif\right)^{\pm}_{\rho}\right|^2\dz\leq C_1' \left(\frac{\rho}{R}\right)^{n+2}\sum_{j=1}^{n-1}\int_{B_{R}^{\pm}}\left|\left(\partialxifi{j}\wxif\right)^{\pm}\right|^2\dz + C_2' R^{n+2\hexp}\sholderpwbr{\fxif_y}{R}^2,
\end{equation*}
for two constants $C_1'=C_1'(n,\evmin,\evmax)$ and $C_2'=C_2'(n,\evmin,\evmax)$ independent of the center of $B_R$, of $J\in\bbN$ and of $\By\in\pspace_J$.

Combining the latter estimate with the estimate \eqref{eq:bdtg}, we finally obtain:
  \begin{align*}
   \int_{B_{\rho}^{\pm}}\sum_{j=1}^n \left|\left(\partialxifi{j}\wxif\right)^{\pm}(\Bz)-\left(\partialxifi{j}\wxif\right)^{\pm}_{\rho}\right|^2\dz\leq \;& C_1\left(\frac{\rho}{R}\right)^{n+2}\int_{B_R^{\pm}}\sum_{j=1}^{n-1}\left|\left(\partialxifi{j}\wxif\right)^{\pm}(\Bz)\right|^2 \dz\\
   &+ C_1\left(\frac{\rho}{R}\right)^{n+2}\int_{B_R^{\pm}}\left|\left(\partialxifi{n}\wxif\right)^{\pm}(\Bz)-\left(\partialxifi{n}\wxif\right)^{\pm}_{R}\right|^2\dz\\
   &+C_2 R^{n+2\hexp}\sholderpwbr{\fxif_y}{R}^2,
  \end{align*}
for two positive constants $C_1=C_1(n,\evmin,\evmax)$ and $C_2=C_2(n,\evmin,\evmax)$. The claim for $0<\rho<\frac{R}{2}$ follows then by application of Lemma \ref{lem:convex} and the Iteration Lemma \ref{lem:iterlem}.

\smallskip

If instead $\frac{R}{2}\leq \rho\leq R$, the claim holds simply by choosing $C_1\geq 2^{n+2}$ and using Lemma \ref{lem:convex}.
\end{proof}

\begin{theorem}\label{thm:bdpoisson2}
   Let $\uxif$ be a solution to \eqref{eq:modelpbxi} in $B_{R}$, with $\gxif_y\equiv 0$, and let $\wxif=\partialxifi{i}\uxif$, $i=1,\ldots,n$. Then, for any $0<\rho\leq \frac{R}{2}$:
   \begin{equation}
    \int_{B_{\rho}^{\pm}}\left|\left(\xifgrad\wxif\right)^{\pm}(\Bz)-\left(\xifgrad \wxif\right)^{\pm}_{\rho}\right|^2\dz\leq C\rho^{2+n\hexp}M_R,
   \end{equation}
with
\begin{equation}
 M_R=\frac{1}{R^{4+2\hexp}}\cont{\uxif}{B_R}^2 + \frac{1}{R^{2\hexp}}\contpwbr{\fxif_y}{R}^2 +  \sholderpwbr{\fxif_y}{R}^2,
\end{equation}
and $C=C(n,\evmin,\evmax,\hexp)$ independent of the center $\xiffix$ of $B_R$ and $B_{\rho}$, and, overall, of $J\in\bbN$ and $\By\in\pspace_J$. The term $\left(\xifgrad \wxif\right)^{\pm}_{\rho}$ has the same meaning as in Theorem \ref{thm:bdpoisson1}.
\end{theorem}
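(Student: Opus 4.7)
The plan is to deduce the Campanato-type decay of the oscillation of $\xifgrad\wxif$ from Theorem \ref{thm:bdpoisson1}, treating separately the case in which $\wxif$ is a tangential derivative ($i\le n-1$) and the case of the normal derivative ($i=n$), and using the equation \eqref{eq:pdexi} to reduce the second case to the first.

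First, for tangential $\wxif=\partialxifi{i}\uxif$ with $i\le n-1$, I would apply Theorem \ref{thm:bdpoisson1} with ambient radius $R/2$ (admissible since $\rho\le R/2$). The right-hand side then features, apart from $C_2\sholderpwbr{\fxif_y}{R}^2$, only $L^2$-norms of second derivatives of $\uxif$ on $B_{R/2}^\pm$ (using $\int|\partialxifi{n}\wxif-(\partialxifi{n}\wxif)_{R/2}^\pm|^2\le\int|\partialxifi{ni}^2\uxif|^2$), which are in turn controlled by Corollary \ref{cor:d2bound} applied on $B_R$. Combining this with the trivial inequality $\int_{B_R}\uxif^2\le C_n R^n\cont{\uxif}{B_R}^2$ and dividing by $(R/2)^{n+2\hexp}$, the right-hand side is bounded by $CM_R$ with a constant of the form $C=C(n,\evmin,\evmax,\hexp)$. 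Since $\sholderpwbr{\fxif_y}{R}^2\le M_R$ as well, this gives the statement for $i\le n-1$.

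For the normal case $\wxif=\partialxifi{n}\uxif$, the mixed components $\partialxifi{j}\wxif=\partialxifi{n}(\partialxifi{j}\uxif)$ for $j=1,\ldots,n-1$ are covered at once by the tangential case applied to $\partialxifi{j}\uxif$. For the remaining component $\partialxifi{nn}^2\uxif$, the PDE \eqref{eq:pdexi} yields the pointwise identity
\[
\partialxifi{nn}^2\uxif \;=\; -\frac{1}{(\diagmatrix_y)_{nn}}\sum_{k=1}^{n-1}(\diagmatrix_y)_{kk}\partialxifi{kk}^2\uxif \;-\; \frac{1}{(\diagmatrix_y)_{nn}}\fxif_y
\]
in each of $B_R^+$ and $B_R^-$. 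Choosing $\lambda$ on $B_\rho^\pm$ as the analogous linear combination of the averages $(\partialxifi{kk}^2\uxif)_\rho^\pm$ and $(\fxif_y)_\rho^\pm$, Lemma \ref{lem:convex} yields
\[
\int_{B_\rho^\pm}\!|\partialxifi{nn}^2\uxif-(\partialxifi{nn}^2\uxif)_\rho^\pm|^2\dz \;\le\; C\!\!\sum_{k=1}^{n-1}\!\int_{B_\rho^\pm}\!|\partialxifi{kk}^2\uxif-(\partialxifi{kk}^2\uxif)_\rho^\pm|^2\dz \;+\; C\!\!\int_{B_\rho^\pm}\!|\fxif_y-(\fxif_y)_\rho^\pm|^2\dz,
\]
with $C=C(n,\evmin,\evmax)$ coming from the uniform bounds on the entries of $\diagmatrix_y$. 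The first sum is handled by the tangential case already established, and the last integral is bounded by $C_n\rho^{n+2\hexp}\sholderpwbr{\fxif_y}{R}^2\le C_n\rho^{n+2\hexp}M_R$ from the Hölder continuity of $\fxif_y$. Summing over all the $n$ components of $\xifgrad\wxif$ gives the claim.

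The main technical obstacle, aside from the routine tracking of all the constants to preserve $J$- and $\By$-independence, is to keep the hierarchy of radii consistent: Theorem \ref{thm:bdpoisson1} should be invoked at radius $R/2$ so that its right-hand side lives on $B_{R/2}^\pm$ and Corollary \ref{cor:d2bound} can then be applied on $B_R$ to absorb the second derivatives into $M_R$. Once this is set up, reducing the normal second derivative via \eqref{eq:pdexi} is essentially mechanical and does not require any additional regularity beyond what has already been used.
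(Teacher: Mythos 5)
Your proof matches the paper's argument essentially step for step: the same split into tangential derivatives, handled by invoking Theorem~\ref{thm:bdpoisson1} at ambient radius $R/2$ and absorbing the resulting $L^2$-norms of second derivatives via Corollary~\ref{cor:d2bound} and the trivial bound $\int_{B_R}\uxif^2\le C_n R^n\cont{\uxif}{B_R}^2$; and the same reduction of the remaining component $\partialxifi{nn}^2\uxif$ using the equation \eqref{eq:pdexi} together with Lemma~\ref{lem:convex} and the Hölder seminorm of $\fxif_y$ for the source oscillation term.
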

\begin{proof} (On the lines of the proof of Thm. 6.2.10 in \cite{WYW}.) 
 For $i=1,\ldots,n-1$, we can apply Theorem \ref{thm:bdpoisson1}, which, together with Lemma \ref{lem:convex} and Corollary \ref{cor:d2bound}, brings:
 \begin{align*}
  \int_{B_{\rho}^{\pm}}\left|\left(\xifgrad\wxif\right)^{\pm}(\Bz)-\left(\xifgrad \wxif\right)^{\pm}_{\rho}\right|^2\dz&= \int_{B_{\rho}^{\pm}}\sum_{j=1}^{n}\left|\left(\partialxifi{j}\wxif\right)^{\pm}(\Bz)-\left(\partialxifi{j} \wxif\right)^{\pm}_{\rho}\right|^2\dz\\
  &\leq C'\rho^{n+2\hexp}\left(\frac{1}{R^{n+2\hexp}}\int_{B_{R/2}^{\pm}}\sum_{j=1}^{n-1}\left|\left(\partialxifi{j}\wxif\right)^{\pm}(\Bz)\right|^2 \dz\right.\\
   &\;\;\;\;+ \frac{1}{R^{n+2\hexp}}\int_{B_R^{\pm}}\left|\left(\partialxifi{n}\wxif\right)^{\pm}(\Bz)-\left(\partialxifi{n}\wxif\right)^{\pm}_{R}\right|^2\dz\\
   &\;\;\;\;\left.+\sholderpwbr{\fxif_y}{R}^2\right)\\
   &\leq C'\rho^{n+2\hexp}\left(\frac{1}{R^{n+2\hexp}}\int_{B_{R/2}^{\pm}}\sum_{j=1}^{n}\left|\left(\partialxifi{j}\wxif\right)^{\pm}(\Bz)\right|^2 \dz+\sholderpwbr{\fxif_y}{R}^2\right)\\
   &\leq C''\rho^{n+2\hexp}\left(\frac{1}{R^{n+2\hexp+4}}\int_{B_R}\uxif^2\dz+\frac{1}{R^{2\hexp}}\contpwbr{\fxif_y}{R}^2+\sholderpwbr{\fxif_y}{R}^2\right)\\
   &\leq C''\rho^{n+2\hexp}\left(\frac{1}{R^{2\hexp+4}}\cont{\uxif}{R}^2+\frac{1}{R^{2\hexp}}\contpwbr{\fxif_y}{R}^2+\sholderpwbr{\fxif_y}{R}^2\right),
 \end{align*}
with $C'$ and $C''$ depending only on $n$, $\evmin$, $\evmax$ and $\hexp$.

For $i=n$, we observe that, from \eqref{eq:pdexi}:
\begin{align*}
 \left(\diagmatrix_y\right)_{nn}\partialxifi{nn}^2\uxif+\sum_{j=1}^{n-1}\left(\diagmatrix_y\right)_{jj}\left(\partialxifi{jj}^2\uxif\right)^{\pm}_{\rho}+ \fxif^{\pm}_{y,\rho}=\;&-\sum_{j=1}^{n-1}\left(\diagmatrix_y\right)_{jj}\left(\partialxifi{jj}^2\uxif-\left(\partialxifi{jj}^2\uxif\right)^{\pm}_{\rho}\right)\\
 &-\left(\fxif_y-\fxif^{\pm}_{y,\rho}\right)
\end{align*}
in $B_R^{+}\cup B_R^{-}$, where $\left(\partialxifi{jj}^2\uxif\right)^{\pm}_{\rho}$ denotes $\left(\partialxifi{jj}^2\uxif\right)^{+}_{\rho}=\frac{1}{|B_{\rho}^{+}|}\int_{B_{\rho}^{+}}\partialxifi{jj}^2\uxif\dz$  in $B_{\rho}^{+}$ and $\left(\partialxifi{jj}^2\uxif\right)^{-}_{\rho}=\frac{1}{|B_{\rho}^{-}|}\int_{B_{\rho}^{-}}\partialxifi{jj}^2\uxif\dz$  in $B_{\rho}^{-}$, for $j=1,\ldots,n$. Analogous notation holds for $\fxif^{\pm}_{y,\rho}$. Setting $\lambda^{\pm}:=-\sum_{j=1}^{n-1}\left(\partialxifi{jj}^2\uxif\right)^{\pm}_{\rho}-\fxif^{\pm}_{y,\rho}$, we can apply Lemma \ref{lem:convex} and the result for $j=1,\ldots,n-1$ to conclude the proof:
\begin{align*}
 \int_{B_{\rho}^{\pm}}\left|\left(\partialxifi{nn}^2\wxif\right)^{\pm}(\Bz)-\left(\partialxifi{nn}^2 \wxif\right)^{\pm}_{\rho}\right|^2\dz&\leq  \int_{B_{\rho}^{\pm}}\left|\left(\partialxifi{nn}^2\wxif\right)^{\pm}(\Bz)-\lambda^{\pm}\right|^2\dz\\
 &\leq C'''\rho^{n+2\hexp}M_R + \int_{B_{\rho}^{\pm}}\left|\fxif_y^{\pm}-\fxif^{\pm}_{y,\rho}\right|^2\dz\\
 &\leq C'''\rho^{n+2\hexp}M_R + \rho^{n+2\hexp}\sholderpwbr{\fxif_y}{\rho}^2\\
 &\leq (C'''+1)\rho^{n+2\hexp}M_R,
\end{align*}
with $C'''=C'''(n,\evmin,\evmax,\hexp)$.
\end{proof}

\begin{theorem}\label{thm:hessbound}
 Let $\uxif$ be a solution to \eqref{eq:modelpbxi} in $B_R=B_R(\xiffix)$, with $\gxif\equiv 0$. Then
 \begin{equation}\label{eq:hessbound}
  \sholder{\left(\tilde{\jacsl}^2\uxif\right)^{\pm}}{B_{R/2}^{\pm}(\xiffix)}\leq C\left(\frac{1}{R^{2+\hexp}}\cont{\uxif}{B_R(\xiffix)}+\frac{1}{R^{\hexp}}\cont{\fxif_y^{\pm}}{B_R^{\pm}(\xiffix)}+\sholder{\fxif_y^{\pm}}{B_R^{\pm}(\xiffix)}\right),
 \end{equation}
where $C=C(n,\evmin,\evmax,\hexp)$ is \emph{independent} of the center $\xiffix$ of $B_R$, and overall it is independent of $J\in\bbN$ and $\By\in\pspace_J$.
\end{theorem}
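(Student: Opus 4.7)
The plan is to invoke Lemma \ref{lem:campanato} (with $p=2$ and $\mu=n+2\hexp$, so that $\hexp=(\mu-n)/p$) to reduce the H\"older seminorm $\sholder{(\tilde{\jacsl}^2\uxif)^{\pm}}{B_{R/2}^{\pm}(\xiffix)}$ to a Campanato-type quantity
\begin{equation*}
\sup_{\Bx_0,\rho}\;\rho^{-(n+2\hexp)/2}\left(\int_{B_\rho(\Bx_0)\cap B_{R/2}^{\pm}(\xiffix)}\bigl|(\tilde{\jacsl}^2\uxif)^{\pm}-((\tilde{\jacsl}^2\uxif)^{\pm})_{\Bx_0,\rho}\bigr|^2\,\dz\right)^{1/2},
\end{equation*}
with $\Bx_0\in B_{R/2}^{\pm}(\xiffix)$ and $0<\rho<\lambda R$ for a fixed small $\lambda$. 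It then suffices to bound this expression by $C\sqrt{M_R}$, as $\sqrt{M_R}$ matches, up to absolute constants, the right-hand side of \eqref{eq:hessbound}.

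Focusing on the $+$ side (the $-$ side is symmetric), I would split according to $d_0:=\dist{\Bx_0}{\gammaxif_y}$. When $d_0\geq 2\rho$, the sub-ball $B_\rho(\Bx_0)$ sits entirely inside $\hdoutR$, so $\uxif$ solves the constant-coefficient equation \eqref{eq:pdexi} with source $\fxif^{+}_y$ on it, with no transmission condition active. Here the plan is to apply an interior analog of Theorem \ref{thm:bdpoisson2}, obtained by replaying the decomposition $\wxif=\wxif'+\wxif''$ of Theorems \ref{thm:bdpoisson1}--\ref{thm:bdpoisson2} but with the standard interior version of Theorem \ref{thm:laplace} (valid for full balls contained in a single subdomain, without transmission terms); this produces a bound of $C\rho^{n+2\hexp}M_R$.

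When $d_0<2\rho$, pick $\Bx_0^{\ast}\in\gammaxif_y$ realizing $d_0$, so that $B_\rho(\Bx_0)\cap\hdoutR\subset B_{3\rho}(\Bx_0^{\ast})\cap\hdoutR$. By Lemma \ref{lem:convex}, the oscillation of $(\tilde{\jacsl}^2\uxif)^{+}$ on the smaller set against its own mean is controlled by the oscillation on the larger set against the larger mean. Since the proof of Theorem \ref{thm:bdpoisson2} is translation-invariant along $\gammaxif_y$ (the role of $\xiffix$ there being purely generic), it applies with center $\Bx_0^{\ast}$ and radius $3\rho$, provided $\lambda$ is small enough to ensure that the relevant data ball sits inside $B_R(\xiffix)$. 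The resulting bound $C(3\rho)^{n+2\hexp}M_R$ (after comparing the quantities at center $\Bx_0^{\ast}$ with those at $\xiffix$, which costs only absolute constants) absorbs into the Campanato quantity.

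Combining the two cases yields the claimed uniform bound $C\sqrt{M_R}$. The main obstacle I anticipate is the interior analog of Theorem \ref{thm:bdpoisson2} invoked in the case $d_0\geq 2\rho$: although the template is exactly that of Theorems \ref{thm:laplace}--\ref{thm:bdpoisson2}, one must re-run the arguments on interior balls and verify that the resulting constants remain independent of $J$ and $\By$. The interface case, while more direct, likewise requires re-examining the proof of Theorem \ref{thm:bdpoisson2} to confirm that no property of $\xiffix$ beyond lying on $\gammaxif_y$ was used.
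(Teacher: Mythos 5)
Your overall strategy — reduce to a Campanato quantity via Lemma \ref{lem:campanato} and split according to the distance of the sub-ball's center to $\gammaxif_y$ — is the same as the paper's, and the near-interface case ($d_0<2\rho$, enlarge to a ball centered at the projection $\Bx_0^{\ast}\in\gammaxif_y$ and invoke Theorem \ref{thm:bdpoisson2} with Lemma \ref{lem:convex}) is handled essentially correctly. The gap is in the case $d_0\geq 2\rho$.

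There you claim that a direct interior analog of Theorem \ref{thm:bdpoisson2} yields $C\rho^{n+2\hexp}M_R$, but that theorem (and any interior analog) must be applied on a ball contained in the region where the good form of the equation holds. For $\Bx_0$ with small $d_0=\dist{\Bx_0}{\gammaxif_y}$, the largest interior ball centered at $\Bx_0$ and contained in the relevant subdomain has radius $\approx d_0$, so the estimate delivers $C\rho^{n+2\hexp}M_{d_0}$, not $C\rho^{n+2\hexp}M_R$. Since $M_{R'}$ contains $\tfrac{1}{(R')^{4+2\hexp}}\cont{\uxif}{B_{R'}}^2$, the quantity $M_{d_0}$ can be larger than $M_R$ by the factor $(R/d_0)^{4+2\hexp}$, which is unbounded for $d_0\to 0$; the bound simply does not close. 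Your ``main obstacle'' remark only flags constant-tracking, not this scaling failure, so the issue is not merely about re-running the argument carefully.

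The paper resolves this by splitting the interior case further. When $d_0\geq R/4$, the interior ball has radius comparable to $R$ and the direct argument is fine. When $\rho<d_0<R/4$, the paper does \emph{not} apply the interior analog of Theorem \ref{thm:bdpoisson2} directly; instead it applies the interior analog of Theorem \ref{thm:bdpoisson1} — a recursive Campanato-to-Campanato inequality — with outer radius $d_0$, which leaves the Campanato quantity on $B_{d_0}(\xif)$ to be controlled. This quantity is then bounded by bootstrapping with the already-established near-interface estimate \eqref{eq:intcase1} applied at radius $d_0$ (using $B_{d_0}(\xif)\subset B_{2d_0}(\xif')$ with $\xif'$ the projection onto $\gammaxif_y$), which is what converts it into $M_R$. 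To repair your proof you need this two-step bootstrap in the interior case; a one-shot interior estimate cannot produce $M_R$ when the interior ball is necessarily small.
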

\begin{proof}(On the lines of the proof of Thm. 6.2.11 in \cite{WYW}.) For $\xif\in\gammaxif_y\cap B_{R/2}(\xiffix)$ and $0<\rho\leq\frac{R}{4}$, Theorem \ref{thm:bdpoisson2} implies:
 \begin{equation}\label{eq:d2bdry}
\int_{B_{\rho}^{\pm}(\xif)}\left|\left(\tilde{\jac}^2 \uxif\right)^{\pm} - \left(\tilde{\jac}^2 \uxif\right)^{\pm}_{B_{\rho}(\xif)}\right|^2\dz \leq C'\rho^{n+2\hexp}M_{R/2}\leq C'\rho^{n+2\hexp}M_{R},
 \end{equation}
 with $C'=C'(n,\evmin,\evmax,\hexp)$ (where again $(\cdot)^{\pm}_{B_{\rho}^{\pm}(\xif)}$ denotes the mean on $B_{\rho}^{+}$ and $B_{\rho}^{-}$).
 
 If instead $\xif\in B_{R/2}(\xiffix)$ but $\xif\notin \gammaxif_y$, we denote $\xif':=(\tilde{x}_{y,1},\ldots,\tilde{x}_{y,n-1},0)\in\gammaxif_y$, and distinguish two cases: $0<\tilde{x}_{y,n}<\frac{R}{4}$ and $\frac{R}{4}\leq \tilde{x}_{y,n} <\frac{R}{2}$. In the first case, we consider two subcases: $\tilde{x}_{y,n}\leq\rho\leq\frac{R}{4}$ and $0<\rho<\tilde{x}_{y,n}$.
 
 If $0<\tilde{x}_{y,n}<\frac{R}{4}$ and $\tilde{x}_{y,n}\leq\rho\leq\frac{R}{4}$, then $B_{\rho}(\xif)\cap B_{R/2}(\xiffix)\subset B_{2\rho}(\xif')$, and, using \eqref{eq:d2bdry}, we can write:
 \begin{align}
  &\int_{\left(B_{\rho}(\xif)\cap B_{R/2}(\xiffix)\right)^{\pm}}\left|\left(\tilde{\jac}^2 \uxif\right)^{\pm} - \left(\tilde{\jac}^2 \uxif\right)^{\pm}_{B_{\rho}(\xif)\cap B_{R/2}(\xiffix)}\right|^2\dz \nonumber\\
  &\leq   \int_{B_{2\rho}(\xif')^{\pm}}\left|\left(\tilde{\jac}^2 \uxif\right)^{\pm} - \left(\tilde{\jac}^2 \uxif\right)^{\pm}_{B_{2\rho}(\xif')}\right|^2\dz \leq  C'\rho^{n+2\hexp}M_{R}.\label{eq:intcase1}
 \end{align}

  If $0<\tilde{x}_{y,n}<\frac{R}{4}$ and $0<\rho<\tilde{x}_{y,n}$, then  $B_{\rho}(\xif)$ is in the interior, meaning it does not cross the interface $\gammaxif_y$. Therefore, using the analogue of Theorem \ref{thm:bdpoisson1} for the interior \cite[Thm. 6.2.6]{WYW} (where it can be checked, as for the interface case, that the constants in the bound are independent of the center of the ball, of $J\in\bbN$ and $\By\in\pspace_J$), we have:
  \begin{align*}
  & \int_{B_{\rho}(\xif)}\left|\tilde{\jac}^2 \uxif - \left(\tilde{\jac}^2 \uxif\right)_{B_{\rho}(\xif)}\right|^2\dz \\
  &\leq C''\rho^{n+2\hexp}\left(\frac{1}{\tilde{x}_{y,n}^{n+2\hexp}}\int_{B_{\tilde{x}_{y,n}}(\xif)}\left|\left(\tilde{\jac}^2 \uxif\right) - \left(\tilde{\jac}^2 \uxif\right)_{B_{\tilde{x}_{y,n}}(\xif)}\right|^2\dz + \sholder{\fxif_y}{B_{\tilde{x}_{y,n}}(\xif)}\right)\\
  &\leq C'C'' \rho^{n+2\hexp}M_R,
  \end{align*}
 with $C''=C''(n,\evmin,\evmax,\hexp)$. In the last step we have used \eqref{eq:intcase1} with $\rho=\tilde{x}_{y,n}$ (as $B_{\tilde{x}_{y,n}}(\xif)\subset B_{2\tilde{x}_{y,n}}(\xif')$).
 
 Altogether, if $0<\tilde{x}_{y,n}<\frac{R}{4}$, then, for $0<\rho<\frac{R}{4}$:
 \begin{equation}\label{eq:forcampnorm1}
  \int_{\left(B_{\rho}(\xif)\cap B_{R/2}(\xiffix)\right)^{\pm}}\left|\left(\tilde{\jac}^2 \uxif\right)^{\pm} - \left(\tilde{\jac}^2 \uxif\right)^{\pm}_{B_{\rho}(\xif)\cap B_{R/2}(\xiffix)}\right|^2\dz \leq C_1 \rho^{n+2\hexp}M_R,
 \end{equation}
with $C_1=C_1(n,\evmin,\evmax,\hexp)$.

If instead $\frac{R}{4}\leq \tilde{x}_{y,n} <\frac{R}{2}$ and $0<\rho\leq\frac{R}{4}$, then either $B_{\rho}(\xif)\subset B_{R/4}(\xif)\subset B_{3R/4}^{+}(\xiffix)\subset B_R^{+}(\xiffix)$, or $B_{\rho}(\xif)\subset B_{R/4}(\xif)\subset B_{3R/4}^{-}(\xiffix)\subset B_R^{-}(\xiffix)$. In the first case, the analogous of Theorem \ref{thm:bdpoisson2} for the interior (where again it can be checked that the constants in the bound are independent of the center of the ball, of $J\in\bbN$ and $\By\in\pspace_J$) implies:
\begin{align}
  \int_{B_{\rho}(\xif)\cap B^{+}_{R/2}(\xiffix)}\left|\tilde{\jac}^2 \uxif - \left(\tilde{\jac}^2 \uxif\right)_{B_{\rho}(\xif)\cap B^{+}_{R/2}(\xiffix)}\right|^2\dz &\leq \int_{B_{\rho}(\xif)}\left|\tilde{\jac}^2 \uxif - \left(\tilde{\jac}^2 \uxif\right)_{B_{\rho}(\xif)}\right|^2\dz\nonumber\\
  & \leq C_2 \rho^{n+2\hexp}M_R,\nonumber
\end{align}
with $C_2=C_2(n,\evmin,\evmax,\hexp)$. An analogous estimate holds in the second case. Considering this last bound together with \eqref{eq:forcampnorm1}, and using Lemma \ref{lem:campanato} with $\mu=n+2\hexp$, $p=2$ and $\lambda=\frac{R}{4}$, we finally obtain, for $C_3=C_3(n,\hexp)$:
\begin{align*}
  \sholder{\left(\tilde{\jacsl}^2\uxif\right)^{\pm}}{B_{R/2}^{\pm}(\xiffix)}\leq C_3\left|\left(\tilde{\jacsl}^2\uxif\right)^{\pm}\right|^{\left(\tfrac{1}{4}\right)}_{2,n+2\hexp;B_{R/2}^{\pm}(\xiffix)}\leq C M_R^{\frac{1}{2}},
\end{align*}
from which \eqref{eq:hessbound} follows, with $C=C(n,\evmin,\evmax,\hexp)$. 
\end{proof}

The previous result states the local estimate for the Poisson equation in case of homogeneous transmission conditions. We are now in the position to consider the case $\gxif\neq 0$:
\begin{theorem}\label{thm:withg}
 Let $\uxif$ be a solution to \eqref{eq:modelpbxi} in $B_R=B_R(\xiffix)$. Then
 \begin{align*}
    \sholder{\left(\tilde{\jacsl}^2\uxif\right)^{\pm}}{B_{R/2}^{\pm}(\xiffix)}\leq\; & C\left(\frac{1}{R^{2+\hexp}}\cont{\uxif}{B_R(\xiffix)}+\frac{1}{R^{\hexp}}\cont{\fxif_y^{\pm}}{B_R^{\pm}(\xiffix)}+\sholder{\fxif_y}{B_R^{\pm}(\xiffix)}\right.\\
    & \left. +\frac{1}{R^{1+\hexp}}\cont{\gxif_y}{\gammaxif_y}+\sholder{\gxif_y}{\gammaxif_y}\right),
 \end{align*}
where $C=C(n,\evmin,\evmax,\hexp)$ is \emph{independent} of the center $\xiffix$ of $B_R$, of $J\in\bbN$ and $\By\in\pspace_J$.
\end{theorem}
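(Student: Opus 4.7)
My strategy is to reduce Theorem~\ref{thm:withg} to the homogeneous case $\gxif_y\equiv 0$ already handled in Theorem~\ref{thm:hessbound}, by subtracting an explicit lifting $v$ that absorbs the co-normal jump. Since (after the orthogonal change of variables) $\gammaxif_y=\{\tilde{x}_{y,n}=0\}$ and $\diagmatrix_y$ is a constant diagonal matrix, a natural piecewise-linear candidate is
\[
v^{\pm}(\xif):=\pm\frac{\tilde{G}(\xif)\,\tilde{x}_{y,n}}{2(\diagmatrix_y)_{nn}}\qquad\text{in }B_R^{\pm}(\xiffix),
\]
where $\tilde{G}$ is a fixed extension of $\gxif_y$ from $\gammaxif_y$ to $\overline{B_R(\xiffix)}$ controlled by $\holderk{\gxif_y}{\gammaxif_y}{1}$. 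A direct computation yields $\llbracket v\rrbracket_{\gammaxif_y}=0$ and $\llbracket \diagmatrix_y\,\partialxif v/\partialxif \nxif\rrbracket_{\gammaxif_y}=\gxif_y$, so that $\wxif:=\uxif-v$ solves \eqref{eq:modelpbxi} with modified source $\tilde{F}_y:=\fxif_y+\xifgrad\cdot(\diagmatrix_y\xifgrad v)$ and homogeneous transmission conditions.

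I would then apply Theorem~\ref{thm:hessbound} to $\wxif$, obtaining
\[
\sholder{(\tilde{\jacsl}^2\wxif)^{\pm}}{B_{R/2}^{\pm}(\xiffix)}\leq C\Bigl(\tfrac{1}{R^{2+\hexp}}\cont{\wxif}{B_R(\xiffix)}+\tfrac{1}{R^{\hexp}}\cont{\tilde{F}_y^{\pm}}{B_R^{\pm}(\xiffix)}+\sholder{\tilde{F}_y^{\pm}}{B_R^{\pm}(\xiffix)}\Bigr),
\]
and I would expand $\wxif=\uxif-v$ on the left and $\tilde{F}_y=\fxif_y+\xifgrad\cdot(\diagmatrix_y\xifgrad v)$ on the right, bounding every $v$-contribution in terms of $\gxif_y$. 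The elementary estimates $\cont{v^{\pm}}{B_R^{\pm}}\lesssim R\cont{\gxif_y}{\gammaxif_y}$, together with the analogous bounds on $\xifgrad v$, $\tilde{\jacsl}^2 v$ and their H\"older seminorms, collapse after regrouping the $R$-powers to exactly the right-hand side claimed in Theorem~\ref{thm:withg}. Uniformity of $(\diagmatrix_y)_{nn}\in[\evmin,\evmax]$ in $J$ and $\By$ guarantees that all constants remain parameter-independent.

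The main obstacle is a regularity mismatch: $\gxif_y\in C^{1,\hexp}(\gammaxif_y)$ only, so the naive linear lifting is merely $C^{1,\hexp}_{pw}$ and the formal expression $\xifgrad\cdot(\diagmatrix_y\xifgrad v)$ involves the nonexistent second tangential derivatives of $\gxif_y$. I would resolve this by choosing $\tilde{G}$ as a distance-weighted smoothing of $\gxif_y$: for instance $\tilde{G}(\xif',\tilde{x}_{y,n}):=(\gxif_y*\rho_{|\tilde{x}_{y,n}|})(\xif')$, where $\rho_\varepsilon$ is a standard tangential mollifier at scale $\varepsilon$. Then $\tilde{G}\in C^{\infty}(\overline{B_R^{\pm}}\setminus\gammaxif_y)$, $\tilde{G}\to\gxif_y$ as $\tilde{x}_{y,n}\to 0$, and one obtains the Whitney-type estimates $\scont[k]{\tilde{G}}{B_R^{\pm}}\lesssim |\tilde{x}_{y,n}|^{1+\hexp-k}\holderk{\gxif_y}{\gammaxif_y}{1}$ for every $k\geq 2$. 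These distance-weighted bounds are precisely what is needed to show $\tilde{F}_y\in C^{0,\hexp}_{pw}$ with norm controlled by $\holderk{\gxif_y}{\gammaxif_y}{1}$, in complete analogy with the treatment of inhomogeneous Neumann data in Section~6.7 of \cite{GT}; the remainder of the argument is then purely algebraic.
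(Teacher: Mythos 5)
Your "distance-weighted smoothing" lifting is, after a change of variables $\Bw'=\tilde{x}_{y,n}\Bz'$, exactly the function $\tilde\psi_{1,2}$ the paper constructs: writing $\tilde G(\xif',\tilde x_{y,n})=(\gxif_y\ast\rho_{|\tilde x_{y,n}|})(\xif')$ and multiplying by $\tilde x_{y,n}$ gives precisely $\frac{1}{2}\tilde x_{y,n}\int_{\bbR^{n-1}}\gxif_y(\xif'-\tilde x_{y,n}\Bz')\,\rho(\Bz')\,d\Bz'$, which is what the paper uses (with the mollifier called $\eta\in C_0^2(\bbR^{n-1})$ instead of $\rho$), and you then subtract it and invoke Theorem~\ref{thm:hessbound} exactly as the paper does. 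The only blemish is the normalization: since $\aref(\By;\xreffix)$ is frozen at a point of the interface it has one-sided values, so the denominator must be $(\diagmatrix_y)^{+}_{nn}$ on $B_R^{+}$ and $(\diagmatrix_y)^{-}_{nn}$ on $B_R^{-}$ (as in the paper's \eqref{eq:psi1}--\eqref{eq:psi2}); with a single $(\diagmatrix_y)_{nn}$ the conormal jump comes out as $\gxif_y$ only when the two one-sided values coincide, so you should restore the $\pm$ superscripts. With that correction the argument is sound and essentially identical to the paper's.
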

\begin{proof}
 For this proof we use a similar argument as in \cite[pp. 124--125]{GT}.
 
 Consider a nonnegative function $\eta\in C_0^2(\bbR^{n-1})$, such that $\int_{\bbR^{n-1}}\eta(\Bz')\dz'=1$. Lemma 6.38 in \cite{GT} ensures that $\gxif_y$ can be extended outside $\gammaxif_y$ in such a way that its extension belongs to $C_0^{1,\hexp}(\bbR^{n-1})$. With some abuse of notation, we still denote by $\gxif_y$ this extension.
 
 We define $\tilde{\psi}_1$ and $\tilde{\psi}_2$ as the functions fulfilling the following equalities:
 \begin{align}
  \left(\diagmatrix_y\right)^{+}_{nn}\tilde{\psi}_1(\xif)&=\frac{1}{2}\tilde{x}_{y,n}\int_{\bbR^{n-1}}\gxif_y(\xif'-\tilde{x}_{y,n}\Bz')\eta(\Bz')\dz',\label{eq:psi1}\\
  \left(\diagmatrix_y\right)^{-}_{nn}\tilde{\psi}_2(\xif)&=-\frac{1}{2}\tilde{x}_{y,n}\int_{\bbR^{n-1}}\gxif_y(\xif'-\tilde{x}_{y,n}\Bz')\eta(\Bz')\dz',\label{eq:psi2}
 \end{align}
where $\xif'=(\tilde{x}_{y,1}',\ldots,\tilde{x}_{y,n-1}')$. It can be checked (see \eqref{eq:psic2a}) that $\tilde{\psi}_1,\tilde{\psi}_2\in C^{2,\hexp}(\bbR^n)$, and that:
\begin{align*}
 &\tilde{\psi}_1(\xif',0)=\tilde{\psi}_2(\xif',0)=0,\\
 &   \left(\diagmatrix_y\right)^{+}_{nn}\dfrac{\partialxif}{\partialxif\tilde{x}_{y,n}}\tilde{\psi}_1(\xif',0)-\left(\diagmatrix_y\right)^{-}_{nn}\dfrac{\partialxif}{\partialxif\tilde{x}_{y,n}}\tilde{\psi}_2(\xif',0)=\gxif_y(\xif').
\end{align*}
The solution $\uxif$ to \eqref{eq:modelpbxi} can be decomposed as $\uxif=\vxif+\psixif$, where 
\begin{equation*}
 \psixif|_{B_R^{+}}=\tilde{\psi}_1,\quad
 \psixif|_{B_R^{-}}=\tilde{\psi}_2.
\end{equation*}
Then $\vxif$ fulfills
\begin{subequations}
\begin{align*}[left=\empheqlbrace]
 & -\xifgrad \cdot \left(\diagmatrix_y \xifgrad \vxif\right) = \fxif_y + \xifgrad \cdot \left(\diagmatrix_y \xifgrad \psixif\right),\quad \text{in }B_R^{+}\cup B_R^{-},\\
 & \llbracket \vxif \rrbracket_{\gammaxif_y} =0,\quad\Big\llbracket \diagmatrix_y\dfrac{\partialxif \vxif}{\partialxif \nxif}\Big\rrbracket_{\gammaxif_y} = 0.
\end{align*}
 \end{subequations}
 Applying Theorem \ref{thm:hessbound} to $\vxif$ (and with the help of Lemma \ref{lem:diagequiv}), we infer:
 \begin{align}
   \sholder{\left(\tilde{\jac}^2\vxif\right)^{\pm}}{B_{R/2}^{\pm}(\xiffix)}\leq&\, C'\frac{1}{R^{2+\hexp}}\cont{\vxif}{B_R(\xiffix)}
    + C'\frac{1}{R^{\hexp}}\left(\cont{\fxif_y^{\pm}}{B_R^{\pm}(\xiffix)}+\cont[2]{\psixif}{B_R^{\pm}(\xiffix)}\right)\nonumber\\
   & + C'\left(\sholder{\fxif_y}{B_R^{\pm}(\xiffix)}+\sholderk{\psixif}{B_R^{\pm}(\xiffix)}{2}\right),\label{eq:vxiest}
 \end{align}
 with $C'=C'(n,\evmin,\evmax,\beta)$.
 

Denoting by $\xifgradtg$ the gradient with respect to the first $n-1$ components of the argument, and by $\partialxifi{i}$ the derivative with respect to the $i^{th}$ component, in $B_R^{+}(\xiffix)$ we have:
\begin{align*}
 \left(\diagmatrix_y\right)^{+}_{nn}\partialxifi{ij}^2\psixif&=\frac{1}{2}\int_{\bbR^{n-1}}\partialxifi{i}\gxif_y(\xif'-\tilde{x}_{y,n}\Bz')\partialxifi{j}\eta(\Bz')\dz',\quad \text{for }i,j\neq n,\\
 \left(\diagmatrix_y\right)^{+}_{nn}\partialxifi{in}^2\psixif&=-\frac{1}{2}\int_{\bbR^{n-1}}\Bz'\cdot\xifgradtg\gxif_y(\xif'-\tilde{x}_{y,n}\Bz')\partialxifi{i}\eta(\Bz')\dz',\quad \text{for }i\neq n,\\
 \left(\diagmatrix_y\right)^{+}_{nn}\partialxifi{nn}^2\psixif&=\frac{1}{2}\int_{\bbR^{n-1}}\Bz'\cdot\xifgradtg\gxif_y(\xif'-\tilde{x}_{y,n}\Bz')\left[(n-2)\eta(\Bz')+\Bz'\cdot\xifgradtg\eta(\Bz')\right]\dz',
\end{align*}
and thus
\begin{subequations}
 \begin{align}
  \cont{\jac^2\psixif}{B_R^{+}(\xiffix)}&\leq C_{\eta}\scont[1]{\gxif_y}{\gammaxif_y},\\
  \sholderk{\psixif}{B_R^{+}(\xiffix)}{2}&\leq C_{\eta}\sholderk{\gxif_y}{\gammaxif_y}{1}.
 \end{align}\label{eq:psic2a}
\end{subequations}
Analogous results hold for the norms on $B_R^{-}(\xiffix)$. The constant $C_{\eta}$ depends on the norms of $\eta$ on $\gammaxif_y$, and thus, in principle, it could depend on $\By\in\pspace_J$ and $J\in\bbN$. However, if, for every $\xiffix\in\gammaxif_y$ considered, we use, in $B_R(\xiffix)$, the same function $\eta$ translated so that it is centered in $\xiffix$, then $C_{\eta}$ is independent of $J\in\bbN$ and of $\By\in\pspace_J$. Combining \eqref{eq:psic2a} with \eqref{eq:vxiest}, and using the interpolation inequalities (cf. \cite[Cor. 1.2.1]{WYW}) to bound $\cont[1]{\gxif}{\gammaxif_y}$, we gather the desired estimate:
\begin{align*}
\sholder{\left(\tilde{\jacsl}^2\uxif\right)^{\pm}}{B_{R/2}^{\pm}(\xiffix)}\leq\; & C \left(\frac{1}{R^{2+\hexp}}\cont{\uxif}{B_R(\xiffix)}+\frac{1}{R^{\hexp}}\cont{\fxif_y^{\pm}}{B_R^{\pm}(\xiffix)}+\frac{1}{R^{\hexp}}\scont[1]{\gxif_y}{\gammaxif_y}\right)\\
& + C\left(\sholderk{\gxif_y}{\gammaxif_y}{1} + \sholder{\fxif_y^{\pm}}{B_R^{\pm}(\xiffix)}\right)\\
\leq\; & C \left(\frac{1}{R^{2+\hexp}}\cont{\uxif}{B_R(\xiffix)}+\frac{1}{R^{1+\hexp}}\cont{\gxif_y}{\gammaxif_y}+\sholderk{\gxif_y}{\gammaxif_y}{1}\right)\\
&+C\left(\frac{1}{R^{\hexp}}\cont{\fxif_y^{\pm}}{B_R^{\pm}(\xiffix)}+ \sholder{\fxif_y^{\pm}}{B_R^{\pm}(\xiffix)}\right),
\end{align*}
with $C=C(n,\evmin,\evmax,\hexp,\eta)$ \emph{independent} of $\xiffix$, of $J\in\bbN$ and of $\By\in\pspace_J$.
\end{proof}\label{appendix:spacereg}
\end{appendices}





\bibliography{references.bib}
\bibliographystyle{siam}

\end{document}